\newtheorem{thm}{Theorem}[section]
\newtheorem{lemma}[thm]{Lemma}
\newtheorem{prop}[thm]{Proposition}
\newtheorem{cor}[thm]{Corollary}
\newcommand{\abs}[1]{\left| #1 \right|}
\newcommand{\wh}{\widehat}
\newcommand{\wt}{\widetilde}
\newcommand{\Reals}{\mathbb{R}}
\newcommand{\Complex}{\mathbb{C}}
\newcommand{\Integers}{\mathbb{Z}}
\newcommand{\Halfplane}{\mathbb{H}}
\newcommand{\Disc}{\mathbb{D}}
\renewcommand{\Re}{\operatorname{Re}}
\renewcommand{\Im}{\operatorname{Im}}
\newcommand{\dist}{\operatorname{dist}}
\newcommand{\SLE}{\operatorname{SLE}}
\newcommand{\Arg}{\operatorname{Arg}}
\newcommand{\Es}{\operatorname{Es}}
\newcommand{\Gr}{\operatorname{Gr}}
\newcommand{\ind}{\textbf{1}}
\newcommand{\conj}[1]{\overline{#1}}
\newcommand{\Pro}[2]{\mathbf{P}^{#1} \left\{ #2 \right\}}
\newcommand{\Prob}[2]{\mathbf{P}^{#1} \left( #2 \right)}
\newcommand{\Exp}[2]{\mathbf{E}^{#1} \left[ #2 \right]}
\newcommand{\whPro}[2]{\widehat{\mathbf{P}}^{#1} \left\{ #2 \right \}}
\newcommand{\whExp}[2]{\widehat{\mathbf{E}}^{#1} \left[ #2 \right]}
\newcommand{\condPro}[3]{\mathbf{P}^{#1} \left\{ #2 \hskip5pt \vline \hskip5pt #3 \right\}}
\newcommand{\condExp}[3]{\mathbf{E}^{#1} \left[ #2 \hskip5pt \vline \hskip5pt #3 \right]}
\newcommand{\condProb}[3]{\mathbf{P}^{#1} \left( #2 \hskip5pt \vline \hskip5pt #3 \right)}
\newcommand{\condwhPro}[3]{\widehat{\mathbf{P}}^{#1} \left\{ #2 \hskip5pt \vline \hskip5pt #3 \right\}}
\newcommand{\qedd}{\nobreak \ifvmode \relax \else
      \ifdim\lastskip<1.5em \hskip-\lastskip
      \hskip1.5em plus0em minus0.5em \fi \nobreak
      \vrule height0.75em width0.5em depth0.25em\fi}
\newenvironment{remark}[1][Remark]{\begin{trivlist}
\item[\hskip \labelsep {\bfseries #1}]}{\end{trivlist}}
\title{The growth exponent for planar loop-erased random walk}
\author{Robert Masson \\ Department of Mathematics, University of British Columbia \\ 1984 Mathematics Road \\ Vancouver, BC V6T 1Z2, Canada. \\rmasson@math.ubc.ca}
\date{}
\begin{document}

\maketitle

\abstract
We give a new proof of a result of Kenyon that the growth exponent for loop-erased random walks in two dimensions is $5/4$. The proof uses the convergence of LERW to Schramm-Loewner evolution with parameter $2$, and is valid for irreducible bounded symmetric random walks on any discrete lattice of $\Reals^2$.

\vskip20pt

\noindent \textbf{Key words:} Random walk, loop-erased random walk, Schramm-Loewner evolution.

\vskip10pt

\noindent \textbf{AMS 2000 Subject Classification:} Primary 60G50; Secondary:60J65.

\vskip10pt

\noindent Submitted to EJP on August 8, 2008, final version accepted March 31, 2009.

\newpage

\tableofcontents

\section{Introduction}

\subsection{Overview}

Let $S$ be a random walk on a discrete lattice $\Lambda \subset \Reals^d$, started at the origin.  The loop-erased random walk (LERW) $\wh{S}^n$ is obtained by running $S$ up to the first exit time of the ball of radius $n$ and then chronologically erasing its loops. 

The LERW was introduced by Lawler \cite{Law80} in order to study the self-avoiding walk, but it was soon found that the two processes are in different universality classes. Nevertheless, LERW is extensively studied in statistical physics for two reasons. First of all, LERW is a model that exhibits many similarities to other interesting models: there is a critical dimension above which its behavior is trivial, it satisfies a domain Markov property, and it has a conformally invariant scaling limit. Furthermore, LERWs are often easier to analyze than these other models because properties of LERWs can often be deduced from facts about random walks. The other reason why LERWs are studied is that they are closely related to certain models in statistical physics like the uniform spanning tree (through Wilson's algorithm which allows one to generate uniform spanning trees from LERWs \cite{Wil96}), the abelian sandpile model \cite{Dha99} and the $b$-Laplacian random walk  \cite{Law91} (LERW is the case $b=1$).

Let $\Gr(n)$ be the expected number of steps of a $d$-dimensional LERW $\wh{S}^n$. Then the $d$ dimensional growth exponent $\alpha_d$ is defined to be such that
$$ \Gr(n) \approx n^{\alpha_d} $$
where
$f(n) \approx g(n)$ if
$$ \lim_{n \to \infty} \frac{\log f(n)}{\log g(n)} = 1. $$
For $d \geq 4$, it was shown by Lawler \cite{Law91, Law95} that $\alpha_d = 2$ (roughly speaking, in these dimensions, random walks do not produce many loops and LERWs have the same growth exponent as random walks). For $d = 3$, numerical simulations suggest that $\alpha_3$ is approximately $1.62$ \cite{AD01} but neither the existence of $\alpha_3$, nor its exact value has been determined rigorously (it is not expected to be a rational number). In the two dimensional case, it was shown by Kenyon \cite{Ken00} that $\alpha_2$ exists for simple random walk on the integer lattice $\Integers^2$ and is equal to $5/4$. His proof uses domino tilings to compute asymptotics for the number of uniform spanning trees of rectilinear regions of $\Reals^2$ and then uses the relation between uniform spanning trees and LERW to conclude that $\alpha_2 = 5/4$.

In this paper, we give a substantially different proof that $\alpha_2 = 5/4$. Namely, we prove 
\begin{thm} \label{growth} Let $S$ be an irreducible bounded symmetric random walk on a two-dimensional discrete lattice started at the origin and let $\sigma_n$ be the first exit time of the ball of radius $n$. Let $\wh{S}^n$ be the loop-erasure of $S[0,\sigma_n]$ and $\Gr(n)$ be the expected number of steps of $\wh{S}^n$. Then
$$ \Gr(n) \approx n^{5/4}.$$
\end{thm}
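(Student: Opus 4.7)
The plan is a one-point decomposition combined with the convergence of LERW to $\SLE_2$. By Fubini,
$$\Gr(n) = \sum_{x \in \Lambda \cap B(0,n)} P(x \in \wh{S}^n),$$
so the growth exponent is controlled by the visit probability $p_n(x) := P(x \in \wh{S}^n)$. The target estimate, at the level of logarithmic equivalence, is that $p_n(x)$ is of order $n^{-3/4} F(x/n)$ for $x$ in the bulk of $B(0,n)$, where $F$ is a bounded interior Green's function for $\SLE_2$ in $\Disc$ started at $0$. The exponent $3/4 = 2 - 5/4$ is the codimension of the $\SLE_2$ trace, whose Hausdorff dimension equals $1 + \kappa/8 = 5/4$. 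Summing over $x$ and approximating by $n^2$ times a Lebesgue integral on $\Disc$ then yields $\Gr(n) \approx n^{-3/4} \cdot n^2 = n^{5/4}$, which is the claim.

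The central technical work is establishing the one-point estimate $p_n(x) \approx n^{-3/4}$, uniformly in $x$ away from $0$ and $\partial B(0,n)$. I would proceed by a \emph{separation lemma} in the spirit of Lawler's work on non-intersection exponents. A last-visit decomposition shows that $x \in \wh{S}^n$ iff the random walk visits $x$ and the loop-erasure of its path from $0$ to the last visit to $x$ does not reintersect the remainder of the walk from $x$ to $\partial B(0,n)$; by reversibility and the domain Markov property this becomes a non-intersection event between a LERW and an independent random walk started at $x$. Coupling arguments across dyadic annuli around $x$ then show that, up to a bounded factor, this microscopic probability equals the probability that $\wh{S}^n$ merely \emph{approaches within $O(1)$ of $x$} --- a macroscopic event stable under the scaling limit. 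Transferring this to $\SLE_2$ via the Lawler--Schramm--Werner convergence theorem, the macroscopic probability becomes that of $\SLE_2$ from $0$ to $\partial \Disc$ entering $B(x/n, 1/n)$, which by conformal scaling and the $\SLE_2$ interior Green's function is of order $n^{-3/4}$ times a bounded factor.

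The main obstacle is the second step: upgrading the path-level convergence of LERW to $\SLE_2$ into uniform, quantitative one-point estimates. The LSW theorem is a weak convergence in the topology of curves modulo reparameterization and does not directly control the lattice event $\{x \in \wh{S}^n\}$. What is really needed are LERW-specific estimates --- escape probabilities in annuli, separation between the random walk and its loop-erasure, and second-moment bounds on the number of visited points --- valid simultaneously at every scale, so that the microscopic visit event factors across scales in the manner dictated by the $\SLE_2$ Green's function; convergence to $\SLE_2$ is then invoked only to identify the macroscopic exponent. Boundary contributions, from $x$ close to $0$ or to $\partial B(0,n)$, form a secondary technicality handled by comparison with the (larger) $\SLE_2$ boundary exponents, which show that such terms are of lower order on the logarithmic scale and therefore do not affect $\alpha_2 = 5/4$.
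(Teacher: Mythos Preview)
Your high-level architecture matches the paper: write $\Gr(n)=\sum_z \Pro{}{z\in\wh S^n}$, use a last-visit decomposition to turn $\Pro{}{z\in\wh S^n}$ into $G_n(0,z)$ times a non-intersection probability between a LERW and an independent random walk (this is exactly the paper's Lemma~\ref{lerwformula}), and use a separation lemma to make this non-intersection probability comparable across scales. So far so good.

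The genuine gap is in how you extract the exponent $3/4$ from $\SLE_2$. You propose to pass from the lattice event ``$\wh S^n$ comes within $O(1)$ of $x$'' to the continuum event ``$\SLE_2$ enters $B(x/n,1/n)$'' and then read off $n^{-3/4}$ from the $\SLE_2$ interior Green's function. But after rescaling, ``within $O(1)$ of $x$'' is a ball of radius $1/n$, which \emph{shrinks} with $n$; it is not a continuous functional on $(\Gamma,d)$, and weak convergence of LERW to $\SLE_2$ gives you nothing about it. You acknowledge this obstacle, but your proposed fix (``factor across scales in the manner dictated by the $\SLE_2$ Green's function'') is circular: you cannot know the per-scale factor is $r^{3/4}$ without already having identified the exponent. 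The paper in fact says explicitly in the introduction that it could not make the Hausdorff-dimension/Green's-function route work.

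The paper's way around this is different and worth internalizing. It defines $\Es(n)$ as the probability that a random walk and an independent LERW from the origin do not intersect up to leaving $B_n$, and proves purely discretely (Propositions~\ref{d} and~\ref{e}, using the separation lemma and the ``up to constants'' independence of the initial and terminal parts of the LERW) the submultiplicative relation
\[
\Es(n)\asymp \Es(m)\,\Es(m,n).
\]
The $\SLE$ input is then invoked only at a \emph{fixed} ratio $r$: the event underlying $\Es(rn,n)$ is that a random walk from $0$ avoids the terminal part of the LERW between radii $rn$ and $n$, which after rescaling is a genuine macroscopic event. Its continuum analogue is ``Brownian motion from $0$ avoids $\SLE_2$ stopped at radius $r$'', and the Lawler--Schramm--Werner intersection exponent (Theorem~\ref{harmeasure}) gives this probability $\asymp r^{3/4}$. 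Since this is (after the work of Section~\ref{rwapproxsec} and Lemma~\ref{contin}) a continuous functional of the curve, weak convergence applies and yields $\Es(rn,n)\asymp r^{3/4}$ for each fixed $r$ and $n$ large (Theorem~\ref{conv}). Iterating the submultiplicativity along a geometric sequence then gives $\Es(n)\approx n^{-3/4}$, and the sum over $z$ finishes as you outlined. The point is that the exponent enters through the \emph{Brownian/$\SLE_2$ non-intersection exponent} at a fixed scale ratio, not through the $\SLE_2$ one-point function at a shrinking scale.
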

The proof of Theorem \ref{growth} uses the fact that LERW has a conformally invariant scaling limit called radial $\SLE_2$. Radial Schramm-Loewner evolution with parameter $\kappa \geq 0$ is a continuous random process from the unit circle to the origin in $\overline{\Disc}$. It was introduced by Schramm \cite{Sch00} as a candidate for the scaling limit of various discrete models from statistical physics. Indeed, he showed that if LERW has a conformally invariant scaling limit, then that limit must be $\SLE_2$. In the later paper by Lawler, Schramm and Werner \cite{LSW04}, the convergence of LERW to $\SLE_2$ was proved. Other models known to scale to SLE include the uniform spanning tree Peano curve ($\kappa = 8$, Lawler, Schramm and Werner \cite{LSW04}), the interface of the Ising model at criticality ($\kappa = 16/3$, Smirnov \cite{Smi07a}), the harmonic explorer ($\kappa = 4$, Schramm and Sheffield \cite{SS05}), the interface of the discrete Gaussian free field ($\kappa = 4$, Schramm and Sheffield \cite{SS06}), and the interface of critical percolation on the triangular lattice ($\kappa = 6$, Smirnov \cite{Smi01} and Camia and Newman \cite{CN06, CN07}). There is also strong evidence to suggest that the self-avoiding walk converges to $\SLE_{8/3}$, but so far, attempts to prove this have been unsuccessful \cite{LSW04b}.

One of the reasons to show convergence of discrete models to SLE is that properties and exponents for SLE are usually easier to derive than those for the corresponding discrete model. It is also widely believed that the discrete model will share the exponents of its corresponding SLE scaling limit. However, the equivalence of exponents between the discrete models and their scaling limits is not immediate. For instance, Lawler and Puckette \cite{LP00} showed that the exponent associated to the non-intersection of two random walks is the same as that for the non-intersection of two Brownian motions. In the case of discrete models converging to SLE, different techniques must be used, since the convergence is weaker than the convergence of random walks to Brownian motion. To the author's knowledge, the derivation of  arm exponents for critical percolation from disconnection exponents for $\SLE_6$ by Lawler, Schramm and Werner \cite{LSW02} and Smirnov and Werner \cite{SW01} is the only other example of exponents for a discrete model being derived from those for its SLE scaling limit.

There are three main reasons for giving a new proof that $\alpha_2 = 5/4$. The first is to give another example where an exponent for a discrete model is derived from its corresponding SLE scaling limit. The second reason is that the convergence of LERW to $\SLE_2$ holds for a general class of random walks on a broad set of lattices. This allows us to establish the exponent $5/4$ for irreducible bounded symmetric random walks on discrete lattices of $\Reals^2$, and thereby generalize Kenyon's result which holds only for simple random walks on $\Integers^2$. Finally, in the course of the proof we establish some facts about LERWs that are of interest on their own. Indeed, in a forthcoming paper with Martin Barlow \cite{BM09}, we use a number of the intermediary results in this paper to obtain second moment estimates for the growth exponent.

There are two properties of $\SLE_2$ that suggest that $\alpha_2 = 5/4$. The first is that the Hausdorff dimension of the SLE curves was established by Beffara \cite{Bef08}, and is equal to $5/4$ for $\SLE_2$. However, we have not found a proof that uses this fact directly. Instead, we use the fact that the probability that a complex Brownian motion from the origin to the unit circle does not intersect an independent $\SLE_2$ curve from the unit circle to the circle of radius $0 < r < 1$ is comparable to $r^{3/4}$. This and other exponents for SLE were established by Lawler, Schramm and Werner \cite{LSW01}. We use this fact to show that the probability that a random walk and an independent LERW started at the origin and stopped at the first exit time of the ball of radius $n$ do not intersect is logarithmically asymptotic to $n^{-3/4}$. We then relate this intersection exponent $3/4$ to the growth exponent $\alpha_2$ and show that $\alpha_2 = 5/4$.

\subsection{Outline of the proof of Theorem \ref{growth}}

While many of the details are quite technical, the main steps in the proof are fairly straightforward. Let $\Es(n)$ be the probability that a LERW and an independent random walk started at the origin do not intersect each other up to leaving $B_n$, the ball of radius $n$. As we mentioned in the previous section, the fact that $\Gr(n) \approx n^{5/4}$ follows from the fact that $\Es(n) \approx n^{-3/4}$. Intuitively, this is not difficult to see. Let $z$ be a point in $B_n$ that is not too close to the origin or the boundary. In order for $z$ to be on the LERW path, it must first be on the random walk path; the expected number of times the random walk path goes through $z$ is of order $1$. Then, in order for $z$ to be on the LERW path, it cannot be part of a loop that gets erased; this occurs if and only if the random walk path from $z$ to $\partial B_n$ does not intersect the loop-erasure of the random walk path from $0$ to $z$. This is comparable to $\Es(n)$. Therefore, since there are on the order of $n^2$ points in $B_n$, $ \Gr(n)$ is comparable to $n^2 \Es(n)$,
and so it suffices to show that $\Es(n) \approx n^{-3/4}$. The above heuristic does not work for points close to the origin or to the circle of radius $n$, and so the actual details are a bit more complicated.

Given $l \leq m \leq n$, decompose the LERW path $\wh{S}^n$ as
$$ \wh{S}^n = \eta^1 \oplus \eta^* \oplus \eta^2$$
(see Figure \ref{decomp}).
\begin{figure}[htp]
\centerline{
\epsfxsize=3.5in
\epsfysize=3.5in
\epsfbox{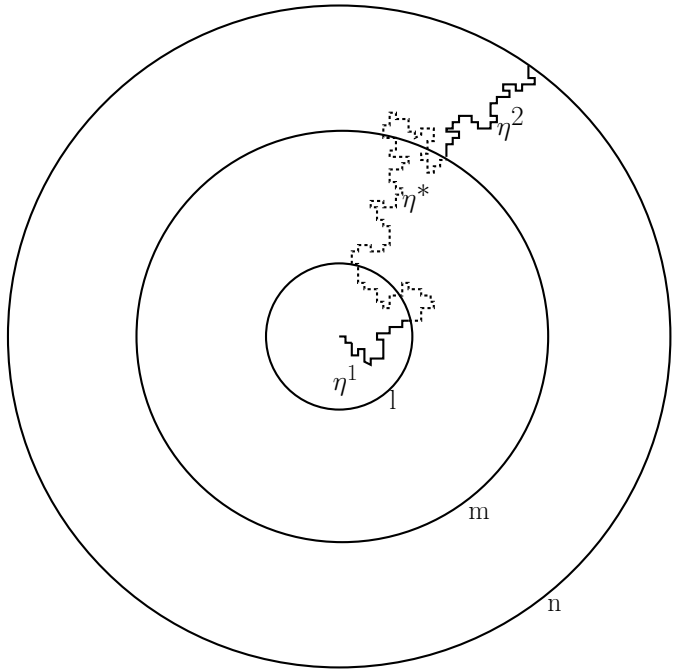}
}
\caption{Decomposition of a LERW path into $\eta^1$, $\eta^2$ and $\eta^*$}
 \label{decomp}
\end{figure}
Define $\Es(m,n)$ to be the probability that a random walk started at the origin leaves the ball $B_n$ before intersecting $\eta^2$. Notice that $\Es(m,n)$ is the discrete analog of the probability that a Brownian motion from the origin to the unit circle does not intersect an independent $\SLE_2$ curve from the unit circle to the circle of radius $m/n$. As mentioned in the previous section, the latter probability is comparable to $(m/n)^{3/4}$ \cite{LSW01}. Therefore, using the convergence of LERW to $\SLE_2$ and the strong approximation of Brownian motion by random walks one can show that there exists $C < \infty$ such that the following holds (Theorem \ref{conv}).  For all $0 < r < 1$, there exists $N$ such that for all $n > N$,
\begin{eqnarray} \label{agt}
\frac{1}{C} r^{3/4} \leq \Es(rn,n) \leq C r^{3/4}.
\end{eqnarray}

Unfortunately, $N$ in the previous statement depends on $r$, so one cannot simply take $r \to 0$ to recover $\Es(n)$. Therefore, one has to relate $\Es(n)$ to $\Es(m,n)$. This is not as easy as it sounds because the probability that a random walk avoids a LERW is highly dependent on the behavior of the LERW near the origin. Nevertheless, we show (Propositions \ref{d} and \ref{e}) that there exists $C < \infty$ such that
\begin{eqnarray} \label{qwd}
C^{-1} \Es(m) \Es(m,n) \leq \Es(n) \leq C \Es(m) \Es(m,n).
\end{eqnarray}
 It is then straightforward to combine (\ref{agt}) and (\ref{qwd}) to deduce that $\Es(n) \approx n^{-3/4}$ (Theorem \ref{escape}).

To prove (\ref{qwd}), we let $l = m/4$ in the decomposition given in Figure \ref{decomp}. Then in order for a random walk $S$ and a LERW $\wh{S}^n$ not to intersect up to leaving $B_n$, they must first reach the circle of radius $l$ without intersecting; this is $\Es(l)$. Next, we show that with probability bounded below by a constant, $\eta^*$ is contained in a fixed half-wedge (Corollary \ref{wedgecond}). We then use a separation lemma (Theorem \ref{sep}) which states that on the event $\Es(l)$, $S$ and $\wh{S}^n$ are at least a distance $cl$ apart at the circle of radius $l$. This allows us to conclude that, conditioned on the event $\Es(l)$, with a probability bounded below by a constant, $S$ will not intersect $\eta^*$. Finally, we use the fact that $\eta^1$ and $\eta^2$ are ``independent up to constants'' (Proposition \ref{indep}) to deduce that
$$ \frac{1}{C} \Es(l)\Es(m,n) \leq \Es(n) \leq C \Es(l) \Es(m,n).$$
Formula (\ref{qwd}) then follows because $m = 4l$ and thus $\Es(l)$ is comparable to $\Es(m)$.

\subsection{Structure of the paper}

In Chapter \ref{prelim}, we give precise definitions of random walks, LERWs and SLE and state some of the basic facts and properties that we require. 

In Chapter \ref{resultsrw}, we prove some technical lemmas about random walks. Section \ref{shitprob} establishes some estimates about Green's functions and the probability of a random walk hitting a set $K_1$ before another set $K_2$. Section \ref{srwavoid} examines the behavior of random walks conditioned to avoid certain sets. Finally, in Section \ref{rwapproxsec} we prove Proposition \ref{a} which states the following. For a fixed continuous curve $\alpha$ in the unit disc $\Disc$, the probability that a continuous random walk on the lattice $\delta \Lambda$ exits $\Disc$ before hitting $\alpha$ tends to the probability that a Brownian motion exits $\Disc$ before hitting $\alpha$. Furthermore, if one fixes $r$, then the convergence is uniform over all curves whose diameter is larger than $r$. 

Chapter \ref{LERWsec} is devoted to proving two results for LERW that are central to the main proof of the paper. The first is Proposition \ref{indep} which states that if $4l \leq m \leq n$ then $\eta^1$ and $\eta^2$ are independent up to a multiplicative constant (see Figure \ref{decomp}). The second result is a separation lemma for LERW. This key lemma states the following intuitive fact about LERW: there exist positive constants $c_1$ and $c_2$ so that, conditioned on the event that a random walk and a LERW do not intersect up to leaving the ball $B_n$, the probability that the random walk and the LERW are at least distance $c_1 n$ apart when they exit the ball $B_n$ is bounded below by $c_2$. Separation lemmas like this one are often quite useful in establishing exponents; a separation lemma was used in \cite{Law96} to establish the existence of the intersection exponent for two Brownian motions and in \cite{SW01} to derive arm exponents for critical percolation.

In Chapter \ref{intersection}, we prove that the growth exponent $\alpha_2 = 5/4$. To do this, we first relate the non-intersection of a random walk and a LERW to the non-intersection of a Brownian motion and an $\SLE_2$. Using the fact that the exponent for the latter is $3/4$, we deduce the same result for the former (Theorem \ref{escape}). Finally, we show how this implies that the growth exponent $\alpha_2$ for LERW is $5/4$ (Theorem \ref{growth}). 

\subsection{Acknowledgements}
I would like to thank Wendelin Werner for suggesting this problem to me. This work was done while I was a graduate student at the University of Chicago and I am very grateful to my advisors Steve Lalley and Greg Lawler for all their patient help and guidance. 

\section{Definitions and background} \label{prelim}

\subsection{Irreducible bounded symmetric random walks}

Throughout this paper, $\Lambda$ will be a two-dimensional discrete lattice of $\Reals^2$. In other words, $\Lambda$ is an additive subgroup of $\Reals^2$ not generated by a single element such that there exists an open neighborhood of the origin whose intersection with $\Lambda$ is just the origin. It can be shown (see for example \cite[Proposition 1.3.1]{LL08}) that $\Lambda$ is isomorphic as a group to $\Integers^2$.    

Now suppose that $V \subset \Lambda \setminus \{ 0 \}$ is a finite generating set for $\Lambda$ with the property that the first nonzero component of every $x \in V$ is positive. Suppose that $\kappa: V \to (0,1)$ is such that
$$ \sum_{x \in V} \kappa(x) \leq 1.$$
Let $p(x) = p(-x) = \kappa(x)/2$ for $x \in V$ and $p(0) = 1 - \sum_{x \in V} \kappa(x)$. Define the random walk $S$ with distribution $p$ to be
$$ S_n = X_1 + X_2 + \cdots + X_n.$$ 
where the random variables $X_k$ are independent with distribution $p$. Then $S$ is a symmetric, irreducible random walk  with bounded increments. It is a Markov chain with transition probabilities $p(x,y) = p(y-x)$.  

If $X = (X^1, X^2)$ has distribution $p$, then 
$$ \Gamma_{i,j} = \Exp{}{X^i X^j} \hskip25pt i,j=1,2$$
is the covariance matrix associated to $S$.  There exists a unique symmetric positive definite matrix $A$ such that $\Gamma = A^2$. Therefore, if $\wt{S}_j = A^{-1}S_j$, then $\wt{S}$ is a random walk on the discrete lattice $A^{-1} \Lambda$ with covariance matrix the identity. Since a linear transformation of a circle is an ellipse, it is clear that if we can show that the growth exponent $\alpha_2$ is $5/4$ for random walks whose covariance matrix is the identity, then $\alpha_2$ will be $5/4$ for random walks with arbitrary covariance matrix. Therefore, to simplify notation and proofs, throughout the paper $S$ will denote a symmetric, irreducible random walk on a discrete lattice $\Lambda$ with bounded increments and \emph{covariance matrix equal to the identity}. 

\subsection{A note about constants}

For the entirety of the paper, we will use the letters $c$ and $C$ to denote constants that may change from line to line but will only depend on the random walk $S$ (which will be fixed throughout).

Given two functions $f(n)$ and $g(n)$, we write $f(n) \approx g(n)$ if
$$ \lim_{n \to \infty} \frac{ \log f(n)}{\log g(n)} = 1,$$
and $f(n) \asymp g(n)$ if there exists $0 < C < \infty$ such that for all $n$
$$ \frac{1}{C} g(n) \leq f(n) \leq C g(n).$$
If $f(n) \to \infty$ and $g(n) \to \infty$ then $f(n) \asymp g(n)$ implies that $f(n) \approx g(n)$, but the converse does not hold.

\subsection{Subsets of $\Complex$ and $\Lambda$} \label{sets}

Recall that our discrete lattice $\Lambda$ and our random walk $S$ with distribution $p$ are fixed throughout.
     
Given $z \in \Complex$, let
$$ D_r(z) = D(z,r) = \{w \in \Complex : \abs{w - z} < r \}$$ be the open disk of radius $r$ centered at $z$ in $\Complex$, and
$$ B_n(z) = B(z,n) = D(z,n) \cap \Lambda$$
be the ball of radius $n$ centered at $z$ in $\Lambda$. We write $D_r$ for $D_r(0)$, $B_n$ for $B_n(0)$ and let $\Disc = D_1$ be the unit disk in $\Complex$.

We use the symbol $\partial$ to denote both the usual boundary of subsets of $\Complex$ and the outer boundary of subsets of $\Lambda$, where the outer boundary of a set $K \subset \Lambda$ (with respect to the distribution $p$) is 
$$ \partial K = \{ x \in \Lambda \setminus K : \text{ there exists $y \in K$ such that $p(x,y) > 0$} \}.$$ The context will make it clear whether we are considering a given set as a subset of $\Complex$ or of $\Lambda$. We will also sometimes consider the inner boundary
$$ \partial_i K = \{ x \in K : \text{ there exists $y \in \Lambda \setminus K$ such that $p(x,y) > 0$} \}.$$ We let $\conj{K} = K \cup \partial K$ and $K^{\circ} = K \setminus \partial_i K$. 

A path with respect to the distribution $p$ is a sequence of points 
$$\omega = [\omega_0, \omega_1, \ldots, \omega_k] \subset \Lambda$$ 
such that
$$ p(\omega) := \Pro{}{S_i = \omega_i: i=0, \ldots, k} = \prod_{i=1}^{k} p(\omega_{i-1},\omega_{i}) > 0.$$
We say that a set $K \subset \Lambda$ is connected (with respect to the distribution $p$) if for any pair of points $x, y \in K$, there exists a path $\omega \subset K$ connecting $x$ and $y$.  

Given $l \leq m \leq n$, let $\Omega_l$ be the set of paths $\omega = [0, \omega_1, \ldots, \omega_k] \subset \Lambda$ such that $\omega_j \in B_l$, $j=1, \ldots, k-1$ and $\omega_k \in \partial B_l$. Let $\widetilde{\Omega}_{m,n}$ be the set of paths $\lambda = [\lambda_0, \lambda_1, \ldots, \lambda_{k'}]$ such that $\lambda_0 \in \partial B_m$, $\lambda_j \in A_{m,n}$, $j=0, 1, \ldots, k' - 1$ and $\lambda_{k'} \in \partial B_{n}$, where $A_{m,n}$ denotes the annulus $B_n \setminus B_m$.

Suppose that $l \leq m \leq n$ and that $\eta = [0, \eta_1, \ldots, \eta_k] \in \Omega_n$. Let
$$ k_1 = \min \{j \geq 1 : \eta_j \notin B_l \} \hskip25pt k_2 = \max \{j \geq 1 : \eta_j \in B_m \}.$$
Then (see Figure \ref{decomp}), $\eta$ can be decomposed as $\eta = \eta^1 \oplus \eta^* \oplus \eta^2$ where
\begin{eqnarray*}
\eta^1 &=& \eta^1_l(\eta) = [0, \ldots, \eta_{k_1}] \in \Omega_l \\
\eta^2 &=& \eta^2_{m,n}(\eta)  = [\eta_{k_2 + 1}, \ldots, \eta_k] \in \widetilde{\Omega}_{m,n} \\
\eta^* &=& \eta^*_{l,m,n}(\eta) = [\eta_{k_1 + 1}, \ldots, \eta_{k_2}].
\end{eqnarray*}

\subsection{Basic facts about Brownian motion and random walks}
Throughout this paper, $W_t$, $t \geq 0$ will denote a standard complex Brownian motion. Given a set $K \subset \Lambda$, let
$$ \sigma_K = \min \{j \geq 1 : S_j \notin K \} \hskip20pt \overline{\sigma}_K = \min \{j \geq 0 : S_j \notin K \} $$
be first exit times of the set $K$. We also let
$$ \xi_K = \min \{j \geq 1 : S_j \in K \} \hskip20pt \overline{\xi}_K = \min \{j \geq 0 : S_j \in K \} $$
be the first hitting times of the set $K$. We let $\sigma_n = \sigma_{B_n}$ and use a similar convention for $\overline{\sigma}_n$, $\xi_n$ and $\overline{\xi}_n$. We also define the following stopping times for Brownian motion: given a set $D \subset \Complex$, let
$$ \tau_D = \min \{t \geq 0 : W_t \in \partial D \}.$$
Depending on whether the Brownian motion is started inside or outside $D$, $\tau_D$ will be either an exit time or a hitting time.

Suppose that $X$ is a Markov chain on $\Lambda$ and that $K \subset \Lambda$. Let
$$ \sigma^X_K = \min \{ j \geq 1: X_j \notin K \}.$$ 
For $x,y \in K$, we let
$$G_K^X(x,y) = \Exp{x}{\sum_{j=0}^{\sigma^X_K - 1} \ind \{X_j = y \}}$$
denote the Green's function for $X$ in $K$. We will sometimes write $G^X(x,y; K)$ for $G^X_K(x,y)$ and also abbreviate $G^X_K(x)$ for $G_K^X(x,x)$. When $X = S$ is a random walk, we will omit the superscript $S$.

Recall that a function $f$ defined on $\overline{K} \subset \Lambda$ is discrete harmonic (with respect to the distribution $p$) if for all $z \in K$,
$$ \mathcal{L}f(z) := -f(z) + \sum_{x \in \Lambda} p(x - z)f(x) = 0.$$
For any two disjoint subsets $K_1$ and $K_2$ of $\Lambda$, it is easy to verify that that the function
$$ h(z) = \Pro{z}{\overline{\xi}_{K_1} < \overline{\xi}_{K_2}}$$
is discrete harmonic on $\Lambda \setminus (K_1 \cup K_2)$.
The following important theorem concerning discrete harmonic functions will be used repeatedly in the sequel \cite[Theorem 6.3.9]{LL08}.

\begin{thm} [Discrete Harnack Principle] Let $U$ be a connected open subset of $\Complex$ and
$A$ a compact subset of $U$. Then there exists a constant $C(U,A)$ such that for all $n$ and all positive harmonic functions $f$ on $nU \cap \Lambda$ 
$$f(x) \leq C(U,A) f(y)$$ for all $x, y \in nA \cap \Lambda$.
\end{thm}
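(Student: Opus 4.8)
The plan is to deduce the global Harnack principle from its local, one–ball counterpart together with a covering/chaining argument that exploits the connectedness of $U$. The local ingredient is: there is a constant $\rho = \rho(p) < \infty$ and an $r_0 = r_0(p)$ so that whenever $r \ge r_0$ and $f \ge 0$ is harmonic on $B(z,2r) \cup \partial B(z,2r)$, one has $f(u) \le \rho\, f(v)$ for all $u,v \in B(z,r)$. Granting this, the theorem follows by covering $A$ efficiently and chaining through overlapping balls. The genuine difficulty is entirely in the local inequality: it is the discrete analogue of the fact that the Poisson kernel of a disc is bounded above and below on compact subsets, and, unlike the chaining, it cannot be obtained by elementary means — it requires quantitative control of the walk, and I expect assembling these estimates to be the main obstacle.

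To prove the local inequality I would first note that, since $f$ is harmonic on the closed ball and $S$ has bounded increments, $f(S_{j\wedge\sigma})$ with $\sigma = \sigma_{B(z,2r)}$ is a bounded martingale, so optional stopping gives $f(u) = \Exp{u}{f(S_\sigma)} = \sum_{w \in \partial B(z,2r)} H(u,w)\, f(w)$, where $H(u,w) = \Prob{u}{S_\sigma = w}$ is the discrete Poisson kernel of the ball. It thus suffices to show $H(u,w)$ and $H(v,w)$ are comparable with a constant uniform in $r \ge r_0$, in $w$, and in $u,v \in B(z,r)$. A last–exit decomposition yields $H(u,w) = \sum_{v'} G_{B(z,2r)}(u,v')\, p(v',w)$, and since $p$ has bounded support only $v'$ within $O(1)$ of $\partial B(z,2r)$ contribute; using the symmetry of $S$, $G_{B(z,2r)}(u,v') = G_{B(z,2r)}(v',u)$, so the claim reduces to the comparability over $u \in B(z,r)$ of $u \mapsto G_{B(z,2r)}(v',u)$, a function positive and harmonic in $u$ on $B(z,r)$. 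This is read off from the asymptotic expansion of the potential kernel $a(\cdot)$ of $S$ via the identity $G_{B(z,2r)}(v',u) = \Exp{v'}{a(S_\sigma - u)} - a(v' - u)$ — equivalently, from the standard difference estimates for discrete harmonic functions, themselves provable by coupling $S$ with Brownian motion or via the local central limit theorem.

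For the chaining, first enlarge $A$ to a compact connected set $A'$ with $A \subseteq A' \subseteq U$: cover $A$ by finitely many discs with closure in $U$, join their centres by arcs in $U$ (possible since an open connected subset of $\Reals^2$ is path connected), and take a small closed neighbourhood of the resulting compact connected set. Assume $U \ne \Complex$ (if $U = \Complex$ the positive harmonic functions on $\Lambda$ are constant, since $S$ is recurrent, so there is nothing to prove), and fix $\epsilon > 0$ with $3\epsilon < \dist(A',\partial U)$. Cover $A'$ by discs $D(x_1,\epsilon/2),\dots,D(x_k,\epsilon/2)$ with $x_i \in A'$; since $A'$ is connected, the intersection graph of this cover is connected, so any two indices $i,j$ are joined by a chain $i = i_0,\dots,i_l = j$ with $l \le k$ and $D(x_{i_t},\epsilon/2) \cap D(x_{i_{t+1}},\epsilon/2) \ne \emptyset$; then $|x_{i_t}-x_{i_{t+1}}| < \epsilon$, so the disc of radius $\epsilon/2$ about the midpoint lies in $D(x_{i_t},\epsilon) \cap D(x_{i_{t+1}},\epsilon)$. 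Now restrict to $n \ge n_0(U,A)$, where $n_0$ is large enough that $n\epsilon \ge r_0$, that $n\epsilon$ exceeds twice the maximal step size of $S$, and that every disc of radius $n\epsilon/2$ meets $\Lambda$ (valid since $\Lambda$ is a lattice). For each $i$, $\overline{D(x_i,2\epsilon)} \subseteq U$ at distance $\ge \epsilon$ from $\partial U$, so $f$ is harmonic on $B(nx_i,2n\epsilon) \cup \partial B(nx_i,2n\epsilon)$ and the local inequality gives $M_i := \sup_{B(nx_i,n\epsilon)} f \le \rho\, m_i$ with $m_i := \inf_{B(nx_i,n\epsilon)} f$. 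Choosing a lattice point in the overlap of $B(nx_{i_t},n\epsilon)$ and $B(nx_{i_{t+1}},n\epsilon)$ gives $m_{i_t} \le M_{i_{t+1}} \le \rho\, m_{i_{t+1}}$, and chaining along a path of length $\le k$ yields $m_i \le \rho^{k}\, m_j$ for all $i,j$. Since every point of $nA \cap \Lambda \subseteq nA' \cap \Lambda$ lies in some $B(nx_i,n\epsilon)$, for $x,y \in nA \cap \Lambda$ we get $f(x) \le M_i \le \rho^{k+1}\, m_j \le \rho^{k+1}\, f(y)$, so the theorem holds with $C(U,A) = \rho^{k+1}$ for all $n \ge n_0(U,A)$. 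The remaining bounded range of small $n$ is handled by the same argument run at the unit scale, or is vacuous, after possibly enlarging $C(U,A)$.
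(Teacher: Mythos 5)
The paper does not prove this theorem; it is quoted verbatim from \cite[Theorem 6.3.9]{LL08} and used as an external input. So there is no internal proof to compare against, and the right benchmark is the argument in the cited reference.

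Your two-stage structure -- a local Harnack inequality on concentric lattice balls, proved through the discrete Poisson kernel, followed by a covering-and-chaining argument through the connected set $A'$ at scale $n\epsilon$ -- is exactly the standard route and agrees in substance with the one in \cite{LL08}. The chaining half is handled carefully and correctly (the nerve of the cover of a connected set is connected once all cover elements meet $A'$; the midpoint trick guarantees overlap balls of definite radius; only finitely many chaining steps are needed; and the $U=\Complex$ case is correctly dismissed by recurrence). The local half is reduced, via optional stopping and the last-exit identity $H(u,w)=\sum_{v'}G_{B}(u,v')p(v',w)$ together with the symmetry $G_B(u,v')=G_B(v',u)$, to the uniform comparability of $u\mapsto G_B(v',u)$ over the inner half-ball for $v'$ near $\partial B$; this is where the real content lives.

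Two small inaccuracies worth flagging, neither of which invalidates the outline. First, the phrase ``read off from the asymptotic expansion of the potential kernel'' undersells the work: in $\Exp{v'}{a(S_\sigma-u)}-a(v'-u)$ both terms are $\asymp \log r$ while the difference must be shown to be $\asymp r^{-1}$ uniformly, so one needs a genuine cancellation estimate (e.g.\ a differencing argument or a comparison with the Brownian Poisson kernel), not a straight substitution of $a(x)=C^{*}\log|x|+C'+O(|x|^{-2})$. Second, the clause ``--- equivalently, from the standard difference estimates for discrete harmonic functions'' is not correct as stated: the additive gradient bound $|f(u)-f(v)|\le C\,r^{-1}|u-v|\sup f$ does not by itself produce a multiplicative Harnack inequality (it bounds oscillation relative to the sup, not relative to the inf), and one cannot shortcut the Poisson-kernel comparability this way. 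Finally, for a bounded range of small $n$ the set $nU\cap\Lambda$ need not be connected as a graph; the statement in \cite{LL08} is to be read with this in mind, and in this paper the Harnack principle is only ever applied at large scales, so the caveat you append at the end is harmless.
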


Suppose that $X$ is a Markov chain with hitting times 
$$ \overline{\xi}^X_K = \min \{ j \geq 0 : X \in K \}. $$ 
Given two disjoint subsets $K_1$ and $K_2$ of $\Lambda$, let $Y$ be $X$ conditioned to hit $K_1$ before $K_2$ (as long as this event has positive probability). Then if we let $h(z) = \Pro{z}{\overline{\xi}^X_{K_1} < \overline{\xi}^X_{K_2}}$, $Y$ is a Markov chain with transition probabilities
$$ p^Y(x,y) = \frac{h(y)}{h(x)}p^X(x,y).$$ 
Therefore, if $\omega = [\omega_0, \ldots, \omega_k]$ is a path with respect to $p^X$ in $\Lambda \setminus (K_1 \cup K_2)$,
\begin{eqnarray}
p^Y(\omega) = \frac{h(\omega_k)}{h(\omega_0)} p^X(\omega). \label{htransform}
\end{eqnarray}
Using this fact, the following lemma follows readily.

\begin{lemma} \label{greencondit} Suppose that $X$ is a Markov chain and let $Y$ be $X$ conditioned to hit $K_1$ before $K_2$. Suppose that $K \subset \Lambda \setminus (K_1 \cup K_2)$. Then for any $x,y \in K$,
$$ G^Y_K(x, y) = \frac{h(y)}{h(x)} G^X_K(x,y).$$
In particular, $G^Y_K(x) = G^X_K(x)$.
\end{lemma}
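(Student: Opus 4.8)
The plan is to use the $h$-transform formula \eqref{htransform} directly. Recall that $Y$ is the Markov chain $X$ conditioned to hit $K_1$ before $K_2$, with transition probabilities $p^Y(x,y) = \frac{h(y)}{h(x)} p^X(x,y)$, where $h(z) = \Pro{z}{\overline{\xi}^X_{K_1} < \overline{\xi}^X_{K_2}}$. Since $K \subset \Lambda \setminus (K_1 \cup K_2)$, every path that $Y$ can take inside $K$ before exiting $K$ lies entirely in the region where the $h$-transform applies, so we will be able to express $Y$-probabilities of paths in terms of $X$-probabilities of the same paths.

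First I would write out the Green's function $G^Y_K(x,y)$ as a sum over path probabilities. Specifically, for $x, y \in K$,
$$ G^Y_K(x,y) = \Exp{x}{\sum_{j=0}^{\sigma^Y_K - 1} \ind\{Y_j = y\}} = \sum_{\omega} p^Y(\omega), $$
where the sum is over all paths $\omega = [\omega_0, \ldots, \omega_j]$ with $\omega_0 = x$, $\omega_j = y$, and $\omega_i \in K$ for all $i = 0, \ldots, j$ (one counts each visit to $y$ before exiting $K$). The key point is that each such path $\omega$ stays in $K \subset \Lambda \setminus (K_1 \cup K_2)$, so by \eqref{htransform} we have $p^Y(\omega) = \frac{h(\omega_j)}{h(\omega_0)} p^X(\omega) = \frac{h(y)}{h(x)} p^X(\omega)$. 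The ratio $\frac{h(y)}{h(x)}$ is the same for every path in the sum, so it factors out:
$$ G^Y_K(x,y) = \frac{h(y)}{h(x)} \sum_{\omega} p^X(\omega) = \frac{h(y)}{h(x)} G^X_K(x,y). $$
Setting $y = x$ gives $G^Y_K(x) = \frac{h(x)}{h(x)} G^X_K(x) = G^X_K(x)$, which is the final assertion.

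The only point requiring a little care is to make sure the path-decomposition of the Green's function is set up correctly — that we are summing over all finite paths from $x$ to $y$ that remain in $K$, weighting each by its probability, and that this indeed equals the expected number of visits to $y$ strictly before the exit time $\sigma^Y_K$. This is standard (it is the usual ``last-exit'' or ``sum over excursions'' representation of the Green's function), and it holds for $X$ and $Y$ alike since both are Markov chains. I do not expect any genuine obstacle here; the lemma is essentially an immediate corollary of \eqref{htransform}, the only subtlety being that one must invoke the hypothesis $K \cap (K_1 \cup K_2) = \emptyset$ to guarantee that \eqref{htransform} is applicable to every path appearing in the sum.
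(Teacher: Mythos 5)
Your proof is correct and is exactly the argument the paper has in mind: the paper gives no proof, merely remarking that the lemma ``follows readily'' from the $h$-transform identity \eqref{htransform}, and your path-sum expansion of the Green's function followed by factoring out the telescoped ratio $h(y)/h(x)$ is the standard way to make that remark precise.
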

Finally, we recall an important theorem concerning the intersections of random walks and Brownian motion with continuous curves.

\begin{thm} [Beurling estimates] \label{beurling} 

\noindent \begin{enumerate}
\item There exists a constant $C < \infty$ such that the following holds. Suppose that $\alpha: [0, t_\alpha] \to \Complex$ is a continuous curve such that $\alpha(0) = 0$ and $\alpha(t_\alpha) \in \partial D_r$. Then if $z \in D_r$,
$$ \Pro{z}{W[0,\tau_r] \cap \alpha[0, t_\alpha] = \emptyset} \leq C \left( \frac{\abs{z}}{r} \right)^{1/2}.$$
\item There exists a constant $C < \infty$ such that the following holds. Suppose that $\omega$ is a path from the origin to $\partial B_n$. Then if $z \in B_n$,
$$ \Pro{z}{S[0,\sigma_n] \cap \omega = \emptyset} \leq C \left( \frac{\abs{z}}{n} \right)^{1/2}.$$
\end{enumerate}
\end{thm}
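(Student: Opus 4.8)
The plan is to prove the two statements separately but by the same mechanism: a radial decomposition into dyadic (or fixed-ratio) annuli, combined with the fact that a curve connecting the center to the outer boundary must cross every intermediate circle, so that in each annulus the walk or Brownian motion has a uniformly bounded-below chance of hitting the curve. Multiplying these conditional survival probabilities over $\sim \log(r/|z|)$ annuli yields the power-law bound with exponent $1/2$; the precise exponent $1/2$ comes from optimizing the base of the geometric decay, or equivalently from the known one-annulus estimate for the harmonic measure of a crosscut.

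For part (1), I would set $\rho_k = 2^k |z|$ for $k = 0, 1, \ldots, K$ where $K = \lfloor \log_2(r/|z|) \rfloor$, and let $\tau^{(k)} = \tau_{D_{\rho_k}}$ be the exit time of the disk $D_{\rho_k}$. Since $\alpha$ joins $0$ to $\partial D_r$, it is a connected set meeting both $\partial D_{\rho_{k-1}}$ and $\partial D_{\rho_k}$, hence separates the two circles inside the annulus $A_k = D_{\rho_k} \setminus \overline{D}_{\rho_{k-1}}$ in a strong enough sense; more simply, $\alpha$ contains a crosscut of $A_k$. A Brownian motion started anywhere on $\partial D_{\rho_{k-1}}$ has probability at least some absolute constant $p_0 > 0$ of hitting $\alpha$ before reaching $\partial D_{\rho_k}$ — this is a standard consequence of the fact that harmonic measure from the inner circle of an annulus of any connected boundary piece touching both components is bounded below, which one proves by conformal mapping to a standard annulus or by a direct estimate. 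Applying the strong Markov property successively at the times $\tau^{(1)}, \ldots, \tau^{(K)}$ gives
$$
\Pro{z}{W[0,\tau_r] \cap \alpha = \emptyset} \leq (1 - p_0)^{K} \leq C\left(\frac{|z|}{r}\right)^{\beta}
$$
with $\beta = -\log_2(1-p_0) > 0$. To get the sharp exponent $\beta = 1/2$, one replaces the crude one-annulus bound by the exact asymptotics for the harmonic measure of a radial slit (the classical Beurling projection theorem: the worst case is a straight radial segment, for which the non-intersection probability in an annulus of modulus $\log 2$ is explicitly computable and the corresponding exponent is $1/2$). I would cite the Beurling projection theorem for this optimal constant rather than re-derive it.

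For part (2), the strategy is identical with $\rho_k = 2^k |z|$, using the balls $B_{\rho_k}$ and the random-walk exit times $\sigma_{\rho_k}$, together with the Beurling-type estimate for random walks: a random walk started on $\partial B_{\rho_{k-1}}$ hits the path $\omega$ (which connects $0$ to $\partial B_n$ and so crosses the annulus $B_{\rho_k} \setminus B_{\rho_{k-1}}$) before reaching $\partial B_{\rho_k}$ with probability bounded below by an absolute constant depending only on $p$. This one-annulus estimate for random walks is the genuinely nontrivial ingredient; it is the discrete analog of the Beurling projection theorem and is proved in Lawler--Limic \cite{LL08} using the analogous projection argument for random walk harmonic measure, and can be invoked directly. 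Iterating over the $\sim \log_2(n/|z|)$ annuli and absorbing the boundary effects near $\partial B_n$ into the constant $C$ gives the claim.

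The main obstacle, and the only step that is not essentially bookkeeping, is establishing the sharp exponent $1/2$ rather than merely \emph{some} positive exponent: the naive annulus-iteration argument gives a non-explicit $\beta$, and extracting $1/2$ requires either the Beurling projection theorem (continuous case) or its discrete counterpart (random-walk case), both of which are external inputs. I expect the author's proof to simply cite these results — in particular \cite[Chapter 6]{LL08} for the discrete statement — rather than prove them from scratch, since the purpose here is only to record the estimates for later use.
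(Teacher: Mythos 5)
Your reading is correct: the paper does not prove these estimates at all, but simply cites them—\cite[Theorem 3.76]{Law05} for the Brownian motion statement and \cite{Kes87}, \cite{LL04} for the random walk statement (rather than \cite{LL08} as you guessed). Your sketch of the underlying annulus-iteration and Beurling-projection mechanism is accurate but not needed, since the paper treats both bounds as external inputs exactly as you anticipated.
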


\begin{proof}
The statement about Brownian motion can be found, for example, in \cite[Theorem 3.76]{Law05}. The statement about random walks was originally proved in \cite{Kes87}; a formulation that is closer to the one given above can be found in \cite{LL04}.
\end{proof}

\subsection{Loop-erased random walk}

We now describe the loop-erasing procedure and various definitions of the loop-erased random walk (LERW). Given a path $\lambda = [\lambda_0, \ldots, \lambda_m]$ in $\Lambda$, we let $\operatorname{L}(\lambda) = [\hat{\lambda}_0, \ldots, \hat{\lambda}_{n}]$ denote its chronological loop-erasure. More precisely, we let
$$ s_0 = \sup \{ j : \lambda(j) = \lambda(0) \},$$
and for $i > 0$,
$$ s_i = \sup \{ j: \lambda(j) = \lambda(s_{i-1} + 1) \}.$$
Let $$n = \inf \{ i: s_i = m \}.$$
Then $$ \operatorname{L} (\lambda) = [\lambda(s_0), \lambda(s_1), \ldots, \lambda(s_n)].$$
Note that one may obtain a different result if one performs the loop-erasing procedure backwards instead of forwards. In other words, if we let $\lambda^R = [\lambda_m, \ldots, \lambda_0]$, then in general, $\operatorname{L}(\lambda^R) \neq \operatorname{L}(\lambda)^R$. However, if $\lambda$ has the distribution of a random walk, then $\operatorname{L}(\lambda^R)$ has the same distribution as $\operatorname{L}(\lambda)^R$ \cite[Lemma 7.2.1]{Law91}.

Now suppose that $S$ is a random walk on $\Lambda$ and $K$ is a proper subset of $\Lambda$. We define the LERW $\widehat{S}^K$ to be the process
$$ \widehat{S}^K = \operatorname{L}(S[0, \sigma_K]).$$
In other words, we run $S$ up to the first exit time of $K$ and then erase loops. We write $\widehat{S}^n$ for $\widehat{S}^{B_n}$. We also define the following stopping times. Given $A \subset K$, we let
$$ \widehat{\sigma}^K_A = \min \{j \geq 1 : \widehat{S}^K_j \notin A \}.$$ If either $A$ or $K$ is a ball $B_n$, we replace $A$ or $K$ by $n$ in the subscript or superscript.

Different sets $K$ will produce different LERWs $\wh{S}^K$, but one can define an ``infinite LERW" as follows. 
For $\omega \in \Omega_l$, and $n > l$ let
$$ \mu_{l, n}(\omega) = \Pro{}{\wh{S}[0, \wh{\sigma}^n_l] = \omega}.$$
Then one can show \cite[Proposition 7.4.2]{Law91} that there exists a limiting measure $\mu_l$ such that
$$ \lim_{n \to \infty} \mu_{l,n}(\omega) = \mu_l(\omega).$$
The $\mu_l$ are consistent and therefore there exists a measure $\mu$ on infinite self-avoiding paths. We call the associated process the infinite LERW and denote it by $\wh{S}$. In this paper, we will consider both the infinite LERW $\wh{S}$, and LERWs $\wh{S}^K$ obtained by stopping a random walk at the first exit time of $K$ and then erasing loops.

Suppose that $X$ is a Markov chain and $\omega = [\omega_0, \ldots, \omega_k]$ is a path in $\Lambda$ with respect to $p^X$. One can write down an exact formula for the probability that the first $k$ steps of the loop-erased process $\wh{X}^K$ are equal to $\omega$. Letting $A_j = \{\omega_0, \ldots, \omega_j\}$, $j=0, \ldots, k$, $A_{-1} = \emptyset$, and $G^X(.;.)$ be the Green's function for $X$, we define
\begin{eqnarray}
 G_K^X(\omega) = \prod_{j=0}^k G^X(\omega_j; K \setminus A_{j-1}). \label{GK}
\end{eqnarray}
 Then \cite{Law99},
\begin{eqnarray}
\Pro{}{\wh{X}^K[0,k] = \omega} = p^X(\omega) G^X_K(\omega) \Pro{\omega_k}{\sigma^X_K < \xi^X_\omega}. \label{lerwf}
\end{eqnarray}

We can use the previous formula to show that while LERW is certainly not a Markov chain, it does satisfy the following ``domain Markov property'': for any Markov chain $X$, if we condition the initial part of $\wh{X}$ to be equal to $\omega$, the rest of $\wh{X}$ can be obtained by running $X$ conditioned to avoid $\omega$ and then loop-erasing.

\begin{lemma} [Domain Markov Property] \label{condit}
Let $X$ be a Markov chain,  $K \subset \Lambda$ and $\omega = [\omega_0, \omega_1, \ldots, \omega_k]$ be a path in $K$ (with respect to $p^X$). Define a new Markov chain $Y$ to be $X$ started at $\omega_k$ conditioned on the event that $X[1,\sigma^X_K] \cap \omega = \emptyset$. Suppose that $\omega' = [\omega'_0 \ldots, \omega'_{k'}]$ is such that $\omega \oplus \omega'$ is a path from $\omega_0$ to $\partial K$. Then,
$$ \condPro{}{\widehat{X}^K[0,\widehat{\sigma}^X_K] = \omega \oplus \omega'}{\widehat{X}^K[0, k] = \omega}  = \Pro{}{\widehat{Y}^K[1,\widehat{\sigma}^Y_K] = \omega'}.$$
\end{lemma}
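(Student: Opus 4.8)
The plan is to prove the identity by a direct computation with the explicit LERW formula \eqref{lerwf}, applied twice — once to $\wh{X}^K$ and once to $\wh{Y}^K$ — and to match the two sides factor by factor. Write $n = k + k'$ for the total length of $\omega \oplus \omega'$, and denote the concatenated path by $\gamma = [\gamma_0, \ldots, \gamma_n] = \omega \oplus \omega'$, so $\gamma_j = \omega_j$ for $j \le k$ and $\gamma_{k+i} = \omega'_i$ for $i \ge 1$. Let $A_j = \{\gamma_0, \ldots, \gamma_j\}$. First I would expand the left-hand side: by the definition of conditional probability it equals $\Pro{}{\wh{X}^K[0,\wh\sigma^X_K] = \gamma} / \Pro{}{\wh{X}^K[0,k] = \omega}$, and both numerator and denominator are given by \eqref{lerwf}. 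Using the product structure \eqref{GK} of $G^X_K(\gamma)$ over $G^X_K(\omega)$, the path weight $p^X(\gamma)/p^X(\omega) = \prod_{i=1}^{k'} p^X(\omega'_{i-1}, \omega'_i)$ (interpreting $\omega'_0 = \omega_k$), and the last-exit factors, the ratio collapses to
$$ \left( \prod_{i=1}^{k'} p^X(\omega'_{i-1},\omega'_i) \right) \left( \prod_{i=1}^{k'} G^X(\omega'_i; K \setminus A_{k+i-1}) \right) \frac{\Pro{\gamma_n}{\sigma^X_K < \xi^X_\gamma}}{\Pro{\omega_k}{\sigma^X_K < \xi^X_\omega}}. $$

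Next I would compute the right-hand side. The chain $Y$ is $X$ started at $\omega_k$ conditioned on $\{X[1,\sigma^X_K] \cap \omega = \emptyset\}$; this is an $h$-transform of the type described in the paragraph preceding Lemma \ref{greencondit}, with $h(z) = \Pro{z}{X[\,\cdot\,] \text{ avoids } \omega \text{ before leaving } K\}}$ — more precisely $Y$ lives on $K \setminus \omega$ and its transition weights are $p^Y(x,y) = \frac{h(y)}{h(x)} p^X(x,y)$. Applying \eqref{lerwf} to $\wh{Y}^K$ with the path $\omega'$ (note $\omega'$ is a path in $K \setminus \omega$ from $\omega_k$ to $\partial K$, and $\wh Y$ starts at index $1$ from $\omega_k = \omega'_0$, so the relevant initial point bookkeeping matches), I get $\Pro{}{\wh Y^K[1,\wh\sigma^Y_K] = \omega'} = p^Y(\omega') \, G^Y_{K\setminus\omega}(\omega') \, \Pro{\omega'_{k'}}{\sigma^Y_{K\setminus\omega} < \xi^Y_{\omega'}}$. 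The telescoping $h$-transform turns $p^Y(\omega')$ into $\frac{h(\omega'_{k'})}{h(\omega_k)} \prod_{i=1}^{k'} p^X(\omega'_{i-1},\omega'_i)$; Lemma \ref{greencondit} (applied with $K_1, K_2$ encoding the avoid-$\omega$ conditioning, on the set $K \setminus (A_{k+i-1} \setminus \{\gamma_{k+i}\})$ at each step, or more simply on $K \setminus \omega$) identifies each $G^Y$ factor with the corresponding $G^X(\omega'_i; (K\setminus\omega) \setminus \{\omega'_1,\ldots,\omega'_{i-1}\}) = G^X(\gamma_{k+i}; K \setminus A_{k+i-1})$; and the last-exit probability for $Y$ must be rewritten in terms of $X$.

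The main obstacle — and the step I would spend the most care on — is reconciling the three "tail" factors: the ratio of escape probabilities $\Pro{\gamma_n}{\sigma^X_K < \xi^X_\gamma}/\Pro{\omega_k}{\sigma^X_K < \xi^X_\omega}$ on the left, versus $h(\omega'_{k'})/h(\omega_k)$ times $\Pro{\omega'_{k'}}{\sigma^Y_{K\setminus\omega} < \xi^Y_{\omega'}}$ on the right. The key observation is that $h(\omega_k) = \Pro{\omega_k}{X[1,\sigma^X_K]\cap\omega=\emptyset} = \Pro{\omega_k}{\sigma^X_K < \xi^X_\omega}$ by definition of the conditioning, so that factor in the denominator already matches. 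For the numerator one needs $h(\gamma_n) \cdot \Pro{\gamma_n}{\sigma^Y_{K\setminus\omega} < \xi^Y_{\omega'}} = \Pro{\gamma_n}{\sigma^X_K < \xi^X_\gamma}$: unwinding the $h$-transform, $\Pro{\gamma_n}{\sigma^Y_{K\setminus\omega} < \xi^Y_{\omega'}}$ is the probability that $X$ from $\gamma_n$, conditioned to avoid $\omega$ up to $\sigma^X_K$, also avoids $\omega'$, which by Bayes equals $\Pro{\gamma_n}{X \text{ avoids } \omega \oplus \omega' \text{ up to } \sigma^X_K} / h(\gamma_n) = \Pro{\gamma_n}{\sigma^X_K < \xi^X_\gamma}/h(\gamma_n)$, exactly cancelling the $h(\gamma_n)$. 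Putting the three groups of factors together then yields equality of the two sides. Beyond this bookkeeping the argument is routine; one should just be careful that all Green's functions are evaluated on the correct (shrinking) domains and that the index shift in $\wh Y^K[1,\cdot]$ versus $\wh X^K[0,\cdot]$ is handled consistently.
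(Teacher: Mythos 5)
Your proposal is correct and takes essentially the same approach as the paper: expand both sides via formula (\ref{lerwf}), factorize the Green's function product, and invoke Lemma \ref{greencondit} to identify $G^Y$ with $G^X$ on the slit domain. The paper's version is a bit shorter because it applies (\ref{lerwf}) to the complete trajectories $\widehat{X}^K[0,\widehat{\sigma}^X_K]$ and $\widehat{Y}^K[0,\widehat{\sigma}^Y_K]$, whose terminal escape factors are trivial, so the Bayes reconciliation of the tail terms that you carry out explicitly is not needed.
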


\begin{proof}
Let $G^X(.;.)$ and $G^Y(.;.)$ be the Green's functions for $X$ and $Y$ respectively. Then by formula (\ref{lerwf}),
\begin{eqnarray*}
\Pro{}{\widehat{X}^K[0,\widehat{\sigma}^X_K] = \omega \oplus \omega'} &=& p^X(\omega \oplus \omega') G^X_K(\omega) G^X_{K \setminus \omega}(\omega') ;\\
\Pro{}{\widehat{X}^K[0, k] = \omega} &=& p^X(\omega) G^X_K(\omega) \Pro{\omega_k}{\sigma^X_K < \xi^X_\omega};\\
\Pro{}{\widehat{Y}^K[0,\widehat{\sigma}^Y_K] = \omega'} &=& p^Y(\omega') G^Y_{K}(\omega').
\end{eqnarray*}
However, 
$$ p^X(\omega \oplus \omega') = p^X(\omega) p^X(\omega'),$$
$$  p^Y(\omega') = \frac{p^X(\omega')}{\Pro{\omega_k}{\sigma^X_K < \xi^X_\omega}},$$ 
and by Lemma \ref{greencondit},
$$ G^Y_K(\omega') = G^Y_{K \setminus \omega}(\omega') = G^X_{K \setminus \omega}(\omega').$$
\end{proof}

\subsection{Schramm-Loewner evolution}

\label{sSLE}

In this subsection, we give a brief description of Schramm-Loewner evolution. For a much more thorough introduction to SLE, see for instance \cite{Law05} or \cite{Wer04}.

Suppose that $\gamma:[0, \infty] \to \overline{\Disc}$ is a simple continuous curve such that $\gamma(0) \in \partial \Disc$, $\gamma(0, \infty] \subset \Disc$ and $\gamma(\infty) = 0$. Then by the Riemann mapping theorem, for each $t \geq 0$, there exists a unique conformal map $g_t : \Disc \setminus \gamma(0,t] \to \Disc$ such that $g_t(0) = 0$ and $g_t'(0) > 0$. The quantity $\log g_t'(0)$ is called the capacity of $\Disc \setminus \gamma(0,t]$ from $0$. By the Schwarz Lemma, $g_t'(0)$ is increasing in $t$ and therefore, one can reparametrize $\gamma$ so that $g_t'(0) = e^t$; this is the capacity parametrization of $\gamma$. For each $t \geq 0$, one can verify that
$$ U_t := \lim_{z \to \gamma(t)} g_t(z)$$
exists and is continuous as a function of $t$. Also, $g_t$ and $U_t$ satisfy Loewner's equation
\begin{eqnarray}
\dot{g}_t(z) = g_t(z) \frac{U_t + g_t(z)}{U_t - g_t(z)}, \hskip30pt g_0(z) = z. \label{loewner}
\end{eqnarray}
Therefore, given a simple curve $\gamma$ as above, one produces a curve $U_t$ on the unit circle satisfying (\ref{loewner}). One calls $U_t$ the driving function of $\gamma$.

The idea behind the Schramm-Loewner evolution is to start with a driving function $U_t$ and use that to generate the curve $\gamma$. Indeed, given a continuous curve $U:[0,\infty] \to \partial \Disc$ and $z \in \Disc$, one can solve the ODE (\ref{loewner}) up to the first time $T_z$ that $g_t(z) = U_t$. If we let $K_t = \{ z \in \overline{\Disc} : T_z \leq t \}$ then one can show that $g_t$ is a conformal map from $\Disc \setminus K_t$ onto $\Disc$ such that $g_t(0) = 0$ and $g_t'(0) = e^t$. We note that there does not necessarily exist a curve $\gamma$ such that $K_t = \gamma[0,t]$ as was the case above.

The radial Schramm-Loewner evolution arises as a special choice of the driving function $U_t$. For each $\kappa > 0$, we let $U_t = e^{i\sqrt{\kappa} B_t}$ where $B_t$ is a standard one dimensional Brownian motion. Then the resulting random maps $g_t$ and sets $K_t$ are called radial $\SLE_\kappa$. It is possible to show that with probability $1$, there exists a curve $\gamma$ such that $\Disc \setminus K_t$ is the connected component of $\Disc \setminus \gamma[0,t]$ containing $0$ (see \cite{RS05} for the case $\kappa \neq 8$ and \cite{LSW04} for $\kappa = 8$). In \cite{RS05} it was shown that if $\kappa \leq 4$ then $\gamma$ is a.s. a simple curve and if $\kappa > 4$, $\gamma$ is a.s. not a simple curve. One refers to $\gamma$ as the radial $\SLE_\kappa$ curve.

One defines radial $\SLE_\kappa$ in other simply connected domains to be such that $\SLE_\kappa$ is conformally invariant. Given a simply connected domain $D \neq \Complex$, $z \in D$ and $w \in \partial D$, there exists a unique conformal map $f:\Disc \to D$ such that $f(0) = z$ and $f(1) = w$. Then $\SLE_\kappa$ in $D$ from $w$ to $z$ is defined to be the image under $f$ of radial $\SLE_\kappa$ in $\Disc$ from $1$ to $0$. 

We will focus on the case $\kappa = 2$, and throughout $\gamma:[0, \infty] \to \overline{\Disc}$ will denote radial $\SLE_2$ in $\Disc$ started uniformly on $\partial \Disc$. If $D \subset \Disc$, we let 
$$ \wh{\tau}_D = \inf \{t \geq 0 : \gamma(t) \in \partial D \}.$$

We conclude this section with precise statements of the two facts about $\SLE_2$ that were mentioned in the introduction: the intersection exponent for $\SLE_2$ and the weak convergence of LERW to $\SLE_2$.

\begin{thm}[Lawler, Schramm, Werner \cite{LSW01}] \label{harmeasure} Let $\gamma$ be radial $\SLE_\kappa$ from $1$ to $0$ in $\Disc$ and for $0 < r <1$, let $\wh{\tau}_r$ be the first time $\gamma$ enters the disk of radius $r$. Let $W$ be an independent complex Brownian motion started at $0$. Then
$$ \Pro{}{W[0,\tau_\Disc] \cap \gamma[0,\wh{\tau}_r] = \emptyset} \asymp r^{\nu},$$
where 
$$ \nu(\kappa) = \frac{\kappa + 4}{8}.$$
In particular,  $\nu = 3/4$ for $\SLE_2$.
\end{thm}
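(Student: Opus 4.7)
The plan is to follow the standard SLE martingale method: condition on the curve, reduce via conformal invariance of Brownian motion to a one-dimensional diffusion, and read off the exponent from an It\^o martingale. First, fix $t\ge 0$ and condition on $\gamma[0,t]$. Since $\SLE_\kappa$ is simple for $\kappa\le 4$, the complement $\Disc\setminus\gamma[0,t]$ is a simply connected domain containing $0$, and by conformal invariance of $W$ together with the normalization $g_t(0)=0$,
\[
\Prob{}{W[0,\tau_\Disc]\cap\gamma[0,t]=\emptyset \,\big|\, \gamma[0,t]} \;=\; \frac{\Theta_t}{2\pi},
\]
where $\Theta_t$ denotes the arclength of $g_t(\partial\Disc\setminus\{\gamma(0)\})$ on $\partial\Disc$. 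The target thus becomes $\Exp{}{\Theta_{\wh\tau_r}} \asymp r^\nu$.

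Second, the radial Loewner equation with $U_t=e^{i\sqrt\kappa B_t}$ drives an explicit SDE for boundary angles of the form
\[
dV_t \;=\; \cot(V_t/2)\,dt \;-\; \sqrt\kappa\, dB_t,
\]
from which $\Theta_t$ (the difference of two such $V$-coordinates) inherits its dynamics. A direct It\^o calculation produces a local martingale of the shape $M_t = \Phi(\Theta_t)\,e^{\lambda t}$, where $\Phi$ is a specific trigonometric function built from $\sin(V^{\pm}_t/2)$ and $\lambda$ is the unique exponent making the drift cancel. Matching the scaling corresponding to the harmonic-measure interpretation above pins down $\lambda = (\kappa+4)/8$, which equals $\nu$.

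Third, Koebe's $1/4$ theorem applied to the inverse map $g_t^{-1}:\Disc\to\Disc\setminus\gamma[0,t]$ gives $\dist(0,\gamma[0,t])\in[e^{-t}/4,\,e^{-t}]$, hence $\wh\tau_r = \log(1/r)+O(1)$ deterministically and $e^{\lambda\wh\tau_r}\asymp r^{-\nu}$. Optional stopping of $M_t$ at $\wh\tau_r$ then yields $\Exp{}{\Phi(\Theta_{\wh\tau_r})} \asymp r^\nu$, and converting back to $\Theta_{\wh\tau_r}$ on the event where $\Phi(\Theta)$ is comparable to $\Theta$ gives the theorem.

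The main obstacle is passing from the martingale identity, which controls $\Exp{}{\Phi(\Theta_{\wh\tau_r})}$, to the two-sided estimate for $\Exp{}{\Theta_{\wh\tau_r}}$ itself. When $\Theta_{\wh\tau_r}$ is close to $0$ or close to $2\pi$, $\Phi$ is a poor proxy for $\Theta$, and the drift $\cot(V/2)$ in the Loewner SDE blows up. The upper bound in the theorem is relatively soft: Beurling's estimate (Theorem~\ref{beurling}) immediately gives $\Pro{}{W[0,\tau_\Disc]\cap\gamma[0,\wh\tau_r]=\emptyset}\le Cr^{1/2}$, and the martingale sharpens $1/2$ to $\nu$. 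The lower bound requires a separation-type statement for radial $\SLE$---namely, that with probability bounded below uniformly in $r$, the free arc $\Theta_{\wh\tau_r}$ is bounded away from $0$ and $2\pi$. Establishing this separation is the technical core of the LSW argument and, on the SLE side, parallels the discrete separation lemmas used elsewhere in this paper.
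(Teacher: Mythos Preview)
The paper does not prove this theorem: it is stated as a background result and attributed to Lawler--Schramm--Werner \cite{LSW01}, with no proof given here. So there is no ``paper's own proof'' to compare against.

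That said, your outline is a faithful sketch of the LSW argument. The reduction via conformal invariance of Brownian motion to the harmonic measure $\Theta_t/2\pi$ of the free arc, the radial Loewner SDE $dV_t = \cot(V_t/2)\,dt - \sqrt{\kappa}\,dB_t$ for boundary angles, the construction of an exponential martingale with rate $\lambda = (\kappa+4)/8$, and the use of Koebe to convert capacity time into Euclidean scale are exactly the ingredients in \cite{LSW01}. You have also correctly identified the hard step: optional stopping gives control of $\Exp{}{\Phi(\Theta_{\wh\tau_r})}$, not of $\Exp{}{\Theta_{\wh\tau_r}}$, and passing between the two requires showing that $\Theta_{\wh\tau_r}$ is bounded away from $0$ and $2\pi$ with uniformly positive probability. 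In \cite{LSW01} this is handled by a boundary-hitting/separation argument for the diffusion $V_t$; your proposal flags the issue but does not resolve it, so as written it is an outline rather than a proof. One small correction: the Beurling estimate gives an upper bound of order $r^{1/2}$, which for $\kappa = 2$ is \emph{weaker} than the target $r^{3/4}$, so it does not directly help sharpen the exponent---the martingale argument is doing all the work for both bounds.
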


In order to state the convergence of LERW to $\SLE_2$ we require some notation. Let $\Gamma$ denote the set of continuous curves $\alpha:[0,t_\alpha] \to \overline{\Disc}$ (we allow $t_\alpha$ to be $\infty$) such that $\alpha(0) \in \partial \Disc$, $\alpha(0,t_\alpha] \subset \Disc$ and $\alpha(t_\alpha) = 0$. We can make $\Gamma$ into a metric space as follows. If $\alpha, \beta \in \Gamma$, we let
$$ d(\alpha, \beta) =  \inf \sup_{0 \leq t \leq t_{\alpha}}  \abs{\alpha(t) - \beta(\theta(t))},$$
where the infimum is taken over all continuous, increasing bijections $\theta:[0,t_{\alpha}] \to [0,t_{\beta}]$. Note that $d$ is a pseudo-metric on $\Gamma$, and is a metric if we consider two curves to be equivalent if they are the same up to reparametrization.

Let $f$ be a continuous function on $\Gamma$, $\gamma$ be radial $\SLE_2$, and extend $\wh{S}^n$ to a continuous curve by linear interpolation (so that the time reversal of $n^{-1} \wh{S}^n$  is in $\Gamma$), then

\begin{thm} [Lawler, Schramm, Werner \cite{LSW04}] \label{LERWtoSLE}   
$$ \lim_{n \to \infty} \Exp{}{f( n^{-1} \wh{S}^n)} = \Exp{}{f(\gamma)}.$$
\end{thm}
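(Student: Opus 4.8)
This is the deep convergence theorem of Lawler, Schramm and Werner, so I only outline the strategy. The goal is to prove that the time-reversals of the rescaled curves $n^{-1}\wh{S}^n$ converge weakly, as random elements of the metric space $(\Gamma,d)$, to radial $\SLE_2$; the stated identity $\Exp{}{f(n^{-1}\wh{S}^n)} \to \Exp{}{f(\gamma)}$ for bounded continuous $f$ is then simply the definition of weak convergence. As is standard for such results, the proof splits into (i) tightness of the family of laws, and (ii) the identification of every subsequential limit as radial $\SLE_2$. Since $\SLE_2$ is one well-defined law, (i) and (ii) together force the entire sequence to converge.

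For (i), the plan is to combine the Beurling-type estimates of Theorem \ref{beurling} with an Aizenman--Burchard-type criterion. The random-walk Beurling estimate says a path from the origin to $\partial B_n$ is hit by an independent random walk with probability bounded below, uniformly; running this across dyadic annuli shows the LERW cannot cross an annulus $\{r<|z|<2r\}$ more than a bounded number of times except with polynomially small probability, and more generally gives the polynomial decay in $k$ of the probability of $k$ disjoint crossings. This is exactly the Aizenman--Burchard hypothesis, which yields tightness of the laws of $n^{-1}\wh{S}^n$ in $(\Gamma,d)$ and, moreover, that every subsequential limit is a.s. a curve generated by a Loewner chain with continuous driving function $U_t=e^{i\theta_t}$, with capacity increments comparable to diameter increments (so the capacity parametrization behaves well).

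For (ii), the heart of the argument, the plan is the martingale-observable method, exploiting the domain Markov property of LERW (Lemma \ref{condit}) and the exact formula (\ref{lerwf}). Reverse the LERW so that it runs from the boundary to the target $0$; by Lemma \ref{condit}, conditionally on the revealed initial segment $\omega$ ending at the tip $x$, the remainder is a LERW in the slit domain. Using (\ref{lerwf}), or equivalently the description of the reversed LERW as the $b=1$ Laplacian random walk, one writes down an explicit discrete observable — a normalized discrete Green's-function / Poisson-kernel quantity $M^{(\delta)}_t(z)$ attached to a marked point $z$ in the slit domain — which is a martingale for the discrete LERW run in the capacity parametrization obtained by conformally mapping the slit domain back to $\Disc$. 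One then passes to the limit: discrete harmonic functions and discrete Green's functions on fine lattices converge, uniformly on compact subsets away from the boundary, to their continuum analogues (the basic potential-theoretic input underlying Proposition \ref{a}, ultimately the convergence of $S$ to Brownian motion), so along a convergent subsequence $M^{(\delta)}_t(z)$ converges to the conformally covariant continuum observable $M_t(z)$, which is therefore a bounded (local) martingale for the limiting curve. Finally, apply Itô's formula to $M_t(z)=F(g_t(z),U_t)$ along the radial Loewner flow (\ref{loewner}): requiring the drift to vanish is a second-order ODE in the $U_t$-variable, and matching its solution against the continuum observable forces $d\theta_t=\sqrt{\kappa}\,dB_t$ with the constant $\kappa$ determined by the specific (LERW, i.e.\ $b=1$) form of the observable; the computation gives $\kappa=2$. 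Hence $\theta_t=\sqrt{2}\,B_t$ and the limit is radial $\SLE_2$.

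The main obstacle is step (ii), and in particular making the discrete martingale property survive the passage to the limit: one needs the convergence of $M^{(\delta)}_t(z)$ to be uniform enough near the tip of the curve, which is a rough random point approaching the lattice scale, and one must control the error terms introduced by loop-erasure — this is where the exact formula (\ref{lerwf}), the Beurling estimate, and the macroscopic separation of the tip from the marked point $z$ must be carefully orchestrated. A secondary difficulty specific to the radial setting is that the tightness estimates must be run near the interior target $0$ as well as near the starting boundary point, and one must upgrade ``convergence of driving functions plus tightness'' to honest convergence in the curve metric $d$, so that $\Exp{}{f(n^{-1}\wh{S}^n)} \to \Exp{}{f(\gamma)}$ holds for every bounded continuous functional $f$ on $\Gamma$.
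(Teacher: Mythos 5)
The paper does \emph{not} prove Theorem \ref{LERWtoSLE}; it is imported as a black box from \cite{LSW04} (along with Theorem \ref{harmeasure}), and no argument is given. There is therefore nothing to compare your proposal against within this paper. That said, your outline is a fair high-level summary of the strategy actually used in \cite{LSW04}: tightness of the rescaled loop-erased paths via Beurling-type crossing estimates of Aizenman--Burchard flavor, and identification of the subsequential limit by exhibiting a discrete martingale observable via the domain Markov property (Lemma \ref{condit}) and the exact formula (\ref{lerwf}), passing to the limit using convergence of discrete harmonic objects to their continuum counterparts, and then using It\^o's formula along the radial Loewner flow to pin the driving Brownian motion down to $\kappa = 2$. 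Since the statement is a citation rather than a proved result here, your proposal should be read as a survey of the cited proof rather than as a proof to be checked line by line; as such a survey it is accurate.
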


\section{Some results for random walks} \label{resultsrw}

In this section we establish some technical lemmas concerning random walks that will be used repeatedly in the sequel.

\subsection{Hitting probabilities and Green's function estimates} \label{shitprob}

Recall that $\xi_K$ is the first hitting time of the set $K$ and $G(.;\Lambda \setminus K)$ is the Green's function in the set $\Lambda \setminus K$.

\begin{lemma} \label{rwdecomp}
Let $K_1, K_2 \subset \Lambda$ be disjoint and $z \in \Lambda \setminus (K_1 \cup K_2)$. Then,
\begin{eqnarray*}
& & \Pro{z}{\xi_{K_1} < \xi_{K_2}} \\
&=& \frac{G\left(z; \Lambda \setminus (K_1 \cup K_2)\right)}{G(z; \Lambda \setminus K_1)} \sum_{y \in \partial_i K_1} \condPro{y}{\xi_z < \xi_{K_2}}{\xi_z < \xi_{K_1}} \Pro{z}{S(\xi_{K_1}) = y}.
\end{eqnarray*}
\end{lemma}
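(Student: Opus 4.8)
The plan is to prove the identity by a last-exit decomposition of the walk at the point $z$, combined with the time-reversal symmetry $p(x,w)=p(w,x)$ of the increments. It suffices to fix $y\in\partial_i K_1$, establish the corresponding identity for $\Pro{z}{\xi_{K_1}<\xi_{K_2},\,S(\xi_{K_1})=y}$, and then sum over $y$, since $S(\xi_{K_1})\in\partial_i K_1$ on the whole event.

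First I would decompose each walk path realizing $\{\xi_{K_1}<\xi_{K_2},\,S(\xi_{K_1})=y\}$ at the last visit $\ell$ to $z$ before time $\xi_{K_1}$ (this is well defined because $S_0=z$ and $z\notin K_1$). The segment $S[0,\ell]$ is then an arbitrary path from $z$ to $z$ lying in $\Lambda\setminus(K_1\cup K_2)$ — it avoids $K_1$ because $\ell<\xi_{K_1}$ and avoids $K_2$ because $\xi_{K_1}<\xi_{K_2}$ — and the segment $S[\ell,\xi_{K_1}]$ is a path from $z$ to $y$ that never returns to $z$, whose interior avoids $K_1$, and which avoids $K_2$. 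This splitting is a bijection onto pairs of such segments, and since $p$ is multiplicative under concatenation the total weight factors; using the standard identity $G_D(z,z)=\sum_{\omega:z\to z\text{ in }D}p(\omega)$ with $D=\Lambda\setminus(K_1\cup K_2)$ this gives
$$ \Pro{z}{\xi_{K_1}<\xi_{K_2},\,S(\xi_{K_1})=y}=G\big(z;\Lambda\setminus(K_1\cup K_2)\big)\cdot q(z,y), $$
where $q(z,y)$ denotes the total $p$-weight of paths from $z$ to $y$ of the second type above.

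Next I would reverse those paths: the map $\omega\mapsto\omega^R$ is a weight-preserving bijection (by symmetry of the increments) between paths of the second type from $z$ to $y$ and finite paths from $y$ that reach $z$ strictly before hitting either $K_1$ or $K_2$, so $q(z,y)=\Pro{y}{\xi_z<\xi_{K_1},\,\xi_z<\xi_{K_2}}$. Running the same last-exit-plus-reversal argument for $\Pro{z}{S(\xi_{K_1})=y}$ alone — with no reference to $K_2$ — yields the classical last-exit formula $\Pro{z}{S(\xi_{K_1})=y}=G(z;\Lambda\setminus K_1)\,\Pro{y}{\xi_z<\xi_{K_1}}$ (alternatively this may simply be quoted from \cite{LL08}). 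Dividing the two expressions for the relevant weights gives $q(z,y)=\dfrac{\Pro{z}{S(\xi_{K_1})=y}}{G(z;\Lambda\setminus K_1)}\,\condPro{y}{\xi_z<\xi_{K_2}}{\xi_z<\xi_{K_1}}$; substituting this into the displayed equation and summing over $y\in\partial_i K_1$ produces exactly the asserted formula.

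The only delicate point is the bookkeeping in the last-exit decomposition: one must keep track of which segment is permitted to revisit $z$ (only the first), which endpoints belong to $K_1$ versus $K_2$ (the common point $y$ lies in $\partial_i K_1$ and, by disjointness, not in $K_2$), and verify that the decomposition is genuinely a bijection so that the weights factor with no error term. The reversal step is immediate from symmetry of $p$, and one should also remark that $G(z;\Lambda\setminus K_1)$ is finite and strictly positive — by irreducibility and recurrence of $S$ the walk reaches $K_1$ before returning to $z$ with positive probability — so that the division above is legitimate.
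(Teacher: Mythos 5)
Your proposal is correct and follows essentially the same route as the paper's proof: a last-exit decomposition at $z$ to factor out $G(z;\Lambda\setminus(K_1\cup K_2))$, path reversal by symmetry of $p$ to move from paths $z\to y$ to paths $y\to z$, and the identity $\Pro{z}{S(\xi_{K_1})=y}=G(z;\Lambda\setminus K_1)\,\Pro{y}{\xi_z<\xi_{K_1}}$ to turn $\Pro{y}{\xi_z<\xi_{K_1}}$ into the stated form. The only difference is organizational — you decompose the joint event $\{\xi_{K_1}<\xi_{K_2},\,S(\xi_{K_1})=y\}$ directly, whereas the paper first proves the general last-exit identity $\Pro{z}{S(\xi_K)=y}=G(z;\Lambda\setminus K)\Pro{z}{S(\xi_K\wedge\xi_z)=y}$ and then applies it with $K=K_1\cup K_2$ — but the mathematical content is identical.
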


\begin{proof}
We begin by showing that for any $K \subset \Lambda$, $z \in \Lambda \setminus K$ and $y \in \partial_i K$,
$$ \Pro{z}{S(\xi_K) = y} = G(z ; \Lambda \setminus K) \Pro{z}{S(\xi_K \wedge \xi_{z}) = y}.$$
To prove this, we proceed as in the proof of \cite[Lemma 2.1.1]{Law91}. Let 
$$\tau = \sup \{ j < \xi_K : S_j = z \}.$$ 
Note that $\tau$ is not a stopping time. However, since $\tau < \xi_K$,
\begin{eqnarray*}
& & \Pro{z}{S(\xi_K) = y} \\
&=& \sum_{k=1}^\infty \Pro{z}{\xi_K = k; S_k = y} \\
&=& \sum_{k=1}^\infty \sum_{j=0}^{k-1} \Pro{z}{\xi_K = k; S_k = y; \tau = j} \\
&=& \sum_{j=0}^\infty \sum_{k=j+1}^\infty \Pro{z}{S_j = z; S_k = y; S_i \notin K, 0 \leq i \leq j;  S_i \notin K \cup \{z\}, j+1 \leq i < k} \\
&=& \sum_{j=0}^\infty \Pro{z}{S_j = z; S_i \notin K, 0 \leq i \leq j} \sum_{k=1}^\infty \Pro{z}{S_k = y, S_i \notin K \cup \{z\}, 1 \leq i < k} \\
&=& G_{\Lambda \setminus K}(z) \Pro{z}{S(\xi_K \wedge \xi_{z}) = y}
\end{eqnarray*}
Applying the previous equality to $K = K_1 \cup K_2$, we get that
\begin{eqnarray*}
\Pro{z}{\xi_{K_1} < \xi_{K_2}} &=& \sum_{y \in \partial_i K_1} \Pro{z}{S(\xi_{K_1 \cup K_2}) = y} \\
&=& G(z; \Lambda \setminus (K_1 \cup K_2)) \sum_{y \in \partial_i K_1} \Pro{z}{S(\xi_{K_1} \wedge \xi_{K_2} \wedge \xi_{z}) = y}
\end{eqnarray*}
By reversing paths, one sees that 
$$\Pro{z}{S(\xi_{K_1} \wedge \xi_{K_2} \wedge \xi_{z}) = y} = \Pro{y}{S(\xi_{K_1} \wedge \xi_{K_2} \wedge \xi_{z}) = z}.$$ 
Thus,
\begin{eqnarray*}
& & 
\Pro{z}{\xi_{K_1} < \xi_{K_2}} \\
&=& G(z; \Lambda \setminus (K_1 \cup K_2)) \sum_{y \in \partial_i K_1} \Pro{y}{S(\xi_{K_1} \wedge \xi_{K_2} \wedge \xi_{z}) = z} \\
&=& G(z; \Lambda \setminus (K_1 \cup K_2)) \sum_{y \in \partial_i K_1} \condPro{y}{\xi_{z } < \xi_{K_2}}{\xi_{z} < \xi_{K_1}} \Pro{y}{\xi_{z} < \xi_{K_1}} 
\end{eqnarray*}
However, by reversing paths yet again, 
$$\Pro{y}{\xi_{z} < \xi_{K_1}} = \Pro{z}{S(\xi_{K_1} \wedge \xi_{z}) = y} =  \frac{\Pro{z}{S(\xi_{K_1}) = y}}{G_{\Lambda \setminus K_1}(z)},$$ 
which completes the proof of the lemma.
\end{proof}

\begin{lemma} \label{disc}

\noindent \begin{enumerate} 
\item  There exists $c > 0$ and $N$ such that for all $l \geq N$ the following holds. Suppose that $K \subset \Lambda$ contains a path connecting $0$ to $\partial B_l$. Then for any $x \in B_l$,  
$$ \Pro{x}{\xi_{K} < \sigma_{2l}} \geq c.$$  

\item There exists $c > 0$ and $N$ such that for all $N \leq 2l < n$, the following holds. Suppose that $K \subset \Lambda$ contains a path connecting $\partial B_{2l}$ to $\partial B_n$. Then for any $x \in \partial B_{2l}$,
$$ \Pro{x}{\xi_{K} \wedge \sigma_n < \xi_{l}} \geq c.$$
\end{enumerate}
\end{lemma}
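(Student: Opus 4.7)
Both parts of Lemma~\ref{disc} express the intuition that a random walk hits a ``large'' connected set $K$ with probability bounded below, uniformly in $K$ and in the starting point. My plan reduces each to the Beurling estimate (Theorem~\ref{beurling}(2)) combined with standard $2$D annular harmonic-measure bounds, which for $l$ sufficiently large are close to their Brownian counterparts $\log(|z|/a)/\log(b/a)$ (as can be seen by discrete potential-theoretic estimates or by strong approximation).

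\emph{Part 1.} Let $\omega \subset K$ denote the sub-path from $0$ to $\partial B_l$. Choose $\beta = 4C^2$ with $C$ the Beurling constant, so that for every $y \in B_\rho$, $\rho := l/\beta$, Theorem~\ref{beurling}(2) gives
$\Pro{y}{S[0,\sigma_l]\cap\omega=\emptyset} \leq C(|y|/l)^{1/2} \leq 1/2.$
Since $\omega \subset K$ and $\sigma_l \leq \sigma_{2l}$, this yields $\Pro{y}{\xi_K<\sigma_{2l}} \geq 1/2$ for $y \in B_\rho$. For a general starting point $x \in B_l \setminus B_\rho$, the standard discrete annular harmonic-measure computation in $A_{\rho,2l}$ gives $\Pro{x}{\xi_{B_\rho}<\sigma_{2l}} \geq c_1 > 0$ with $c_1$ depending only on $\beta$; applying the strong Markov property at $\xi_{B_\rho}$ then yields $\Pro{x}{\xi_K<\sigma_{2l}} \geq c_1/2$.

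\emph{Part 2.} Let $\omega \subset K$ be the given path from $\partial B_{2l}$ to $\partial B_n$, starting at some $y \in \omega \cap \partial B_{2l}$. I split into two regimes. If $n \leq 4l$, then even ignoring $K$ the annular harmonic-measure estimate gives $\Pro{x}{\sigma_n < \xi_l} \geq \log 2 / \log(n/l) \geq 1/2$, so the conclusion holds for free. If $n > 4l$, extract a sub-path $\omega' \subset \omega$ from $y$ until its first exit from $B_{l/2}(y)$; this exists because $\omega$ eventually reaches $\partial B_n$, well outside $B_{l/2}(y)$, and $B_{l/2}(y) \subset A_{l,n}$ since $|y|=2l$ and $n>4l$. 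With $\tilde\rho = l/(4C^2)$ as in Part~1, Theorem~\ref{beurling}(2) recentered at $y$ gives $\Pro{z}{\xi_{\omega'}<\sigma_{B_{l/2}(y)}}\geq 1/2$ for $z \in B_{\tilde\rho}(y)$, and on this event the walk hits $K$ before leaving $A_{l,n}$. It remains to show that $\Pro{x}{\xi_{B_{\tilde\rho}(y)}<\xi_l\wedge\sigma_n}\geq c_2>0$ uniformly in $x \in \partial B_{2l}$: the function $z \mapsto \Pro{z}{\xi_{B_{\tilde\rho}(y)}<\xi_l\wedge\sigma_n}$ is discrete harmonic on $A_{l,n}\setminus B_{\tilde\rho}(y)$ with boundary value $1$ on $\partial B_{\tilde\rho}(y)$ and $0$ on $\partial_i B_l \cup \partial B_n$, and the discrete Harnack principle applied in a fixed compact neighborhood of $\partial B_{2l}$ disjoint from $\partial B_l$ (whose size is $\asymp l$, independent of $n$) gives the required lower bound by comparing to the value at a fixed point near $y$ which is close to~$1$.

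\emph{Main obstacle.} Uniformity in $K$ comes for free from Beurling, which depends only on the radial extent of $K$. The chief technical point is the Harnack step in Part~2: one must verify that Harnack applies uniformly in $n/l$. This works because the domain of harmonicity near $\partial B_{2l}$ contains a fixed annular neighborhood of size comparable to $l$, regardless of how large $n/l$ becomes. A secondary concern is that $y$ can be an arbitrary point of $\partial B_{2l}$, so the Harnack constant must be taken uniform over rotations, which follows from rotational invariance of the ambient annulus up to lattice discretization.
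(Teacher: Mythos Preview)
Your approach is correct and genuinely different from the paper's. The paper proves both parts by a second-moment argument: it thins $K$ to one point per radial shell of width $m$ (the range of the walk), then uses two-sided Green's function bounds to show $\Exp{x}{V}\asymp l/m$, where $V$ is the number of visits to the thinned set before the relevant exit time, and concludes via $\Pro{x}{\xi_K<\sigma}=\Exp{x}{V}/\condExp{x}{V}{\xi_K<\sigma}$. For Part~2 this requires nontrivial Green's function estimates in the exterior domain $B_l^c$, obtained by a potential-kernel computation. Your route via the Beurling estimate is shorter and more conceptual: Beurling already packages the uniformity in $K$ that the second-moment method has to rebuild by hand, and the remaining work is just steering the walk into position.

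There is, however, a loose end in your Harnack step for Part~2. The function $h(z)=\Pro{z}{\xi_{B_{\tilde\rho}(y)}<\xi_l\wedge\sigma_n}$ is harmonic only on $A_{l,n}\setminus B_{\tilde\rho}(y)$, and since $y\in\partial B_{2l}$, any ``fixed compact neighborhood of $\partial B_{2l}$'' necessarily meets $B_{\tilde\rho}(y)$ and so is not contained in the domain of harmonicity. Even after excising a neighborhood of $y$ from the compact set, the domain still depends on $\arg(y)$, and the Harnack principle as stated in the paper yields a constant $C(U,A)$ for a \emph{fixed} pair $(U,A)$; your appeal to ``rotational invariance up to lattice discretization'' is a heuristic, not a proof of uniformity in $y$. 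Both points are easy to repair. For $x\in B_{2\tilde\rho}(y)$ a direct annular estimate inside $B_{l/2}(y)$ already gives $h(x)\ge c$. For $x\notin B_{2\tilde\rho}(y)$, note that $h(x)\ge \Pro{x}{\xi_{B_{\tilde\rho}(y)}<\sigma_{A_{3l/2,\,5l/2}}}$, which removes the $n$-dependence entirely; then either run a chain-of-balls argument along $\partial B_{2l}$ from $x$ to $y$ using $O(1)$ balls of radius $\asymp l$, or invoke the invariance principle, since after scaling by $1/l$ the configuration ranges over a compact family with strictly positive Brownian probability.
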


\begin{proof}

\noindent \textit{Proof of (1):} We assume that $N$ is sufficiently large so that for all $l \geq N$, each of the steps below works. 

First of all, we may assume that $z \in B_{l/4}$ since if $z \in B_l$,
$$ \Pro{z}{\xi_{l/4} < \sigma_{2l}} > c.$$

If $p$ is the distribution of the random walk $S$, let 
$$m = \max \{ \abs{x} : p(x) > 0 \}.$$
Since $K$ connects $0$ to $\partial B_l$, there exists a subset $K'$ of $K$ such that for each $i = 1, \ldots, \lfloor l/m \rfloor$, there is exactly one point $x \in K'$ such that $(i-1)m \leq \abs{x} < im$. It is clear that if the lemma holds for $K'$ then it will hold for $K$. Therefore, we assume that $K$ has this property.

By \cite[Proposition 6.3.5]{LL08}, there exists a constant $C$ such that if $z \in B_l$,
$$ G_{B_{l}}(0,z) = C\left[ \log l - \log \abs{z} \right] + O(\abs{z}^{-1}).$$
Therefore, if $y,z \in B_l$ with $\abs{z - y} < l/2$, and $l$ is large enough, 
\begin{eqnarray*} 
G_{B_{2l}}(z,y) &\geq& G_{B_l}(0,y - z) \\
&\geq& C \left[ \log l - \log \abs{z - y} \right] + O(\abs{z - y}^{-1}) \\
&\geq& c_1 > 0.
\end{eqnarray*}
Similarly, if $z,y \in B_l$, 
$$ G_{B_{2l}}(z,y) \leq G_{B_{4l}}(0, y - z) \leq C \left[ \log l - \log \abs{z - y} \right] + C'.$$

Let $V$ be the number of visits to $K$ before leaving $B_{2l}$. Then for any $z \in B_{l/4}$, since there are at least $l/(4m)$ points within distance $l/2$ from $z$, 
$$ \Exp{z}{V} = \sum_{y \in K} G_{B_{2l}}(z,y) \geq \frac{c_1 l}{4m}.$$
Also, since there are at most $2j/m$ points in $K$ within distance $j$ from $z \in B_l$,
$$ \Exp{z}{V} \leq C \left[ \frac{l \log l}{m} -2 \sum_{j=1}^{l/(2m)} \log j \right]+ C'\frac{l}{m} \leq C_2 \frac{l}{m}.$$

Therefore, for any $x \in B_l$, 
$$\Pro{x}{\xi_{K} < \sigma_{2l}} = \frac{\Exp{x}{V}}{\condExp{x}{V}{\xi_{K} < \sigma_{2l}}} \geq \frac{c_1}{4C_2}.$$   
 
\noindent \textit{Proof of (2):} We again let $N$ be large enough so that if $l \geq N$ the following steps work. For $x \in \partial B_{2l}$, there exists $c > 0$ such that for all $l$ large enough, 
$$ \Pro{x}{\sigma_{4l} < \xi_l} \geq c.$$
Therefore, we may assume that $n > 4l$. We will show that if $K \subset \Lambda$ contains a path connecting $\partial B_{2l}$ to $\partial B_{4l}$, then 
$$ \Pro{x}{\xi_{K} < \xi_l} \geq c.$$

It suffices to show that for all $z, y \in B_{4l} \setminus B_{2l}$,  
\begin{eqnarray} \label{afn}
c_1 \leq G(z,y; B_l^c) \leq C_2 \left[ \log l - \log \abs{z - y} + C' \right].
\end{eqnarray}
For if we can show (\ref{afn}), then we can proceed as in the proof of (1).

To prove the left inequality, we note that for $z \in \partial B_{l/4}(y)$,
$$ G(z,y; B_l^c) \geq G_{B_{l/2}(y)}(z,y) \geq c $$
by the estimate in (1). Therefore, for any $z, y \in B_{4l} \setminus B_{2l}$, 
$$ G(z,y; B_l^c) \geq c \Pro{z}{\xi_{B_{l/4}(y)} < \xi_l},$$
and by approximation by Brownian motion, one can bound the latter from below by a uniform constant. 

We now prove the right inequality in (\ref{afn}). By the monotone convergence theorem, 
$$ G(z,y; B_l^c) = \lim_{m \to \infty} G(z,y; B_{m} \setminus B_l).$$
However, since $B_m \setminus B_l$ is a finite set, we can apply \cite[Proposition 4.6.2]{LL08} which states that
$$ G(z,y; B_{m} \setminus B_l) = \Exp{z}{a(S(\sigma_{B_m \setminus B_l}) - y)} - a(z - y),$$
where $a$ denotes the potential kernel. By \cite[Theorem 4.4.3]{LL08}, 
$$ a(z) = C^*\log \abs{z} + C' + O(\abs{z}^{-2}).$$
Therefore,
\begin{eqnarray*}
& & G(z,y; B_{m} \setminus B_l) \\
&\leq& [C^* \log 5l] \Pro{z}{\xi_l < \sigma_{m}} + [C^* \log (m+4l)]\Pro{z}{\sigma_m < \xi_l} - C^*\log \abs{z - y} + C''.
\end{eqnarray*}
However, because $\abs{z} < 4l$, a standard estimate \cite[Proposition 6.4.1]{LL08} shows that
$$ \Pro{z}{\sigma_{m} < \xi_l} \leq \frac{\log(4l) - \log l + C}{\log m - \log l} \leq \frac{C}{\log m - \log l}.$$
Therefore,
\begin{eqnarray*} 
G(z,y; B_l^c) &=& \lim_{m \to \infty} G(z,y; B_{m \setminus B_l}) \\
&\leq& \lim_{m \to \infty} C^* \log 5l + C \frac{\log (m + 4l)}{\log m - \log l} - C^* \log \abs{z - y} + C'' \\
&=& C^* \left[ \log l - \log \abs{z - y} + C'' \right].
\end{eqnarray*}  

\end{proof}

\begin{lemma} \label{green}
There exists $C < \infty$ and $N$ such that for all $N \leq 2l \leq n$, the following holds. Suppose that $K \subset \Lambda$ contains a path connecting $\partial B_{2l}$ to $\partial B_n$. Then for any $z \in B_l$,
$$ G(z ; B_n \setminus K) \leq C G(z ; B_{2l}).$$
\end{lemma}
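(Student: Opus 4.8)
The plan is to exploit the Green's function decomposition $G(z;B_n\setminus K) = G(z;B_{2l}) + \Exp{z}{\ind\{\sigma_{2l}<\xi_K\wedge\sigma_n\};\, G(S_{\sigma_{2l}};B_n\setminus K)}$, obtained by breaking the random walk excursions at the first exit of $B_{2l}$: each visit to $z$ after leaving $B_{2l}$ (without having hit $K$ or left $B_n$) contributes a factor counted by the second term. So it suffices to bound $\Exp{z}{\ind\{\sigma_{2l}<\xi_K\wedge\sigma_n\};\, G(S_{\sigma_{2l}};B_n\setminus K)}$ by $C\,G(z;B_{2l})$. First I would control $G(w;B_n\setminus K)$ uniformly for $w\in\partial B_{2l}$: since $K$ connects $\partial B_{2l}$ to $\partial B_n$, Lemma \ref{disc}(2) (applied with $K$ and the roles appropriately matched, noting $\overline{\xi}$ vs.\ $\xi$) gives $\Pro{w}{\xi_K\wedge\sigma_n<\xi_l}\ge c$, hence a random walk from $w$ is killed (by hitting $K$ or leaving $B_n$) before returning deep inside with probability bounded below; iterating this shows $G(w;B_n\setminus K)\le C$ for all $w\in\partial B_{2l}$. (Alternatively one bounds $G(w;B_n\setminus K)\le G(w;B_{4l}\setminus K)\le C$ using that $K$ already connects $\partial B_{2l}$ to $\partial B_{4l}$, reducing to a finite-size estimate as in the proof of Lemma \ref{disc}.)

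Given the uniform bound $G(\cdot;B_n\setminus K)\le C$ on $\partial B_{2l}$, the second term is at most $C\,\Pro{z}{\sigma_{2l}<\xi_K}$, and so the lemma reduces to showing $\Pro{z}{\sigma_{2l}<\xi_K} \le C'\,G(z;B_{2l})$ for $z\in B_l$. For $z$ away from the origin this is easy since $G(z;B_{2l})\asymp \log(2l)-\log|z|\asymp 1$ by the logarithmic asymptotics of \cite[Proposition 6.3.5]{LL08} quoted in the proof of Lemma \ref{disc}, while the probability is trivially $\le 1$. The only delicate regime is $z$ near $0$, where $G(z;B_{2l})$ can be as large as $\asymp\log l$; but there $\Pro{z}{\sigma_{2l}<\xi_K}\le 1$ is still far smaller than $G(z;B_{2l})$, so the inequality holds with room to spare. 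In fact one can just use $G(z;B_{2l})\ge c$ for \emph{all} $z\in B_l$ (again from the log asymptotics, since $|z|\le l$ forces $\log(2l)-\log|z|\ge\log 2 - o(1)$), which immediately gives $\Pro{z}{\sigma_{2l}<\xi_K}\le 1 \le c^{-1}G(z;B_{2l})$ and closes the argument.

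I would therefore carry out the steps in this order: (i) write the excursion decomposition of $G(z;B_n\setminus K)$ at time $\sigma_{2l}$; (ii) prove the uniform bound $G(w;B_n\setminus K)\le C$ for $w\in\partial B_{2l}$, using Lemma \ref{disc}(2) (or the finite-region potential-kernel estimate from its proof) together with the fact that $K$ joins $\partial B_{2l}$ to $\partial B_n$; (iii) conclude $G(z;B_n\setminus K)\le G(z;B_{2l}) + C\,\Pro{z}{\sigma_{2l}<\xi_K}$; (iv) invoke the lower bound $G(z;B_{2l})\ge c$ for $z\in B_l$ from \cite[Proposition 6.3.5]{LL08} to absorb the second term. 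The main obstacle is step (ii): one must show that once the walk reaches $\partial B_{2l}$, the obstacle set $K$ — about which we know only that it is connected and spans the annulus — absorbs the walk efficiently enough that the Green's function stays $O(1)$; this is exactly the content of Lemma \ref{disc}(2), so the real work is just matching hypotheses (in particular handling the case $n\le 4l$ separately, as in the proof of Lemma \ref{disc}(2), where one first notes $K$ need only connect $\partial B_{2l}$ to $\partial B_{4l}$ and then reduces to a finite region). Everything else is bookkeeping with the logarithmic Green's function asymptotics already used earlier in the section.
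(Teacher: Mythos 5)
Your excursion decomposition at $\sigma_{2l}$ is the right starting point (and is essentially what the paper does, phrased via $G(z;B_n\setminus K)=\Pro{z}{\xi_K\wedge\sigma_n<\xi_z}^{-1}$), and steps (i), (iii) and (iv) are fine, but step (ii) has a genuine gap: the claimed uniform bound $G(w;B_n\setminus K)\le C$ for $w\in\partial B_{2l}$ is \emph{false}. The set $K$ is only assumed to contain \emph{some} path joining $\partial B_{2l}$ to $\partial B_n$, and it need not come anywhere near a given $w\in\partial B_{2l}$. For example, take $w=(2l,0)$ and let $K$ be a radial slit emanating from $(-2l,0)$ (with, say, $n\ge 4l$). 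Then the killing set $K\cup(\Lambda\setminus B_n)$ is at Euclidean distance at least of order $l$ from $w$, so
$$
\Pro{w}{\xi_K\wedge\sigma_n<\xi_w}\le\Pro{w}{\sigma_{B(w,l)}<\xi_w}\asymp\frac{1}{\log l},
$$
and hence $G(w;B_n\setminus K)=\Pro{w}{\xi_K\wedge\sigma_n<\xi_w}^{-1}\ge c\log l$. Lemma \ref{disc}(2) controls the probability of escape before hitting $B_l$, not before \emph{returning to $w$}; returns to $w$ are much cheaper than visits to $B_l$, which is exactly why the diagonal Green's function at $w$ can be logarithmically large. Your ``alternative'' bound $G(w;B_n\setminus K)\le G(w;B_{4l}\setminus K)$ is also reversed --- Green's functions are monotone increasing in the domain, so this inequality goes the wrong way. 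As a result the second term in your decomposition can be of order $\log l$, which is not $O(G(z;B_{2l}))$ when, say, $|z|\asymp l$ and $G(z;B_{2l})\asymp 1$.

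The repair, and what the paper actually does, is to not decouple the off-diagonal Green's function $G(w,z;B_n\setminus K)$ from the quantity you are trying to bound. Write $G(w,z;B_n\setminus K)=\Pro{w}{\xi_z<\xi_K\wedge\sigma_n}\,G(z;B_n\setminus K)$; since $z\in B_l$ forces $\xi_z\ge\xi_l$, Lemma \ref{disc}(2) gives $\Pro{w}{\xi_z<\xi_K\wedge\sigma_n}\le 1-c$. Substituting into your decomposition yields $G(z;B_n\setminus K)\le G(z;B_{2l})+(1-c)\,G(z;B_n\setminus K)$, and rearranging finishes the proof. (Equivalently, decompose the escape probability $\Pro{z}{\xi_K\wedge\sigma_n<\xi_z}$ over the first exit of $B_{2l}$, as in the paper.) Once the argument is set up this way the lower bound $G(z;B_{2l})\ge c$ from your step (iv), while true, is not needed.
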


\begin{proof}
Without loss of generality, we may assume that $K \subset \Lambda \setminus B_{2l}$. In that case, $\sigma_{2l} < \xi_{K} \wedge \sigma_n$ for all walks started in $B_l$ and therefore,
\begin{eqnarray*}
G(z ; B_n \setminus K) &=&  \Pro{z}{\xi_{K} \wedge \sigma_n < \xi_{z}}^{-1} \\
&=& \left( \sum_{w \in \partial B_{2l}} \Pro{w}{\xi_{K} \wedge \sigma_n < \xi_{z}} \Pro{z}{S(\sigma_{2l}) = w ; \sigma_{2l} < \xi_{z}} \right)^{-1}.
\end{eqnarray*}
However, by Lemma \ref{disc}, for any $w \in \partial B_{2l}$,
$$  \Pro{w}{\xi_{K} \wedge \sigma_n < \xi_{z}} \geq c > 0.$$
Therefore,
$$ G(z ; B_n \setminus K) \leq C \Pro{z}{\sigma_{2l} < \xi_z}^{-1} = C G(z; B_{2l}).$$
\end{proof}

\begin{lemma} \label{rebv}
There exists $c >0$ and $N$ such that for $N \leq 2l \leq n$ the following holds. Suppose $K \subset \Lambda \setminus B_{2l}$ contains a path connecting $\partial B_{2l}$ to $\partial B_n$. Then for $z \in B_l$,
$$ \condPro{z}{\xi_{0} < \sigma_{2l}} {\xi_{0} < \xi_{K} \wedge \sigma_n} \geq c.$$
\end{lemma}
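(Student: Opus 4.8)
The plan is to condition on the first visit of the random walk to the circle $\partial B_{2l}$ and exploit the fact, already established, that a walk started anywhere on $\partial B_{2l}$ reaches $K$ (or exits $B_n$) before returning to $B_l$ with probability bounded below. Concretely, write $z \in B_l$ and decompose the event $\{\xi_0 < \xi_K \wedge \sigma_n\}$ according to whether $\xi_0 < \sigma_{2l}$ or not. On the complement $\{\sigma_{2l} < \xi_0 < \xi_K \wedge \sigma_n\}$ the walk must first reach $\partial B_{2l}$ (at time $\sigma_{2l}$, before hitting $0$ or $K$ or leaving $B_n$), then return to hit $0$ before hitting $K$ or leaving $B_n$. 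So I would first bound
$$ \Prob{z}{\sigma_{2l} < \xi_0 < \xi_K \wedge \sigma_n} \leq \sum_{w \in \partial B_{2l}} \Prob{z}{S(\sigma_{2l}) = w;\, \sigma_{2l} < \xi_0} \Prob{w}{\xi_0 < \xi_K \wedge \sigma_n}. $$
The key input is that by Lemma \ref{disc}(2), $\Prob{w}{\xi_K \wedge \sigma_n < \xi_l} \geq c$ for $w \in \partial B_{2l}$, and on that event the walk certainly does not hit $0$ before $\xi_K \wedge \sigma_n$ (since $0 \in B_l$, hitting $0$ would require re-entering $B_l$ first). Hence $\Prob{w}{\xi_0 < \xi_K \wedge \sigma_n} \leq 1 - c =: \theta < 1$ for every $w \in \partial B_{2l}$.

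Next I would reconstruct the unconditioned probability of hitting $0$ early. Using the same path decomposition and the strong Markov property at $\sigma_{2l}$, together with the fact that after reaching $\partial B_{2l}$ the walk may still come back and hit $0$, one gets for every $z \in B_l$
$$ \Prob{z}{\xi_0 < \xi_K \wedge \sigma_n} \;=\; \Prob{z}{\xi_0 < \sigma_{2l}} \;+\; \Prob{z}{\sigma_{2l} < \xi_0 < \xi_K \wedge \sigma_n}. $$
Combining with the bound above and writing $q := \Prob{z}{\xi_0 < \sigma_{2l}}$ and $Q := \Prob{z}{\xi_0 < \xi_K \wedge \sigma_n}$, I obtain $Q \leq q + \theta\, \Prob{z}{\sigma_{2l} < \xi_0}$. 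The subtle point is that $\Prob{w}{\xi_0 < \xi_K \wedge \sigma_n}$ is weighted by $\Prob{z}{S(\sigma_{2l}) = w; \sigma_{2l} < \xi_0}$, whose total mass is $\Prob{z}{\sigma_{2l} < \xi_0} \leq 1 - q$; hence $Q \leq q + \theta(Q')$ where $Q'$ is the probability the walk hits $0$ after its first visit to $\partial B_{2l}$, and crucially $Q' \leq Q$ since hitting $0$ after $\sigma_{2l}$ is a sub-event of hitting $0$ at all (along $\{\xi_0 < \xi_K \wedge \sigma_n\}$). Rearranging gives $Q(1-\theta) \leq q$, i.e.
$$ \condProb{z}{\xi_0 < \sigma_{2l}}{\xi_0 < \xi_K \wedge \sigma_n} \;=\; \frac{q}{Q} \;\geq\; 1 - \theta \;=\; c > 0, $$
which is the claim.

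The main obstacle I anticipate is making the decomposition $Q = q + \theta Q'$ with $Q' \le Q$ fully rigorous: one must be careful that on $\{\sigma_{2l} < \xi_0 < \xi_K \wedge \sigma_n\}$ the walk has not yet hit $K$ or left $B_n$ at time $\sigma_{2l}$ (which is automatic since $K \subset \Lambda \setminus B_{2l}$ and $B_{2l} \subset B_n$, so $\sigma_{2l} < \xi_K \wedge \sigma_n$ always), and that applying the strong Markov property at $\sigma_{2l}$ correctly produces the factor $\Prob{w}{\xi_0 < \xi_K \wedge \sigma_n}$ — here one uses that the post-$\sigma_{2l}$ walk hitting $0$ before $\xi_K \wedge \sigma_n$ is exactly the relevant event, and the uniform bound $\le \theta$ from Lemma \ref{disc}(2) applies to each starting point $w \in \partial B_{2l}$. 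Everything else is bookkeeping; no new estimates beyond Lemma \ref{disc}(2) are needed, and the hypothesis $K \subset \Lambda \setminus B_{2l}$ is used precisely to guarantee $\sigma_{2l} < \xi_K \wedge \sigma_n$ so that the decomposition is clean.
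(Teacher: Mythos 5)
Your proposal identifies the right tool (Lemma \ref{disc}(2)) and the right first decomposition at $\sigma_{2l}$, but the algebraic step at the end does not go through. From Lemma \ref{disc}(2) you correctly obtain $\Pro{w}{\xi_0 < \xi_K \wedge \sigma_n} \leq 1-c =: \theta$ for every $w \in \partial B_{2l}$, and hence, writing $q = \Pro{z}{\xi_0 < \sigma_{2l}}$ and $Q = \Pro{z}{\xi_0 < \xi_K \wedge \sigma_n}$,
$$ Q \;\leq\; q + \theta\, \Pro{z}{\sigma_{2l} < \xi_0} \;\leq\; q + \theta(1-q). $$
This does not imply $q/Q \geq 1-\theta$: if $q$ is small (as it is for $z$ near $\partial B_l$, since $\Pro{z}{\xi_0 < \sigma_{2l}}$ decays like $(\log l)^{-1}$), then $Q$ can be of order $\theta$ and $q/Q$ is of order $q/\theta$, which tends to $0$. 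Your attempted fix --- replacing $\theta\,\Pro{z}{\sigma_{2l} < \xi_0}$ by $\theta Q'$ and then invoking $Q' \leq Q$ --- is not justified: if $Q' := \Pro{z}{\sigma_{2l} < \xi_0 < \xi_K \wedge \sigma_n}$ (so that $Q' \leq Q$ holds), then identically $Q = q + Q'$, and the claimed inequality $Q \leq q + \theta Q'$ would force $Q' \leq \theta Q'$, i.e.\ $Q' = 0$, which is false. If instead $Q' := \Pro{z}{\sigma_{2l} < \xi_0}$, then the step $\Pro{z}{\sigma_{2l} < \xi_0} \leq Q$ fails, since the left side is typically close to $1$.

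What is missing is a second iteration of the decomposition. The paper fixes $z \in \partial B_l$ to be a maximizer of $\Pro{\cdot}{\xi_0 < \xi_K \wedge \sigma_n}$ over $\partial B_l$, and then bounds
$$ \Pro{w}{\xi_0 < \xi_K \wedge \sigma_n} = \sum_{y \in \partial B_l} \Pro{w}{S(\xi_l) = y;\, \xi_l < \xi_K \wedge \sigma_n}\,\Pro{y}{\xi_0 < \xi_K \wedge \sigma_n} \leq \Pro{w}{\xi_l < \xi_K \wedge \sigma_n}\,Q \leq (1-c)\,Q, $$
where the last inequality is Lemma \ref{disc}(2) and the middle one uses the maximality of $z$. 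This gives $Q \leq q + (1-c)Q\,\Pro{z}{\sigma_{2l}<\xi_0} \leq q + (1-c)Q$, and the desired bound follows. The reduction from an arbitrary $z \in B_l$ to this maximizer is also nontrivial (the paper handles it by another one-step decomposition at $\partial B_l$ together with the discrete Harnack inequality); your proposal skips this entirely. So while your proposal captures the correct intuition --- iterate the visits to $\partial B_{2l}$ and use Lemma \ref{disc}(2) to gain a factor $1-c$ each time --- the crucial "self-improving" step $\Pro{w}{\cdots} \leq (1-c)Q$ requires both the maximizer choice of $z$ and an extra decomposition at $\partial B_l$, neither of which appears in your argument.
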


\begin{proof}
To begin with, we claim that it suffices to show that
\begin{eqnarray} \label{adhx} 
\Pro{z}{\xi_{0} < \xi_{K} \wedge \sigma_n} \leq C \Pro{z}{\xi_{0} < \sigma_{2l}}
\end{eqnarray} for $z \in \partial B_l$ such that
$$\Pro{z}{\xi_{0} < \xi_{K} \wedge \sigma_n} = \max_{y \in \partial B_l} \Pro{y}{\xi_{0} < \xi_{K} \wedge \sigma_n}.$$
To see this, note that
$$ \condPro{z}{\xi_{0} < \sigma_{2l}}{\xi_{0} < \xi_{K} \wedge \sigma_n} = \frac{\Pro{z}{\xi_{0} < \sigma_{2l}}}{\Pro{z}{\xi_{0} < \xi_{K} \wedge \sigma_n}}.$$
Therefore it suffices to show that for all $z \in B_l$,
$$ \Pro{z}{\xi_{0} < \xi_{K} \wedge \sigma_n} \leq C \Pro{z}{\xi_{0} < \sigma_{2l}}.$$
However, for $z \in B_l$,
$$ \Pro{z}{\xi_{0} < \xi_{K} \wedge \sigma_n} = \Pro{z}{\xi_0 < \sigma_l} + \sum_{w \in \partial B_l} \Pro{w}{\xi_{0} < \xi_{K} \wedge \sigma_n}\Pro{z}{S(\xi_0 \wedge \sigma_l) = w} $$
and 
$$ \Pro{z}{\xi_{0} < \sigma_{2l}} = \Pro{z}{\xi_0 < \sigma_l} + \sum_{w \in \partial B_l} \Pro{w}{\xi_{0} < \sigma_{2l}}\Pro{z}{S(\xi_0 \wedge \sigma_l) = w}.$$
Furthermore, by the discrete Harnack inequality, for any $y, y' \in \partial B_l$,
$$ \Pro{y}{\xi_{0} < \xi_{K} \wedge \sigma_n} \asymp \Pro{y'}{\xi_{0} < \xi_{K} \wedge \sigma_n}$$
and
$$  \Pro{y}{\xi_{0} < \sigma_{2l}} \asymp  \Pro{y'}{\xi_{0} < \sigma_{2l}}.$$
Therefore, the lemma will follow once we prove (\ref{adhx}).

Let $z \in \partial B_l$ be such that
$$\Pro{z}{\xi_{0} < \xi_{K} \wedge \sigma_n} = \max_{y \in \partial B_l} \Pro{y}{\xi_{0} < \xi_{K} \wedge \sigma_n}.$$
Then,
\begin{eqnarray*}
\Pro{z}{\xi_{0} < \xi_{K} \wedge \sigma_n} &=& \Pro{z}{\xi_{0} < \sigma_{2l}} + \Pro{z}{\sigma_{2l} < \xi_{0}; \xi_{0} < \xi_{K} \wedge \sigma_n}.
\end{eqnarray*}
Now,
\begin{eqnarray*}
& &
\Pro{z}{\sigma_{2l} < \xi_{0}; \xi_{0} < \xi_{K} \wedge \sigma_n} \\
&=& \sum_{w \in \partial B_{2l}} \Pro{w}{\xi_{0} < \xi_{K} \wedge \sigma_n} \Pro{z}{S(\sigma_{2l}) = w;  \sigma_{2l} < \xi_{0}}.
\end{eqnarray*}
By Lemma \ref{disc}, for any $w \in \partial B_{2l}$,
\begin{eqnarray*}
\Pro{w}{\xi_{0} < \xi_{K} \wedge \sigma_n} &=& \sum_{y \in \partial B_l}\Pro{w}{S(\xi_{l}) = y; \xi_{l} < \xi_{K} \wedge \sigma_n}  \Pro{y}{\xi_{0} < \xi_{K} \wedge \sigma_n} \\
&\leq& \Pro{w}{\xi_{l} < \xi_{K} \wedge \sigma_n}  \Pro{z}{\xi_{0} < \xi_{K} \wedge \sigma_n} \\
&\leq& (1-c) \Pro{z}{\xi_{0} < \xi_{K} \wedge \sigma_n}.
\end{eqnarray*}
Thus,
\begin{eqnarray*}
\Pro{z}{\xi_{0} < \xi_{K} \wedge \sigma_n} &\leq& \Pro{z}{\xi_{0} < \sigma_{2l}} + (1-c) \Pro{z}{\xi_{0} < \xi_{K} \wedge \sigma_n} \Pro{z}{\sigma_{2l} < \xi_{0}} \\
&\leq& \Pro{z}{\xi_{0} < \sigma_{2l}} + (1-c) \Pro{z}{\xi_{0} < \xi_{K} \wedge \sigma_n},
\end{eqnarray*}
and therefore
$$ \Pro{z}{\xi_{0} < \xi_{K} \wedge \sigma_n} \leq \frac{1}{c} \Pro{z}{\xi_{0} < \sigma_{2l}},$$
which completes the proof.
\end{proof}

\subsection{Random walks conditioned to avoid certain sets} \label{srwavoid}

\begin{prop} \label{dirichlet}
There exist constants $N$ and $c > 0$ such that for all $n \geq N$ the following holds. Suppose that $K \subset \Lambda \setminus B_n(n,0)$ where $B_n(n,0)$ denotes the ball of radius $n$ centered at $(n,0)$ (see Figure \ref{figdirichlet}). Then,
$$ \condPro{0}{\arg(S(\sigma_n)) \in [-\frac{\pi}{4}, \frac{\pi}{4} ]}{\sigma_n < \xi_{K}} \geq c.$$
\end{prop}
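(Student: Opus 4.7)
The plan is to show the equivalent unconditional bound
\[
\Pro{0}{S(\sigma_n) \in A,\ \sigma_n < \xi_{K}} \;\geq\; c\, \Pro{0}{\sigma_n < \xi_{K}},
\]
where $A = \{z \in \partial B_n : \arg z \in [-\pi/4, \pi/4]\}$. The geometric observation driving the proof is that $A \subset B_n(n,0)$: for $z = ne^{i\theta}$ with $|\theta| \leq \pi/4$,
\[
|z - n|^2 \;=\; 2n^2(1 - \cos\theta) \;\leq\; 2n^2(1 - 1/\sqrt{2}) \;<\; n^2.
\]
Combined with the hypothesis $K \cap B_n(n,0) = \emptyset$, this means the entire ``lens'' $D := B_n \cap B_n(n,0)$ is disjoint from $K$. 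I will work with the reference ball $B' := B_{n/10}(z^\star)$ where $z^\star = (n/2, 0)$; then $B' \subset D$ with $\dist(B', \partial B_n) \geq 2n/5$ and $\dist(B', \partial B_n(n,0)) \geq 2n/5$.

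The first ingredient is an interior estimate: for every $z \in B'$ and all $n$ sufficiently large,
\[
\Pro{z}{S[0, \sigma_n] \subset B_n(n,0),\ S(\sigma_n) \in A} \;\geq\; c_1,
\]
uniformly in $z$. I would derive this from the corresponding Brownian statement in the scaled lens $\Disc \cap D(1,1)$, where $D(1,1) = \{|w-1|<1\}$: a Brownian motion started at any $w \in B_{1/10}(1/2)$ exits the lens through both boundary arcs with positive probability, and conditionally on exiting through the $\partial \Disc$-arc its exit point lands in the sub-arc $A$ with positive probability (standard harmonic-measure estimate with a uniform Harnack bound on $B_{1/10}(1/2)$). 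The invariance principle, in the form of Proposition \ref{a}, transfers this bound to the random walk. Since the event on the left forces the walk's trajectory to stay in $B_n(n,0) \subset \Lambda \setminus K$, it implies $\sigma_n < \xi_{K}$, and thus $\Pro{z}{S(\sigma_n) \in A,\ \sigma_n < \xi_{K}} \geq c_1$ for all $z \in B'$. Applying the strong Markov property at $\xi_{B'}$ reduces the proposition to the comparison
\[
\Pro{0}{\xi_{B'} < \sigma_n \wedge \xi_{K}} \;\geq\; c_2\, \Pro{0}{\sigma_n < \xi_{K}}.
\]

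The hard part will be this final comparison, since it must hold uniformly over all admissible $K$. The key leverage is that $h_K(z) := \Pro{z}{\sigma_n < \xi_{K}}$ is discrete harmonic on all of $D$ (which contains no points of $K$); the discrete Harnack principle applied to $h_K$ on compact subdomains of $D$ gives $h_K(z) \asymp h_K(z^\star)$ for all $z \in B(z^\star, n/5)$, and in particular $h_K \asymp h_K(z^\star) \leq 1$ on $B'$. To bound $h_K(0)$ by a constant multiple of $\Pro{0}{\xi_{B'} < \sigma_n \wedge \xi_{K}}$, I would decompose $h_K(0)$ according to the first passage of the walk to $\partial B_{n/4}$ and then apply the recurrence estimate of Lemma \ref{disc}(2) (any walk starting on $\partial B_{n/4}$ inside $D$ hits $B'$ before returning to $\partial B_{n/8}$ with uniformly positive probability), together with the Green's function comparison of Lemma \ref{green} and the first-passage decomposition of Lemma \ref{rwdecomp}; the iteration is patterned after the proof of Lemma \ref{rebv}. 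Combining these pieces yields the desired comparison and, with the interior bound above, the proposition.
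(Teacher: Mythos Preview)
Your approach is genuinely different from the paper's, and the reduction via the reference ball $B'$ is natural, but Step~4 --- the comparison $\Pro{0}{\xi_{B'} < \sigma_n \wedge \xi_K} \geq c_2\, h_K(0)$ --- is where the real content lies, and your sketch does not establish it. The decomposition at $\partial B_{n/4}$ would require a pointwise bound $\Pro{z}{\xi_{B'} < \sigma_n \wedge \xi_K} \geq c\, h_K(z)$ for every $z \in \partial B_{n/4}$ reached with positive weight, and this is false: take the nearest-neighbour walk on $\Integers^2$ with $K$ equal to the entire $y$-axis. Then for $z=(-\lfloor n/4\rfloor,0)$ one has $h_K(z)>0$ (the walk can exit $B_n$ through the left half) but $\Pro{z}{\xi_{B'}<\sigma_n\wedge\xi_K}=0$, since $B'$ lies entirely in $\{\Re w>0\}$ and is unreachable from $z$ without crossing $K$. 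So the per-point comparison breaks, and your parenthetical (``any walk starting on $\partial B_{n/4}$ inside $D$ hits $B'$\ldots'') only covers part of $\partial B_{n/4}$; the iteration in Lemma~\ref{rebv} does not transfer because there the walk is forced back toward a central target, whereas here $B'$ sits off to one side and $K$ can separate large portions of $B_n$ from it. More fundamentally, your Step~4 is the assertion that the $h_K$-transformed walk from $0$ visits $B'$ before $\sigma_n$ with uniformly positive probability --- which is essentially the same kind of statement as the proposition itself, so the reduction has not bought you anything.

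The paper's proof avoids this circularity by exploiting a \emph{pointwise} monotonicity of the explicit harmonic function $h(z)=\Pro{z}{\arg W(\tau_\Disc)\in[-\pi/4,\pi/4]}$: one checks by direct computation that $h(z)\leq h(0)$ for every $z\in\Disc\setminus D_1(1)$. Since $K\subset\Lambda\setminus B_n(n,0)$, this says that restarting the walk from any point of $K$ can only lower the expected value of $h_n(S(\sigma_{rn}))$, so the conditional expectation given $\{\sigma_{rn}<\xi_K\}$ is at least the unconditional one. This single inequality replaces your entire Step~4 and is what makes the argument go through uniformly in $K$; there is no obvious substitute for it in your framework. (A minor side remark: Proposition~\ref{a} concerns avoidance of a fixed curve and does not literally cover your interior estimate ``stay in $B_n(n,0)$ and exit through $A$''; you would need a direct invariance-principle argument there, though this is not the serious issue.)
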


\begin{figure}[htp]
\centerline{
\epsfxsize=3in
\epsfysize=3in
\epsfbox{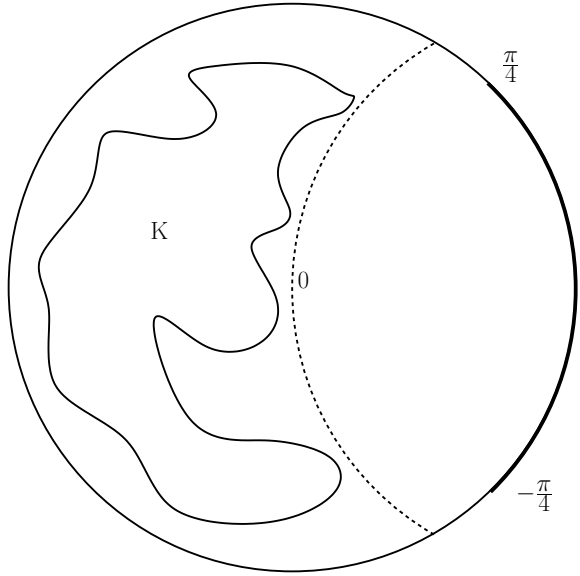}
}
\caption{The setup for Proposition \ref{dirichlet}}
\label{figdirichlet}
\end{figure}

\begin{proof}
For $z \in \Disc$, let
$$ h(z) = \Pro{z}{\arg [W(\tau_\Disc)] \in [-\frac{\pi}{4}, \frac{\pi}{4}]}$$
where $W$ denotes standard two-dimensional Brownian motion. Then $h$ is the solution to the Dirichlet problem with boundary value $\ind_{[-\pi/4, \pi/4]}$. Therefore, we can express $h$ as
$$ h(z) = \frac{1}{2\pi} \int_{-\frac{\pi}{4}}^{\frac{\pi}{4}} H_\Disc(z,e^{i\theta}) \,d\theta$$
where
$$ H_\Disc(z,e^{i\theta}) = \frac{1 - \abs{z}^2}{\abs{e^{i\theta} - z}^2}$$
is the Poisson kernel for the unit disk.

One can compute $h$ (it is easier to consider the problem on $\Halfplane$ and then map back via a conformal transformation):
\begin{eqnarray*} 
h(z) &=& \frac{1}{\pi}\left[ \arctan \left( \frac{(\sqrt{2} - 1)\abs{1 + z}^2 + 2\Im(z)}{1 - \abs{z}^2} \right) \right. \\
&+& \left. \arctan \left(\frac{(\sqrt{2} - 1)\abs{1 + z}^2 - 2 \Im(z)}{1 - \abs{z}^2} \right) \right].
\end{eqnarray*}

We now establish three basic facts about $h$ that we will use below.
\begin{enumerate}
\item Let $D_1(1)$ be the disk of radius $1$ centered at the point $1$. We claim that for all $z \in \Disc \setminus D_1(1)$, $h(z) \leq h(0)$. By the maximal principle for harmonic functions, $h$ restricted to $\overline{\Disc} \setminus D_1(1)$ takes its maximal value on $\partial D_1(1) \cap \Disc$ (since $h$ vanishes on $\partial \Disc \setminus D_1(1)$). Thus, to prove the claim, it suffices to show that 
$$f(t) = h(1 + \cos t, \sin t) $$
takes its maximal value at $t = \pi$ for $2\pi/3 \leq t \leq 4\pi/3$. Since one has an explicit formula for $h$, this is left as an exercise for the reader or the reader's Calculus students.

\item Next, fixing $w = e^{i \theta}$, it is a basic calculation to show that
$$ \frac{\partial H_\Disc(.,w)}{\partial x}(0) = 2 \cos \theta, \hskip25pt \frac{\partial H_\Disc(.,w)}{\partial y}(0) = 2 \sin \theta.$$  
Therefore,
$$ \frac{\partial h}{\partial x}(0) = \frac{1}{2\pi} \int_{-\frac{\pi}{4}}^{\frac{\pi}{4}} (2 \cos \theta) \,d\theta = \frac{\sqrt{2}}{\pi},$$
and
$$ \frac{\partial h}{\partial y}(0) = \frac{1}{2\pi} \int_{-\frac{\pi}{4}}^{\frac{\pi}{4}} (2 \sin \theta) \,d\theta = 0.$$
These results can also be obtained from the explicit formula for $h$.

\item Finally, there exists $0 < r < 1$ such that for all $r < \abs{z} < 1$, and $\abs{\arg(z)} > \pi/3$, $h(z) < 1/8$. This follows from the fact that if $\abs{z} > r$ and $|\arg(z)| > \pi/3$ then for $w \in \partial \Disc$ with $|\arg(w)| < \pi/4$,
$$ H_\Disc(z,w) = \frac{1 - \abs{z}^2}{\abs{w - z}^2} \leq c(1 - r^2),$$
which can be made to be arbitrarily close to $0$.
\end{enumerate}

\vskip15pt

Assume that $n$ is large enough so that $\conj{B_{rn}} \subset n \Disc$ where $r$ is as in the previous paragraph. We let $h_n(z) = h(z/n)$ which is harmonic in $n \Disc$. Then for $z \in \overline{B_{rn}}$, define
$$ \wt{h}_n(z) = \Exp{z}{ h_n(S(\overline{\sigma}_{rn}))}.$$
Then $\wt{h}_n$ is discrete harmonic in $ B_{rn}$ and agrees with $h_n$ on $\partial B_{rn}$.  

A natural question to ask is how close does the discrete harmonic solution $\wt{h}_n$ approximate the continuous harmonic solution $h_n$? By \cite[Corollary 6.2.4]{LL08}, for all $z \in B_{rn}$, 
$$h_n(z) = \wt{h}_n(z) - \Exp{z}{\sum_{j=0}^{\overline{\sigma}_{rn} - 1} \mathcal{L}h_n(S_j)},$$
where $\mathcal{L}f(x) = -f(x) + \sum_{y \in \Lambda} p(y)f(x+y)$ is the generator for $S$. Consider the associated operator 
$$\wt{\mathcal{L}}f(x) = \frac{1}{2} \sum_{y \in \Lambda} \kappa(y) \frac{\partial^2 f}{\partial y^2}(x).$$
By Taylor's theorem, for any $C^4$ function $f$ and $z \in \Lambda$,
$$ \abs{\mathcal{L}f(z) - \wt{\mathcal{L}}f(z)} \leq C R^4 M_4(f)(z),$$
where $R$ is the range of the walk $S$ and $M_4(f)$ is the $L^\infty$ norm of the sum of the fourth derivatives of $f$ in the disk $D_R(z)$.

Since the random walk $S$ has covariance matrix the identity (we have been assuming that $S$ has this property but this is the first place we use it), one can show that $\wt{\mathcal{L}}$ is actually a multiple of the continuous Laplacian. Thus, $\wt{\mathcal{L}} h_n = 0$. Furthermore, since the fourth derivatives of $h$ are bounded on $r\Disc$, $M_4(h_n)$ is bounded by $Cn^{-4}$ in $B_{rn}$. Therefore, combining all the previous remarks (and letting $CR^4 = C$ since $R$ depends only on the random walk $S$ which we've fixed), we obtain that for $z \in B_{rn}$, 
\begin{eqnarray*}
\abs{h_n(z) - \wt{h}_n(z)} &\leq& \Exp{z}{\sum_{j=0}^{\overline{\sigma}_{rn} - 1} C R^4 M_4(h_n)(z)} \\
&\leq& Cn^{-4} \Exp{z}{\overline{\sigma}_{rn} - 1} \\
&=& Cn^{-4} \sum_{x \in B_n} G_n(z,x) \\
&\leq& Cn^{-4} \sum_{x \in B_{2n}} G_{2n}(0,x) \\
&\leq& Cn^{-4} \sum_{x \in B_{2n}} \left[\log 2n - \log\abs{x} + C'\right] \\&\leq& C n^{-2}.
\end{eqnarray*}

We now have all the pieces we need to prove the proposition. Let $z$ be any point in $B_{rn} \setminus B_n(n,0)$, and fix $x \in \Lambda$ such that $\Re(x) > 0$. Then by Taylor's theorem and our previous observations about $h$, if $n$ is large enough so that $x$ is in $B_{rn}$,
\begin{eqnarray*}
& &
\wt{h}_n(x) - \wt{h}_n(z) \\
&=& [\wt{h}_n(x) - h_n(x)] + [h_n(x) - h_n(0)] + [h_n(0) - h_n(z)] + [h_n(z) - \wt{h}_n(z)] \\
&\geq& - Cn^{-2} + \left[n^{-1}\frac{\partial h}{\partial x}(0) \Re(x) - n^{-2} M_2(h) \abs{x}^2 \right] + 0 - Cn^{-2} 
\end{eqnarray*}
where $M_2(h)$ is the $L^\infty$ norm of the sum of the second order derivatives of $h$ in $r\Disc$.
Since $\frac{\partial h}{\partial x}(0) > 0$, it is clear that for $n$ sufficiently large,
$$  \wt{h}_n(x) \geq \wt{h}_n(z),$$
for all $z \in B_{rn} \setminus B_n(n,0)$. 

Since $K \subset \Lambda \setminus B_n(n,0)$,
\begin{eqnarray*}
\condExp{x}{h_n(S(\sigma_{rn}))}{\xi_{K} < \sigma_{rn}} &\leq& \max_{z \in K \cap B_{rn}} \Exp{z}{h_n(S(\sigma_{rn}))} \\
&=& \max_{z \in K \cap B_{rn}} \wt{h}_n(z) \\
&\leq& \wt{h}_n(x) \\
&=& \Exp{x}{h_n(S(\sigma_{rn}))}.
\end{eqnarray*}
Thus,
\begin{eqnarray*}
\Exp{x}{h_n(S(\sigma_{rn}))} 
&=& \condExp{x}{h_n(S(\sigma_{rn}))}{\xi_{K} < \sigma_{rn}} \Pro{x}{\xi_{K} < \sigma_{rn}} \\
&+& \condExp{x}{h_n(S(\sigma_{rn}))}{\sigma_{rn} < \xi_{K}} \Pro{x}{\sigma_{rn} < \xi_{K}} \\
&\leq& \Exp{x}{h_n(S(\sigma_{rn}))} \Pro{x}{\xi_{K} < \sigma_{rn}} \\
&+& \condExp{x}{h_n(S(\sigma_{rn}))}{\sigma_{rn} < \xi_{K}} \Pro{x}{\sigma_{rn} < \xi_{K}}.
\end{eqnarray*}
This implies that
\begin{eqnarray*} 
\condExp{x}{h_n(S(\sigma_{rn}))}{\sigma_{rn} < \xi_{K}} &\geq& \Exp{x}{h_n(S(\sigma_{rn}))} \\
&=& \wt{h}_n(x) \\
&\geq& \wt{h}_n(0) \\
&\geq& h(0) - Cn^{-2} \geq \frac{1}{5}
\end{eqnarray*}
for $n$ sufficiently large, since $h(0) = 1/4$.

Recall that $r$ was defined so that for all $z$ such that $r < \abs{z} < 1$, and $\abs{\arg(z)} > \pi/3$, $h(z) < 1/8$. Therefore,
\begin{eqnarray*} 
\frac{1}{5} &\leq& \condExp{x}{h_n(S(\sigma_{rn}))}{\sigma_{rn} < \xi_{K}} \\
&=& \sum_{|\arg(z)| > \pi/3} h_n(z) \condPro{x}{S(\sigma_{rn}) = z}{\sigma_{rn} < \xi_{K} } \\
&+& \sum_{|\arg(z)| \leq \pi/3} h_n(z) \condPro{x}{S(\sigma_{rn}) = z}{\sigma_{rn} < \xi_{K}} \\
&\leq& \frac{1}{8} \left( 1 - \condPro{x}{\abs{\arg S(\sigma_{rn})} \leq \frac{\pi}{3}}{\sigma_{rn} < \xi_{K} } \right) \\
&+& \condPro{x}{\abs{\arg S(\sigma_{rn})} \leq \frac{\pi}{3}}{\sigma_{rn} < \xi_{K}}.
\end{eqnarray*}
Thus,
$$ \condPro{x}{\abs{\arg S(\sigma_{rn})} \leq \frac{\pi}{3} }{ \sigma_{rn} < \xi_{K} } \geq c > 0.$$

Since $x$ is independent of $K$ and $n$,
$$ \Pro{0}{\xi_{x} < \xi_{K} \wedge \sigma_{rn}} \geq c,$$
and hence,
$$ \condPro{0}{\abs{\arg S(\sigma_{rn})} \leq \frac{\pi}{3}}{\sigma_{rn} < \xi_{K} } \geq c > 0.$$

Finally,
\begin{eqnarray*}
\condPro{0}{\abs{\arg S(\sigma_{n})} \leq \frac{\pi}{4}}{\sigma_{n} < \xi_{K} } &\geq& \condPro{0}{\abs{\arg S(\sigma_{n})} \leq \frac{\pi}{4}; \sigma_n < \xi_K}{ \sigma_{rn} < \sigma_K} \\
&\geq& c \condPro{0}{\abs{\arg S(\sigma_{rn})} \leq \frac{\pi}{3}}{\sigma_{rn} < \xi_{K} } \geq c.
\end{eqnarray*}

\end{proof}

\begin{lemma} \label{wedge}
For $0 < \theta < \pi$, there exist $c(\theta)$, $N(\theta)$ and $\alpha(\theta)$ such that the following holds. For $n > N$, and $z \in \Lambda$ with $N < \abs{z} < n$, let $W$ be the wedge
$$ W = \{ x \in \Lambda :  0 \leq \abs{x} \leq n, \abs{\arg(x) - \arg(z)} \leq \theta \}.$$
Then,
$$\Pro{z}{\sigma_W = \sigma_n} \geq c \left( \frac{\abs{z}}{n} \right)^{\alpha}.$$
\end{lemma}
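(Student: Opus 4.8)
The plan is to prove the bound by a chaining argument over dyadic scales, reducing the statement to a single scale-invariant one-scale estimate which follows from comparison with Brownian motion; since only the existence of \emph{some} exponent $\alpha(\theta) < \infty$ is needed (not the sharp value $\pi/(2\theta)$), a crude argument suffices. Write $\phi(x) = \arg(x) - \arg(z)$, and for $0 < \psi < \pi$ let $\mathcal{W}(\psi) = \{x \in \Lambda : \abs{\phi(x)} \leq \psi\}$ be the full angular wedge of half-angle $\psi$ about the ray through $z$, so that $W = \mathcal{W}(\theta) \cap \{x : \abs{x} \leq n\}$ and, up to the usual $O(1)$ lattice-boundary adjustments, $\{\sigma_W = \sigma_n\}$ is the event that $S$ stays inside $\mathcal{W}(\theta)$ until it leaves $B_n$.

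The crux is a one-scale estimate: there exist $p_0 = p_0(\theta) \in (0,1)$ and $N_0 = N_0(\theta)$ such that for every $r \geq N_0$ and every $w \in \Lambda$ with $r \leq \abs{w} \leq r + O(1)$ and $\abs{\phi(w)} \leq \theta/2$, the walk from $w$ reaches $\partial B_{2r}$ at a point of $\mathcal{W}(\theta/2)$, having stayed in $\mathcal{W}(\theta) \cap B_{2r}$ throughout, with probability at least $p_0$. I would prove this by rescaling by $r$: the region and the target then become a fixed domain, independent of $r$ and (Brownian motion being rotationally invariant) of $\arg(z)$. The corresponding Brownian event --- started at any point of the compact arc $\{\abs{\,\cdot\,}=1,\ \abs{\arg}\leq\theta/2\}$, reach the middle half of the outer arc while staying in the slightly smaller wedge $\mathcal{W}(\theta-\delta)$ --- has probability bounded below by a positive constant, uniformly in the starting point, since it is the value of a positive continuous harmonic function on a compact set. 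Then one invokes the invariance principle in the form of convergence of random-walk exit distributions, and of whole paths in the sup-metric, to those of Brownian motion on this fixed domain (e.g.\ \cite[Chapter 6]{LL08}); for $r \geq N_0$ the discrete probability is at least, say, half the Brownian one, the margin $\mathcal{W}(\theta-\delta)$ versus $\mathcal{W}(\theta)$ absorbing the $O(1)$ overshoot of the bounded-range walk across the angular rays and across $\partial B_{2r}$.

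Granting the one-scale estimate, the lemma follows by iteration. Put $\rho = \abs{z}$, take $N \geq N_0$ so that $\rho \geq N_0$, and set $K = \lceil \log_2(n/\rho) \rceil \geq 1$, so that $2^K\rho \geq n$ and $2^k\rho \geq N_0$ for all $0 \leq k \leq K$. Let $\tau_k$ be the first exit time of $B_{2^k\rho}$ and let $A_k$ be the event that $S$ reaches $\partial B_{2^k\rho}$ inside $\mathcal{W}(\theta/2)$ while staying in $\mathcal{W}(\theta)$ on $[\tau_{k-1},\tau_k]$. Since $z$ lies on the spine ($\abs{\phi(z)} = 0 \leq \theta/2$), the strong Markov property at $\tau_{k-1}$ together with the one-scale estimate gives, on $A_1 \cap \dots \cap A_{k-1}$, that $A_k$ has conditional probability at least $p_0$; hence $\Pro{z}{A_1 \cap \dots \cap A_K} \geq p_0^K$. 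On $A_1 \cap \dots \cap A_K$ the walk stays in $\mathcal{W}(\theta)$ up to $\tau_K$, and $\sigma_n \leq \tau_K$ because $B_n \subseteq B_{2^K\rho}$, so $\sigma_W = \sigma_n$. Finally, since $K \leq \log_2(n/\rho)+1$,
$$ p_0^K \geq p_0^{\,\log_2(n/\rho)+1} = p_0\left(\frac{n}{\rho}\right)^{\log_2 p_0} = p_0\left(\frac{\abs{z}}{n}\right)^{-\log_2 p_0}, $$
so the lemma holds with $\alpha(\theta) = -\log_2 p_0 = \log_2(1/p_0) > 0$ and $c(\theta) = p_0$ (the bound is trivially consistent in the degenerate case $K = 1$, i.e.\ $n \leq 2\rho$).

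The step I expect to be the real obstacle is the one-scale estimate --- specifically, making the lower bound uniform both in the starting point and in the scale $r$, while correctly handling the bounded (non-nearest-neighbor) range of the walk near the angular boundary of the wedge. The rotation issue is itself harmless (the Brownian quantity is rotation invariant, and the random-walk/Brownian comparison on a fixed domain holds regardless of the domain's orientation relative to $\Lambda$), but one must: (i) formulate the Brownian event with a genuine margin ($\mathcal{W}(\theta-\delta)$, with the outer target strictly inside the $\theta/2$-arc) so that uniform closeness of paths transfers the confinement from the Brownian motion to the random walk; and (ii) choose $N_0$ large enough that overshoot across $\partial B_{2r}$, overshoot across the angular rays, and discreteness of the exit point are all dominated by $\delta$. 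These points are routine but need to be stated with care.
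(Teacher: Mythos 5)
Your proposal is correct and takes essentially the same approach as the paper: a multi-scale iteration in which each scale costs a constant factor $p_0(\theta)>0$ bounded below via the invariance principle, yielding a bound of the form $p_0^{O(\log(n/|z|))}\asymp(|z|/n)^{\alpha}$. The only difference is your choice of working between fixed dyadic circles (which forces you to prove a path-confinement one-scale estimate in an annular wedge), whereas the paper uses the exit from the largest ball inscribed at the current position --- the ball lies in the cone automatically, so confinement is free and the per-scale invariance-principle input is merely a lower bound on the probability that the exit direction lies within $\theta/2$ of the outward ray, which is slightly cleaner.
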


\begin{remark}
By comparison with Brownian motion, one expects that $\alpha(\theta) = \pi/\theta$ would be the optimal constant. However, in this paper we will only need the existence of $\alpha$ and not its exact value.
\end{remark}

\begin{proof}
It is clear that we can make $\alpha(\theta)$ non-increasing in $\theta$, therefore, without loss of generality, take $\theta < \pi/2$. Also, without loss of generality, assume $\arg(z) = 0$. 

Let $\wt{W}$ be the cone
$$ \wt{W} = \{  x \in \Lambda : \abs{\arg(x)} \leq \theta \}.$$
 
We define a random sequence of points $\{ z_k \} \subset \wt{W}$ as follows. We let $z_0 = z$. Then, given $z_k$, we let $B_k$ be the largest ball centered at $z_k$ such that $\overline{B_k} \subset \wt{W}$, $r_k$ be the radius of $B_k$ and let $z_{k+1} = S(\sigma_{B_k})$ where $S$ is a random walk starting at $z_k$.

We note that $z_j = z_k$ for all $j \geq k$ if and only if $z_k \in \partial_i \wt{W}$. We make $N(\theta)$ large enough to ensure that if $\abs{z} > N$ then $z \notin \partial_i \wt{W}$. In this case, there exists $c'(\theta) > 0$ such that $r_0 \geq c'(\theta) \abs{z}$.  

Let $E_k$ denote the event that $z_{k+1} \neq z_k$ and that
$$ \abs{ \arg (z_{k+1} - z_k)} \leq \frac{\theta}{2}.$$ 
On the event $E_k$,
$$r_{k+1} \geq (1 + 2 \sin(\theta/4))r_k = \wt{c}(\theta)r_k,$$
and
$$\abs{z_{k+1}}^2 \geq \abs{z_k}^2 + r_k^2$$
(we use the fact that $\theta < \pi/2$ for the second assertion). Therefore, if $E_0, \ldots, E_j$ all hold, then 
$$ r_k \geq \wt{c}^k r_0 \geq \wt{c}^k c' \abs{z}$$ 
for $k = 1, \ldots, j$. Therefore,
\begin{eqnarray*} 
\abs{z_{j+1}}^2 &\geq& \abs{z}^2 + \sum_{k=0}^j r_k^2 \\
&\geq& \abs{z}^2 + \sum_{k=0}^j (c')^2 \wt{c}^{2k}  \abs{z}^2 \\
&\geq& (c')^2\wt{c}^{2j} \abs{z}^2.
\end{eqnarray*}
Since $\wt{c} > 1$, it follows that if we let $j$ be the smallest integer such that 
$$ j \geq \frac{\log(n/c'\abs{z})}{\log(\wt{c})},$$ then
$$ \bigcap_{k=0}^j E_j \subset \{ \sigma_W = \sigma_n \}.$$
Finally, by the invariance principle, there exists a constant $c''(\theta)$ and $N$ such that for $n \geq N$, $\Prob{}{E_{k}} \geq c''$ for all $k$. Therefore,
$$ \Pro{z}{\sigma_W =\sigma_n} \geq \Prob{}{\bigcap_{k=0}^{j} E_k} = \prod_{k=0}^{j} \Prob{}{E_k} \geq (c'')^{j+1} \geq c(\theta) \left( \frac{\abs{z}}{n} \right)^{\alpha(\theta)},$$
where $\alpha = - \log(c'')/\log(\wt{c})$ and $c = c''(c')^\alpha$.
\end{proof}

\begin{cor} \label{dfr}
Fix $\theta_1, \theta_2 \in (0, \pi/2)$. There exist $N$, $\alpha$ and $c>0$ depending only on $\theta_1 + \theta_2$ such that the following holds. Let $N \leq l < m < n$,  and $z \in \partial B_m$. Let $W$ be the half-wedge
$$ W = \{ x \in \Lambda: l  \leq \abs{x} \leq n, - \theta_1 \leq \arg(x) - \arg(z) \leq \theta_2 \}.$$ 
\begin{enumerate}
\item Let $r = \min \{m \sin \theta_1, m \sin \theta_2, m - l \}$. Then for any $K \subset B_m$,
$$\condPro{z}{S[0,\sigma_n] \subset W }{ \sigma_n < \xi_{K}} \geq c \left( \frac{r}{n} \right)^{\alpha}.$$
\item  Let $r' = \min \{m \sin \theta_1, m \sin \theta_2, n - m \}$. There exists $\beta = \beta(\theta_1 + \theta_2, l/m)$ such that for any $K \subset \Lambda \setminus B_m$,
$$\condPro{z}{S[0,\xi_l] \subset W }{ \xi_l < \xi_{K}} \geq c \left( \frac{r'}{m} \right)^{\beta}.$$
\end{enumerate}
Notice that in both cases, the right hand side depends only on $\theta_1$, $\theta_2$, and the ratios $l/n$ and $m/n$.
\end{cor}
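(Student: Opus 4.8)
The plan is to prove (1) in full and to obtain (2) by running the same scheme ``inwards'', where it is in fact simpler because a walk that never leaves $B_m$ automatically avoids $K\subset\Lambda\setminus B_m$. Write $\theta=(\theta_1+\theta_2)/2<\pi/2$; all constants below depend only on $\theta$, and for (2) also on $l/m$. Expressing the conditional probability as $\Pro{z}{S[0,\sigma_n]\subset W,\ \sigma_n<\xi_K}\big/\Pro{z}{\sigma_n<\xi_K}$, I would split the walk at $\sigma_{2m}$ (if $2m>n$, read $\sigma_n$ for $\sigma_{2m}$ below, so the ``bulk'' step is vacuous). Since $K\subset B_{2m}$, we have $\{\sigma_n<\xi_K\}\subset\{\sigma_{2m}<\xi_K\}$, so the denominator is at most $\Pro{z}{\sigma_{2m}<\xi_K}$. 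For the numerator, use the strong Markov property at $\sigma_{2m}$ and keep only continuations that stay in $W\cap\{\abs{x}\ge m\}$: such a continuation stays in $W$, avoids $K$, and — started from a point of $\partial B_{2m}$ lying angularly within $\theta/2$ of the centre of $W$ — reaches $\partial B_n$ inside $W$ with probability at least $c(m/n)^{\alpha_1}$, $\alpha_1=\alpha_1(\theta)$, by Lemma~\ref{wedge} applied to the symmetric wedge of half-angle $\theta$ about the central direction of $W$ (the inner radius $m$ causes no problem because we start at radius $2m$, so the ball-iteration in the proof of that lemma goes through verbatim with ``cone'' replaced by ``$\{\abs{x}\ge m\}\cap\text{cone}$'', its proof needing only that the starting point be angularly interior to the sector).

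It then suffices to establish the local estimate
\[ \Pro{z}{S[0,\sigma_{2m}]\subset W,\ \abs{\arg S(\sigma_{2m})-\arg z}\le\theta/2,\ \sigma_{2m}<\xi_K}\ \ge\ c\,(r/m)^{\alpha_2}\,\Pro{z}{\sigma_{2m}<\xi_K} \]
with $\alpha_2=\alpha_2(\theta)$; combined with the previous paragraph this gives the corollary with $\alpha=\max(\alpha_1,\alpha_2)$, since $r/m\le 1$ and $m/n\le 1$.

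To prove the local estimate I would pass to the walk $Y$ obtained by conditioning on $\{\sigma_{2m}<\xi_K\}$, i.e.\ the $h$-transform with $h(w)=\Pro{w}{\sigma_{2m}<\xi_K}$ (see (\ref{htransform})), so that the ratio above equals $\Pro{z}{Y[0,\sigma_{2m}]\subset W,\ \abs{\arg Y(\sigma_{2m})-\arg z}\le\theta/2}$. Run $Y$ through dyadic scales $s_j\asymp 2^j r$, $j=0,\dots,J$ with $J\asymp\log(m/r)$ and $2^J r\asymp m$, let $w_j$ be the position of $Y$ at stage $j$ (with $w_0=z$), and let $E_j$ be the event that the portion of $Y$ between $w_j$ and the first exit of $D(w_j,2s_j)$ stays in $W$, does not come back close to $K$, and ends at a point $w_{j+1}$ that is farther from $\partial B_m$ than $w_j$ and again angularly within $\theta/2$ of the centre of $W$. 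On $\bigcap_j E_j$ the walk $Y$ stays in $W$ and exits $\partial B_{2m}$ angularly centred, so the estimate follows from a per-scale lower bound $\Pro{}{E_j\mid\text{stages}<j}\ge c(\theta)>0$, which yields $c^{J+1}=(r/m)^{\alpha_2}$. The per-scale bound is the crux. Unwinding the $h$-transform it becomes a comparison, for the \emph{unconditioned} walk from $w_j$, of $\sum_{\omega\in E_j}h(\text{endpoint of }\omega)\,p(\omega)$ against $h(w_j)\,\Pro{w_j}{\sigma_{2m}<\xi_K}$; one uses the discrete Harnack principle to see $h$ is comparable at comparable points lying at distance of order $s_j$ from $\partial B_m$ (so the $h$-weighting costs only a constant), hitting estimates for $K$ — the Beurling estimate (Theorem~\ref{beurling}) when $K$ is macroscopically large near $w_j$, elementary estimates otherwise — to discard the excursions returning close to $K$, and the invariance principle on the fixed-shape region $D(w_j,2s_j)\cap W$ for the remaining genuinely unconditioned wedge-escape event. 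Heuristically: where $K$ fills $B_m$ near $z$ the conditioning is exactly what drives $Y$ outward, and where $K$ is far the conditioning is nearly trivial and $E_j$ is an ordinary wedge-escape event; the dyadic scheme handles both uniformly in the arbitrary set $K$.

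For (2) the reduction is shorter: conditioning on $\{\xi_l<\xi_K\}$ and keeping only walks that step into $B_m$ immediately and then reach $B_l$ without ever leaving $B_m$ makes $K$ irrelevant, and lower-bounds the conditional probability by $c$ times the unconditioned probability that a walk started at radius of order $m$ reaches $B_l$ while staying in the truncated cone $W\cap B_m$ and never hitting $\partial B_n$; running the ball-iteration of Lemma~\ref{wedge} (in its truncated-cone form, from the centre of the cone down toward $B_l$) gives a lower bound $c(r'/m)^{\beta}$, the factor $(n-m)/m$ in $r'$ recording how close the starting radius is to the forbidden circle $\partial B_n$ and $\beta=\beta(\theta,l/m)$ absorbing both the $\pi/(2\theta)$-type exponent of the radial traversal from $m$ to $l$ and the gambler's-ruin factor; dropping the conditioning costs nothing since the numerator was bounded below by an event on which $\xi_K$ never occurs and the denominator above by $1$. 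I expect the main obstacle to be the local estimate in the third paragraph: one must control the $K$-conditioned walk at every scale between $r$ and $m$ simultaneously, and since $h$ degenerates on $\partial B_m$ the naive ``$Y$ looks locally like $S$'' comparison fails precisely at the start — the dyadic iteration together with Harnack and Beurling is what makes the argument uniform over all $K\subset B_m$.
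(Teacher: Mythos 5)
There is a genuine gap, and it sits exactly where you flag it. Up to the extra angular constraint on $S(\sigma_{2m})$, your ``local estimate'' is the statement of part~(1) with $n$ replaced by $2m$, so nothing has yet been reduced; all the difficulty has been pushed into the dyadic $h$-transform scheme, whose per-scale bound $\Prob{}{E_j\mid\text{past}}\geq c(\theta)$ you assert but do not prove. That bound is not a routine consequence of Harnack, Beurling and the invariance principle: $h(w)=\Pro{w}{\sigma_{2m}<\xi_K}$ is harmonic only off the arbitrary $K$, which may abut $z$ (even fill $B_m\cap B(z,10r)$), so at the innermost scales the conditioned walk $Y$ need not resemble $S$ at all and Harnack gives nothing across $K$; as you observe yourself, that is precisely where the argument breaks. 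What is needed there is a genuine Dirichlet-problem estimate for a walk started at $z\in\partial B_m$ conditioned to avoid an arbitrary $K\subset B_m$, and this is exactly Proposition~\ref{dirichlet} of the paper, proved by a harmonic-function comparison, not deducible from the general lemmas you invoke. Without it the proposal is incomplete.

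The more important structural point is that, once one has that innermost-scale estimate, the multi-scale conditioning is unnecessary. The paper applies Proposition~\ref{dirichlet} exactly once to $B=B(z,r)$, yielding $\condPro{z}{\abs{\arg(S(\sigma_B)-z)}\leq\pi/4}{\sigma_B<\xi_K}\geq c$. Any such exit point $y$ lies in the disc of radius $r$ tangent to $\partial B_m$ at $z$ from outside, hence strictly outside $B_m$. From a point $w$ near $y$ (still outside $B_m$) one builds a wedge $\wt{W}$ of aperture $\geq(\theta_1+\theta_2)/2$, symmetric about $y$, whose two sides pass through $ne^{-i\theta_1}$ and $ne^{i\theta_2}$; because $\theta_1,\theta_2<\pi/2$, the vertex $w$ is the closest point of $\wt{W}$ to the origin, so $\wt{W}\cap B_m = \emptyset$ and any path staying in $\wt{W}$ automatically avoids $K$ and stays in $W$ up to $\sigma_n$. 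The \emph{unconditioned} wedge escape of Lemma~\ref{wedge} from $y$ therefore gives $\Pro{z}{S[0,\sigma_n]\subset W,\ \sigma_n<\xi_K}\geq c(r/n)^\alpha\,\Pro{z}{\sigma_B<\xi_K}\geq c(r/n)^\alpha\,\Pro{z}{\sigma_n<\xi_K}$, with no $h$-transform and no dyadic iteration. Part~(2) is handled the same way (Proposition~\ref{dirichlet} on $B(z,r')$, then an unconditioned inward cone escape inside $B_m$); the $(r'/m)^\beta$ factor there is the ratio of the initial ball radius to the cone's outer radius, not, as your last paragraph suggests, a gambler's-ruin correction for proximity to $\partial B_n$.
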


\begin{figure}[htp]
\centerline{
\epsfxsize=4in
\epsfysize=3in
\epsfbox{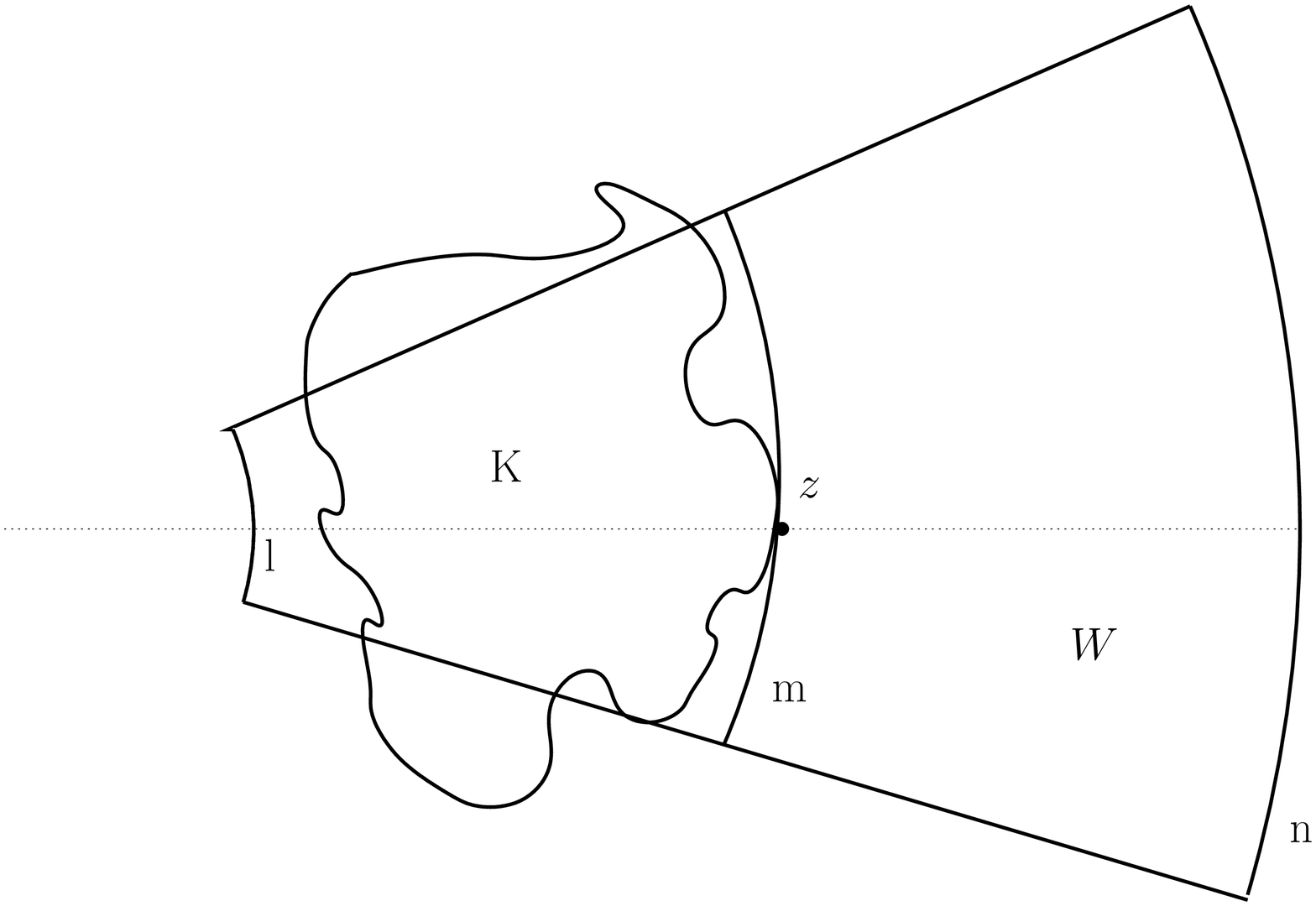}
}
\caption{The setup for part 1 of Corollary \ref{dfr}}
\end{figure}

\begin{proof}
Both parts of the corollary are proved similarly. We prove 1 in detail, and indicate the modifications needed to prove 2.

Without loss of generality, assume that $\arg(z) = 0$. The quantity $r$ defined in the statement of the corollary is the radius of the largest ball with center $z$ whose closure is contained in the half-infinite wedge 
$$ \wh{W} = \{ x \in \Lambda: \abs{x} \geq l, - \theta_1 \leq \arg(x) \leq \theta_2 \}.$$ We can apply Proposition \ref{dirichlet} to the ball $B = B(z,r)$, to obtain that there exists a constant $c > 0$ such that
$$ \condPro{z}{\abs{\arg(S(\sigma_B) - z)} \leq \frac{\pi}{4}}{ \sigma_{B} < \xi_K} \geq c.$$

Let $y$ be any point on $\partial B$ such that $\abs{\arg(y -z)} \leq \pi/4$, and let $\wt{B} = B(y, r/2)$. Note that $\wt{B} \subset \wh{W} \setminus B_m$. There exists a point $w \in \partial \wt{B}$ such that $y$ is on the bisector of the angle formed from the lines joining $w$ to the two outermost "corners" of $W$, $x_1 = n e^{-i\theta_1}$ and $x_2 = n e^{i \theta_2}$. Let
$$s = \max \{\abs{x_1 - w}, \abs{x_2 - w}\} \leq 2n,$$ and let $\wt{W}$ be the wedge with vertex $w$, radius $s$ and such that $x_1$ and $x_2$ are on $\partial \wt{W}$. The wedge $\wt{W}$ will have aperture $\theta \geq (\theta_1 + \theta_2)/2$ and $y$ will be on the axis of symmetry of $\wt{W}$. Therefore, by Lemma \ref{wedge},
\begin{eqnarray*}
\Pro{y}{S[0, \sigma_n] \subset W; \sigma_n < \xi_K} &\geq& \Pro{y}{\sigma_{\wt{W}} = \sigma_{B(w,s)}} \\
&\geq& c(\theta) \left( \frac{r}{s} \right)^{\alpha(\theta)} \\
&\geq& c(\theta_1 + \theta_2) \left( \frac{r}{n} \right)^{\alpha(\theta_1 + \theta_2)}.
\end{eqnarray*}

To finish the proof of 1, let
$$ c^* = c \left( \frac{r}{n} \right)^{\alpha}.$$
Then,
\begin{eqnarray*}
& & \Pro{z}{S[0,\sigma_n] \subset W; \sigma_n < \xi_{K}} \\
&\geq& \sum_{\substack{ y \in \partial B(z,R) \\ \abs{\arg(y - z)} \leq \pi/4}} \Pro{y}{S[0, \sigma_n] \subset W; \sigma_n < \xi_K} \Pro{z}{\sigma_B < \xi_K; S(\sigma_B) = y} \\
&\geq& c^* \Pro{z}{\abs{\arg(S(\sigma_B) - z)} \leq \frac{\pi}{4}; \sigma_{B} < \xi_K} \\
&\geq& c^* \Pro{z}{\sigma_B < \xi_K} \\
&\geq& c^* \Pro{z}{\sigma_n < \xi_K}.
\end{eqnarray*}

The proof of 2 is similar. In this case, the angle $\theta$ of the wedge $\wt{W}$ will be such that
$$ \theta \geq \tan^{-1}\left( \frac{2l \sin((\theta_1 + \theta_2)/2)}{m}\right).$$
This is why $\beta$ will also depend on $l/m$. Besides this observation, the proof of 2 is identical to the proof of 1.

\end{proof}

The following corollary is similar to the previous one, except that here we are conditioning to avoid sets that are on either side of the half-wedge. 

\begin{cor} \label{wedgecond} Let $\theta \in (0, 2\pi]$ and $0 < a < 1 < b < \infty$. Then there exists $N$ and $c > 0$ depending on $a$, $b$ and $\theta$ such that for $n > N$ the following holds. Let $W$ be the half-wedge
$$ W = \{ x : an  \leq \abs{x} \leq 4bn, \abs{\arg(x)} \leq \theta \}.$$  Suppose that $K_1 \subset B_n$ contains a path connecting $\partial B_{an}$ to $\partial_i B_n$, and $K_2 \subset \Lambda \setminus B_{4n}$ contains a path connecting $\partial B_{4n}$ to $\partial B_{4bn}$. Let $K = K_1 \cup K_2$. Then for any $z \in \partial B_n$, $y \in \partial B_{4n}$ with $\abs{\arg(z)} < \theta/2$, $\abs{\arg(y)} < \theta/2$,
$$ \condPro{z}{S[0,\xi_{y}] \subset W }{ \xi_{y} < \xi_K} \geq c.$$
\end{cor}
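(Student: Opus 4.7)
The plan is to prove $\Pro{z}{S[0,\xi_y]\subset W,\,\xi_y<\xi_K}\geq c\,\Pro{z}{\xi_y<\xi_K}$ by decomposing the walk at the first time $\sigma_{4n}$ that it reaches $\partial B_{4n}$. Since $|y|\geq 4n$, we have $\sigma_{4n}\leq\xi_y$, so the walk must first touch $\partial B_{4n}$ at some point $v:=S(\sigma_{4n})$. The strategy is to show that, conditionally on $\xi_y<\xi_K$, both the outward leg (from $z$ to $v$) and the completion leg (from $v$ to $y$) stay in $W$ with probability bounded below by a constant; multiplying then yields the claim.

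For the outward leg, I would apply Corollary \ref{dfr} part 1 with $K$ replaced by $K_1$ (legitimate because $K_1\subset B_n\subset B_m$ for $m\geq n$) and with half-wedge parameters $\theta_1=\theta_2=\theta/2$ so that the resulting half-wedge lies inside $W$. Corollary \ref{dfr} part 1 then yields $\condPro{z}{S[0,\sigma_{4n}]\subset W}{\sigma_{4n}<\xi_{K_1}}\geq c$. To transfer the conditioning from $\{\sigma_{4n}<\xi_{K_1}\}$ to the actual conditioning $\{\xi_y<\xi_K\}$, I would invoke the strong Markov property at $\sigma_{4n}$ together with the discrete Harnack principle applied to the harmonic function $h(x):=\Pro{x}{\xi_y<\xi_K}$: Harnack implies that $h$ is comparable at nearby points on $\partial B_{4n}$, so this change of conditioning modifies probabilities only by a constant factor. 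A symmetric argument using Corollary \ref{dfr} part 2 (with $K$ replaced by $K_2$), combined with a hitting-probability estimate at the specific point $y$ obtained from Lemma \ref{disc} and standard Green's function asymptotics, handles the completion leg.

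The main technical obstacle is handling the two-sided conditioning, since Corollary \ref{dfr} and Proposition \ref{dirichlet} are formulated for $K$ on only one side of the relevant region. The key bridge is the discrete Harnack principle applied to $h(x)=\Pro{x}{\xi_y<\xi_K}$; together with Beurling's estimate (Theorem \ref{beurling}), which confirms that walks in $\{\xi_y<\xi_K\}$ are essentially confined to the annulus $B_{4bn}\setminus B_{an}$, this allows passage from one-sided to two-sided conditioning at only a constant cost. A secondary subtlety is that the target $y$ is a specific point rather than a macroscopic set; this will be dealt with by applying Harnack to $h$ in a neighborhood of $y$ that is bounded away from $K_2$, after using the hypothesis $|\arg(y)|<\theta/2$ to secure enough room.
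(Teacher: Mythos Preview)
Your decomposition at $\sigma_{4n}$ has a genuine gap. The function $h(x)=\Pro{x}{\xi_y<\xi_K}$ vanishes on $K_2$, and by hypothesis $K_2$ contains a path starting on $\partial B_{4n}$; there is no angular restriction on $K_2$, so $K_2$ may lie entirely inside $W$. Hence for landing points $v=S(\sigma_{4n})\in\partial B_{4n}\cap W$, the value $h(v)$ can be arbitrarily small (indeed zero), and the discrete Harnack principle gives no lower bound for $h$ along $\partial B_{4n}$. Your ``change of conditioning'' step therefore fails: knowing that the outward leg lands in $W\cap\partial B_{4n}$ while avoiding $K_1$ does not by itself imply that the full event $\{\xi_y<\xi_K\}$ has comparable probability, because the walk may have landed right next to $K_2$. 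A related problem is your completion leg: both $v$ and $y$ lie on $\partial B_{4n}$, so this is neither an outward nor an inward crossing and Corollary~\ref{dfr} part~2 does not apply.

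The paper avoids both issues by decomposing at the \emph{intermediate} level $\partial B_{2n}$, where $K_1$ and $K_2$ are each at macroscopic distance. From $w\in\partial B_{2n}\cap W$ it then does not use a forward Markov/Harnack argument; instead it applies the last-exit decomposition of Lemma~\ref{rwdecomp}, which rewrites $\condPro{w}{S[0,\xi_y]\subset W}{\xi_y<\xi_K}$ as a ratio of Green's functions times a path-reversed probability started from $y$. The Green's function ratio is controlled by Lemma~\ref{green}, and the reversed probability is then handled by a further decomposition at $\partial B_{3n}$ using Lemma~\ref{rebv}, discrete Harnack (now legitimate, since one is away from both $K_1$ and $K_2$), and Corollary~\ref{dfr} part~2. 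The essential idea you are missing is that the specific target point $y$ sits on the same boundary as $K_2$, so one must reverse paths (via Lemma~\ref{rwdecomp}) to separate the ``hit $y$'' and ``avoid $K_2$'' constraints rather than try to Harnack across $\partial B_{4n}$.
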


\begin{figure}[htp]
\centerline{
\epsfxsize=4.2in
\epsfysize=3in
\epsfbox{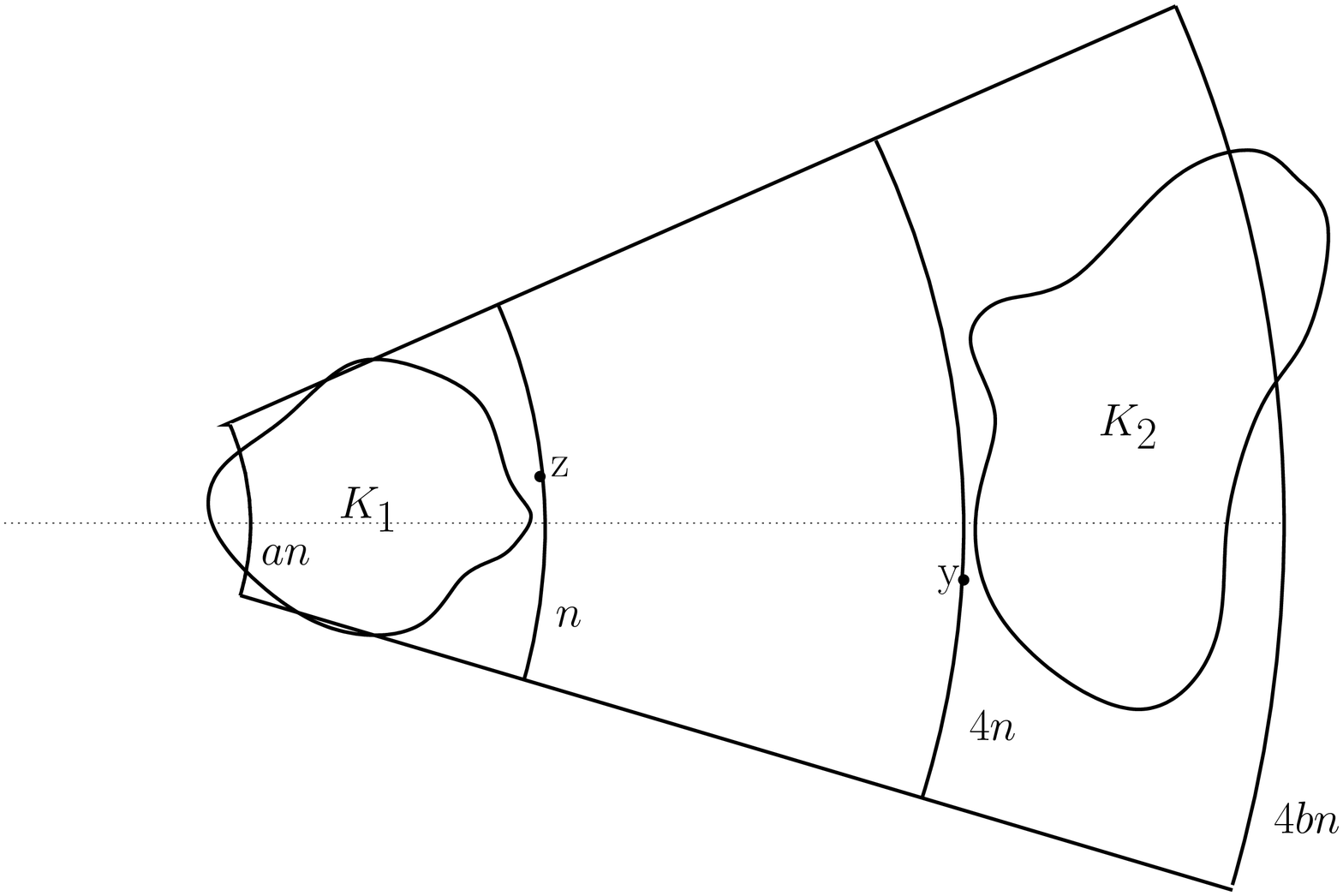}
}
\caption{The setup for Corollary \ref{wedgecond}}
\end{figure}

\begin{proof}
Applying part 1 of Corollary \ref{dfr} to the half-wedge
$$ W^* = \{ x : an  \leq \abs{x} \leq 2n, \abs{\arg(x) - \arg(z)} \leq \frac{\theta}{4} \},$$
one obtains that there exists a constant $c = c(a, \theta)$ such that
$$ \condPro{z}{S[0,\sigma_{2n}] \subset W^*}{ \sigma_{2n} < \xi_K} \geq c.$$
Now suppose that $w \in \partial B_{2n} \cap W^*$. Then by Lemma \ref{rwdecomp}, 
\begin{eqnarray*}
& &
\condPro{w}{S[0,\xi_{y}] \subset W }{ \xi_{y} < \xi_K} \\
&=& \frac{\Pro{w}{\xi_{y} < \sigma_W \wedge \xi_K}}{\Pro{w}{\xi_{y} < \xi_K}} \\
&=& \frac{G(w ; W \setminus (\{y\} \cup K))\Pro{y}{\xi_{w} < \sigma_W \wedge \xi_K \wedge \xi_y}}{G(w; \Lambda \setminus (\{y\} \cup K)) \Pro{y}{\xi_{w} < \xi_{K} \wedge \xi_y}}.
\end{eqnarray*}
However, for $w \in \partial B_{2n} \cap W^*$, 
$$ G(w ; W \setminus (\{y\} \cup K)) \geq G(w; B_{c(\theta)n}(w)),$$ and therefore by Lemma \ref{green},
$$ \frac{G(w ; W \setminus (\{y\} \cup K))}{G(w; \Lambda \setminus (\{y\} \cup K))} \geq c(\theta, b).$$
Thus,
$$ \condPro{w}{S[0,\xi_{y}] \subset W}{ \xi_{y} < \sigma_K} \geq c(\theta,b) \frac{\Pro{y}{\xi_{w} < \sigma_W \wedge \xi_K \wedge \xi_y}}{\Pro{y}{\xi_{w} < \xi_{K} \wedge \xi_y}}.$$

By the strong Markov property,
\begin{eqnarray*}
& & \Pro{y}{\xi_{w} < \sigma_W \wedge \xi_K \wedge \xi_y} \\
&\geq& \sum_{x \in \partial_i B_{3n} \cap W^*} \Pro{x}{\xi_w < \sigma_W \wedge \xi_K \wedge \xi_y} \Pro{y}{S(\xi_{3n}) = x ; \xi_{3n} < \sigma_{W^*} \wedge \xi_K \wedge \xi_y}.
\end{eqnarray*}
However, by Lemma \ref{rebv}, there exists $c(\theta,b)$ such that for $x \in \partial_i B_{3n} \cap W^*$,
$$ \Pro{x}{\xi_w < \sigma_W \wedge \xi_K \wedge \xi_y} \geq c(\theta,b) \Pro{x}{\xi_w < \xi_K \wedge \xi_y}.$$
Furthermore, by the discrete Harnack inequality, there exists $c > 0$ such that for all $x, x' \in \partial_i B_{3n}$, 
$$ \Pro{x}{\xi_w < \xi_K \wedge \xi_y} \geq c \Pro{x'}{\xi_w < \xi_K \wedge \xi_y}.$$
Thus, fixing $x' \in \partial_i B_{3n}$,
$$ \Pro{y}{\xi_{w} < \sigma_W \wedge \xi_K \wedge \xi_y} \geq c(\theta,b) \Pro{x'}{\xi_w < \xi_K \wedge \xi_y} \Pro{y}{\xi_{3n} < \sigma_{W^*} \wedge \xi_K \wedge \xi_y}.$$
Similarly,
$$ \Pro{y}{\xi_{w} < \xi_K \wedge \xi_y} \leq c \Pro{x'}{\xi_w < \xi_K \wedge \xi_y} \Pro{y}{\xi_{3n} < \xi_K \wedge \xi_y}.$$
Therefore using part 2 of Corollary \ref{dfr},
\begin{eqnarray*} 
\condPro{w}{S[0,\xi_{y}] \subset W}{\xi_{y} < \sigma_K} &\geq& c(\theta,b) \frac{\Pro{y}{\xi_{w} < \sigma_W \wedge \xi_K \wedge \xi_y}}{\Pro{y}{\xi_{w} < \xi_{K} \wedge \xi_y}}\\
&\geq& c(\theta,b) \frac{\Pro{y}{\xi_{3n} < \sigma_{W^*} \wedge \xi_K \wedge \xi_y}}{\Pro{y}{\xi_{3n} < \xi_K \wedge \xi_y}} \\
&=& c(\theta,b) \condPro{y}{\xi_{3n} < \sigma_{W^*} }{ \xi_{3n} < \xi_K \wedge \xi_y} \\
&\geq& c(\theta, b).
\end{eqnarray*}

\end{proof}

\subsection{Random walk approximations to hitting probabilities of curves by Brownian motion} \label{rwapproxsec}

Given a random walk $S$ on a discrete lattice $\Lambda$, we can make $S$ into a continuous curve $S_t$ by linear interpolation and define $S_t^{(n)} = n^{-1} S_{n^2t}$. Now fix a continuous curve $\alpha:[0,t_\alpha] \to \overline{\Disc}$. In this section, we will compare the probability that a Brownian motion $W_t$ started at the origin leaves the unit disk before hitting $\alpha$ to the probability that $S^{(n)}_t$ started at the origin leaves the unit disk before hitting $\alpha$. By the invariance principle, one can show that as $n$ tends to infinity, the latter probability approaches the former. What is more difficult is to show that this convergence is uniform in $\alpha$ as long as the diameter of $\alpha$ is sufficiently large. This is Proposition \ref{a} and is the main result of the section.

For $0 < \delta < 1$, let $A_\delta$ denote the annulus
$$ A_\delta = \{ z \in \Complex : 1 - \delta < \abs{z} < 1 \}.$$
Given a curve $\alpha:[0,t_\alpha] \to \overline{\Disc}$, let
$$ C_\delta(\alpha) =  \{ e^{i\theta} : \theta = \arg(z) \text{ for some $z \in \alpha[0,t_\alpha] \cap A_\delta$} \},$$
and
$$ \wt{C}_\delta(\alpha) = \{ z \in \partial \Disc : \operatorname{dist}(z,C_\delta) < \delta \}.$$
Recall that $D_\delta(z)$ is the disk of radius $\delta$ centered at $z$.

\begin{lemma} \label{jordan}
For all $0 < \delta < 1$ and all simple continuous curves $\alpha: [0,t_\alpha] \to \Disc$, there exist two points $z_1, z_2 \in \partial \Disc$ such that $D_\delta(z_1) \cup \alpha[0, t_\alpha] \cup D_\delta(z_2)$ disconnects $0$ from $\wt{C}_\delta(\alpha)$ in $\Disc$.
\end{lemma}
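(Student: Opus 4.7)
The plan is as follows. If $\alpha \cap A_\delta = \emptyset$, then $C_\delta$ and $\wt{C}_\delta$ are both empty and the conclusion holds trivially for any $z_1, z_2 \in \partial\Disc$. Assume therefore that $\alpha$ meets $A_\delta$. I will construct an explicit Jordan arc $\beta \subset \alpha[0,t_\alpha] \cup \overline{D_\delta(z_1)} \cup \overline{D_\delta(z_2)}$ with endpoints $z_1, z_2 \in \partial\Disc$ and apply the Jordan curve theorem inside $\overline{\Disc}$ to split $\Disc$ into two open components, with $0$ in one and $\wt{C}_\delta$ reachable only through the other.

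To choose the endpoints, let $s_1 = \inf\{t : \alpha(t) \in A_\delta\}$ and $s_2 = \sup\{t : \alpha(t) \in A_\delta\}$. Since $A_\delta$ is open, I can find $t_1 \in [s_1,s_2]$ slightly larger than $s_1$ and $t_2 \in [s_1,s_2]$ slightly smaller than $s_2$, with $\alpha(t_i) \in A_\delta$ and $|\alpha(t_i)| > 1-\delta$ strictly. Setting $z_i = \alpha(t_i)/|\alpha(t_i)| \in \partial \Disc$, we have $|\alpha(t_i) - z_i| = 1 - |\alpha(t_i)| < \delta$, so $\alpha(t_i) \in D_\delta(z_i)$. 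By the extremality of $s_1, s_2$, both tails $\alpha|_{[0,t_1]}$ and $\alpha|_{[t_2,t_\alpha]}$ lie outside $A_\delta$, and hence $C_\delta$ consists only of angular projections of points of $\alpha|_{[t_1,t_2]} \cap A_\delta$.

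Now assemble $\beta$ by concatenating the radial segment from $z_1$ to $\alpha(t_1)$ (contained in $\overline{D_\delta(z_1)}$), the sub-arc $\alpha|_{[t_1,t_2]}$, and the radial segment from $\alpha(t_2)$ to $z_2$ (contained in $\overline{D_\delta(z_2)}$). Simplicity of $\alpha$, together with a further small adjustment of $t_1,t_2$ if necessary to keep the two short radial segments from re-hitting $\alpha|_{[t_1,t_2]}$, guarantees that $\beta$ is a simple arc in $\overline{\Disc}$ with endpoints $z_1, z_2 \in \partial \Disc$. By the Jordan curve theorem applied in $\overline{\Disc}$, the complement $\overline{\Disc}\setminus\beta$ splits into two open components $U_1, U_2$ whose closures meet $\partial\Disc$ in the two complementary open arcs $A_1, A_2$ from $z_1$ to $z_2$. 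Relabel so that $0 \in U_1$.

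The remaining content of the lemma is the inclusion $\wt{C}_\delta \subset A_2 \cup \overline{D_\delta(z_1)} \cup \overline{D_\delta(z_2)}$: once this is established, any continuous path in $\Disc$ from $0$ to a point of $\wt{C}_\delta$ must either cross $\beta$ (hence meet $\alpha[0,t_\alpha]\cup D_\delta(z_1) \cup D_\delta(z_2)$) or terminate inside one of the two disks. For $z \in C_\delta$, pick $w \in \alpha|_{[t_1,t_2]} \cap A_\delta$ with radial projection $z$; then $w \in \beta$ and the radial segment from $w$ to $z$ lies in $\overline{A_\delta}$ and meets $\partial\Disc$ only at $z$. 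A continuity/topological argument using the simplicity of $\alpha$, together with the fact that the tails $\alpha|_{[0,t_1]}$ and $\alpha|_{[t_2,t_\alpha]}$ avoid $A_\delta$, shows that this radial segment is trapped in $\overline{U_2}$, forcing $z \in A_2 \cup \{z_1, z_2\}$; the $\delta$-thickening that converts $C_\delta$ into $\wt{C}_\delta$ is then absorbed by $\overline{D_\delta(z_1)} \cup \overline{D_\delta(z_2)}$ in a neighborhood of the endpoints of $A_2$. The main obstacle is precisely this topological side-selection step — verifying that the radial segments emerging from $\alpha \cap A_\delta$ all exit $\beta$ on the $U_2$-side rather than the $U_1$-side containing $0$. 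This is delicate when $\alpha$ winds around $0$ or re-enters $A_\delta$ repeatedly, and is the reason that the extremality of $s_1, s_2$ (and hence the fact that no angular contributions to $C_\delta$ come from outside $[t_1,t_2]$) must be used crucially.
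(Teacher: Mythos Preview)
Your approach has two genuine gaps, and both stem from choosing $z_1,z_2$ via the \emph{time}-extremal visits of $\alpha$ to $A_\delta$ rather than via something that tracks winding.

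\textbf{Simplicity of $\beta$.} You claim that ``a further small adjustment of $t_1,t_2$'' keeps the two radial segments from re-hitting $\alpha|_{[t_1,t_2]}$. This is not justified and in fact can fail. Take $\alpha$ that spirals outward inside $A_\delta$, say from $(1-\delta/2)e^{i0}$ counterclockwise through more than one full turn before leaving $A_\delta$ inward. Then for any $t_1$ near $s_1$, the radial segment from $\alpha(t_1)$ to its projection $z_1$ lies on a ray that $\alpha$ crosses again at a strictly larger radius on a later turn. No small perturbation of $t_1$ avoids this: moving $t_1$ forward just moves the ray, and the next turn of the spiral still crosses it. The paper handles this by taking $z_k=e^{i\theta_k}$ with $\theta_k$ the extremal values of the \emph{continuous} argument $\operatorname{Arg}(\alpha(t))$ over $\alpha\cap A_\delta$, and then attaching the ray at the \emph{outermost} point $r_k e^{i\theta_k}$ of $\alpha$ on that ray; by construction the open segment $\{re^{i\theta_k}:r_k<r\le 1\}$ is disjoint from $\alpha$, so simplicity is automatic.

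\textbf{Side-selection.} You correctly identify this as ``the main obstacle,'' but then only assert that ``a continuity/topological argument'' forces the radial segments from $\alpha\cap A_\delta$ to lie in $\overline{U_2}$. This is exactly the content of the lemma, and nothing in your setup makes it evident: with first/last-time endpoints there is no control on how $\alpha|_{[t_1,t_2]}$ winds around $0$, and hence no reason for $C_\delta$ to land in a single boundary arc determined by $z_1,z_2$. The paper's use of $\operatorname{Arg}$ makes this step clean. If $\theta_2-\theta_1<2\pi$, then $C_\delta\subset\{e^{i\theta}:\theta_1\le\theta\le\theta_2\}$ by definition of $\theta_1,\theta_2$, and the closed curve formed by $\alpha|_{[t_1,t_2]}$, the two rays, and the \emph{complementary} boundary arc has winding number $0$ about the origin, so $0$ lies outside. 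If $\theta_2-\theta_1\ge 2\pi$, a separate argument shows that the rays $r_\delta(z_1)$ or $r_\delta(z_2)$ must contain two points of $\alpha$ whose continuous arguments differ by a nonzero multiple of $2\pi$, producing a Jordan subcurve of $\alpha\cup r_\delta(z_1)\cup r_\delta(z_2)$ that actually encloses $0$; in that case the set disconnects $0$ from all of $\partial\Disc$.

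In short, the missing idea is to replace ``first/last time in $A_\delta$'' by ``extremal continuous argument in $A_\delta$,'' together with attaching rays at the outermost point of $\alpha$ on each extremal ray. That choice simultaneously delivers simplicity of the Jordan curve and a one-line winding-number computation for the side-selection.
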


\begin{proof}
If $0 \in \alpha[0, t_\alpha]$, then the result is immediate, so we will assume that $0 \notin \alpha[0, t_\alpha]$. In that case, $\Arg(\alpha(t))$, the continuous argument of $\alpha(t)$ is well defined for all $t \in [0, t_\alpha]$, where we let $\Arg(\alpha(0)) = \arg(\alpha(0)) \in (-\pi, \pi]$. Let
$$ \theta_1 = \inf \{ \Arg(\alpha(t)): \alpha(t) \in A_\delta \} \hskip20pt \text{ and } \hskip20pt \theta_2 = \sup \{ \Arg(\alpha(t)): \alpha(t) \in A_\delta \}.$$ We consider two cases: $\theta_2 - \theta_1 < 2\pi$, and $\theta_2 - \theta_1 \geq 2\pi$.

Suppose first that $\theta_2 - \theta_1 < 2\pi$. For $k=1,2$, let $z_k = e^{i\theta_k}$, $r_k = \sup \{ r : re^{i\theta_k} \in \alpha[0, t_\alpha] \}$ and $t_k$ be such that $\alpha(t_k) = r_k e^{i \theta_k}$ (the $t_k$ exist because $\alpha[0, t_\alpha]$ is compact). See Figure \ref{pic7}.

\begin{figure}[htp]
\centerline{
\epsfxsize=3.5in
\epsfysize=3.5in
\epsfbox{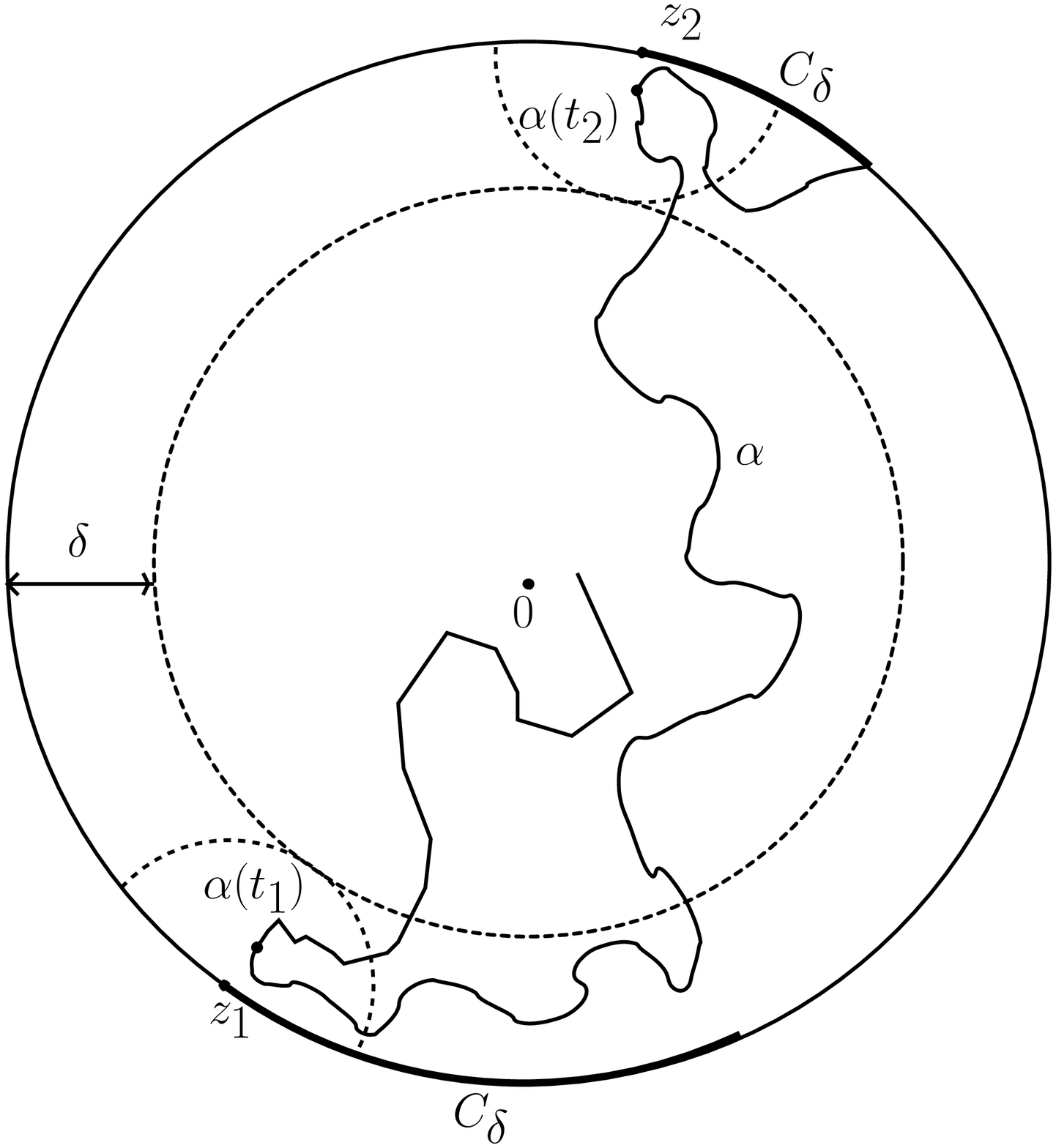}
}
\caption{The set $C_\delta$ and the points $z_1$, $z_2$ and $\alpha(t_1)$, $\alpha(t_2)$}
\label{pic7}
\end{figure}

 We construct a continuous curve $\omega$ as follows.  Given any $z = e^{i \theta} \in \partial \Disc$, let
$$ r_\delta(z) = \{ r e^{i\theta} : 1 - \delta \leq r \leq 1\}.$$We let $\omega(0) = \alpha(t_1)$, then follow the curve $\alpha$ from $\alpha(t_1)$ to $\alpha(t_2)$ (we might be following the curve backwards), then the ray $r_\delta(z_2)$ from $\alpha(t_2)$ to $z_2$, then $\partial \Disc$ clockwise from $z_2$ to $z_1$, and finally the ray $r_\delta(z_1)$ back to $\alpha(t_1)$. See Figure \ref{pic8}.

\begin{figure}[htp]
\centerline{
\epsfxsize=3.5in
\epsfysize=3.5in
\epsfbox{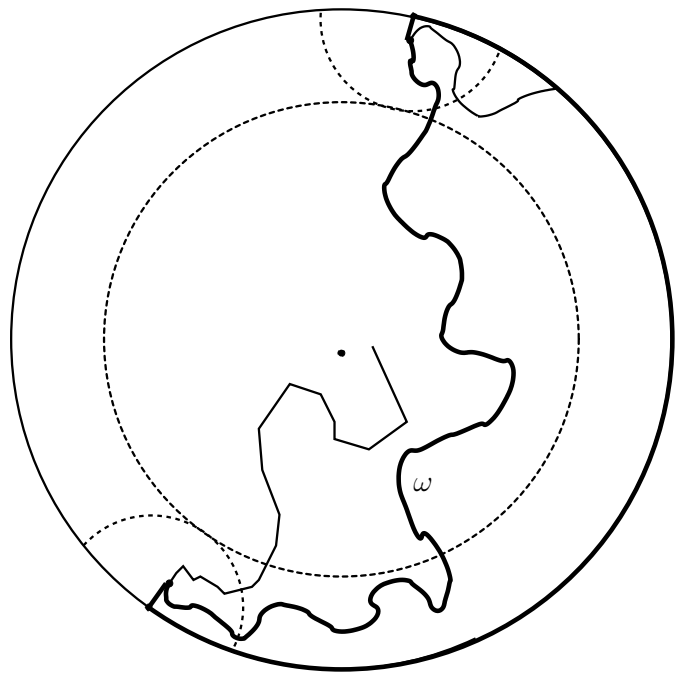}
}
\caption{The curve $\omega$ in the case $\theta_2 - \theta_1 < 2 \pi$}
\label{pic8}
\end{figure}

From the definition of the $t_k$ and $z_k$, and the fact that $\alpha$ is simple, $\omega$ is a closed simple curve. Therefore, by the Jordan curve theorem, $\omega$ separates the plane into two disjoint connected components. Furthermore, because $\theta_2 - \theta_1 < 2\pi$, the winding number of $\omega$ with respect to $0$ is $0$. Therefore, $0$ is in the unbounded component defined by $\omega$. Furthermore, $C_\delta \subset [z_1,z_2]$, where $[z_1, z_2]$ is the arc $\{e^{i\theta} : \theta_1 \leq \theta \leq \theta_2\}$. Therefore, $D_\delta(z_1) \cup \alpha[0,t_\alpha] \cup D_\delta(z_2)$ disconnects $0$ from $\wt{C}_\delta$ in $\Disc$.

Now suppose that $\theta_2 - \theta_1 \geq 2\pi$. Let $z_1 = e^{i \theta_1}$ and $z_2 = e^{i \theta_2}$. In order to prove the lemma for this case, we claim that it suffices to show that either $r_\delta(z_1) \cap \alpha[0, t_\alpha]$ or $r_\delta(z_2) \cap \alpha[0, t_\alpha]$ contains two points whose Argument differs by a nonzero multiple of $2\pi$. For suppose that $w_1 = \abs{w_1}e^{i\theta_1} = \alpha(s_1)$ and $w_2 = \abs{w_2} e^{i\theta_1} = \alpha(s_2)$ are such that $\Arg(w_2) - \Arg(w_1) = 2k\pi$, $k \neq 0$, and $w_1$ and $w_2$ are chosen so that $\arg(\alpha(t)) \neq \theta_1$ for $t$ between $s_1$ and $s_2$. Also, without loss of generality, $\abs{w_1} < \abs{w_2}$. Then we can consider the curve $\omega$ that starts at $w_1$, follows $\alpha$ from $w_1$ to $w_2$, and then returns to $w_1$ along the ray $r_\delta(z_1)$ (see Figure \ref{pic9}). By construction, $\omega$ is a closed simple curve whose winding number is nonzero. Therefore, $\omega$ contains $0$, and since $\omega \subset \Disc$, it disconnects $0$ from $\partial \Disc$. This shows that $r_\delta(z_1) \cup \alpha[0, t_\alpha] \cup r_\delta(z_2)$ disconnects $0$ from $\partial \Disc$, from which the lemma follows. 

\begin{figure}[htp]
\centerline{
\epsfxsize=3.5in
\epsfysize=3.5in
\epsfbox{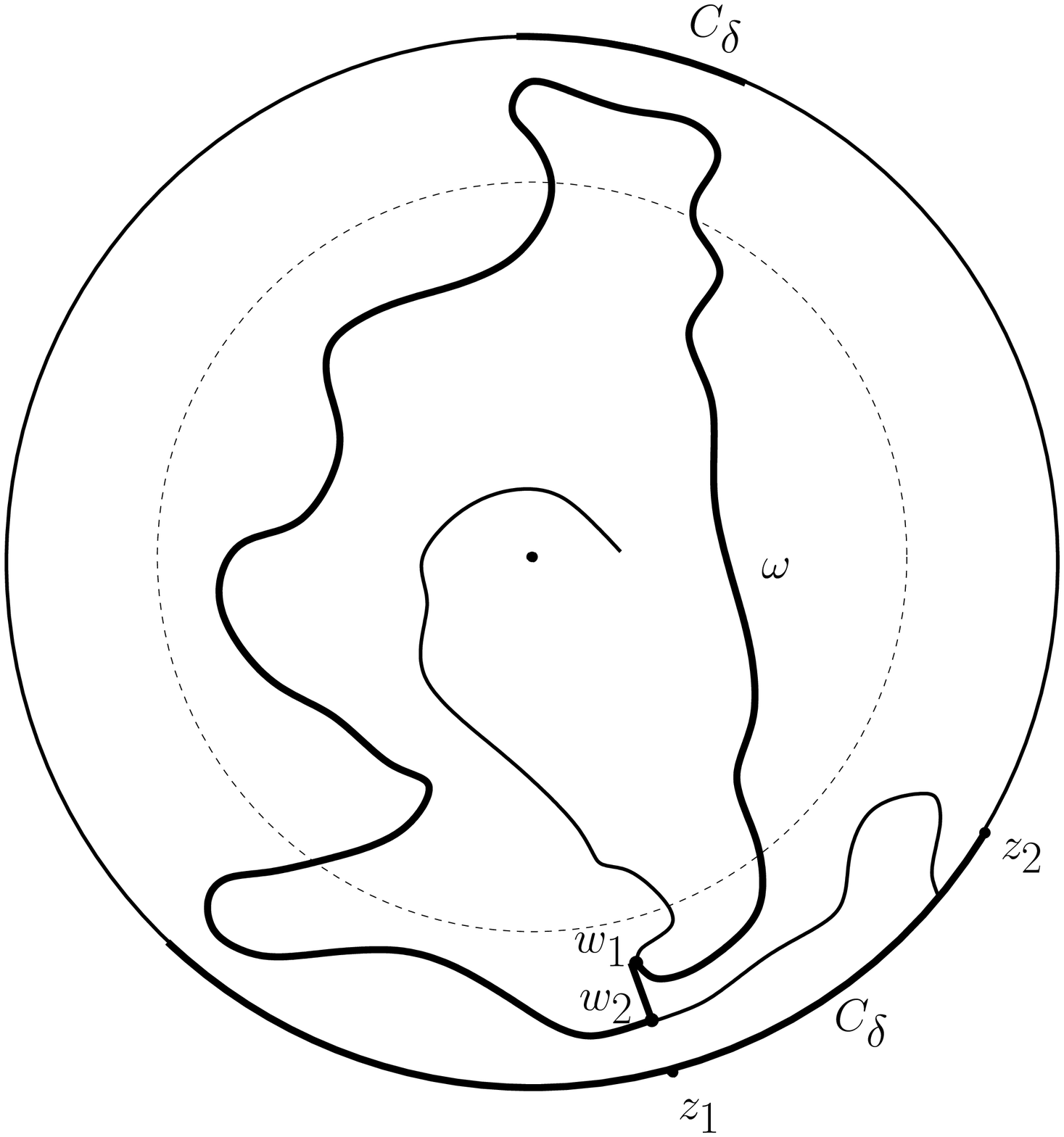}
}
\caption{The curve $\omega$ in the case $\theta_2 - \theta_1 > 2 \pi$}
\label{pic9}
\end{figure}

In order to show that either $r_\delta(z_1) \cap \alpha[0, t_\alpha]$ or $r_\delta(z_2) \cap \alpha[0, t_\alpha]$ contains two points whose Argument differs by a nonzero multiple of $2\pi$, we let $r_k$ and $t_k$ be such that $r_k = \sup \{ \abs{\alpha(t)} : \Arg(\alpha(t)) = \theta_k \}$ and $\alpha(t_k) = r_k e^{i \theta_k}$, $k=1,2$. We assume to the contrary that both $\{ r e^{i\theta_1} : r_1 < r \leq 1 \} \cap \alpha[0, t_\alpha] = \emptyset$ and $\{ r e^{i\theta_2} : r_2 < r \leq 1 \} \cap \alpha[0, t_\alpha] = \emptyset$. Then we can define two curves $\omega_1$ and $\omega_2$ as follows. $\omega_1$ starts at $r_1 e^{i \theta_1}$, travels along $\alpha$ to $r_2 e^{i \theta_2}$, follows the ray $r_\delta(z_2)$ to $z_2$, then travels along $\partial \Disc$ clockwise to $z_1$, and finally returns to $r_1 e^{i \theta_1}$ along $r_\delta(z_1)$. We define $\omega_2$ in the same way except that we travel along $\partial \Disc$ clockwise. Then by our assumptions, $\omega_1$ and $\omega_2$ are both closed simple curves with nonzero winding number, and hence both contain the origin, a contradiction.
\end{proof}

\begin{lemma} \label{skor}
For all $r> 0$ and $\epsilon > 0$, there exists $N$ such that for all $n \geq N$, the following holds. There exists a Brownian motion $W$ and a random walk $S$ defined on the same probability space such that if $S^{(n)}_t = n^{-1}S_{n^2t}$, then for all continuous curves $\alpha: [0, t_\alpha] \to \overline{\Disc}$, with $\operatorname{diam}(\alpha[0,t_\alpha]) \geq r$,
$$ \Pro{0}{\abs{W(\tau_\alpha \wedge \tau_\Disc) - S^{(n)}(\xi_\alpha \wedge \sigma_\Disc)} > r} < \epsilon.$$
\end{lemma}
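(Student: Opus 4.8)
The plan is to realize $W$ and $S$ on a common probability space via the strong approximation of a random walk by Brownian motion, so that for a fixed (large) time horizon $T$ and a small parameter $\rho>0$ one has $\sup_{0\le t\le T}\abs{W_t-S^{(n)}_t}\le\rho$ with probability close to $1$ once $n$ is large (uniformly, since the coupling does not see $\alpha$). Fix $\alpha$ with $\operatorname{diam}(\alpha[0,t_\alpha])\ge r$ and write $\tau=\tau_\alpha\wedge\tau_\Disc$, a stopping time for $W$, and $\xi=\xi_\alpha\wedge\sigma_\Disc$, a stopping time for $S^{(n)}$. First choose $T$ so large that $\prob{\tau_\Disc>T}<\epsilon/6$ and, using $\expect{\sigma_{B_n}}\le Cn^2$ (optional stopping applied to the martingale $\abs{S_k}^2-2k$, the covariance being the identity) together with Markov's inequality, also $\prob{\sigma_\Disc>T}<\epsilon/6$ uniformly in $n$. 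Let $G$ be the event that $\sup_{t\le T}\abs{W_t-S^{(n)}_t}\le\rho$, $\tau_\Disc\le T$ and $\sigma_\Disc\le T$; then $\prob{G^c}<\epsilon/2$ for $n$ large (depending on $\rho,T$), and on $G$ we have $\tau,\xi\le T$. It therefore suffices to bound $\prob{\{\abs{W(\tau)-S^{(n)}(\xi)}>r\}\cap G}$ by $\epsilon/2$, uniformly over admissible $\alpha$.

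Work first on $G\cap\{\tau\le\xi\}$. Since $W(\tau)\in\alpha\cup\partial\Disc$ we get $\dist(S^{(n)}(\tau),\alpha\cup\partial\Disc)\le\abs{S^{(n)}(\tau)-W(\tau)}\le\rho$. Introduce the genuine $S^{(n)}$-stopping time $\zeta=\inf\{t\ge0:\dist(S^{(n)}_t,\alpha\cup\partial\Disc)\le 2\rho\}$ (rounded up to the next multiple of $n^{-2}$ so that $S^{(n)}$ restarts there); on $G\cap\{\tau\le\xi\}$ one has $\zeta\le\tau\le\xi$. The key step is the estimate
$$\prob{S^{(n)}\text{ travels more than }r/4\text{ from }S^{(n)}(\zeta)\text{ before hitting }\alpha\text{ or exiting }\Disc}\le C\left(\frac{\rho}{r}\right)^{1/2},$$
which follows from the strong Markov property at $\zeta$ together with the Beurling estimate for random walk (Theorem \ref{beurling}(2)): starting within $O(\rho)$ of the closest point $x_0$ of $\alpha\cup\partial\Disc$, one applies Beurling in the disk $D_{r/8}(x_0)$, using that $\operatorname{diam}(\alpha)\ge r$ forces $\alpha$ (or else $\partial\Disc$) to contain a connected subset spanning that disk and that meeting this subset entails hitting $\alpha$ or exiting $\Disc$; the bound is uniform over the admissible positions of $S^{(n)}(\zeta)$. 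Off this event, $S^{(n)}$ stays within $r/4$ of $S^{(n)}(\zeta)$ on all of $[\zeta,\xi]$ (it meets neither $\alpha$ nor $\partial\Disc$ before time $\xi$), so $\abs{S^{(n)}(\tau)-S^{(n)}(\xi)}\le r/2$ and hence $\abs{W(\tau)-S^{(n)}(\xi)}\le\rho+r/2<r$ as soon as $\rho<r/2$. The event $G\cap\{\xi<\tau\}$ is treated identically with the roles of $W$ and $S^{(n)}$ exchanged, using the Beurling estimate for Brownian motion (Theorem \ref{beurling}(1)) and the $W$-stopping time $\inf\{t:\dist(W_t,\alpha\cup\partial\Disc)\le 2\rho\}$.

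Combining the two cases gives $\prob{\abs{W(\tau)-S^{(n)}(\xi)}>r}\le\prob{G^c}+2C(\rho/r)^{1/2}$, and one finishes by choosing, in order: $\rho$ small enough (depending only on $\epsilon$ and $r$) that $2C(\rho/r)^{1/2}<\epsilon/2$ and $\rho<r/2$; then $T$ as above; then $N$ large enough for the coupling. The step I expect to be the main obstacle is the mismatch of stopping times: $\tau_\alpha\wedge\tau_\Disc$ is adapted to $W$ but not to $S^{(n)}$, and symmetrically, so one cannot apply the strong Markov property of $S^{(n)}$ directly at $\tau$. The resolution is exactly to pass to the $S^{(n)}$-stopping time $\zeta$ at which $S^{(n)}$ first comes within $O(\rho)$ of $\alpha\cup\partial\Disc$ and then invoke Beurling; this is where $\operatorname{diam}(\alpha)\ge r$ is used essentially (without it a walk started near $\alpha$ could escape to distance $\gg\rho$ before returning to it). The remaining points are routine: rounding $\zeta$ to a lattice time, replacing the continuous curve $\alpha$ by a nearby connected lattice path on $n^{-1}\Lambda$ when quoting Theorem \ref{beurling}(2) (valid once $1/n\ll\rho$), and the $O(1/n)$ overshoot of $\partial\Disc$ by $S^{(n)}$.
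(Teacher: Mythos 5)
Your argument is correct and follows essentially the same route as the paper: couple $W$ and $S^{(n)}$ via the strong approximation theorem so they stay close on a fixed time horizon, then use the Beurling estimates (Theorem \ref{beurling}) to show that once one process has hit $\alpha\cup\partial\Disc$, the other, being nearby, cannot wander far before hitting it too. The paper's own proof applies Beurling ``starting from $z=S^{(n)}(\tau^*)$'' directly, where $\tau^*$ is the hitting time for $W$; you correctly flag that $\tau^*$ is not an $S^{(n)}$-stopping time and repair this by moving to the genuine stopping time $\zeta$ at which $S^{(n)}$ first comes within $O(\rho)$ of $\alpha\cup\partial\Disc$. This is a legitimate refinement of a step the paper leaves implicit, but it does not change the underlying method.
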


\begin{proof} Let $T$ be large enough so that $\Pro{0}{\tau_\Disc > T} < \epsilon$. By the strong approximation of Brownian motion by random walk \cite[Theorem 3.5.1]{LL08}, there exists a sequence $\wt{S}^{n}$ of random walks defined on the same probability space as $W$ so that if $S^{(n)}_t = n^{-1} \wt{S}^{n}_{n^2t}$ is defined as above, then almost surely,
$$ \lim_{n \to \infty} \sup_{0 \leq t \leq T} \abs{S^{(n)}_t - W_t} = 0.$$
Therefore, we can let $N$ be such that for $n \geq N$,
$$ \Pro{0}{ \sup_{0 \leq t \leq T} \abs{S^{(n)}_t - W_t} > \frac{\epsilon^2 r}{C^2}} < \epsilon,$$
where $C$ is the larger of the constants in the Beurling estimates (Theorem \ref{beurling}).

Now fix $n \geq N$ and let $\tau^* = \tau_\alpha \wedge \tau_\Disc$ and $\sigma^* = \xi_\alpha \wedge \sigma_\Disc$. Suppose first that $\tau^* < \sigma^*$, and let $W_{\tau^*} = w$, $S^{(n)}_{\tau^*} = z$. Suppose further that $\tau_\Disc < T$ and that
$$ \sup_{0 \leq t \leq T} \abs{S^{(n)}_t - W_t} \leq \frac{\epsilon^2 r}{C^2}.$$
Then on this event, $\abs{z - w} \leq C^{-2} \epsilon^2 r$. Since both $\alpha$ and $\partial \Disc$ are continuous curves, by the Beurling estimates, letting $D = D_r(w)$,
\begin{eqnarray*}
\Pro{z}{ \abs{S^{(n)}_{\sigma^*} - w} > r} &\leq& \Pro{z}{ S^{(n)}[0,\sigma_D] \cap (\alpha \cup \partial \Disc) = \emptyset } \\
&\leq& C \left(\frac{\epsilon^2 r / C^2}{r}\right)^{1/2} = \epsilon.
\end{eqnarray*}
The case where $\sigma^* < \tau^*$ is proved in the same way, using the Beurling estimates for Brownian motion.
\end{proof}

\begin{lemma} \label{ckappa} Recall the definition of $\wt{C}_\delta$ from the beginning of the section.
\begin{enumerate}
\item \label{ckappa1} For all $\epsilon > 0$, there exists $\delta > 0$ such that for all $\alpha: [0, t_\alpha] \to \Disc$,
$$ \Pro{0}{W[0,\tau_\Disc] \cap \alpha[0,t_\alpha] = \emptyset; W(\tau_\Disc) \in \wt{C}_\delta} < \epsilon.$$
\item \label{ckappa2} For all $\epsilon > 0$, there exists $\delta > 0$ and $N$ such that for all $n \geq N$ and $\alpha: [0, t_\alpha] \to \Disc$,
$$ \Pro{0}{S^{(n)}[0,\sigma_\Disc] \cap \alpha[0,t_\alpha] = \emptyset; S^{(n)}(\sigma_\Disc) \in \wt{C}_\delta} < \epsilon $$
where $S^{(n)}_t = n^{-1} S_{n^2t}$.
\end{enumerate}
\end{lemma}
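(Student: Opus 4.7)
The plan is to use Lemma \ref{jordan} to reduce both parts to a harmonic-measure estimate for two small balls near $\partial \Disc$. Given $\epsilon > 0$, I would choose $\delta > 0$ small (and, for part \ref{ckappa2}, $N$ large) later. Lemma \ref{jordan} produces $z_1, z_2 \in \partial \Disc$ (depending on $\alpha$ and $\delta$) such that $D_\delta(z_1) \cup \alpha[0, t_\alpha] \cup D_\delta(z_2)$ disconnects $0$ from $\wt{C}_\delta$ in $\Disc$. On the event in part \ref{ckappa1}, the Brownian path from $0$ to $W(\tau_\Disc) \in \wt{C}_\delta$ avoids $\alpha$, so it must enter $D_\delta(z_1) \cup D_\delta(z_2)$; the analogous statement holds for $S^{(n)}$ in part \ref{ckappa2}. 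It therefore suffices to bound the hitting probability of each ball $D_\delta(z_i)$ by $O(\delta)$, uniformly in $z_i \in \partial \Disc$.

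For part \ref{ckappa1}, I would prove $\Pro{0}{W[0, \tau_\Disc] \cap D_\delta(z) \neq \emptyset} \leq C\delta$ uniformly in $z \in \partial \Disc$ as follows. To enter $D_\delta(z)$, $W$ must first hit the inner arc $\Gamma := \partial D_{2\delta}(z) \cap \Disc$; by scaling and rotational invariance of Brownian motion, from any point of $\Gamma$ it exits $\Disc$ through the boundary arc $A := \partial \Disc \cap D_{2\delta}(z)$ with probability bounded below by an absolute constant $p > 0$. Since the Poisson kernel at $0$ gives $\Pro{0}{W(\tau_\Disc) \in A} = O(\delta)$, the strong Markov property at the entrance time into $D_\delta(z)$ yields
$$ \Pro{0}{W[0,\tau_\Disc] \cap D_\delta(z) \neq \emptyset} \leq \frac{1}{p}\, \Pro{0}{W(\tau_\Disc) \in A} \leq \frac{C\delta}{p}. $$
Taking $\delta < \epsilon p / (2C)$ finishes part \ref{ckappa1}.

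For part \ref{ckappa2}, I would transfer the Brownian bound to $S^{(n)}$ using the strong approximation from Lemma \ref{skor}. Fix $T$ so that $\Pro{0}{\tau_\Disc > T} < \epsilon/8$, and couple $W$ and $S^{(n)}$ on one probability space so that $\sup_{0 \leq t \leq T} |W_t - S^{(n)}_t| < \delta$ with probability at least $1 - \epsilon/8$ for all $n \geq N$. On the good coupled event, an entrance of $S^{(n)}$ into $D_\delta(z_i)$ before $\sigma_\Disc$ forces $W$ to enter $D_{2\delta}(z_i)$ before $\tau_{(1+\delta)\Disc}$; applying the part \ref{ckappa1} estimate at scale $2\delta$ (and a standard estimate to replace $(1+\delta)\Disc$ by $\Disc$ at the cost of a bounded multiplicative constant) then controls the right side uniformly in $z_i$, provided $\delta$ was chosen small enough to absorb all constants.

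The main subtlety is that Lemma \ref{jordan} is stated only for simple curves, whereas the present lemma allows any continuous $\alpha$. Since the conclusion of Lemma \ref{jordan} depends only on the image $\alpha[0, t_\alpha]$ as a compact connected set, I would extend it to arbitrary continuous curves by essentially the same argument: define $\theta_1, \theta_2$ as the infimum and supremum of the continuous argument $\Arg \alpha(t)$ on $\{t : \alpha(t) \in A_\delta\}$ (assuming, without loss of generality, $0 \notin \alpha$), and replace the Jordan-curve step in the original proof by a topological approximation of $\alpha$ by nearby simple curves in $\Disc$; the required separation of $0$ from $\wt{C}_\delta$ persists in the limit. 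This is where most of the additional care in the proof goes.
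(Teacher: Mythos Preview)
Your approach is essentially the same as the paper's: both invoke Lemma \ref{jordan} to reduce to bounding the probability of hitting $D_\delta(z)$ uniformly in $z \in \partial \Disc$, and for part \ref{ckappa2} both transfer this to $S^{(n)}$ via the strong approximation (the paper packages this step as an application of Lemma \ref{skor} rather than redoing the sup-norm coupling, and in part \ref{ckappa1} it simply appeals to rotational invariance and lets $\delta \to 0$ instead of your explicit $O(\delta)$ bound). You are right that Lemma \ref{jordan} is stated only for simple $\alpha$, a point the paper does not address; in all later applications the curve is simple (LERW or $\SLE_2$), so the issue is harmless there.
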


\begin{proof}
By Lemma \ref{jordan}, there exist $z_1, z_2 \in \partial \Disc$ such that
\begin{eqnarray*}
& & \Pro{0}{W[0,\tau_\Disc] \cap \alpha[0,t_\alpha] = \emptyset, W(\tau_\Disc) \in \wt{C}_\delta} \\ 
&\leq& \Pro{0}{W[0,\tau_\Disc] \cap D_\delta(z_1) \neq \emptyset} + \Pro{0}{W[0,\tau_\Disc] \cap D_\delta(z_2) \neq \emptyset}.
\end{eqnarray*}
However, by rotational symmetry of Brownian motion,
$$ \Pro{0}{W[0,\tau_\Disc] \cap D_\delta(z) \neq \emptyset}$$
is the same for all $z \in \partial \Disc$. Since, $D_\delta(z)$ shrinks to a single point as $\delta$ tends to $0$, the right-hand side above can be made to be less than $\epsilon$ by making $\delta$ small enough. This proves \ref{ckappa1}.

The proof of \ref{ckappa2} is the same as \ref{ckappa1}, except that we cannot use any sort of rotational symmetry. Therefore, we must show that there exists $\delta > 0$ and $N$ such that for all $n \geq N$ and $z \in \partial \Disc$,
$$ \Pro{0}{S^{(n)}[0, \sigma_\Disc] \cap D_\delta(z) \neq \emptyset} < \epsilon.$$
Let $\delta > 0$ be small enough so that for all $z \in \partial \Disc$,
$$ \Pro{}{W[0,\tau_2] \cap D_\delta(z) \neq \emptyset} < \epsilon,$$
where $\tau_2$ is the hitting time of the circle of radius $2$ by the Brownian motion $W$. We now apply Lemma \ref{skor} to obtain that there exists $N$ such that for all $n \geq N$, there exists a simple random walk $S$, defined on the same probability space as $W$ such that
$$ \Pro{0}{\abs{W(\tau_{D_\delta(z)} \wedge \tau_2) - S^{(n)}(\xi_{D_\delta(z)} \wedge \sigma_2)} > \delta } < \epsilon.$$
This implies that
\begin{eqnarray*}
& &
\Pro{0}{S^{(n)}[0, \sigma_\Disc] \cap D_\delta(z) \neq \emptyset} \\
&\leq& \Pro{0}{\xi_{D_\delta(z)} < \sigma_2} \\
&=& \Pro{0}{\xi_{D_\delta(z)} < \sigma_2; \tau_{D_\delta(z)} < \tau_2} + \Pro{}{\xi_{D_\delta(z)} < \sigma_2; \tau_{D_\delta(z)} > \tau_2} \\
&\leq& \Pro{0}{\tau_{D_\delta(z)} < \tau_2} + \Pro{0}{\abs{W(\tau_{D_\delta(z)} \wedge \tau_2) - S^{(n)}(\xi_{D_\delta(z)} \wedge \sigma_2)} > \delta} \\
&<& 2\epsilon.
\end{eqnarray*}
\end{proof}

\begin{prop} \label{a}
For all $\epsilon > 0$ and $r > 0$, there exists $N$ such that for all $n \geq N$ and all continuous curves $\alpha: [0,t_\alpha] \to \Disc$ with $\operatorname{diam}(\alpha[0,t_\alpha]) \geq r$, 
$$ \abs{\Pro{0}{W[0,\tau_\Disc] \cap \alpha[0,t_\alpha] = \emptyset} - \Pro{0}{S^{(n)}[0,\sigma_\Disc] \cap \alpha[0,t_\alpha] = \emptyset}} < \epsilon,$$
where $S^{(n)}_t = n^{-1} S_{n^2t}$.
\end{prop}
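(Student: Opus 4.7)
The plan is to couple $W$ and $S^{(n)}$ via Lemma \ref{skor} so that the two exit/hit events only disagree on a rare set, and use Lemma \ref{ckappa} to show that the remaining discrepancy is localized to the ``boundary shadow'' $\wt{C}_\delta(\alpha)$, which has small probability. Throughout the argument, write $\tau^* = \tau_\alpha \wedge \tau_\Disc$ and $\sigma^* = \xi_\alpha \wedge \sigma_\Disc$, and let $A = \{W[0,\tau_\Disc] \cap \alpha = \emptyset\}$, $B_n = \{S^{(n)}[0,\sigma_\Disc] \cap \alpha = \emptyset\}$. We want $|\Prob{0}{A} - \Prob{0}{B_n}| < \epsilon$.

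First, I would choose parameters in the right order. Apply Lemma \ref{ckappa} to obtain a $\delta > 0$ and an $N_1$ such that, for all $\alpha$ and all $n \geq N_1$,
$$ \Pro{0}{A; W(\tau_\Disc) \in \wt{C}_{\delta}(\alpha)} < \epsilon/4 \quad \text{and} \quad \Pro{0}{B_n; S^{(n)}(\sigma_\Disc) \in \wt{C}_{\delta}(\alpha)} < \epsilon/4. $$
Then set $\eta = \min(\delta/2,\,r)$. Since $\operatorname{diam}(\alpha) \geq r \geq \eta$, Lemma \ref{skor} (applied with tolerance and diameter both equal to $\eta$) supplies an $N_2$ and, for each $n \geq N_2$, a coupling of $W$ with $S^{(n)}$ such that the event $G = \{|W(\tau^*) - S^{(n)}(\sigma^*)| \leq \eta\}$ satisfies $\Pro{0}{G^c} < \epsilon/4$.

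The core of the proof is then the case analysis on $G$. On $A \cap B_n^c \cap G$ we have $W(\tau^*) = W(\tau_\Disc) \in \partial \Disc$ while $S^{(n)}(\sigma^*) = S^{(n)}(\xi_\alpha) \in \alpha$, with $|W(\tau_\Disc) - S^{(n)}(\xi_\alpha)| \leq \eta$. Writing $z = S^{(n)}(\xi_\alpha) \in \alpha$ and $w = W(\tau_\Disc) \in \partial \Disc$, we get $1 - |z| \leq |w - z| \leq \eta$, so $z \in \alpha \cap A_\eta$, and consequently $|w - e^{i\arg(z)}| \leq |w - z| + (1 - |z|) \leq 2\eta \leq \delta$, placing $w$ in $\wt{C}_\delta(\alpha)$. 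Thus $A \cap B_n^c \cap G \subset \{A\} \cap \{W(\tau_\Disc) \in \wt{C}_\delta(\alpha)\}$, whose probability is $< \epsilon/4$. The symmetric argument shows $A^c \cap B_n \cap G \subset \{B_n\} \cap \{S^{(n)}(\sigma_\Disc) \in \wt{C}_\delta(\alpha)\}$, also with probability $< \epsilon/4$. Combining,
$$ |\Prob{0}{A} - \Prob{0}{B_n}| \leq \Pro{0}{A \triangle B_n} \leq \Pro{0}{G^c} + \Pro{0}{A \cap B_n^c \cap G} + \Pro{0}{A^c \cap B_n \cap G} < \epsilon $$
for $n \geq \max(N_1,N_2)$.

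The main technical point is not the case analysis itself but making sure Lemma \ref{ckappa} can be combined with the coupling in Lemma \ref{skor} uniformly over all $\alpha$ with $\operatorname{diam}(\alpha) \geq r$. The diameter hypothesis enters only through Lemma \ref{skor}, where it is needed to invoke the Beurling estimate after the strong approximation; once that tolerance $\eta$ is comparable to $\delta$, the purely geometric fact that a point on $\partial\Disc$ within $\eta$ of $\alpha$ must lie in $\wt{C}_{2\eta}(\alpha)$ finishes the argument. No properties of $\alpha$ beyond continuity and its diameter lower bound are used, so the convergence is indeed uniform over this class.
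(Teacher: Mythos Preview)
Your proof is correct and takes essentially the same approach as the paper, coupling $W$ and $S^{(n)}$ via Lemma \ref{skor} and controlling the residual boundary discrepancy via Lemma \ref{ckappa}; your split by the good-coupling event $G$ is just a slightly cleaner packaging of the paper's three-case decomposition $E_1 \cup E_2 \cup E_3$. One cosmetic nit: with $\eta=\delta/2$ you only obtain $\operatorname{dist}(w,C_\delta)\le 2\eta = \delta$ rather than strict inequality, so take, say, $\eta=\delta/3$ to guarantee $w$ lands in the open set $\wt C_\delta(\alpha)$.
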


\begin{proof}
By Lemmas \ref{skor} and \ref{ckappa} , there exists $\delta > 0$ and $N$ such that for all $n \geq N$, the following holds. There exists a Brownian motion $W$ and a random walk $S$ defined on the same probability space such that for all continuous curves $\alpha: [0,t_\alpha] \to \Disc$ with $\operatorname{diam}(\alpha[0,t_\alpha]) > r$,
\begin{eqnarray}
\Pro{0}{W[0,\tau_\Disc] \cap \alpha[0,t_\alpha] = \emptyset; W(\tau_\Disc) \in \wt{C}_\delta} < \epsilon; \label{a1} \\
\Pro{0}{S^{(n)}[0,\sigma_\Disc] \cap \alpha[0,t_\alpha] = \emptyset; S^{(n)}(\sigma_\Disc) \in \wt{C}_\delta} < \epsilon; \label{a2}\\
\Pro{0}{\abs{W(\tau^*) - S^{(n)}(\sigma^*)} > \delta } < \epsilon, \label{a3}
\end{eqnarray}
where $\tau^* = \tau_\alpha \wedge \tau_\Disc$ and $\sigma^* = \xi_\alpha \wedge \sigma_\Disc$.

We will show that the proposition holds with this choice of $N$. Note that
\begin{eqnarray*}
& & \abs{\Pro{0}{W[0,\tau_\Disc] \cap \alpha[0,t_\alpha] = \emptyset} - \Pro{0}{S^{(n)}[0,\sigma_\Disc] \cap \alpha[0,t_\alpha] = \emptyset}} \\
&\leq& \Prob{0}{E} + \Prob{0}{F},
\end{eqnarray*}
where
$$ E = \{ W(\tau^*) \in \partial \Disc ; S^{(n)}(\sigma^*) \in \alpha[0, t_\alpha] \},$$
$$ F = \{ W(\tau^*) \in  \alpha[0, t_\alpha] ; S^{(n)}(\sigma^*) \in \partial \Disc \}.$$
We will show that $\Prob{0}{E} < \epsilon$. The proof that $\Prob{0}{F} < \epsilon$ is entirely similar. Recall that $D_{1 - \delta}$ denotes the ball of radius $1 - \delta$ and that $A_\delta$ denotes the annulus $\Disc \setminus D_{1 - \delta}$. Then,
$$ \Prob{0}{E} \leq \Prob{0}{E_1} + \Prob{0}{E_2} + \Prob{0}{E_3},$$
where
$$ E_1 = \{ W(\tau^*) \in \wt{C}_\delta ; S^{(n)}(\sigma^*) \in \alpha[0,t_\alpha] \};$$
$$ E_2 = \{ W(\tau^*) \in \partial \Disc ; S^{(n)}(\sigma^*) \in \alpha[0,t_\alpha] \cap D_{1 - \delta} \};$$
$$ E_3 = \{ W(\tau^*) \in \partial \Disc \setminus \wt{C}_\delta ; S^{(n)}(\sigma^*) \in \alpha[0,t_\alpha] \cap A_\delta \}.$$
However by (\ref{a1}), $\Prob{0}{E_1} < \epsilon$, and by (\ref{a3}), $\Prob{0}{E_2} < \epsilon$ and $\Prob{0}{E_3} < \epsilon$.
\end{proof}

\section{Some results for loop-erased random walks}

\label{LERWsec}

\subsection{Up to constant independence of the initial and terminal parts of a LERW path} \label{measures}

\textit{For this section only}, we no longer restrict our random walks to be two-dimensional. When it is necessary to specify what dimension we are in, we will denote the dimension by $d$.

Although we have avoided using it up to now, it will be convenient to use ``big-O'' notation in this section. Recall that $f(n) = O(a(n))$ if there exists $C < \infty$ such that 
$$ f(n) \leq Ca(n).$$
Here, $C$ can depend on the dimension but on no other quantity. 
We will also write 
$$ f(n) = \left[1 + O(a(n))\right] g(n), $$
if there exists $C < \infty$ such that  
$$ 1 - Ca(n) \leq \frac{f(n)}{g(n)} \leq 1 + C a(n).$$

Recall that for a natural number $l$, $\Omega_l$ denotes the set of paths 
$$ \omega = [0, \omega_1, \ldots, \omega_k]$$ such that $\omega_j \in B_l$, $j=0,1, \ldots, k-1$ and $\omega_k \in \partial B_l$.

Given a set $K$ such that $B_l \subset K$, and such that
$$ \Pro{0}{\sigma_K < \infty} = 1,$$
we define $\mu_{l,K}$ on $\Omega_l$ to be the measure obtained by running a random walk up to the first exit time $\sigma_K$ of $K$, loop-erasing and restricting to $B_l$. More precisely, for $\omega \in \Omega_l$,
$$ \mu_{l,K}(\omega) = \Pro{0}{\widehat{S}^K[0, \widehat{\sigma}_l] = \omega}.$$

If $B_l \subset K_1$ and $B_l \subset K_2$ are such that 
$$ \Pro{0}{\sigma_{K_i} < \infty} = 1$$
for either $i=1$ or $i=2$, we define a measure $\mu_{l,K_1,K_2}$ on $\Omega_l$ as follows. Let $X$ denote random walk conditioned to leave $K_1$ before $K_2$ (as long as this has positive probability; if not, $\mu_{l,K_1, K_2}$ is not defined). Then for $\omega \in \Omega_l$, we let
$$ \mu_{l,K_1,K_2}(\omega) = \Pro{0}{\widehat{X}^{K_1}[0, \widehat{\sigma}_l] = \omega}.$$
This is the measure on $\Omega_l$ obtained by running $X$ up to $\sigma^X_{K_1}$, loop-erasing and restricting to $B_l$.
Note that $\mu_{l,K}$ is equal to $\mu_{l, K , \Lambda}$.

In this section, we establish some relations between the measures defined above. In fact we will show that for $n \geq 4$ and any $K_1$ and $K_2$ such that $B_{nl} \subset K_1$ and $B_{nl} \subset K_2$,
\begin{eqnarray*}
\mu_{l,K_1, K_2}(\omega) &=& \left\{ \begin{array}{ll}
\left[1 + O(\frac{1}{\log n})\right]\mu_{l,K_1 \cap K_2}(\omega) & d = 2; \\
\left[1 + O(n^{-1})\right]\mu_{l,K_1 \cap K_2}(\omega) & d \geq 3. 
\end{array} \right.
\hskip10pt \text{ (Proposition
\ref{aw})} \\
\mu_{l, K_1}(\omega) &=& \left\{ \begin{array}{ll}
\left[1 + O(\frac{1}{\log n})\right]\mu_{l,K_2}(\omega) & d = 2; \\
\left[1 + O(n^{2-d})\right]\mu_{l,K_2}(\omega) & d \geq 3. 
\end{array} \right. \hskip10pt \text{ (Proposition \ref{ax})}
\end{eqnarray*}
This implies that if $B_{4l} \subset K_1$ and $B_{4l} \subset K_2$ then
\begin{eqnarray*} 
\mu_{l,K_1,K_2}(\omega) &\asymp& \mu_{l,K_1 \cap K_2}(\omega) \\
\mu_{l,K_1}(\omega) &\asymp& \mu_{l,K_2}(\omega)
\end{eqnarray*}
(recall that the symbol $\asymp$ means that each side is bounded by a constant multiple of the other side, the constant depending on the random walk $S$ and on \emph{nothing else}). We use these facts to prove that for a LERW $\wh{S}^n$, $\eta^1_l(\wh{S}^n)$ and $\eta^2_{4l,n}(\wh{S}^n)$ (see the definitions in section \ref{sets}) are independent up to constants (Proposition \ref{indep}).

\begin{lemma} \label{avoidfar}
Let $d=2$. For $\omega \in \Omega_l$ and $y \in \partial B_{l}$,
$$ \Pro{y}{\sigma_{nl} < \xi_\omega} = O(\frac{1}{\log n}).$$
\end{lemma}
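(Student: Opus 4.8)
The plan is to bound the probability that a random walk started at $y \in \partial B_l$ travels all the way out to $\partial B_{nl}$ without ever hitting the path $\omega$. The key observation is that $\omega$ connects the origin to $\partial B_l$, so $\omega$ is a connected set joining $0$ to the boundary of the small ball, and in each dyadic annulus $A_{2^j l, 2^{j+1} l}$ (for $j = 0, 1, \ldots, \lfloor \log_2 n \rfloor - 1$) the walk has a chance bounded below by a constant of hitting a ``copy'' of the obstacle before crossing that annulus. More precisely, I would fix the annuli $A_j := B_{2^{j+1}l} \setminus B_{2^j l}$ for $j = 0, \ldots, J$ where $J \asymp \log n$, and show that for the walk to reach $\partial B_{nl}$ before hitting $\omega$ it must, in particular, avoid $\omega$ during each crossing of $A_j$. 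The probability of reaching $\partial B_{2^{j+1}l}$ from $\partial B_{2^j l}$ without hitting $\omega$ should be at most $1 - c$ for some constant $c > 0$ that does not depend on $j$ (by scale invariance of the relevant estimate) or on $\omega$ (since we only use that $\omega$ passes through the ball $B_l$, hence through every larger ball).

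For the per-annulus bound, the natural tool is the Beurling-type estimate combined with the fact that $\omega \cap B_{2^j l}$ contains a connected path from $0$ to $\partial B_l \subset \partial_i B_{2^j l}$, so in particular $\omega$ contains a path crossing the ball $B_{2^j l}$; more directly one can invoke part (2) of the Beurling estimates (Theorem~\ref{beurling}) or, better suited here, Lemma~\ref{disc}(1): since $\omega$ is a path connecting $0$ to $\partial B_l$ and hence (trivially) a set meeting $B_{2^j l}$ in a connected crossing, for any $x \in \partial B_{2^j l}$ the probability $\Pro{x}{\xi_\omega < \sigma_{2^{j+1} l}} \geq c$. Actually Lemma~\ref{disc}(1) applies to sets connecting $0$ to $\partial B_L$; applied with $L = 2^j l$ (noting $\omega$ connects $0$ to $\partial B_l$ and hence, being connected, contains a subpath connecting $0$ to $\partial B_{2^j l}$ only if it reaches that far, which it may not), so the cleaner statement to use is that $\omega$ connects $0$ to $\partial B_l$ and then iterate within $B_{2l}$ first; for the outer annuli one uses instead that any walk reaching $\partial B_{2^{j+1}l}$ from $\partial B_{2^j l}$ has, by the Beurling estimate applied to the obstacle $\omega \cup (\text{segment of walk so far})$, probability $\le 1 - c$ of avoiding the already-traced path — but that complicates things. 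The simplest correct route: use the Harnack/strong-Markov decomposition, let $p_j = \max_{x \in \partial B_{2^j l}} \Pro{x}{\sigma_{nl} < \xi_\omega}$, and show $p_j \le (1-c) p_{j+1}$ by conditioning on the first visit to $\partial B_{2^{j+1}l}$ and using that, starting from $\partial B_{2^j l}$, with probability $\ge c$ the walk hits $\omega$ before reaching $\partial B_{2^{j+1}l}$ (this last fact follows from Lemma~\ref{disc}(1) after a harmless preliminary step inside $B_{2l}$, or directly from the Beurling estimate since $\omega$ crosses the annulus region near $B_l$). Iterating from $j = 0$ to $j = J \asymp \log n$ gives $\Pro{y}{\sigma_{nl} < \xi_\omega} \le (1-c)^J = O(n^{\log(1-c)/\log 2})$, which is in particular $O(1/\log n)$.

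The main obstacle is the uniformity of the per-scale constant $c$ over all obstacles $\omega \in \Omega_l$: one needs that the lower bound on the hitting probability of $\omega$ within an annulus does not degrade as $\omega$ becomes a very thin, wiggly path. This is exactly what the Beurling estimates (Theorem~\ref{beurling}(2)) provide — the bound $C(|z|/n)^{1/2}$ depends only on the fact that $\omega$ is a connected path from $0$ to the outer boundary, not on its geometry — so I would lean on Theorem~\ref{beurling}(2) (or Lemma~\ref{disc}) to get $\Pro{x}{S[0,\sigma_{2^{j+1}l}] \cap \omega = \emptyset} \le C (|x|/(2^{j+1}l))^{1/2}$ for $x$ near $\partial B_{2^j l}$, which is $\le C 2^{-1/2} < 1$ after possibly grouping a fixed number of annuli together so the ratio is large enough to beat the constant $C$. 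Once the single-scale bound is strictly below $1$, the geometric iteration over $\asymp \log n$ scales yields a polynomial decay in $n$, which is stronger than the claimed $O(1/\log n)$; I would then simply note that polynomial decay is $O(1/\log n)$ and conclude. (The statement is deliberately weak because that is all the caller needs.)
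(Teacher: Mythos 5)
Your proposal has a genuine gap, and it is the one you noticed mid-argument but did not resolve. The dyadic iteration $p_j \leq (1-c)\,p_{j+1}$ with a \emph{uniform} constant $c > 0$ cannot hold, because the obstacle $\omega$ lives entirely inside $\overline{B_l}$: it does not extend into the outer annuli. For the walk started at $x \in \partial B_{2^j l}$ to hit $\omega$ before reaching $\partial B_{2^{j+1}l}$, it must first re-enter $B_l$, and by the standard logarithmic estimate the probability of returning to $\partial B_l$ before crossing to $\partial B_{2^{j+1}l}$ from $\partial B_{2^j l}$ is of order $\frac{\log 2}{(j+1)\log 2} = \frac{1}{j+1}$, which tends to $0$. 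So the per-scale ``gain'' against $\omega$ degrades with $j$, and no fixed $c$ works at every scale. The Beurling estimate does not rescue this: Theorem~\ref{beurling}(2) requires a connected path from the center to the outer boundary of the ball under consideration, and $\omega$ simply does not cross $A_{2^j l, 2^{j+1}l}$ for $j \geq 1$.

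A quick sanity check makes the failure vivid: your proposed conclusion $\Pro{y}{\sigma_{nl} < \xi_\omega} = O(n^{-\beta})$ for some $\beta > 0$ is \emph{false}. With probability bounded below by a constant the walk exits $B_{2l}$ without hitting $\omega$ (Lemma~\ref{disc}(1) gives an upper bound $1-c$, and the complementary constant lower bound is easy since $\omega$ does not surround $\partial B_l$), and from there it escapes to $\partial B_{nl}$ without returning to $B_l$ with probability $\asymp (\log n)^{-1}$. So $\Pro{y}{\sigma_{nl} < \xi_\omega} \geq c/\log n$; the $O(1/\log n)$ in the statement is sharp and cannot be improved to polynomial decay.

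The paper's actual proof stays at a \emph{single} scale and closes a self-improving recursion instead of iterating over $\asymp \log n$ scales. Writing $p = \max_{y \in \partial B_l} \Pro{y}{\sigma_{nl}<\xi_\omega}$, one decomposes at the first exit from $B_{2l}$: the walk first reaches $\partial B_{2l}$ without hitting $\omega$ (probability $\leq 1-c$ by Lemma~\ref{disc}(1)), and then from $\partial B_{2l}$ it either escapes to $\partial B_{nl}$ without ever revisiting $B_l$, an event of probability $\asymp (\log n)^{-1}$, or it returns to $\partial B_l$, at which point the conditional continuation probability is again $\leq p$. This gives $p \leq (1-c)\bigl[C(\log n)^{-1} + p\bigr]$, which rearranges to $p \leq \frac{(1-c)C}{c\,\log n}$. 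The factor of $(1-c)$ is harvested only once, exactly because the obstacle cannot be revisited without returning to $B_l$; the $1/\log n$ comes from that return probability, not from a small per-scale factor compounded geometrically.
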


\begin{proof}
Let $y_0$ be such that
$$ \Pro{y_0}{\sigma_{nl} < \xi_\omega} = \max_{y \in \partial B_l} \Pro{y}{\sigma_{nl} < \xi_\omega}.$$
We will show that 
$$  \Pro{y_0}{\sigma_{nl} < \xi_\omega} \leq \frac{C}{\log n}$$ 
which will clearly imply the result for all $y \in \partial B_l$. 

\begin{eqnarray*}
\Pro{y_0}{\sigma_{nl} < \xi_\omega} &=& \Pro{y_0}{\sigma_{2l} < \xi_\omega} \sum_{z \in \partial B_{2l}} \Pro{z}{\sigma_{nl} < \xi_\omega} \condPro{y}{S(\sigma_{nl}) = z}{\sigma_{2l} < \xi_\omega}.
\end{eqnarray*}
By part (1) of Lemma \ref{disc}, there exists $c > 0$ such that
$$ \Pro{y_0}{\sigma_{2l} < \xi_\omega} \leq 1 - c.$$
Furthermore, for $z \in \partial B_{2l}$,
\begin{eqnarray*}
\Pro{z}{\sigma_{nl} < \xi_\omega} &=& \Pro{z}{\sigma_{nl} < \xi_l} + \sum_{y \in \partial B_l} \Pro{z}{S(\xi_l)=y; \xi_l < \sigma_{nl}} \Pro{y}{\sigma_{nl} < \xi_\omega} \\
&\leq& \Pro{z}{\sigma_{nl} < \xi_l} + \Pro{y_0}{\sigma_{nl} < \xi_\omega}.
\end{eqnarray*}
By \cite[Proposition 6.4.1]{LL08}, 
$$ \Pro{z}{\sigma_{nl} < \xi_l} \asymp (\log n)^{-1}.$$
Therefore,
\begin{eqnarray*}
\Pro{y_0}{\sigma_{nl} < \xi_\omega} &\leq& (1-c)\left[C(\log n)^{-1}+ \Pro{y_0}{\sigma_{nl} < \xi_\omega}\right]
\end{eqnarray*}
which gives the desired result.
\end{proof}

\begin{prop} \label{aw}
Suppose that $n \geq 4$, $K_1$ and $K_2$ are such that $B_{nl} \subset K_1$ and $B_{nl} \subset K_2$, and that $\omega = [0, \omega_1, \ldots, \omega_k] \in \Omega_l$. Then,
\begin{eqnarray*}
\mu_{l,K_1, K_2}(\omega) &=& \left\{ \begin{array}{ll}
\left[1 + O(\frac{1}{\log n})\right]\mu_{l,K_1 \cap K_2}(\omega) & d = 2; \\
\left[1 + O(n^{-1})\right]\mu_{l,K_1 \cap K_2}(\omega) & d \geq 3. 
\end{array} \right.
\end{eqnarray*}
In particular, if  $B_{4l} \subset K_1$ and $B_{4l} \subset K_2$ then
$$ \mu_{l,K_1,K_2}(\omega) \asymp \mu_{l,K_1 \cap K_2}(\omega).$$
\end{prop}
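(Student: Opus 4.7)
The plan is to expand both measures via the LERW formula (\ref{lerwf}) and reduce the proposition to a ratio of hitting probabilities. For $\mu_{l,K_1,K_2}$, I would observe that under the conditioning $\sigma_{K_1} < \sigma_{K_2}$ every loop erased at each $\omega_j$ must stay inside $K_2$ as well (otherwise the conditioning fails), so the Green's-function factor $G(\omega_j; K_1 \setminus A_{j-1})$ in (\ref{lerwf}) is replaced by $G(\omega_j; (K_1 \cap K_2) \setminus A_{j-1})$, while the tail factor becomes $P^{\omega_k}(\sigma_{K_1} < \sigma_{K_2} \wedge \xi_\omega)$. Dividing by the normalization $h(0) = P^0(\sigma_{K_1} < \sigma_{K_2})$, this gives
$$\mu_{l,K_1,K_2}(\omega) = \frac{p(\omega)\, G_{K_1 \cap K_2}(\omega)\, P^{\omega_k}(\sigma_{K_1} < \sigma_{K_2} \wedge \xi_\omega)}{h(0)},$$
where $h(z) = P^z(\sigma_{K_1} < \sigma_{K_2})$; alternatively this can be derived from the $h$-transform identity (\ref{htransform}) combined with Lemma \ref{greencondit}, noting that the conditioned chain lives in $K_1 \cap K_2$ up to $\sigma^X_{K_1}$. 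Cancelling the common factors $p(\omega)\, G_{K_1 \cap K_2}(\omega)$ against the direct expression for $\mu_{l,K_1 \cap K_2}(\omega)$ reduces the problem to proving
$$\frac{P^{\omega_k}(\sigma_{K_1} < \sigma_{K_2} \wedge \xi_\omega)}{h(0)\, P^{\omega_k}(\sigma_{K_1 \cap K_2} < \xi_\omega)} = 1 + O(1/\log n) \quad (d=2).$$

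Since $B_{nl} \subset K_1 \cap K_2$, both events in this ratio require $\sigma_{nl} < \xi_\omega$, so I would apply the strong Markov property at $\sigma_{nl}$ to express the numerator and denominator as weighted sums over $y \in \partial B_{nl}$ with common weights $\pi_y := P^{\omega_k}(S(\sigma_{nl}) = y;\, \sigma_{nl} < \xi_\omega)$. For fixed $y$, writing $q(y) = P^y(\xi_\omega < \sigma_{K_1 \cap K_2})$ and decomposing by whether $\xi_\omega$ occurs before the next exit, one has the identities
\begin{align*}
P^y(\sigma_{K_1} < \sigma_{K_2} \wedge \xi_\omega) &= h(y) - E^y\bigl[\ind\{\xi_\omega < \sigma_{K_1 \cap K_2}\}\, h(S(\xi_\omega))\bigr],\\
P^y(\sigma_{K_1 \cap K_2} < \xi_\omega) &= 1 - q(y).
\end{align*}
The two key analytic inputs will be that $h$ is discrete harmonic on $B_{nl}$ (so the discrete Poisson-kernel comparison $P^z(S(\sigma_{nl}) = w) = P^0(S(\sigma_{nl}) = w)[1 + O(1/n)]$ for $z \in B_l$, $w \in \partial B_{nl}$, yields $h(z) = h(0)[1 + O(1/n)]$ uniformly for $z \in B_l$, and in particular for $S(\xi_\omega) \in \omega \subset B_l$), together with Lemma \ref{avoidfar}, which in $d=2$ gives $P^{\omega_k}(\sigma_{nl} < \xi_\omega) = O(1/\log n)$.

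Finally, I would combine these inputs via two bookkeeping identities: the strong-Markov identity $\sum_y \pi_y q(y) = \sum_y \pi_y - P^{\omega_k}(\sigma_{K_1 \cap K_2} < \xi_\omega)$, and the identity $\sum_y \pi_y h(y) = h(0)[1 + O(1/n)]\sum_y \pi_y$, which I would prove by splitting the optional-stopping relation $h(\omega_k) = E^{\omega_k}[h(S(\sigma_{nl}))]$ according to $\{\sigma_{nl} < \xi_\omega\}$ versus $\{\xi_\omega < \sigma_{nl}\}$, and invoking the Harnack estimate above on $S(\xi_\omega) \in B_l$ in the second piece. Substituting, the numerator of the ratio telescopes to $h(0)[1 + O(1/n)]\, P^{\omega_k}(\sigma_{K_1 \cap K_2} < \xi_\omega)$, giving a ratio of $1 + O(1/n)$, well within the stated $O(1/\log n)$ for $d=2$; the same scheme in $d \geq 3$, using the transience-based $O(n^{-1})$ estimate in place of Lemma \ref{avoidfar}, yields the stated $1 + O(n^{-1})$ bound. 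The main obstacle will be ensuring that every error stays genuinely multiplicative in $h(0)$ rather than additive, since $h(0)$ can be arbitrarily small depending on the geometry of $K_1$ and $K_2$; this is exactly what the multiplicative form of the Poisson-kernel comparison on $\partial B_{nl}$ guarantees.
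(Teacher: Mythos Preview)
Your reduction is the same as the paper's: both expand $\mu_{l,K_1,K_2}(\omega)$ and $\mu_{l,K}(\omega)$ (with $K = K_1\cap K_2$) via formula (\ref{lerwf}), cancel $p(\omega)G_K(\omega)$, and reduce to the ratio
\[
\frac{\Pro{\omega_k}{\sigma_{K_1}<\sigma_{K_2}\wedge\xi_\omega}}{h(0)\,\Pro{\omega_k}{\sigma_K<\xi_\omega}}.
\]
The divergence is in how you control this ratio, and there your telescoping argument has a genuine gap.

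Your second bookkeeping identity $\sum_y \pi_y h(y)=h(0)[1+O(1/n)]\sum_y\pi_y$ does \emph{not} follow from optional stopping plus the Harnack bound $h(z)=h(0)[1+O(1/n)]$ on $z\in\overline{B_l}$. Stopping at $\xi_\omega\wedge\sigma_{nl}$ gives
\[
h(\omega_k)=\sum_y\pi_y h(y)+\Exp{\omega_k}{h(S(\xi_\omega));\ \xi_\omega<\sigma_{nl}},
\]
and applying the Harnack bound to $h(\omega_k)$ and $h(S(\xi_\omega))$ yields $\sum_y\pi_y h(y)=h(0)\bigl[\Pi+O(1/n)\bigr]$ with $\Pi=\sum_y\pi_y$, i.e.\ an \emph{additive} error $h(0)O(1/n)$. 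Since $\Pi=\Pro{\omega_k}{\sigma_{nl}<\xi_\omega}=O(1/\log n)$ in $d=2$ (Lemma \ref{avoidfar}), this is not the multiplicative form you claim. Even if you grant the multiplicative form, the telescoping fails: writing $D=\Pro{\omega_k}{\sigma_K<\xi_\omega}$, your two identities combine to give a numerator of the form $h(0)\bigl[D+O(1/n)\cdot\Pi\bigr]$, so the ratio is $1+O(1/n)\cdot\Pi/D$. There is no uniform-in-$K$ lower bound on $D/\Pi$ in $d=2$ (take $K$ enormous: recurrence forces $D\to 0$ while $\Pi$ is fixed), so you cannot conclude $1+O(1/n)$ or even $1+O(1/\log n)$.

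The paper handles the ratio differently. It first proves the sandwich inequalities
\[
\min_{y\in\omega}\frac{\Pro{y}{\sigma_{K_1}<\sigma_{K_2}\wedge\xi_\omega}}{\Pro{y}{\sigma_K<\xi_\omega}}
\ \le\ h(\omega_k)\ \le\
\max_{y\in\omega}\frac{\Pro{y}{\sigma_{K_1}<\sigma_{K_2}\wedge\xi_\omega}}{\Pro{y}{\sigma_K<\xi_\omega}},
\]
and then shows, via the Poisson-kernel comparison $\condPro{y}{S(\sigma_{nl})=z}{\sigma_{nl}<\xi_\omega}=[1+O(\log n/n)]\Pro{0}{S(\sigma_{nl})=z}$ applied to \emph{both} numerator and denominator separately, that this ratio is the same for all $y\in\omega$ up to $1+O(\log n/n)$. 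The point is that numerator and denominator share the \emph{same} $y$-dependent factor $\Pro{y}{\sigma_{nl}<\xi_\omega}$, which cancels, leaving a $y$-independent quotient of $\Pro{0}{\cdot}$-averages over $\partial B_{nl}$; this is exactly what absorbs the dependence on the size of $K$. Your direct expansion lacks this cancellation. The fix is either to adopt the paper's max/min sandwich, or to apply the Poisson-kernel comparison you mention to the \emph{conditional} exit distribution (not just to $h$) so that both $\sum_y\pi_y h(y)$ and $\sum_y\pi_y q(y)$ factor through the same quantity up to multiplicative error.
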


\begin{proof}
Let $K = K_1 \cap K_2$. Let $X$ be a random walk conditioned to exit $K_1$ before $K_2$. Then by formula (\ref{lerwf}),
$$ \mu_{l,K}(\omega) = p(\omega) G_K(\omega) \Pro{\omega_k}{\sigma_K < \xi_{\omega}},$$
and
$$ \mu_{l,K_1,K_2}(\omega) = p^X(\omega) G^X_K(\omega) \condPro{\omega_k}{\sigma_{K_1} < \xi_{\omega} }{ \sigma_{K_1} < \sigma_{K_2}}.$$

By Lemma \ref{greencondit}, $G^X_K(\omega) = G_K(\omega)$. Furthermore, if we let $h(z) = \Pro{z}{\sigma_{K_1} < \sigma_{K_2}}$, then by (\ref{htransform}),
$$ p^X(\omega) = \frac{h(\omega_k)}{h(0)} p(\omega).$$
The function $h$ is harmonic in $B_{nl}$ and $\omega_k \in B_l$. Therefore, by the difference estimates for harmonic functions \cite[Theorem 6.3.8]{LL08},
$$ h(\omega_k) = \left[1 + O(n^{-1})\right]h(0),$$
and thus, 
$$  p^X(\omega) = \left[1 + O(n^{-1})\right]p(\omega).$$

Hence, it suffices to show that 

$$\Pro{\omega_k}{\sigma_{K_1} < \xi_{\omega} \wedge \sigma_{K_2}} = \left\{ \begin{array}{ll}
\left[1 + O(\frac{1}{\log n})\right] \Pro{\omega_k}{\sigma_{K_1} < \sigma_{K_2}} \Pro{\omega_k}{\sigma_K < \xi_{\omega}}  & d = 2; \\
\left[1 + O(n^{-1})\right] \Pro{\omega_k}{\sigma_{K_1} < \sigma_{K_2}} \Pro{\omega_k}{\sigma_K < \xi_{\omega}} & d \geq 3. 
\end{array} \right. $$

Let $y_0 \in \omega$ be such that
$$ \Pro{y_0}{\sigma_{K_1} < \sigma_{K_2}} = \max_{y \in \omega} \Pro{y}{\sigma_{K_1} < \sigma_{K_2}}.$$
Then
\begin{eqnarray*}
\Pro{y_0}{\sigma_{K_1} < \sigma_{K_2}} &=&  \condPro{y_0}{\sigma_{K_1} < \sigma_{K_2}}{\sigma_K < \xi_\omega} \Pro{y_0}{\sigma_K < \xi_\omega} \\
&+& \condPro{y_0}{\sigma_{K_1} < \sigma_{K_2}}{\xi_\omega < \sigma_K} \Pro{y_0}{\xi_\omega < \sigma_K} \\
&\leq& \condPro{y_0}{\sigma_{K_1} < \sigma_{K_2}}{\sigma_K < \xi_\omega} \Pro{y_0}{\sigma_K < \xi_\omega} \\
&+& \Pro{y_0}{\sigma_{K_1} < \sigma_{K_2}}\Pro{y_0}{\xi_\omega < \sigma_K}.
\end{eqnarray*}
Therefore,
$$ \Pro{y_0}{\sigma_{K_1} < \sigma_{K_2}} \leq \condPro{y_0}{\sigma_{K_1} < \sigma_{K_2}}{\sigma_K < \xi_\omega} = \frac{\Pro{y_0}{\sigma_{K_1} < \sigma_{K_2} \wedge \xi_\omega}}{\Pro{y_0}{\sigma_K < \xi_\omega}},$$ 
and hence
\begin{eqnarray} \label{okr1}
\Pro{\omega_k}{\sigma_{K_1} < \sigma_{K_2}} \leq \max_{y \in \omega} \frac{\Pro{y}{\sigma_{K_1} < \sigma_{K_2} \wedge \xi_\omega}}{\Pro{y}{\sigma_K < \xi_\omega}}.
\end{eqnarray}
A similar argument shows that
\begin{eqnarray} \label{okr2}
\Pro{\omega_k}{\sigma_{K_1} < \sigma_{K_2}} \geq \min_{y \in \omega} \frac{\Pro{y}{\sigma_{K_1} < \sigma_{K_2} \wedge \xi_\omega}}{\Pro{y}{\sigma_K < \xi_\omega}}.
\end{eqnarray}

Now let $y$ be any point on the path $\omega$. Then since $B_{nl}$ is a subset of both $K_1$ and $K_2$,
\begin{eqnarray*}
& & \Pro{y}{\sigma_{K_1} < \sigma_{K_2} \wedge \xi_\omega} \\
&=& \Pro{y}{\sigma_{nl} < \xi_\omega} \sum_{z \in \partial B_{nl}} \Pro{z}{\sigma_{K_1} < \sigma_{K_2} \wedge \xi_\omega} \condPro{y}{S(\sigma_{nl}) = z}{\sigma_{nl} < \xi_{\omega}},
\end{eqnarray*}
and
\begin{eqnarray*}
\Pro{y}{\sigma_{K} < \xi_\omega} 
= \Pro{y}{\sigma_{nl} < \xi_\omega} \sum_{z \in \partial B_{nl}} \Pro{z}{\sigma_{K} < \xi_\omega} \condPro{y}{S(\sigma_{nl}) = z}{\sigma_{nl} < \xi_{\omega}}.
\end{eqnarray*}

However \cite[Lemma 2.1.2]{Law91},
\begin{eqnarray*}
\condPro{y}{S(\sigma_{nl}) = z}{\sigma_{nl} < \xi_{\omega}} &=& \left\{ \begin{array}{ll} \left[1 +  O(\frac{\log n}{n})\right] \Pro{0}{S(\sigma_{nl}) = z} & d = 2; \\
\left[1 +  O(n^{-1})\right] \Pro{0}{S(\sigma_{nl}) = z} & d \geq 3;
\end{array} \right.
\end{eqnarray*}

Let $y_1$ be such that 
$$ \frac{\Pro{y_1}{\sigma_{K_1} < \sigma_{K_2} \wedge \xi_\omega}}{\Pro{y_1}{\sigma_K < \xi_\omega}} = \max_{y \in \omega} \frac{\Pro{y}{\sigma_{K_1} < \sigma_{K_2} \wedge \xi_\omega}}{\Pro{y}{\sigma_K < \xi_\omega}},$$
and $y_2$ be such that
$$ \frac{\Pro{y_2}{\sigma_{K_1} < \sigma_{K_2} \wedge \xi_\omega}}{\Pro{y_2}{\sigma_K < \xi_\omega}} = \min_{y \in \omega} \frac{\Pro{y}{\sigma_{K_1} < \sigma_{K_2} \wedge \xi_\omega}}{\Pro{y}{\sigma_K < \xi_\omega}}.$$  
Then by (\ref{okr1}) and (\ref{okr2}), if $d = 2$,
\begin{eqnarray*}
\frac{\Pro{\omega_k}{\sigma_{K_1} < \sigma_{K_2} \wedge  \xi_{\omega}}}{\Pro{\omega_k}{\sigma_{K_1} < \sigma_{K_2}} \Pro{\omega_k}{\sigma_K < \xi_{\omega}}} 
&\leq& \frac{\Pro{y_1}{\sigma_{K_1} < \sigma_{K_2} \wedge \xi_\omega} \Pro{y_2}{\sigma_K < \xi_\omega}}{\Pro{y_1}{\sigma_K < \xi_\omega} \Pro{y_2}{\sigma_{K_1} < \sigma_{K_2} \wedge \xi_\omega}} \\
&\leq& \frac{(1+ C \log n / n)^2}{(1 - C \log n / n)^2} \\
&\leq& 1 + \frac{C' \log n}{n}.
\end{eqnarray*}
The lower bound and the case $d \geq 3$ follows in the same way.
\end{proof}

We now define a measure on unrooted loops in $\Lambda$. See \cite[Chapter 9]{LL08} for more details.

A rooted loop $\eta = [\eta_0, \eta_1, \ldots, \eta_k]$ is a path in $\Lambda$ such that $\eta_0 = \eta_k$; $\eta_0$ is called the root of the loop. We say that two rooted loops $\eta$ and $\eta'$ are equivalent if $\eta' = [\eta_j, \eta_{j+1}, \ldots, \eta_{k-1}, \eta_0, \ldots,  \eta_j]$ for some $j$. We call the equivalence classes under this relation unrooted loops. We will denote by $\widetilde{\eta}$ the unrooted loop corresponding to the rooted loop $\eta$. Recall the notation
$$ p(\eta) := \prod_{i=1}^k p(\eta_{i-1},\eta_i) = \Pro{\eta_0}{S_i = \eta_i, i=0, \ldots k}.$$
Notice that this does not depend on the root of $\eta$ and therefore $p(\wt{\eta})$ is well defined for unrooted loops $\wt{\eta}$.

We define a measure $m$ on the set of unrooted loops as follows. Given an unrooted loop $\widetilde{\eta}$, let $\alpha(\widetilde{\eta})$ be the number of distinct rooted representatives of $\widetilde{\eta}$. Then we define
$$ m(\widetilde{\eta}) = \frac{\alpha(\widetilde{\eta})p(\wt{\eta})}{|\wt{\eta}|},$$
where $|\wt{\eta}|$ denotes the number of steps of a representative of $\wt{\eta}$. Any two representatives of $\widetilde{\eta}$ have the same number of steps so that $m$ is well defined.

Recall the definition of $G_K(\omega)$ given in (\ref{GK}). The following lemma allows us to express $G_K(\omega)$ in terms of the unrooted loop measure. 

\begin{lemma} \label{aa}
$$ G_K(\omega) = \exp \left\{   m(\wt{\eta} : \wt{\eta} \subset K; \wt{\eta} \cap \omega \neq \emptyset) \right\}.$$
\end{lemma}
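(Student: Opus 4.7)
The plan is to reduce everything to the standard single-point identity
\[
\log G_U(x) \;=\; m\bigl(\wt\eta : x \in \wt\eta,\, \wt\eta \subset U\bigr), \qquad x \in U,
\]
and then combine the factors of $G_K(\omega) = \prod_{j=0}^k G(\omega_j;\, K \setminus A_{j-1})$ via a first-intersection decomposition of loops against the path $\omega$. Taking logs of the defining product,
\[
\log G_K(\omega) \;=\; \sum_{j=0}^k \log G\bigl(\omega_j;\, K \setminus A_{j-1}\bigr),
\]
so once the single-point identity is established in each $U_j := K \setminus A_{j-1}$, what remains is a purely combinatorial reassembly.

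For the single-point identity I would exploit the return representation
\[
G_U(x) \;=\; \frac{1}{1 - F_U(x)}, \qquad F_U(x) := \Pro{x}{\xi_x < \sigma_U},
\]
which comes from the strong Markov property, and Taylor-expand $-\log(1 - F_U(x)) = \sum_{n \geq 1} F_U(x)^n/n$. Again by the strong Markov property, $F_U(x)^n$ equals the total $p$-weight of rooted loops at $x$, lying in $U$, with exactly $n$ returns to $x$, so
\[
\log G_U(x) \;=\; \sum_{\eta \text{ rooted at } x,\, \eta \subset U,\, |\eta| \geq 1} \frac{p(\eta)}{n_{\mathrm{ret}}(\eta)}.
\]
Group rooted loops by their unrooted class $\wt\eta$. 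All rooted representatives carry the same weight $p(\wt\eta)$ and have the same return count $n_{\mathrm{ret}} = n_x(\wt\eta)$, and the number of distinct rooted representatives with root $x$ equals $n_x(\wt\eta)\,\alpha(\wt\eta)/|\wt\eta|$. This last count is the one delicate point: writing a representative as a cyclic sequence of length $|\wt\eta|$ with $\alpha(\wt\eta)$ distinct cyclic shifts, these shifts have roots $y_0, y_1, \ldots, y_{\alpha(\wt\eta)-1}$, and by the resulting $(|\wt\eta|/\alpha(\wt\eta))$-periodicity $x$ occurs in this block exactly $n_x(\wt\eta)\,\alpha(\wt\eta)/|\wt\eta|$ times. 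Cancelling $n_x(\wt\eta)$ then gives
\[
\log G_U(x) \;=\; \sum_{\wt\eta \subset U,\, x \in \wt\eta} \frac{\alpha(\wt\eta)\, p(\wt\eta)}{|\wt\eta|} \;=\; m\bigl(\wt\eta : x \in \wt\eta,\, \wt\eta \subset U\bigr).
\]

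For the reassembly I would associate to each unrooted loop $\wt\eta \subset K$ with $\wt\eta \cap \omega \neq \emptyset$ the first intersection index $j^*(\wt\eta) := \min\{j \geq 0 : \omega_j \in \wt\eta\}$. Since then $\omega_0, \ldots, \omega_{j^*-1} \notin \wt\eta$, one has $\wt\eta \subset K \setminus A_{j^*-1}$ and $\omega_{j^*} \in \wt\eta$; conversely any $\wt\eta$ contributing to the $j$-th summand satisfies $j^*(\wt\eta) = j$. This partitions the family $\{\wt\eta \subset K : \wt\eta \cap \omega \neq \emptyset\}$ according to the value of $j^*$, giving
\[
\sum_{j=0}^k m\bigl(\wt\eta : \omega_j \in \wt\eta,\, \wt\eta \subset K \setminus A_{j-1}\bigr) \;=\; m\bigl(\wt\eta : \wt\eta \subset K,\, \wt\eta \cap \omega \neq \emptyset\bigr),
\]
and exponentiating gives the lemma. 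The main obstacle is the cyclic-shift counting in the single-point step; once that is correctly set up, the remainder is a clean application of the strong Markov property together with disjoint-union bookkeeping indexed by $j^*$.
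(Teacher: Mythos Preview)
Your proposal is correct and follows essentially the same route as the paper: both arguments establish the single-point identity $\log G_U(x)=m(\wt\eta:x\in\wt\eta,\ \wt\eta\subset U)$ by writing $G_U(x)=(1-\rho)^{-1}$, Taylor-expanding, and counting rooted representatives of each unrooted loop at $x$ (your $n_x\alpha/|\wt\eta|$ is exactly the paper's $\beta/\kappa$), and then both reassemble the product $G_K(\omega)=\prod_j G(\omega_j;K\setminus A_{j-1})$ via the first-intersection index $j^*$ along $\omega$. The only difference is cosmetic: the paper groups the rooted-loop sum by the value of $\beta_z(\wt\eta)=j$ and identifies $\rho^j=j\cdot m(\beta=j)$, whereas you group directly by unrooted class and cancel $n_x$; these are the same computation.
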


\begin{proof}
We will first show that for any $z \in \Lambda$,
\begin{eqnarray}
G_K(z) = \exp \{  m(\wt{\eta} : \wt{\eta} \subset K ; z \in \wt{\eta}) \}. \label{fd}
\end{eqnarray}
Let $\rho = \Pro{z}{\xi_z < \sigma_K}$. Then by the strong Markov property for random walk, 
$$G_K(z) = 1 + \rho G_K(z),$$ 
and thus,
$$ G_K(z) = \frac{1}{1 - \rho} = e^{ - \ln (1 - \rho)} = \exp \left\{ \sum_{j=1}^\infty \frac{\rho^j}{j} \right\}.$$

Given an unrooted loop $\wt{\eta}$, let $\kappa(\wt{\eta})$ be the largest integer $m$ that divides $\abs{\wt{\eta}} = n$ such that $\eta_k = \eta_{k + (n/m)}$ for all $k \leq n - (n/m)$. Then $\kappa(\wt{\eta}) = \abs{\wt{\eta}} / \alpha(\wt{\eta})$ and therefore
$$ m(\wt{\eta}) = \frac{p(\eta)}{\kappa(\wt{\eta})} .$$
Let
$$ \beta(\wt{\eta}) = \beta_z(\wt{\eta}) := \# \{j: \eta_j = z \}$$ be the number of times that $\wt{\eta}$ hits $z$. Then the number of representatives of $\wt{\eta}$ that are rooted at $z$ is $\beta(\wt{\eta})/ \kappa(\wt{\eta})$. Hence,
\begin{eqnarray*}
\rho^j &=& \sum_{\substack{\text{ $\eta$ rooted at $z$} \\ \eta \subset K  \\ \beta(\eta) = j}} p(\eta) \\
&=& \sum_{\substack{\text{ $\wt{\eta}$ unrooted} \\ z \in \wt{\eta}, \: \wt{\eta} \subset K  \\ \beta(\wt{\eta}) = j}} \sum_{\text{ $\eta$ rooted at $z$}} p(\eta) \\
&=& \sum_{\substack{\text{ $\wt{\eta}$ unrooted} \\ z \in \wt{\eta}, \: \wt{\eta} \subset K  \\ \beta(\wt{\eta}) = j}} \frac{\beta(\wt{\eta}) p(\wt{\eta})}{\kappa(\wt{\eta})} \\
&=& j \cdot m(z \in \wt{\eta}; \wt{\eta} \subset K ; \beta(\wt{\eta}) = j).
\end{eqnarray*}
Therefore,
$$\sum_{j=1}^\infty \frac{\rho^j}{j} = \sum_{j=1}^\infty m(z \in \wt{\eta}; \wt{\eta} \subset K ; \beta(\wt{\eta}) = j) = m(z \in \wt{\eta}; \wt{\eta} \subset K),$$
which proves (\ref{fd}).

Let $E_j$, $j=0, \ldots, k$, be the set of unrooted loops $\wt{\eta}$ such that $\omega_j \in \wt{\eta}$ and $\wt{\eta} \subset K \setminus \{\omega_0, \ldots, \omega_{j-1}\}$. Then the $E_j$ are disjoint and their union is the set of all unrooted loops $\wt{\eta}$ such that $\wt{\eta} \cap \omega \neq \emptyset$ and $\wt{\eta} \subset K$. This observation along with (\ref{fd}) finishes the proof.
\end{proof}

\begin{prop} \label{ax}
Suppose that $n \geq 4$, $K_1$ and $K_2$ are such that $B_{nl} \subset K_1$ and $B_{nl} \subset K_2$, and that $\omega \in \Omega_l$. Then 
\begin{eqnarray*}
\mu_{l, K_1}(\omega) &=& \left\{ \begin{array}{ll}
\left[1 + O(\frac{1}{\log n})\right]\mu_{l,K_2}(\omega) & d = 2; \\
\left[1 + O(n^{2-d})\right]\mu_{l,K_2}(\omega) & d \geq 3. 
\end{array} \right.
\end{eqnarray*}
In particular, if $B_{4l} \subset K_1$ and $B_{4l} \subset K_2$ then
$$ \mu_{l,K_1}(\omega) \asymp \mu_{l,K_2}(\omega).$$
\end{prop}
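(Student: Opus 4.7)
The plan is to follow the same skeleton as the proof of Proposition \ref{aw}, but this time the Green's functions $G_{K_1}(\omega)$ and $G_{K_2}(\omega)$ differ, so Lemma \ref{aa} will play a central role. First I would reduce to the nested case: if we can show that whenever $K_0 \subset K$ both contain $B_{nl}$ one has $\mu_{l,K}(\omega) = [1+O(\epsilon_n)]\,\mu_{l,K_0}(\omega)$ (with $\epsilon_n = 1/\log n$ in $d=2$ and $\epsilon_n = n^{2-d}$ in $d\geq 3$), then for arbitrary $K_1,K_2$ we apply this twice with $K_0 = K_1\cap K_2$. By formula (\ref{lerwf}),
\begin{equation*}
\frac{\mu_{l,K}(\omega)}{\mu_{l,K_0}(\omega)} = \frac{G_K(\omega)}{G_{K_0}(\omega)}\cdot \frac{\Pro{\omega_k}{\sigma_K < \xi_\omega}}{\Pro{\omega_k}{\sigma_{K_0} < \xi_\omega}},
\end{equation*}
and neither factor is individually close to $1$ (in $d=2$ each can depend logarithmically on the sizes of the sets), so they must cancel.

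Next I would separate the $K$-dependence on each side. For the exit-probability ratio, conditioning on the first exit of $B_{nl}$ (forced because $\omega\subset B_l$ and $K_0\supset B_{nl}$) and applying the harmonic measure estimate \cite[Lemma 2.1.2]{Law91}, as in the proof of Proposition \ref{aw}, yields
\begin{equation*}
\Pro{\omega_k}{\sigma_K < \xi_\omega} = [1+O(\log n/n)]\,\Pro{\omega_k}{\sigma_{nl} < \xi_\omega}\cdot q_K(\omega),
\end{equation*}
where $q_K(\omega) := \sum_{z\in\partial B_{nl}} \Pro{0}{S(\sigma_{nl})=z}\,\Pro{z}{\sigma_K < \xi_\omega}$ is independent of the starting point $\omega_k$. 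The identical expansion for $K_0$ reduces the exit-probability ratio to $[1+O(\log n/n)]\,q_K(\omega)/q_{K_0}(\omega)$. Meanwhile, Lemma \ref{aa} gives
\begin{equation*}
\frac{G_K(\omega)}{G_{K_0}(\omega)} = \exp\left\{m\bigl(\wt\eta:\wt\eta\cap\omega\neq\emptyset,\ \wt\eta\subset K,\ \wt\eta\not\subset K_0\bigr)\right\},
\end{equation*}
and every loop counted must enter $B_l$ and exit $B_{nl}$.

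The hard part will be proving the cancellation
\begin{equation*}
\exp\bigl\{m(\wt\eta:\wt\eta\cap\omega\neq\emptyset,\ \wt\eta\subset K,\ \wt\eta\not\subset K_0)\bigr\}\cdot \frac{q_K(\omega)}{q_{K_0}(\omega)} = 1 + O(\epsilon_n).
\end{equation*}
The heuristic is that each ``large'' unrooted loop (hitting $\omega$ and exiting $B_{nl}$ but contained in $K\setminus K_0$) corresponds, after choosing a root on $\partial B_{nl}$, to one excursion of a walk returning from $\partial B_{nl}$ to hit $\omega$ rather than escaping through $\partial K$; these excursions are exactly what separates $q_K$ from $q_{K_0}$. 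To make this precise I would expand $q_{K_0}(\omega)-q_K(\omega)$ via the strong Markov property at re-entries into $B_{nl}$, match the resulting walk sums to the loop measure using the pointwise identity $\log G_A(z) = m(\wt\eta: z\in\wt\eta,\ \wt\eta\subset A)$ extracted from the proof of Lemma \ref{aa}, and control the error via Lemma \ref{avoidfar} (which gives the $O(1/\log n)$ rate in $d=2$) together with transience estimates in $d\geq 3$. The main obstacle is the combinatorial bookkeeping: unrooted loops admit many rootings, so a canonical choice of root is required to avoid double counting, and one must show that the resulting harmonic-measure and geometric errors combine without spoiling the desired estimate uniformly in $\omega$, $K$, and $K_0$.
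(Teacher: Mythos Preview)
Your proposal is incomplete at exactly the point you flag as the ``hard part'': proving the direct cancellation between the loop-measure exponent and the ratio $q_K(\omega)/q_{K_0}(\omega)$. You correctly observe that neither $G_K(\omega)/G_{K_0}(\omega)$ nor the exit-probability ratio is individually $1+O(\epsilon_n)$, but matching loops to excursions one-to-one, with the rooting and multiplicity bookkeeping you describe, is genuinely delicate and you have not carried it out. The paper avoids this entirely by a normalization trick that you are missing.

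The paper's idea is this: rather than show that the two factors cancel for a fixed $\omega$, show that the ratio $\mu_{l,K_1}(\omega)/\mu_{l,K_2}(\omega)$ is \emph{the same for every $\omega\in\Omega_l$} up to a factor $1+O(\epsilon_n)$. Since both $\mu_{l,K_1}$ and $\mu_{l,K_2}$ are probability measures on $\Omega_l$, summing over $\omega'$ then forces that common ratio to be $1+O(\epsilon_n)$. So the task becomes proving, for any two paths $\omega,\omega'\in\Omega_l$, the pair of estimates
\[
\frac{\Pro{\omega_k}{\sigma_{K_1}<\xi_\omega}}{\Pro{\omega_k}{\sigma_{K_2}<\xi_\omega}}
\;\le\;[1+C\epsilon_n]\,\frac{\Pro{\omega'_{k'}}{\sigma_{K_1}<\xi_{\omega'}}}{\Pro{\omega'_{k'}}{\sigma_{K_2}<\xi_{\omega'}}},
\qquad
\frac{G_{K_1}(\omega)}{G_{K_2}(\omega)}\;\le\;[1+C\epsilon_n]\,\frac{G_{K_1}(\omega')}{G_{K_2}(\omega')}.
\]
For the first, the paper pushes your factorization one step further: after reaching $\partial B_{nl}$, it replaces $\xi_\omega$ by $\xi_l$ (at cost $O(\epsilon_n)$, via Lemma~\ref{avoidfar} in $d=2$ and transience in $d\ge3$), so the residual factor $\sum_z \Pro{0}{S(\sigma_{nl})=z}\Pro{z}{\sigma_{K_i}<\xi_l}$ is \emph{independent of $\omega$} and cancels in the double ratio. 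For the second, Lemma~\ref{aa} reduces the double ratio of Green's functions to the loop measure of loops that hit $B_l$, exit $B_{nl}$, \emph{and distinguish $\omega$ from $\omega'$}; one then bounds $m(\wt\eta:\wt\eta\cap B_l\ne\emptyset,\ \wt\eta\cap\omega=\emptyset,\ \wt\eta\not\subset B_{nl})\le C\epsilon_n$ by a direct decomposition over the innermost point of the loop, using Beurling in $d=2$.

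In short: you are trying to prove a cancellation, whereas the paper sidesteps it by proving $\omega$-independence of each factor separately and invoking normalization. The latter is both easier and what actually gets carried out.
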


\begin{proof}
By Formula (\ref{lerwf}), for any $\omega \in \Omega_l$,
$$ \mu_{l,K_i}(\omega) = p(\omega) G_{K_i}(\omega) \Pro{\omega_k}{\sigma_{K_i} < \xi_{\omega}} \hskip15pt i = 1,2.$$

Let $e(n) = (\log n)^{-1}$ if $d=2$ and $e(n) = n^{2-d}$ if $d \geq 3$. Let $\omega' = [\omega'_0, \ldots, \omega'_{k'}]$ be any other path in $\Omega_l$. We will show that
\begin{eqnarray}
\frac{\Pro{\omega_k}{\sigma_{K_1} < \xi_{\omega}}}{\Pro{\omega_k}{\sigma_{K_2} < \xi_{\omega}}} \leq \left[1 + Ce(n)\right] \frac{\Pro{\omega'_{k'}}{\sigma_{K_1} < \xi_{\omega'}}}{\Pro{\omega'_{k'}}{\sigma_{K_2} < \xi_{\omega'}}} \label{az}
\end{eqnarray}
and
\begin{eqnarray}
\frac{G_{K_1}(\omega)}{G_{K_2}(\omega)} \leq \left[1 + Ce(n)\right] \frac{G_{K_1}(\omega')}{G_{K_2}(\omega')}.  \label{ay}
\end{eqnarray}
For this will imply that
$$ \frac{\mu_{l,K_1}(\omega')}{\mu_{l,K_2}(\omega')} \leq \left[1 + Ce(n)\right] \frac{\mu_{l,K_1}(\omega)}{\mu_{l,K_2}(\omega)}$$
and
\begin{eqnarray*}
1 &=& \sum_{\omega' \in \Omega_l} \mu_{l, K_1}(\omega') \\
&=& \sum_{\omega' \in \Omega_l} \frac{\mu_{l, K_1}(\omega')}{\mu_{l, K_2}(\omega')} \mu_{l, K_2}(\omega') \\
&\leq& \left[1 + Ce(n)\right] \frac{\mu_{l, K_1}(\omega)}{\mu_{l, K_2}(\omega)} \sum_{\omega' \in \Omega_l} \mu_{l, K_2}(\omega') \\
&=& \left[1 + Ce(n)\right] \frac{\mu_{l, K_1}(\omega)}{\mu_{l, K_2}(\omega)}.
\end{eqnarray*}
One then gets the other bound by reversing the roles of $K_1$ and $K_2$.

We first show (\ref{az}). Since $B_{nl} \subset K_1$
$$ \Pro{\omega_k}{\sigma_{K_1} < \xi_{\omega}} = \Pro{\omega_k}{\sigma_{nl} < \xi_{\omega}} \sum_{z \in \partial B_{nl}} \Pro{z}{\sigma_{K_1} < \xi_{\omega}} \condPro{\omega_k}{S(\sigma_{nl}) = z}{\sigma_{nl} < \xi_\omega}.$$
If $d \geq 3$, then \cite[Proposition 6.4.2]{LL08} for $z \in \partial B_{nl}$,
$$ \Pro{z}{\sigma_{K_1} < \xi_{\omega}} \geq \Pro{z}{\xi_{l} = \infty} \geq 1 - Cn^{2-d}.$$
Therefore, if $d \geq 3$,
$$ \left[1 - Cn^{2-d}\right] \Pro{\omega_k}{\sigma_{nl} < \xi_\omega} \leq \Pro{\omega_k}{\sigma_{K_1} < \xi_{\omega}} \leq \Pro{\omega_k}{\sigma_{nl} < \xi_\omega}.$$
One gets a similar formula with $K_2$ replacing $K_1$ and $\omega'$ replacing $\omega$, from which (\ref{az}) follows for the case $d \geq 3$.

To prove (\ref{az}) for the case $d = 2$, we first note that \cite[Lemma 2.1.2]{Law91}
$$ \condPro{\omega_k}{S(\sigma_{nl}) = z}{\sigma_{nl} < \xi_\omega} = \left[1 + O \left(\frac{\log n}{n}\right)\right] \Pro{0}{S(\sigma_{nl}) = z}.$$
Furthermore, for $z \in \partial B_{nl}$,
\begin{eqnarray*}
\Pro{z}{\sigma_{K_1} < \xi_{\omega}} &=& \Pro{z}{\sigma_{K_1} < \xi_l} + \sum_{y \in \partial_i B_l} \Pro{y}{\sigma_{K_1} < \xi_{\omega}} \Pro{z}{S(\xi_l) = y ; \xi_l < \sigma_{K_1}}.
\end{eqnarray*}
By applying Lemma \ref{avoidfar} and \cite[Lemma 2.1.2]{Law91} again we get that for $y \in \partial_i B_l$, 
\begin{eqnarray*}
\Pro{y}{\sigma_{K_1} < \xi_{\omega}} &=& \Pro{y}{\sigma_{nl} < \xi_\omega} \sum_{w \in \partial B_{nl}} \Pro{w}{\sigma_{K_1} < \xi_\omega} \condPro{y}{S(\sigma_{nl}) = z}{\sigma_{nl} < \xi_\omega} \\
&\leq& \frac{C}{\log n} \sum_{w \in \partial B_{nl}} \Pro{w}{\sigma_{K_1} < \xi_\omega} \Pro{0}{S(\sigma_{nl}) = w} \\
&\leq& \frac{C}{\log n} \frac{\Pro{\omega_k}{\sigma_{K_1} < \xi_\omega}}{\Pro{\omega_k}{\sigma_{nl} < \xi_\omega}}.
\end{eqnarray*}
Thus,
\begin{eqnarray*}
\Pro{z}{\sigma_{K_1} < \xi_\omega} &\leq& \Pro{z}{\sigma_{K_1} < \xi_l} + \frac{C}{\log n} \frac{\Pro{\omega_k}{\sigma_{K_1} < \xi_\omega}}{\Pro{\omega_k}{\sigma_{nl} < \xi_\omega}} \Pro{z}{\xi_l < \sigma_{K_1}} \\
&\leq& \Pro{z}{\sigma_{K_1} < \xi_l} + \frac{C}{\log n} \frac{\Pro{\omega_k}{\sigma_{K_1} < \xi_\omega}}{\Pro{\omega_k}{\sigma_{nl} < \xi_\omega}}.
\end{eqnarray*}
Therefore, 
\begin{eqnarray*}
\Pro{\omega_k}{\sigma_{K_1} < \xi_\omega} 
&=& \Pro{\omega_k}{\sigma_{nl} < \xi_{\omega}} \sum_{z \in \partial B_{nl}} \Pro{z}{\sigma_{K_1} < \xi_{\omega}} \condPro{\omega_k}{S(\sigma_{nl}) = z}{\sigma_{nl} < \xi_\omega} \\ 
&\leq& \left[1 + \frac{C \log n}{n}\right] \Pro{\omega_k}{\sigma_{nl} < \xi_\omega} \sum_{z \in \partial B_{nl}} \Pro{z}{\sigma_{K_1} < \xi_{\omega}}\Pro{0}{S(\sigma_{nl}) = z} \\
&\leq& \left[1 + \frac{C \log n}{n}\right] \Pro{\omega_k}{\sigma_{nl} < \xi_\omega} \sum_{z \in \partial B_{nl}} \Pro{z}{\sigma_{K_1} < \xi_l}\Pro{0}{S(\sigma_{nl}) = z} \\ &+& \frac{C}{\log n} \Pro{\omega_k}{\sigma_{K_1} < \xi_\omega},
\end{eqnarray*}
and hence,
$$ \Pro{\omega_k}{\sigma_{K_1} < \xi_\omega} \leq \left[1 + \frac{C}{\log n}\right] \Pro{\omega_k}{\sigma_{nl} < \xi_\omega} \sum_{z \in \partial B_{nl}} \Pro{z}{\sigma_{K_1} < \xi_l}\Pro{0}{S(\sigma_{nl}) = z},$$
with a similar lower bound. We get similar bounds with $\omega'$ replacing $\omega$ and $K_2$ replacing $K_1$ from which (\ref{az}) follows.

\vskip20pt

We now show (\ref{ay}). By Lemma \ref{aa},
\begin{eqnarray*}
& &
\frac{G_{K_1}(\omega)G_{K_2}(\omega')}{G_{K_2}(\omega)G_{K_1}(\omega')} \\
&=& \exp \{  m(\widetilde{\eta} \cap \omega \neq \emptyset; \wt{\eta} \cap \omega' = \emptyset; \widetilde{\eta} \subset K_1; \widetilde{\eta} \cap (\Lambda \setminus K_2) \neq \emptyset) \\
&+& m(\widetilde{\eta} \cap \omega = \emptyset; \wt{\eta} \cap \omega' \neq \emptyset; \widetilde{\eta} \cap (\Lambda \setminus K_1) \neq \emptyset; \widetilde{\eta} \subset K_2) \\
&-& m(\widetilde{\eta} \cap \omega \neq \emptyset; \wt{\eta} \cap \omega' = \emptyset; \widetilde{\eta} \cap (\Lambda \setminus K_1) \neq \emptyset; \widetilde{\eta} \subset K_2)  \\
&-& m(\widetilde{\eta} \cap \omega = \emptyset; \wt{\eta} \cap \omega' \neq \emptyset; \widetilde{\eta} \subset K_1; \widetilde{\eta} \cap (\Lambda \setminus K_2) \neq \emptyset) \} \\
&\leq& \exp \{ m(\wt{\eta} \cap \omega' = \emptyset; \wt{\eta} \cap (\Lambda \setminus K_2) \neq \emptyset) + m(\wt{\eta} \cap \omega = \emptyset; \wt{\eta} \cap (\Lambda \setminus K_1) \neq \emptyset) \}.
\end{eqnarray*}
We also get a similar lower bound by exchanging the roles of $K_1$ and $K_2$. Therefore, it suffices to show that there exists $C < \infty$ such that for all $l$ and all $\omega \in \Omega_l$,
$$  m(\wt{\eta} \cap B_l \neq \emptyset; \wt{\eta} \cap \omega = \emptyset; \wt{\eta} \cap (\Lambda \setminus B_{nl}) \neq \emptyset) \leq Ce(n).$$

Given an unrooted loop $\widetilde{\eta}$ with representative $\eta = [\eta_0, \ldots, \eta_k]$, let
$$ <\widetilde{\eta}> = \min \{\abs{\eta_i} : i=0, \ldots, k \}.$$ 
Let $\widetilde{\eta}^{*}$ be such that $\abs{\widetilde{\eta}^{*}} = < \wt{\eta} >$ and such that
$$\arg(\widetilde{\eta}^{*}) = \min \{ \arg (\eta_i) : \abs{\eta_i} = <\wt{\eta}> \}.$$

Suppose first that $d = 2$. Then for $j \leq l/2$ and $z \in \partial B_j$,
\begin{eqnarray*}
& & m( \widetilde{\eta} \cap \omega = \emptyset; \widetilde{\eta} \cap (\Lambda \setminus B_{nl}) \neq \emptyset; \lceil <\widetilde{\eta}> \rceil = j; \widetilde{\eta}^{*}= z) \\
&\leq& \Pro{z}{ \sigma_{2j} < \xi_{j-1}} \left( \max_{y \in \partial B_{2j}} \Pro{y}{\sigma_l < \xi_{\omega}} \right) \\
&\times& \left( \max_{v \in \partial B_l} \Pro{v}{\sigma_{nl} < \sigma_{j-1}} \right)\left( \max_{w \in \partial B_{nl}} \Pro{w}{\xi_{z} < \xi_{j-1}} \right)\\
&\leq& C \frac{1}{j} \left( \frac{2j}{l} \right)^{\frac{1}{2}} \left( \frac{\log l - \log (j-1)}{\log{nl} - \log l} \right) \frac{1}{j} \\
 &\leq& C l^{-\frac{1}{2}}(\log n)^{-1}j^{-\frac{3}{2}}\log \frac{l}{j},
\end{eqnarray*}
where the exponent $1/2$ comes from the Beurling estimates (Theorem \ref{beurling}).

If $l/2 < j \leq l$, then
\begin{eqnarray*}
& & m( \widetilde{\eta} \cap \omega = \emptyset, \widetilde{\eta} \cap (\Lambda \setminus B_{nl}) \neq \emptyset, \lceil <\widetilde{\eta}> \rceil = j, \widetilde{\eta}^{*}= z) \\ 
&\leq&  \Pro{z}{ \sigma_{nl} < \xi_{j-1}} \left( \max_{w \in \partial B_{nl}} \Pro{y}{\xi_z < \xi_{j-1}} \right) \\
&\leq& C \left(\frac{\log j - \log (j-1)}{\log{nl} - \log l} \right)j^{-1}
\\
&\leq& C j^{-2}(\log n)^{-1}.
\end{eqnarray*}
Therefore,
\begin{eqnarray*}
& & 
m(\wt{\eta} \cap B_l \neq \emptyset; \wt{\eta} \cap \omega = \emptyset; \wt{\eta} \cap (\Lambda \setminus B_{nl}) \neq \emptyset) \\
&=&  \sum_{j=1}^{l/2} \sum_{z \in \partial B_j} m( \widetilde{\eta} \cap \omega = \emptyset; \widetilde{\eta} \cap (\Lambda \setminus B_{nl}) \neq \emptyset; <\widetilde{\eta}> = j; \widetilde{\eta}_{*}= z) \\
&+& \sum_{j = l/2}^l \sum_{z \in \partial B_j} m( \widetilde{\eta} \cap \omega = \emptyset; \widetilde{\eta} \cap (\Lambda \setminus B_{nl}) \neq \emptyset; <\widetilde{\eta}> = j; \widetilde{\eta}_{*}= z) \\
&\leq& \frac{C}{\log n} \left[ l^{-\frac{1}{2}} \sum_{j=1}^{l/2} j^{-\frac{1}{2}} \log \frac{l}{j} + \sum_{j=l/2}^l \frac{1}{j} \right] \\
&\leq& \frac{C}{\log n}.
\end{eqnarray*}

The case $d \geq 3$ is easier. In this case,
\begin{eqnarray*}
& & m( \widetilde{\eta} \cap \omega = \emptyset, \widetilde{\eta} \cap (\Lambda \setminus B_{nl}) \neq \emptyset, \lceil <\widetilde{\eta}> \rceil = j) \\ 
&\leq&  \left( \max_{z \in \partial B_j} \Pro{z}{ \sigma_{nl} < \xi_{j-1}} \right) \left( \max_{w \in \partial B_{nl}} \Pro{w}{\xi_{j} < \infty} \right) \\
&\leq& C \left(\frac{j^{2-d} - (j-1)^{2-d}}{(nl)^{2-d} - l^{2-d}} \right)\left(\frac{(nl)^{2-d}}{l^{2-d}} \right)
\\
&\leq& C \left(\frac{j^{1-d}}{l^{2-d} - (nl)^{2-d}} \right)n^{2-d}
\\
&\leq& C n^{2-d}l^{d-2}j^{1-d}.
\end{eqnarray*}
Thus,
\begin{eqnarray*}
& & 
m(\wt{\eta} \cap B_l \neq \emptyset; \wt{\eta} \cap \omega = \emptyset; \wt{\eta} \cap (\Lambda \setminus B_{nl}) \neq \emptyset) \\
&=& \sum_{j=1}^{l} m( \widetilde{\eta} \cap \omega = \emptyset, \widetilde{\eta} \cap (\Lambda \setminus B_{nl}) \neq \emptyset, \lceil <\widetilde{\eta}> \rceil = j) \\
&\leq& C n^{2-d} l^{d-2} \sum_{j=1}^l j^{1-d} \\
&\leq& C n^{2-d}.
\end{eqnarray*}
\end{proof}

\begin{cor} \label{infdist} Recall that $\wh{S}$ denotes an infinite LERW. Suppose that $n \geq 4$, $K$ is such that $B_{nl} \subset K$, and $\omega \in \Omega_l$. Then,
\begin{eqnarray*}
\Pro{}{\widehat{S}[0,\widehat{\sigma}_l] = \omega} &=& \left\{ \begin{array}{ll}
\left[1 + O(\frac{1}{\log n})\right] \Pro{}{\widehat{S}^K[0,\widehat{\sigma}_l] = \omega} & d = 2; \\
\left[1 + O(n^{2-d})\right]\Pro{}{\widehat{S}^K[0,\widehat{\sigma}_l] = \omega} & d \geq 3. 
\end{array} \right.
\end{eqnarray*}
In particular,
$$ \Pro{}{\widehat{S}[0,\widehat{\sigma}_l] = \omega} \asymp \Pro{}{\widehat{S}^{4l}[0,\widehat{\sigma}_l] = \omega}.$$
\end{cor}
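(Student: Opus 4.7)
The proof is a one-line consequence of Proposition \ref{ax} combined with the defining limit of the infinite LERW, so my plan is essentially to verify that the error estimate survives passage to the limit.

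Recall from Section \ref{prelim} that the infinite LERW is defined via the consistent family of measures
$$\mu_{l,m}(\omega) = \Pro{}{\wh{S}[0,\wh{\sigma}^m_l] = \omega}, \qquad \omega \in \Omega_l,$$
and that $\mu_l(\omega) := \lim_{m \to \infty} \mu_{l,m}(\omega)$ exists. In the notation of this section, $\mu_{l,m}$ is precisely $\mu_{l,B_m}$, and
$$\Pro{}{\wh{S}[0,\wh{\sigma}_l] = \omega} = \lim_{m \to \infty} \mu_{l,B_m}(\omega).$$

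Now fix $n \geq 4$, a set $K$ with $B_{nl} \subset K$, and $\omega \in \Omega_l$. For every integer $m \geq nl$, the ball $B_m$ also contains $B_{nl}$, so Proposition \ref{ax} applies to the pair $(K_1, K_2) = (K, B_m)$, giving
$$\mu_{l,K}(\omega) = [1 + O(e(n))]\,\mu_{l,B_m}(\omega),$$
where $e(n) = 1/\log n$ for $d=2$ and $e(n) = n^{2-d}$ for $d \geq 3$, and crucially the implicit constant in the $O$-term depends only on the dimension and \emph{not} on $m$. Letting $m \to \infty$, the right-hand factor converges to $\Pro{}{\wh{S}[0,\wh{\sigma}_l] = \omega}$, and since the multiplicative error $[1+O(e(n))]$ is uniform in $m$, we may pass to the limit to obtain
$$\mu_{l,K}(\omega) = [1 + O(e(n))]\,\Pro{}{\wh{S}[0,\wh{\sigma}_l] = \omega}.$$
Rearranging (using that $[1+O(e(n))]^{-1} = [1 + O(e(n))]$ for $n$ large) yields the stated estimate, and the $\asymp$ statement follows by taking $n = 4$.

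There is no real obstacle here; the only point to check is that Proposition \ref{ax} is stated with a constant that is uniform over the choice of the second set $K_2$, which is indeed the case since its proof gives explicit bounds depending only on $n$, $l$, and the walk $S$. Hence nothing prevents us from letting $m \to \infty$ while keeping the error term intact.
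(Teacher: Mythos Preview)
Your proof is correct and follows exactly the approach in the paper: use the definition of the infinite LERW as $\lim_{m\to\infty}\mu_{l,B_m}$ and apply Proposition~\ref{ax} to the pair $(K,B_m)$ with the error uniform in $m$. The paper's own proof is in fact just the one displayed line $\Pro{}{\widehat{S}[0,\widehat{\sigma}_l]=\omega}=\lim_{m\to\infty}\mu_{l,B_m}(\omega)$, and your explicit remark that the $O(e(n))$ constant is independent of $m$ is a useful clarification of why the limit passage is legitimate.
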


\begin{proof}
This follows immediately from  Proposition \ref{ax} and the definition of the infinite LERW $\wh{S}$:
$$ \Pro{}{\widehat{S}[0,\widehat{\sigma}_l] = \omega}  = \lim_{m \to \infty} \Pro{}{\widehat{S}^m[0,\widehat{\sigma}_l] = \omega} = \lim_{m \to \infty} \mu_{l,B_m}(\omega).$$
\end{proof}

We conclude this section with the proof that $\eta_1$ and $\eta_2$ are independent (up to constants) for the LERW $\wh{S}^n$.

\begin{prop} \label{indep} Let $4l \leq m \leq n$. Then for any $\omega \in \Omega_l$, $\lambda \in \widetilde{\Omega}_{m,n}$,
\begin{eqnarray*}
& & \Pro{}{\eta^1_l\left(\widehat{S}^n\right) = \omega; \eta^2_{m,n}\left(\widehat{S}^n\right) = \lambda} \\
 &=& \left\{ \begin{array}{ll}
\left[1 + O(\log (m/l)^{-1})\right] \Pro{}{\eta^1_l\left(\widehat{S}^n\right) = \omega} \Pro{}{\eta^2_{m,n}\left(\widehat{S}^n\right) = \lambda} & d = 2; \\
\left[1 + O(\frac{l}{m})\right] \Pro{}{\eta^1_l\left(\widehat{S}^n\right) = \omega} \Pro{}{\eta^2_{m,n}\left(\widehat{S}^n\right) = \lambda} & d \geq 3.  
\end{array} \right.
\end{eqnarray*}
In particular,
$$ \Pro{}{\eta^1_l\left(\widehat{S}^n\right) = \omega; \eta^2_{m,n}\left(\widehat{S}^n\right) = \lambda} \asymp \Pro{}{\eta^1_l\left(\widehat{S}^n\right) = \omega} \Pro{}{\eta^2_{m,n}\left(\widehat{S}^n\right) = \lambda},$$
i.e. $\eta^1$ and $\eta^2$ are ``independent up to constants''.
\end{prop}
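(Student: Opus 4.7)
The plan is to reduce the assertion, via the domain Markov property, to a quantitative comparison between two LERW-type measures on $\wt{\Omega}_{m,n}$ that differ only in their behaviour inside $B_l$, and then to control this comparison using the machinery developed in the proofs of Propositions \ref{aw} and \ref{ax}.

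First, I apply Lemma \ref{condit}: conditional on $\eta^1_l(\wh{S}^n) = \omega$, the continuation of $\wh{S}^n$ is distributed as the loop-erasure $\wh{Y}^n$ of a walk $Y$ started at $\omega_k$ and conditioned on $Y[1, \sigma^Y_n] \cap \omega = \emptyset$. Since $\omega \subset \overline{B_l} \subset B_m$ and the first step of $\wh{Y}^n$ is a neighbour of $\omega_k$ (hence still in $B_m$ when $m \geq 4l$), the last entry into $B_m$ of the full path $\omega \oplus \wh{Y}^n$ lies inside $\wh{Y}^n$ itself, so
$$\Prob{}{\eta^1 = \omega,\, \eta^2 = \lambda} \;=\; \Prob{}{\eta^1 = \omega} \cdot \Prob{}{\eta^2_{m,n}(\wh{Y}^n) = \lambda}.$$
It therefore suffices to show that, uniformly in $\omega \in \Omega_l$,
$$\Prob{}{\eta^2_{m,n}(\wh{Y}^n) = \lambda} \;\asymp\; \Prob{}{\eta^2_{m,n}(\wh{S}^n) = \lambda},$$
with quantitative error $[1 + O(\log^{-1}(m/l))]$ in $d=2$ and $[1 + O(l/m)]$ in $d \geq 3$.

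To establish this comparison I would write each side via formula (\ref{lerwf}) as a sum over admissible middle paths $\tau$ of products of step probabilities, Green's functions, and an escape probability. The concatenation identity
$$G_{B_n}(\omega \oplus \tau \oplus \lambda) = G_{B_n}(\omega)\, G_{B_n \setminus \omega}(\tau)\, G_{B_n \setminus (\omega \cup \tau)}(\lambda),$$
together with the obvious factorisation of $p(\omega \oplus \tau \oplus \lambda)$, splits each summand into an $\omega$-part, a $\tau$-part and a $\lambda$-part, coupled through the appearance of $\omega$ inside the middle and final Green's function factors and through the escape probability. To compare with the analogous $\omega$-free expressions arising in the marginal $\Prob{}{\eta^2 = \lambda}$, I invoke Lemma \ref{aa} to rewrite ratios of Green's function products as exponentials of an unrooted loop measure, and then bound the measure of loops touching both $\omega \subset B_l$ and $\tau \cup \lambda \subset \Lambda \setminus B_l$. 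Since $m \geq 4l$ and $\lambda \subset A_{m,n}$, any such loop must cross the annulus $A_{l,m}$, and the Beurling estimate (Theorem \ref{beurling}) together with the standard annulus hitting estimates that drive the proof of Proposition \ref{ax} bound this loop measure by $O(\log^{-1}(m/l))$ in $d=2$ and $O(l/m)$ in $d \geq 3$.

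The main obstacle is the escape-probability factor $\Pro{\lambda_{k'}}{\sigma_n < \xi_{\omega \oplus \tau \oplus \lambda}}$ appearing in (\ref{lerwf}), which depends on the entire path and does not factor cleanly into an $\omega$-part and a $(\tau,\lambda)$-part. I would handle it by the same circle of ideas as in the proofs of Propositions \ref{aw} and \ref{ax}: a walk leaving $B_n$ from $\lambda_{k'} \in \partial B_n$ and avoiding $\tau \cup \lambda$ also avoids the distant set $\omega \subset B_l$ with only relative error $O(\log^{-1}(m/l))$ (respectively $O(l/m)$), the relevant harmonic-measure input being essentially Lemma \ref{avoidfar}. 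Combining this with the Green's function comparison and resumming over $\tau$ then yields the desired factorisation, from which the $\asymp$ claim follows.
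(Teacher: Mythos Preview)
Your reduction via Lemma~\ref{condit} is fine, but the loop-measure estimate that follows breaks down. You assert that $\tau \cup \lambda \subset \Lambda \setminus B_l$, but this is false: the continuation $\wh{Y}^n$ begins at $\omega_k \in \partial B_l$ and may re-enter $B_l$ arbitrarily often before reaching $\partial B_n$; only the set $\omega$ itself is avoided. Hence the loops you need to control --- those touching both $\omega$ and $\tau$ --- need not cross the annulus $A_{l,m}$; they can be small loops living entirely inside $B_l$, and their total mass is not bounded by the annulus-crossing estimates that drive Proposition~\ref{ax}. For the same reason, removing $\omega$ from the escape probability $\Pro{\lambda_{k'}}{\sigma_n < \xi_{\omega \cup \tau \cup \lambda}}$ is not governed by Lemma~\ref{avoidfar}, since the walk's interaction with $\omega$ is mediated by whatever gaps $\tau$ happens to leave around it. Resumming over $\tau$ does not obviously cure this: the dominant middle pieces may well wind through $B_l$ close to $\omega$, and the comparison with the unconditional marginal $\Pro{}{\eta^2 = \lambda}$ involves a structurally different sum over \emph{all} initial pieces $\omega'$ as well.

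The paper sidesteps this by conditioning in the other direction. Using that forward and backward loop-erasure have the same law, one time-reverses so that $\wh{S}^n$ is the loop-erasure of a walk $Y$ from $\partial B_n$ to $0$; conditioning on $\eta^2 = \lambda$ (now the \emph{initial} segment of $\wh{Y}$) and applying Lemma~\ref{condit} yields a walk $Z$ from $\lambda_0 \in \partial B_m$ conditioned to reach $0$ inside $B_n \setminus \lambda$. A second reversal turns this into a walk from $0$ conditioned to hit $\lambda_0$ before leaving $B_n \setminus \lambda$, whose restriction to $B_l$ is precisely a measure $\mu_{l, K_1, K_2}$ with $B_m \subset K_1 \cap K_2$. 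Propositions~\ref{aw} and~\ref{ax} then apply directly, with no middle piece to handle. The point is that $\lambda$, unlike $\omega$, is genuinely separated from $B_l$ by the full annulus $A_{l,m}$, so conditioning on it is harmless at scale $l$.
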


\begin{proof}
We fix $l$, $m$ and $n$ throughout and let $\eta^1 = \eta^1_l$, $\eta^2 = \eta^2_{m,n}$.

Let $X$ be a random walk started at $0$ conditioned to leave $B_n$ before returning to $0$. Then $\wh{X}$ and $\wh{S}^n$ have the same distribution. Let $Y$ be a random walk started on $\partial B_n$ according to harmonic measure from $0$ and conditioned to hit $0$ before returning to $\partial B_n$. By reversing paths, for all $z \in \partial B_n$,
$$ \Pro{0}{S(\xi_0 \wedge \sigma_n) = z} = \Pro{z}{S(\xi_0 \wedge \sigma_n) = 0}.$$
Therefore, $X$ and $Y^R$ (the time-reversal of $Y$) have the same distribution. Recall that one obtains the same distribution on LERW by erasing loops from random walks forwards or backwards. Therefore, if $\omega$ and $\lambda$ are as above,
$$ \condPro{}{\eta^1\left(\widehat{S}^n\right) = \omega }{ \eta^2\left(\widehat{S}^n\right) = \lambda} = \condPro{}{\eta^1\left(\widehat{Y}[0,\wh{\xi}_0]\right) = \omega^R }{ \eta^2\left(\widehat{Y}[0, \wh{\xi}_0]\right) = \lambda^R}.$$

Now let $Z$ be a random walk starting at $\lambda_0$, conditioned to hit $0$ before leaving $B_n \setminus \lambda$. Then by the domain Markov property for LERW (Lemma \ref{condit}),
$$ \condPro{}{\eta^1\left(\widehat{Y}[0,\wh{\xi}_0]\right) = \omega^R }{ \eta^2\left(\widehat{Y}[0, \wh{\xi}_0]\right) = \lambda^R}  = \Pro{}{\eta^1(\widehat{Z}[0,\wh{\xi}_{0}]) = \omega^R}.$$

However, by again reversing paths as above, and noting that the loop-erasure of a random walk starting at $0$ and conditioned to avoid $0$ after the first step has the same distribution as the loop-erasure of an unconditioned random walk,
$$ \Pro{}{\eta^1\left(\widehat{Z}[0,\wh{\xi}_{0}]\right) = \omega^R} = \mu_{l,\Lambda \setminus \{\lambda_0\},K}(\omega),$$
where $K = B_n \setminus \{ \lambda_1, \ldots, \lambda_k \}$. 

Let $k = m/l$, $e(k) = (\log k)^{-1}$ if $d = 2$ and $e(k) = k^{-1}$ if $d \geq 3$. Since $k \geq 4$, $B_m \subset K$ and $B_m \subset \Lambda \setminus \{ \lambda_0 \}$, we can apply Propositions \ref{aw} and \ref{ax} to conclude that
\begin{eqnarray*}
\mu_{l,\Lambda \setminus \{\lambda_0\},K}(\omega) &=& [1 + O(e(k))] \mu_{l, B_n \setminus \lambda}(\omega) \\
&=& [1 + O(e(k))] \mu_{l,n}(\omega) \\
&=& \left[1 + O(e(k))\right] \Pro{}{\eta^1\left(\widehat{S}^n\right) = \omega }.
\end{eqnarray*}
\end{proof}

\subsection{The separation lemma} \label{separation}

Throughout this section $S$ will be a random walk and $\wh{S}$ will be an independent infinite LERW. Let $\mathcal{F}_k$ denote the $\sigma$-algebra generated by
$$ \{ S_n : n \leq \sigma_k \} \cup \{\wh{S}_n: n \leq \wh{\sigma}_k \}.$$
For positive integers $j$ and $k$, let $A_k$ be the event
$$ A_k = \{ S[1, \sigma_k] \cap \wh{S}[0, \wh{\sigma}_k] = \emptyset \},$$
$D_k$ be the random variable
$$ D_k = k^{-1} \min \{\operatorname{dist}(S(\sigma_k), \wh{S}[0, \wh{\sigma}_k]), \operatorname{dist}(\wh{S}(\wh{\sigma}_k), S[0, \sigma_k]) \},$$
and $T_j^k$ be the integer valued random variable
$$ T_j^k = \min \{l \geq k  : D_l \geq 2^{-j} \}.$$

The goal of this section is to prove the following separation lemma which states that, conditioned on the event $A_k$ that the random walk $S$ and the infinite LERW $\wh{S}$ do not intersect up to the circle of radius $k$, the probability that they are further than some fixed distance apart from each other at the circle of radius $k$ ($D_k \geq c_1$) is bounded from below by a constant $c_2 > 0$. 

\begin{thm} [Separation Lemma] \label{sep}
There exist constants $c_1, c_2 > 0$ such that for all $k$,
$$ \condPro{}{D_k \geq c_1}{A_k} \geq c_2.$$
\end{thm}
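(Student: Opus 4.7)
The plan is to prove the separation lemma by a one-step renewal argument: I will show that on the event $A_{k/4}$ and regardless of the configuration of $S[0,\sigma_{k/4}]$ and $\wh{S}[0,\wh{\sigma}_{k/4}]$, with conditional probability bounded below by a positive constant, both paths can be extended from radius $k/4$ to radius $k$ in such a way that (a) they remain disjoint (so $A_k$ holds) and (b) their endpoints on $\partial B_k$ lie in two disjoint half-wedges at mutual distance at least $c_1 k$ (so $D_k \geq c_1$). Combined with the trivial monotonicity $\Prob{}{A_k} \leq \Prob{}{A_{k/4}}$, this gives
$$ \condPro{}{D_k \geq c_1}{A_k} \geq \frac{c\, \Prob{}{A_{k/4}}}{\Prob{}{A_k}} \geq c,$$
which is the conclusion.

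The construction has three ingredients. First (setup): conditioning on $\mathcal{F}_{k/4}$ and $A_{k/4}$, let $z_0 = S(\sigma_{k/4})$, $w_0 = \wh{S}(\wh{\sigma}_{k/4})$, and choose two disjoint half-wedges $W_S, W_{\wh{S}} \subset A_{k/4,k}$ of fixed aperture, with axes pointing in roughly opposite directions. The axes are selected measurably from the configuration so that $W_S$ and $W_{\wh{S}}$ lie at distance $\geq c_1 k$ along $\partial B_k$ and are disjoint from the existing pieces $\wh{S}[0,\wh{\sigma}_{k/4}]$ and $S[0,\sigma_{k/4}]$, respectively. Second (forcing $S$): applying Corollary \ref{dfr}(2) with avoidance set $K = \wh{S}[0,\wh{\sigma}_n]$ (for $n$ large, approximating $\wh{S}$ by Corollary \ref{infdist}), drive $S$ from $z_0$ into a point $z_1 \in \partial B_{k/2}$ on the axis of $W_S$ through a suitable half-wedge with positive conditional probability; then apply Corollary \ref{wedgecond} to drive $S$ from $z_1$ to some $z_2 \in \partial B_k$ entirely inside $W_S$, again with positive conditional probability given non-intersection with the LERW. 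Third (forcing $\wh{S}$): by Lemma \ref{condit} (domain Markov property) the conditional law of $\wh{S}[\wh{\sigma}_{k/4}, \wh{\sigma}_n]$ given $\wh{S}[0,\wh{\sigma}_{k/4}]$ is the loop-erasure of a random walk conditioned to avoid the initial LERW segment, and Proposition \ref{indep} identifies this with an independent LERW up to multiplicative constants. Applying the same two-step wedge forcing (Corollaries \ref{dfr}(2) and \ref{wedgecond}) to this conditioned random walk — with avoidance set now the rest of $S$ — drives $\wh{S}[\wh{\sigma}_{k/4}, \wh{\sigma}_k]$ into $W_{\wh{S}}$ (loop-erasure preserves inclusion). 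By the conditional independence of $S$ and $\wh{S}$ given $\mathcal{F}_{k/4}$, the joint probability is the product of these two positive lower bounds.

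The main obstacle is the configuration-dependent geometry in the first step: if $z_0$ and $w_0$ are angularly close on $\partial B_{k/4}$, then the annulus $A_{k/4,k}$ by itself may not admit disjoint half-wedges through $z_0$ and $w_0$ reaching opposite sides of $\partial B_k$. The resolution is to subdivide $[k/4,k]$ into two (or three) sub-scales, using the innermost sub-annulus to first angularly separate the two endpoints (via Corollary \ref{dfr}, which only requires the exit point of a wedge from an annulus) and the outer sub-annulus to propagate the separation to $\partial B_k$ through genuinely disjoint wedges. A secondary technical difficulty is that the LERW wedge-forcing in the third ingredient must preserve uniform constants; this is exactly the reason for invoking the up-to-constant independence of Proposition \ref{indep} rather than trying to work with the conditional LERW distribution directly.
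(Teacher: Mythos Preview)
Your one-step renewal argument has a genuine gap. The core claim is that, conditionally on $\mathcal{F}_{k/4}$ and $A_{k/4}$, one can drive $S$ and $\wh{S}$ into disjoint wedges reaching $\partial B_k$ with conditional probability bounded below by a \emph{universal} constant, independent of the configuration. This is false when $D_{k/4}$ is small. All of the wedge-forcing tools you invoke (Corollary~\ref{dfr}, Corollary~\ref{wedgecond}) give lower bounds of the form $c(r/n)^\alpha$, where $r$ is essentially the distance from the starting point to the boundary of the wedge. If $z_0 = S(\sigma_{k/4})$ and $w_0 = \wh{S}(\wh{\sigma}_{k/4})$ are at distance $\epsilon k$, then any wedge through $z_0$ that avoids $w_0$ (or the portion of the LERW near $w_0$) must have aperture $O(\epsilon)$ on one side, so $r = O(\epsilon k)$ and the bound degrades to $c\epsilon^\alpha$. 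Subdividing $[k/4,k]$ into sub-annuli does not help: the cost of the very first step, separating the endpoints from distance $\epsilon k$ to any fixed fraction of $k$, is already $\epsilon^\alpha$, and no amount of later wedge geometry can recover a uniform constant. In effect, your argument proves Lemma~\ref{lfg} (the cost to maintain separation $2^{-j}$ is $\geq c2^{-\alpha j}$) but then asserts the $j=0$ case uniformly, which is exactly what is not available.

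The missing idea, which the paper supplies via Lemma~\ref{ope}, is a \emph{competing} upper bound: if the two paths remain within distance $2^{-j}$ of each other over an annulus of width $\asymp j^2 2^{-j} k$ without intersecting, this event has probability at most $2^{-\beta j^2}$, by iterating the Beurling estimate $\asymp j^2$ times. The separation lemma then follows from the summability of $2^{-\beta j^2 + \alpha j}$: either the paths separate quickly (at polynomial cost $2^{-\alpha j}$) or they fail to avoid each other (probability $\leq 2^{-\beta j^2}$), and one iterates over scales $j \to j-1$ until a fixed separation is reached. Your proposal has no analogue of this ``close paths must intersect'' input, and without it the argument cannot be uniform in the initial configuration.
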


The proof of Theorem \ref{sep} depends on two lemmas. Lemma \ref{ope} roughly states that the probability that $S$ and $\wh{S}$ stay close together without intersecting each other is very small. More precisely, the probability that $T_{j-1} \geq (1+c j^2 2^{-j})T_j$ and that the paths don't intersect is less than $2^{- \beta j^2}$. Lemma \ref{lfg} states that if $S$ and $\wh{S}$ are separated, then there is a substantial probability that they stay separated and don't intersect. To wit, if $\{ T_j > k \}$ and $A_{T_j}$ hold, then the probability that $A_{2k}$ and $\{ D_{2k} \geq 2^{-j} \}$ hold is greater than $2^{-\alpha j}$. The proof of the separation lemma then combines the two lemmas to show that 
$$ \condPro{}{T_{j-1} \leq (1+c j^2 2^{-j})T_j}{A_{2k}} \geq 1 - 2^{\alpha j - \beta j^2}.$$ 
Since 
$$ \prod_{j=1}^\infty (1+cj^2 2^{-j}) < \infty,$$ 
and 
$$ \prod_{j=j_0}^\infty (1 - 2^{\alpha j - \beta j^2}) > 0,$$
then conditioned on $A_{2k}$, there is a probability bounded below that $S$ and $\wh{S}$ separate to some fixed distance before leaving the ball of radius $2k$ no matter how close the two paths were upon leaving the ball of radius $k$. 

\begin{lemma} \label{ope}
For all $c > 0$, there exists $\beta = \beta(c) > 0$ and $j_0(c)$ such that for all $j \geq j_0$ and all $k$,
$$ \condPro{}{T_{j-1}^{k} \geq (1 + c j^2 2^{-j})T_{j}^k; A_{2k} }{ \mathcal{F}_{T_j^{k}}} \ind \left\{ T_j^{k} \leq \frac{3k}{2} \right\}\leq 2^{-\beta j^2}.$$
\end{lemma}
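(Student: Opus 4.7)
The plan is to subdivide the thin annulus $[T, (1 + cj^2 2^{-j})T]$, where $T = T_j^k$, into $N \asymp cj^2$ concentric sub-annuli each of radial width $2^{-j} T$, and to show that within each sub-annulus the conditional probability of \emph{simultaneously} not intersecting and remaining within separation $2^{-(j-1)}$ drops by a uniform factor $\rho < 1$. Iterating then yields $\rho^{cj^2} = 2^{-\beta j^2}$ with $\beta = -c \log_2 \rho$.

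Write $\epsilon = 2^{-j}$ and $r_i = T(1 + i\epsilon)$ for $i = 0, 1, \ldots, N$, where $N = \lceil cj^2 \rceil$, and let $\mathcal{G}_i$ denote the $\sigma$-algebra generated by $S[0, \sigma_{r_i}]$ together with $\wh{S}[0, \wh{\sigma}_{r_i}]$. Define
\[
E_i = \bigl\{S[0, \sigma_{r_i}] \cap \wh{S}[0, \wh{\sigma}_{r_i}] = \emptyset\bigr\} \cap \{D_{r_i} \leq 2\epsilon\}.
\]
For $j$ large enough that $r_N \leq 2k$ on $\{T \leq 3k/2\}$ (which needs $cj^2 2^{-j} \leq 1/3$), one has $\{T_{j-1}^k \geq (1 + cj^2 2^{-j})T\} \cap A_{2k} \subseteq E_N$, so the lemma reduces to showing $\condPro{}{E_{i+1}}{\mathcal{G}_i} \ind\{E_i\} \leq \rho \, \ind\{E_i\}$ for some $\rho < 1$ independent of $i, j, k$.

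Fix $i$ and work on $E_i$. By the symmetry between $S$ and $\wh{S}$ in the definition of $D_\cdot$, assume without loss of generality that $z := S(\sigma_{r_i})$ lies within distance $2\epsilon r_i$ of $\wh{S}[0, \wh{\sigma}_{r_i}]$. Since $S$ and $\wh{S}$ are independent, I first condition on the entire realization of $\wh{S}$ and analyze the random walk alone. Inside the ball $B_{4\epsilon r_i}(z)$ the LERW contains a connected arc from a point within distance $2\epsilon r_i$ of $z$ out to $\partial B_{4\epsilon r_i}(z)$ (because $\wh{S}$ is infinite). The discrete Beurling estimate (Theorem \ref{beurling}) then bounds the probability that $S$ exits this ball without hitting $\wh{S}$ strictly below $1$, and a wedge argument (Corollary \ref{dfr}) applied at a sub-scale shows that, conditional on non-intersection, there is a further positive probability that $S$ exits in a direction which keeps it at distance $\geq 8\epsilon r_{i+1}$ from $\wh{S}$ upon reaching $\partial B_{r_{i+1}}$. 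In combination, with probability bounded below uniformly in $i, j, k$, either $S$ hits $\wh{S}$ inside the sub-annulus or $S(\sigma_{r_{i+1}})$ is far from $\wh{S}$; the latter leaves $E_{i+1}$ possible only through the symmetric clause that $\wh{S}(\wh{\sigma}_{r_{i+1}})$ is close to $S[0, \sigma_{r_{i+1}}]$.

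To close off this symmetric possibility, invoke the analogous argument for $\wh{S}$: by the domain Markov property (Lemma \ref{condit}) the continuation of $\wh{S}$ beyond radius $r_i$ is the loop-erasure of a random walk started at $\wh{S}(\wh{\sigma}_{r_i})$ and conditioned to avoid $\wh{S}[0, \wh{\sigma}_{r_i}]$; a comparison via Proposition \ref{ax} and Corollary \ref{infdist} then reduces this to an infinite LERW at bounded multiplicative cost. Applying the Beurling estimate to this conditioned walk and the half-wedge estimate of Corollary \ref{wedgecond} to force its continuation into a wedge away from $S$ yields a second uniform lower bound: either $\wh{S}$ hits $S[0, \sigma_{r_{i+1}}]$ inside the sub-annulus or $\wh{S}(\wh{\sigma}_{r_{i+1}})$ separates to distance $> 2\epsilon r_{i+1}$ from $S$. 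The main obstacle is precisely this LERW side of the argument, because Beurling and wedge estimates are intrinsically random-walk statements that must be translated through the loop-erasing procedure and its $h$-transform; one must also arrange the two separation constructions (for $S$ and for $\wh{S}$) in essentially disjoint local regions of the sub-annulus so that their lower bounds compound rather than cancel each other. Once both are in hand, their union gives $\condPro{}{E_{i+1}}{\mathcal{G}_i} \ind\{E_i\} \leq \rho \, \ind\{E_i\}$ with $\rho < 1$, and iterating over $i = 0, 1, \ldots, N-1$ yields the claimed bound $2^{-\beta j^2}$.
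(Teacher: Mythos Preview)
Your overall iteration scheme---divide the annulus into roughly $cj^2$ sub-annuli of radial thickness $2^{-j}T$ and drop by a constant factor in each---is exactly the paper's plan. The difference lies in how the single-step estimate is proved.

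The paper's argument is considerably simpler than yours because it \emph{freezes the entire LERW path} $K := \wh S[0,\wh\sigma((1+cj^22^{-j})T_j)]$ at the outset (this is possible because $\wh S$ is independent of $S$), and then works only with the random walk $S$ relative to this fixed set. Define $\rho = \inf\{n \ge \sigma(T_j): \operatorname{dist}(S_n,K) \le 2^{-j+1}|S_n|\}$. On each sub-annulus, either $\rho$ has not occurred by the outer boundary---in which case both clauses of $D_l$ are automatically satisfied (the first because $S(\sigma_l)$ is far from $K\supset \wh S[0,\wh\sigma_l]$; the second because every $S_n$ with $n\ge\sigma(T_j)$ is far from $K$, while $S[0,\sigma(T_j)]\subset \overline{B_{T_j}}$ is far from $\wh S(\wh\sigma_l)\in\partial B_l$ by radial distance)---or $\rho$ has occurred, in which case a single application of the Beurling estimate (Theorem~\ref{beurling}) gives a uniform positive probability that $S$ hits $K$ before exiting the next sub-annulus. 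No LERW-side argument is needed at all.

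By contrast, your proposal conditions only on $\mathcal G_i$ (the processes up to radius $r_i$) and then tries to run \emph{separate} arguments for $S$ and for $\wh S$. This creates two genuine difficulties you correctly flag but do not resolve. First, the domain Markov property (Lemma~\ref{condit}) is stated for $\wh X^K$ in a bounded domain, not for the infinite LERW $\wh S$; transferring it via Propositions~\ref{aw}--\ref{ax} costs constants at each of the $cj^2$ steps, and you must check these do not accumulate. Second, and more seriously, ``applying Beurling to the conditioned walk'' whose loop-erasure gives the continuation of $\wh S$ does \emph{not} yield that $\wh S$ itself hits $S$ with positive probability (the random walk may hit $S$ on a loop that gets erased), nor does forcing the conditioned walk into a wedge directly control where the loop-erased endpoint $\wh S(\wh\sigma_{r_{i+1}})$ lands. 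Finally, even granting both one-sided statements, combining them to force \emph{both} clauses of $D_{r_{i+1}}$ simultaneously is not a ``union'' but an intersection, and arranging the two constructions in disjoint regions so that they compose is itself nontrivial.

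In short: your iteration is right, but the single-step estimate should be redone by conditioning on the full LERW path in the annulus first. Then only the random-walk Beurling estimate is needed, and all the LERW-side machinery you invoke becomes unnecessary.
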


\begin{proof}
We let $j_0$ be such that for all $j \geq j_0$, $cj^2 2^{-j} < 1/2$.

Since $k$ is fixed we will write $T_j$ for $T_j^k$ from now on. We suppose that $\wh{S}[0, \wh{\sigma}(T_j)]$ and $S[0, \sigma(T_j)]$ are any paths such that $T_j \leq \frac{3k}{2}$ holds. We also assume that $D_{T_j} < 2^{-j + 1}$ or else there is nothing to prove.

Now consider $K := \wh{S}[0, \wh{\sigma}((1 + c j^2 2^{-j})T_j)]$ and let
$$ \rho = \inf \{ n \geq \sigma(T_j) : \operatorname{dist}(S_n, K) \leq 2^{-j + 1} \abs{S_n} \}.$$
Notice that even though we assume that $D_{T_j} < 2^{-j +1}$, $\rho$ is not necessarily equal to $\sigma(T_j)$.

If $\rho > \sigma((1 + 4 \cdot 2^{-j})T_j)$ then this means that $T_{j - 1} < (1 + 4 \cdot 2^{-j})T_j$.
However, if $\rho \leq \sigma((1 + 4 \cdot 2^{-j})T_j)$, then by the Beurling estimates for random walk (Theorem \ref{beurling}), there exists $c' < 1$ such that
$$\Pro{}{S[\rho, \sigma((1 + 8 \cdot 2^{-j})T_j)] \cap K = \emptyset} \leq c'.$$

The same estimate will hold starting at $T_j + 8k2^{-j}$, $k = 0, 1, \ldots, \lfloor c j^2/8 \rfloor$. Therefore,
\begin{eqnarray*}
& & \condPro{}{T_{j-1} \geq (1 + c j^2 2^{-j})T_{j}; A_{2k} }{ \mathcal{F}_{T_j}} \ind \left\{T_j \leq \frac{3k}{2} \right\} \\
&\leq& \condPro{}{T_{j-1} \geq (1 + c j^2 2^{-j})T_{j}; A_{T_j + cj^2 2^{-j}} }{ \mathcal{F}_{T_j}} \ind \left\{T_j \leq \frac{3k}{2} \right\} \\
&\leq& (c')^{\lfloor c j^2/8 \rfloor} = 2^{-\beta j^2}.
\end{eqnarray*}

\end{proof}

\begin{lemma} \label{lfg}
There exists $ \alpha < \infty$ and $c > 0$ such that for all $j$ and $k$,
$$ \condPro{}{A_{2k}; D_{2k} \geq c2^{-j} }{ \mathcal{F}_{T_j^{k}}} \geq c 2^{-\alpha j} \ind(A_{T_j^{k}}).$$
\end{lemma}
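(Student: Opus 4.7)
On the event $A_{T_j^k}$ (outside of which the right-hand side vanishes), write $T=T_j^k$, $x=S(\sigma_T)$, $y=\wh S(\wh\sigma_T)$, $\omega=S[0,\sigma_T]$, and $\eta=\wh S[0,\wh\sigma_T]$. The definition of $T_j^k$ gives $D_T\geq 2^{-j}$, so $|x-y|\geq 2^{-j}T$: the angular gap between $x$ and $y$ on $\partial B_T$ is at least $c\,2^{-j}$, and the Euclidean balls of radius $2^{-j-1}T$ around $x$ and $y$ are disjoint from $\eta$ and $\omega$ respectively. The plan is to build two disjoint ``channels'' in $\conj{B_{2k}}\setminus B_T$, one containing $x$ and one containing $y$, to force $S$ and the continuation of $\wh S$ into them, and thereby to lower-bound the joint probability of $\{A_{2k};\,D_{2k}\geq c\,2^{-j}\}$ by $c\,2^{-\alpha j}$.

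My first step is a standard reduction on the LERW side. Using Corollary \ref{infdist} to replace $\wh S$ by $\wh S^N$ for a suitable $N$ at the cost of a multiplicative constant, Lemma \ref{condit} identifies the conditional law of the continuation of $\wh S^N$ past $\eta$ as the loop-erasure of a random walk $Y$ started at $y$ and conditioned on $\{Y[1,\sigma^Y_N]\cap\eta=\emptyset\}$. Since $\operatorname{L}(Y)\subset Y$ and the events in the lemma depend only on the traces of the paths, it suffices to prove the joint lower bound for the pair $(S,Y)$.

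The channels are constructed in two stages. Stage one (the ``outward push'') iterates Proposition \ref{dirichlet} on dyadically growing balls $B_{r_m/4}(x^{(m)})$ with $r_m\asymp 2^{m-j}T$ for $m=0,\dots,J$ and $J\asymp j$. At step $m$, I translate and rotate so that $x^{(m)}$ becomes the origin and the outward radial direction there becomes the positive real axis; a law-of-cosines calculation shows that the tangent ball $B_{r_m/4}\bigl((r_m/4,0)\bigr)$ lies in $\{|z|\geq |x^{(m)}|\}\supseteq\{|z|\geq T\}$, hence is disjoint from $\eta$, matching the hypothesis of Proposition \ref{dirichlet} with $K=\eta$. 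The proposition then gives a constant conditional lower bound on the outward-exit event inside the cone $\{\arg\in[-\pi/4,\pi/4]\}$, and combining with an elementary lower bound (immediate at $m=0$, and controlled at later steps by the Green's function and Harnack estimates of Section \ref{shitprob}) on the probability of exiting the ball before hitting $\eta$ produces the desired constant lower bound on the joint push-outward event at each step. An analogous iteration pushes $Y$ outward from $y$: viewing $Y$ as an $h$-transform of the unconditioned walk with $h(z)=\Pro{z}{\sigma^Y_N<\xi^Y_\eta}$ and invoking the Harnack inequality on the small balls (which keep $h$ comparable from step to step), Proposition \ref{dirichlet} transfers to $Y$ up to a bounded multiplicative factor. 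By biasing the outward cones slightly away from the other walk's current radial direction, I grow the angular gap between $x^{(m)}$ and $y^{(m)}$ from $c\,2^{-j}$ to a constant $c_0$ over the $J\asymp j$ iterations, with both $x^{(J)}$ and $y^{(J)}$ carrying free balls of radius $\asymp T$ by the end.

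Stage two is a single application of part 1 of Corollary \ref{dfr} in two half-wedges, one based at $x^{(J)}$ and one at $y^{(J)}$, with a fixed aperture $\theta$ chosen small enough that the wedges are disjoint (which is possible by the constant angular gap from stage one). This yields constant lower bounds that $S$ and $Y$ stay in their respective wedges until exiting $B_{2k}$. The exit points then satisfy $D_{2k}\geq c_0\gg c\,2^{-j}$ by construction, and non-intersection of all relevant pieces is automatic because the channels are pairwise disjoint and lie outside $\conj{B_T}$ except at the starting points $x$ and $y$. Multiplying the $2J$ constant factors from stage one and the two from stage two gives the lower bound $c^{2J+2}\geq c'\,2^{-\alpha j}$. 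The main obstacle is the bookkeeping in stage one: at each iteration one must simultaneously verify the disjointness of the tangent ball from the obstructing path, lower-bound the probability of exiting the current ball before hitting that path (delicate once $r_m/4$ is comparable to the distance from $x^{(m)}$ to the nearest point of $\eta$), and choose the small angular biases so that the two walks spread apart without their intermediate free balls starting to collide.
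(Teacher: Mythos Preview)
Your approach is sound and reaches the right conclusion, but it takes a more circuitous route than the paper. The paper's proof has just two steps. First, apply Proposition~\ref{dirichlet} \emph{once} to each of $S$ and the LERW continuation (the latter via Lemma~\ref{condit}, exactly as you do), pushing them from $\partial B_{T_j}$ out to radius $(1+c2^{-j})T_j$ inside small disjoint half-wedges $W_1,W_2$ of size $\sim 2^{-j}T_j$; this costs only a constant. Second, apply Corollary~\ref{dfr} \emph{once} to each, to travel from there to $\partial B_{2k}$ inside half-wedges $W_1',W_2'$ whose inner radial width is $\sim 2^{-j}T_j$ and whose aperture opens to a fixed angle $\pi/6$ on the far side; by the corollary this costs $c\,(2^{-j}T_j/2k)^\alpha\asymp 2^{-\alpha j}$. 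The entire $2^{-\alpha j}$ loss is absorbed in a single wedge estimate, and there is no need to grow the angular gap to a constant before invoking the wedge lemma.

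Your Stage~1 is in effect re-deriving Lemma~\ref{wedge}/Corollary~\ref{dfr} by hand, iterating Proposition~\ref{dirichlet} $O(j)$ times on dyadically growing balls. That is why you run into the bookkeeping you flag: tracking $\mathrm{dist}(x^{(m)},\eta)$, lower-bounding $\Pro{x^{(m)}}{\sigma_{B_{r_m/4}}<\xi_\eta}$ at intermediate steps, and choosing the angular biases so the two iterations spread apart without colliding. These issues are all surmountable (for instance $|x^{(m)}|-T$ grows geometrically in $m$, which supplies the clearance from $\eta$ that you need), but they are avoidable: Corollary~\ref{dfr} already packages exactly this iteration, with the conditioning on avoiding a set $K\subset B_m$ built in. Your route does yield a stronger conclusion, $D_{2k}\geq c_0$ rather than $D_{2k}\geq c\,2^{-j}$, but the lemma does not require it.
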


\begin{proof}
Since $k$ is fixed, we will omit the superscript $k$ from now on. Let $z_1 = S(\sigma_{T_j})$ and $z_2 = \wh{S}(\wh{\sigma}_{T_j})$. Without loss of generality, we may assume that $T_j < 2k$ (or else there is nothing to prove) and also that $\arg(z_2) < \arg(z_1)$. Note that $\abs{z_1} = \abs{z_2} = T_j$ and $k \leq T_j \leq 2k$.

Suppose that $A_{T_j}$ holds. By definition of $T_j$, there exists $c > 0$ and half-wedges
$$W_1 = \{ z : (1- c 2^{-j})T_j \leq \abs{z} \leq (1 + c 2^{-j}) T_j, -c2^{-j} \leq \arg(z) - \arg(z_1) \leq c2^{-j} \} $$
and
$$ W_2 = \{ z : (1 - c 2^{-j})T_j \leq \abs{z} \leq (1 + c 2^{-j})T_j, -c2^{-j} \leq \arg(z) - \arg(z_2) \leq c2^{-j} \}$$
such that $S[0,\sigma_{T_j}] \cap W_2 = \emptyset$, $\wh{S}[0, \wh{\sigma}_{T_j}] \cap W_1 = \emptyset$, and $\dist(W_1,W_2) \geq c2^{-j}T_j$.

Using Lemma \ref{condit} and Proposition \ref{dirichlet}, it is easy to verify that there exists a global constant $c'$ such that
$$ \Pro{z_1} { \abs{S(\sigma_{W_1})} \geq (1 + c2^{-j})T_j} \geq c',$$
and
$$ \Pro{z_2}{\abs{\wh{S}(\wh{\sigma}_{W_2})} \geq (1 + c2^{-j})T_j} \geq c'.$$

Now consider the half-wedges
$$ W_1' = \{ z : T_j \leq \abs{z} \leq 2k, -\frac{3c}{2}2^{-j} \leq \arg(z) - \arg(z_1) \leq \frac{\pi}{6} \} $$
and
$$ W_2' = \{ z : T_j \leq \abs{z} \leq 2k,\frac{\pi}{6} \leq \arg(z) - \arg(z_1) \leq  \frac{3c}{2}2^{-j} \}.$$
Applying Lemma \ref{condit} and Corollary \ref{dfr} to $W_1'$ and $W_2'$,
one obtains that for any $z_1' \in \partial W_1$ such that $\abs{z_1'} \geq (1 + c2^{-j})T_j$
$$ \Pro{z_1'} { \abs{S(\sigma_{W_1'})} = 2k} \geq c'2^{-\alpha j},$$
and for any $z_2' \in \partial W_2$ such that $\abs{z_2'} \geq (1 + c2^{-j})T_j$
$$ \Pro{z_2'} { \abs{\wh{S}(\wh{\sigma}_{W_2'})} = 2k} \geq c'2^{- \alpha j},$$
The result then follows since $W_1'$ and $W_2'$ are distance $c2^{-j}T_j$ apart and $S$ and $\wh{S}$ are independent.

\end{proof}

\begin{proof} [Proof of Theorem \ref{sep}]
We again fix $k$ and let $T_j = T_j^{k/2}$. Let 
$$s = \prod_{j=1}^\infty (1 + c j^2 2^{-j})$$ 
where $c$ is chosen so that $s \leq 3/2$. We also let $j_0$ be such that for $j \geq j_0$, $2^{-\beta j^2 + \alpha j} < 1$, where $\alpha$ and $\beta = \beta(c)$ are as in Lemmas \ref{ope} and \ref{lfg}.

To prove the theorem, it suffices to show that for all $m$,
$$\condPro{}{D_k \geq c_1}{ A_k ; 2^{-m} \leq D_{k/2} < 2^{-m + 1} } \geq c_2.$$
By Lemma \ref{lfg}, it is enough to find a constant $c_2'$ such that
$$ \condPro{}{T_{j_0} \leq 3k/4 }{ A_k ; 2^{-m} \leq D_{k/2} < 2^{-m + 1} } \geq c_2'.$$
In fact, we will show that
$$ \condPro{}{ T_{j_0} \leq ks/2 }{ A_k ; 2^{-m} \leq D_{k/2} < 2^{-m + 1} } \geq c_2'.$$

Let 
$$B_m = \{ 2^{-m} \leq D_{k/2} < 2^{-m + 1} \},$$
and
$$C_{j} = \{T_{j-1} \leq (1 + c j^2 2^{-j})T_j \}.$$  Then,
\begin{eqnarray*}
\condPro{}{T_{j_0} \leq 3k/4}{A_k ; B_m} &\geq& \condProb{}{ \bigcap_{j=j_0 + 1}^m C_j }{  A_k; B_m   }  \\
&=& \prod_{j=j_0 + 1}^m \condProb{}{ C_j }{  A_k ; B_m ; \bigcap_{l=j + 1}^m C_l } \\
&=& \prod_{j=j_0 + 1}^m \condProb{}{ C_j }{  A_k; B_m; A_{T_j}; \bigcap_{l=j + 1}^m C_l } \\
&=& \prod_{j=j_0 + 1}^m \left[1 - \frac{\condProb{}{C_j^c ; A_k }{ B_m; A_{T_j}; \bigcap_{l=j + 1}^m C_l }}{\condProb{}{A_k }{ B_m ; A_{T_j} ; \bigcap_{l=j + 1}^m C_l }} \right].
\end{eqnarray*}

However,
$$  B_m \cap A_{T_j} \cap \bigcap_{l=j + 1}^m C_l \in \mathcal{F}_{T_j},$$
$$ B_m \cap A_{T_j} \cap \bigcap_{l=j + 1}^m C_l  \subset \{T_j \leq \frac{3k}{2} \}, $$
and
$$ B_m \cap A_{T_j} \cap \bigcap_{l=j + 1}^m C_l  \subset A_{T_j}.$$
Therefore, by Lemmas \ref{ope} and \ref{lfg},
$$ \condPro{}{T_{j_0} \leq 3k/4 }{ A_k ; B_m} \geq \prod_{j=j_0 + 1}^m (1 - 2^{-\beta j^2 + \alpha j}) \geq \prod_{j=j_0 + 1}^\infty (1 - 2^{-\beta j^2 + \alpha j}) = c_2 > 0.$$

\end{proof}

Using the same techniques, one can prove a ``reverse'' separation lemma. Let $\wt{S}$ be a random walk started uniformly on the circle $\partial B_n$ and conditioned to hit $0$ before leaving $B_n$.  Let $X$ be the time reversal of $\wh{S}^n$ (so that $X$ is also a process from $\partial B_n$ to $0$). As before, for $k \leq n$, let 
$$ \wt{A}_k = \{ \wt{S}[0,\wt{\sigma}_k] \cap X[0,\wh{\sigma}_k] = \emptyset \},$$
$$ \wt{D}_k = k^{-1} \min \{ \operatorname{dist}(\wt{S}(\wt{\sigma}_k), X[0,\wh{\sigma}_k]), \operatorname{dist}(X(\wh{\sigma}_k), \wt{S}[0, \wt{\sigma}_k]) \}.$$
Then,

\begin{thm} [Reverse Separation Lemma] \label{sep2}
There exists $c_1, c_2 > 0$ such that
$$\condPro{}{\wt{D}_k \geq  c_1}{ \wt{A}_k} \geq c_2.$$
\end{thm}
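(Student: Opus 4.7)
The plan is to mirror the proof of Theorem \ref{sep} with the walks now moving inward from $\partial B_n$ toward $0$, exchanging the roles of ``outward'' and ``inward'' throughout. Let $\wt{\mathcal{F}}_l$ denote the $\sigma$-algebra generated by $\wt{S}[0,\wt{\sigma}_l]$ and $X[0,\wh{\sigma}_l]$, and define
$$ \wt{T}_j^k := \sup\{l : k \leq l \leq 2k,\ \wt{D}_l \geq 2^{-j}\}, $$
with the convention $\sup \emptyset = k-1$. This is the deepest radius between $k$ and $2k$ at which the two paths are separated by $2^{-j}$ times the radius, and plays the role of $T_j^k$ in the forward case. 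By applying the argument at scale $2k$ (so that the paths are shown to still be separated upon shrinking from radius $2k$ down to radius $k$), it suffices to prove that, conditioned on $\wt{A}_k$, with positive probability $\wt{D}_k \geq c_1$ for some fixed $c_1 > 0$.

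We prove two analogues of Lemmas \ref{ope} and \ref{lfg}. The first says that the probability that the paths lose separation very rapidly as they move inward without intersecting is small: there exist $\beta(c) > 0$ and $j_0(c)$ such that for $j \geq j_0$,
$$ \condPro{}{\wt{T}_{j-1}^k \leq (1-cj^2 2^{-j})\wt{T}_j^k;\ \wt{A}_k}{\wt{\mathcal{F}}_{\wt{T}_j^k}}\ind\{\wt{T}_j^k \geq 3k/2\} \leq 2^{-\beta j^2}. $$
The proof is the same as for Lemma \ref{ope}: on the event in question, $\wt{S}$ must traverse geometric shells of width $\asymp 2^{-j}\wt{T}_j^k$ located inside the radius $\wt{T}_j^k$ without hitting $X$, and iterating the Beurling estimate (Theorem \ref{beurling}) over $\asymp j^2$ such shells gives the claim. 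The second lemma says that separated paths have a reasonable chance of maintaining separation: there exist $\alpha < \infty$ and $c > 0$ such that
$$ \condPro{}{\wt{A}_{k/2};\ \wt{D}_{k/2} \geq c 2^{-j}}{\wt{\mathcal{F}}_{\wt{T}_j^k}} \geq c\, 2^{-\alpha j}\, \ind(\wt{A}_{\wt{T}_j^k}). $$
This is proved by constructing disjoint half-wedges for $\wt{S}$ and $X$ in the annulus $\{k/2 \leq |z| \leq \wt{T}_j^k\}$, using Proposition \ref{dirichlet} to direct each path into its own half-wedge, and then Corollary \ref{dfr} to keep it confined there all the way down to radius $k/2$. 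Combining these two estimates via the same telescoping product argument used in the proof of Theorem \ref{sep} yields the desired positive lower bound on $\condPro{}{\wt{D}_k \geq c_1}{\wt{A}_k}$.

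The main obstacle is transferring the Beurling estimate and the half-wedge estimate of Corollary \ref{dfr} from ordinary random walks to the two time-reversed processes $\wt{S}$ and $X$. For $\wt{S}$, which is a random walk conditioned to hit $0$ before leaving $B_n$, the conditioning is an $h$-transform by $h(z) = \Pro{z}{\xi_0 < \sigma_n}$; by the discrete Harnack inequality, $h$ is comparable on scales bounded away from $0$ and $n$, so the transition probabilities of $\wt{S}$ differ from those of an unconditioned walk by a bounded factor on the relevant scales, and the estimates carry over up to constants. For $X$, we use the domain Markov property for LERW (Lemma \ref{condit}) in its time-reversed form: since $X$ has the distribution of the loop-erasure of a random walk from a $\partial B_n$-endpoint (sampled from harmonic measure from $0$) to $0$, conditioning on $X[0,\wh{\sigma}_l]$ makes the remainder of $X$ the loop-erasure of a random walk conditioned to avoid the already-revealed portion of $X$ until it reaches $0$. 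Corollary \ref{dfr} applies to this underlying random walk, and since loop-erasure preserves containment in a half-wedge, the bounds transfer to $X$.
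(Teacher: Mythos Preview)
Your proposal is correct and is precisely the adaptation of the proof of Theorem~\ref{sep} that the paper has in mind when it writes ``using the same techniques, one can prove a `reverse' separation lemma'' and gives no further argument. You have correctly isolated the two extra ingredients needed to make the forward proof run in reverse: that $\wt S$ is an $h$-transform of ordinary random walk by $h(z)=\Pro{z}{\xi_0<\sigma_n}$, so that the Beurling estimate and the wedge estimates of Proposition~\ref{dirichlet} and Corollary~\ref{dfr} transfer up to Harnack constants on annuli bounded away from $0$ and $\partial B_n$; and that the time-reversed LERW $X$ can be realized as the loop-erasure of the conditioned walk $Y$ of Proposition~\ref{indep}, so that Lemma~\ref{condit} again furnishes a domain Markov property after conditioning on an initial segment of $X$.
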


\section{The growth exponent} \label{intersection}

\subsection{Introduction}

Recall that $W_t$ denotes standard complex Brownian motion and $\gamma$ denotes radial $\SLE_2$ in $\Disc$ started uniformly on $\partial \Disc$.

In this chapter we will consider random walks and independent LERWs. We will view them as being defined on different probability spaces so that $\whPro{}{.}$ and $\whExp{}{.}$ denote probabilities and expectations with respect to the LERW, while $\Pro{}{.}$ and $\Exp{}{.}$ will denote probabilities and expectations with respect to the random walk. For $m \leq n$, we define $\Es(m,n)$, $\Es(n)$ and $\wt{\Es}(n)$ as follows.
\begin{eqnarray*}
\Es(m,n) &=& \whExp{}{\Pro{}{S[1,\sigma_n] \cap \eta^2_{m,n}(\wh{S}^{n}[0, \wh{\sigma}_n]) = \emptyset}},\\
\Es(n) &=&  \whExp{}{\Pro{}{S[1,\sigma_n] \cap \wh{S}^n[0, \wh{\sigma}_n] = \emptyset}} = \Es(0,n),\\
\wt{\Es}(n) &=& \whExp{}{\Pro{}{S[1,\sigma_n] \cap \wh{S}[0, \wh{\sigma}_n] = \emptyset}}.
\end{eqnarray*}
$\Es(m,n)$ is the probability that a random walk from the origin to $\partial B_n$ and the terminal part of an independent LERW from $m$ to $n$ do not intersect.
$\Es(n)$ is the probability that a random walk from the origin to $\partial B_n$ and the loop-erasure of an independent random walk from the origin to $\partial B_n$ do not intersect. $\wt{\Es}(n)$ is the probability that a random walk from the origin to $\partial B_n$ and an \emph{infinite} LERW from the origin to $\partial B_n$ do not intersect.

In section \ref{decompsec}, we prove that for $m < n$, $\Es(n)$ can be decomposed as
$$ \Es(n) \asymp \Es(m) \Es(m,n).$$
In section \ref{escapesec}, we use the convergence of LERW to $\SLE_2$ (Theorem \ref{LERWtoSLE}) and the intersection exponent $3/4$ for $\SLE_2$ (Theorem \ref{harmeasure}) to show that 
$$\Es(m,n) \asymp \left( \frac{m}{n} \right)^{3/4}.$$ We then combine these two results to show that $\Es(n) \approx n^{-3/4}$. Finally, in section \ref{growthsec}, we show how the fact that $\Es(n) \approx n^{-3/4}$ implies that $\Gr(n) \approx n^{5/4}$.

Before proceeding, we prove the following lemma which shows that $\wt{\Es}(n)$ and $\Es(4n)$ are on the same order of magnitude.

\begin{lemma} \label{2escapes}
$$ \wt{\Es}(n) \asymp \Es(4n).$$
\end{lemma}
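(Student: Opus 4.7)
The plan is to first invoke Corollary \ref{infdist} to replace the infinite LERW $\wh{S}$ by the finite LERW $\wh{S}^{4n}$, and then to pass from non-intersection up to $\partial B_n$ to non-intersection up to $\partial B_{4n}$ by combining the separation lemma (Theorem \ref{sep}) with the wedge estimates of Section \ref{srwavoid}. The first step is immediate: applying Corollary \ref{infdist} with $l = n$ and $K = B_{4n}$ gives, for every $\omega \in \Omega_n$,
$$\Pro{}{\wh{S}[0,\wh{\sigma}_n] = \omega} \asymp \Pro{}{\wh{S}^{4n}[0,\wh{\sigma}^{4n}_n] = \omega},$$
and summing each side against the random walk avoidance probability $\Pro{}{S[1,\sigma_n] \cap \omega = \emptyset}$ yields
$$\wt{\Es}(n) \asymp E^{*}(n) := \whExp{}{\Pro{}{S[1,\sigma_n] \cap \wh{S}^{4n}[0,\wh{\sigma}^{4n}_n] = \emptyset}}.$$
The upper bound $\Es(4n) \leq E^{*}(n)$ is trivial because the defining event of $\Es(4n)$ requires strictly more non-intersection, so it remains to prove $E^{*}(n) \leq C\,\Es(4n)$.

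For this lower bound I would condition on the event $A^{*} := \{S[1,\sigma_n] \cap \wh{S}^{4n}[0,\wh{\sigma}^{4n}_n] = \emptyset\}$ and show that with probability bounded below, the non-intersection persists up to $\partial B_{4n}$. The separation lemma, stated for $(S,\wh{S})$, transfers to $(S,\wh{S}^{4n})$ via the comparison of measures from Corollary \ref{infdist}: conditional on $A^{*}$, with probability at least some $c > 0$ we have that both $z := S(\sigma_n)$ is at distance at least $c_1 n$ from $\omega := \wh{S}^{4n}[0,\wh{\sigma}^{4n}_n]$ and $w := \wh{S}^{4n}(\wh{\sigma}^{4n}_n)$ is at distance at least $c_1 n$ from $S[0,\sigma_n]$. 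Given such separation, I would choose two half-wedges
$$W_S = \{x : (1-\epsilon)n \leq |x| \leq 4n, \ |\arg(x) - \arg(z)| \leq \theta\}, \qquad W_L = \{x : (1-\epsilon)n \leq |x| \leq 4n, \ |\arg(x) - \arg(w)| \leq \theta\},$$
with $\epsilon,\theta$ small functions of $c_1$ so that $W_S \cap W_L = \emptyset$ and so that the portions of each wedge lying inside $B_n$ are disjoint from $\omega$ and $S[0,\sigma_n]$ respectively (which is arranged by requiring $\epsilon + 2\sin\theta < c_1$, since those portions have diameter bounded by this quantity times $n$).

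It then remains to show that with probability bounded below the extension of $S$ from $z$ stays in $W_S$ and the extension of $\wh{S}^{4n}$ from $w$ stays in $W_L$. The wedge estimate for $S$ follows directly from part 1 of Corollary \ref{dfr}. For the LERW extension, I would apply the domain Markov property (Lemma \ref{condit}) to identify the continuation as the loop-erasure of a random walk $Y$ started at $w$ and conditioned to avoid $\omega$ before exiting $B_{4n}$; since the loop-erasure of a path is contained in its range, it is enough to bound below the probability that $Y$ itself stays in $W_L$, which is again part 1 of Corollary \ref{dfr} applied with conditioning set essentially $\omega$. By independence of $S$ and $\wh{S}^{4n}$, the joint probability that both extensions stay in their respective wedges is bounded below, and the construction of the wedges forces non-intersection of the full paths up to $\partial B_{4n}$. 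Combining with the separation estimate gives $E^{*}(n) \leq C\,\Es(4n)$, completing the proof. The main technical point is the geometric compatibility of the two wedges with the separation constant $c_1$; a secondary issue is that the terminal point $w$ of the conditioning set $\omega$ lies on $\partial B_n$ rather than strictly in $B_n$, so Corollary \ref{dfr} is most naturally applied with $K = \omega \setminus \{w\}$ and the small additional constraint of not returning to $w$ is absorbed via the strong Markov property at $w$.
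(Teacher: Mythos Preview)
Your proposal is correct and follows essentially the same route as the paper: reduce via Corollary \ref{infdist} to comparing non-intersection up to $\partial B_n$ versus $\partial B_{4n}$ for $(S,\wh{S}^{4n})$, then use the separation lemma plus a wedge confinement (Lemma \ref{condit} and Corollary \ref{dfr}) to extend the non-intersection at uniform cost. The only cosmetic difference is that the paper confines just the LERW extension to a single half-wedge $W(z_0)$ around its exit point and lets the random walk escape around it (which is implicit in the constant $c$ coming from the strong Markov property), whereas you symmetrically confine each process to its own disjoint wedge; both realizations of the same idea work.
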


\begin{proof}
By Corollary \ref{infdist}, it suffices to show that
$$ \whExp{}{\Pro{}{S[1,\sigma_n] \cap \wh{S}^{4n}[0, \wh{\sigma}_n] = \emptyset}} \asymp \whExp{}{\Pro{}{S[1,\sigma_{4n}] \cap \wh{S}^{4n}[0, \wh{\sigma}_{4n}] = \emptyset}}.$$
It is clear that the left hand side is greater than or equal to the right hand side. To prove the other direction, we will use the separation lemma (Theorem \ref{sep}). Given a point $z \in \partial B_n$, let $W(z)$ be the half-wedge
$$ W(z) = \{ w \in \Lambda : (1-c_1)n \leq \abs{w} \leq 4n, \abs{\arg(w) - \arg(z)} < c_1/2\},$$
where $c_1$ is as in the statement of the separation lemma. We also let
$$ A_n = \{S[1,\sigma_n] \cap \wh{S}^{4n}[0, \wh{\sigma}_n] = \emptyset\},$$
$$ z_0 = \wh{S}^{4n}(\wh{\sigma}_n),$$
and
$$ D_n = n^{-1} \min \{\operatorname{dist}(S(\sigma_n), \wh{S}^{4n}[0, \wh{\sigma}_n]), \operatorname{dist}(z_0, S[0, \sigma_n]) \}.$$

By the strong Markov property for random walk,
\begin{eqnarray*}
& & \whExp{}{\Pro{}{S[1,\sigma_{4n}] \cap \wh{S}^{4n}[0, \wh{\sigma}_{4n}] = \emptyset}} \\
&\geq& c \whExp{}{\ind \{\wh{S}^{4n}[\wh{\sigma}_n, \wh{\sigma}_{4n}] \subset W(z_0) \} \Pro{}{A_n; D_n \geq c_1}}.
\end{eqnarray*}
By Lemma \ref{condit} and Corollary \ref{dfr},
$$ \whExp{}{\ind \{\wh{S}^{4n}[\wh{\sigma}_n, \wh{\sigma}_{4n}] \subset W(z_0) \} \Pro{}{A_n; D_n \geq c_1}} \geq c \whExp{}{\Pro{}{A_n; D_n \geq c_1}}.$$
Finally, by the separation lemma,
$$ \whExp{}{\Pro{}{A_n; D_n \geq c_1}} \geq c \whExp{}{\Prob{}{A_n}},$$
and therefore,
$$ \whExp{}{\Pro{}{S[1,\sigma_{4n}] \cap \wh{S}^{4n}[0, \wh{\sigma}_{4n}] = \emptyset}} \geq c \whExp{}{\Pro{}{S[1,\sigma_n] \cap \wh{S}^{4n}[0, \wh{\sigma}_n] = \emptyset}}.$$
\end{proof}

\subsection{Proof that $\Es(n) \asymp \Es(m)\Es(m,n)$} 
\label{decompsec}

\begin{prop} \label{d} There exists $C < \infty$ such that for all $m$ and $n$ with $m \leq n$,
$$ \Es(n) \leq C \Es(m) \Es(m,n).$$
\end{prop}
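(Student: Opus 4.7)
The plan is to set $l = m/4$ and decompose $\wh{S}^n = \eta^1 \oplus \eta^* \oplus \eta^2$ with $\eta^1 := \eta^1_l(\wh{S}^n)$ and $\eta^2 := \eta^2_{m,n}(\wh{S}^n)$. The non-intersection event $E_n := \{S[1,\sigma_n] \cap \wh{S}^n = \emptyset\}$ is contained in $E_1 \cap E_2$, where $E_1 := \{S[1,\sigma_l] \cap \eta^1 = \emptyset\}$ and $E_2 := \{S[\sigma_l,\sigma_n] \cap \eta^2 = \emptyset\}$, because $\eta^1 \subset B_l$ and $\eta^2 \cap B_m = \emptyset$. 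The point of this particular decomposition is that $E_1$ is determined by $S$ up to time $\sigma_l$, $E_2$ is determined by $S$ after $\sigma_l$, and on the LERW side $\eta^1$ and $\eta^2$ are independent up to constants by Proposition~\ref{indep} (which applies since $4l = m \leq n$).

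Applying the strong Markov property of $S$ at $\sigma_l$ yields
$$
\Pro{0}{E_1 \cap E_2} = \Exp{0}{\ind(E_1)\, h(S(\sigma_l))}, \qquad h(z) := \Pro{z}{S[0,\sigma_n] \cap \eta^2 = \emptyset}.
$$
For any fixed $\eta^2$, the function $h$ is discrete harmonic on $B_m$ because $\eta^2 \cap B_m = \emptyset$. Applying the discrete Harnack principle to the compact set $\overline{B_l}\subset B_m$ -- with a constant depending only on the fixed ratio $l/m = 1/4$ -- gives $h(z) \leq C\, h(0)$ for every $z \in \partial B_l$. Since $0 \notin \eta^2$, $h(0) = \Pro{0}{S[1,\sigma_n] \cap \eta^2 = \emptyset}$, and so
$$
\Pro{0}{E_1 \cap E_2} \leq C\, \Pro{0}{E_1}\, \Pro{0}{S[1,\sigma_n] \cap \eta^2 = \emptyset}.
$$

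Taking the expectation over $\wh{S}^n$ and invoking Proposition~\ref{indep} on the product of functionals of $\eta^1$ and $\eta^2$ decouples the two factors:
$$
\Es(n) \leq C'\, \whExp{}{\Pro{0}{E_1}}\cdot\whExp{}{\Pro{0}{S[1,\sigma_n] \cap \eta^2 = \emptyset}} = C'\, \whExp{}{\Pro{0}{E_1}}\cdot\Es(m,n).
$$
For the remaining factor, Corollary~\ref{infdist} (valid because $n \geq 4l$) lets me replace $\eta^1_l(\wh{S}^n)$ by the initial segment $\wh{S}[0,\wh{\sigma}_l]$ of the infinite LERW in law, up to a constant, so $\whExp{}{\Pro{0}{E_1}} \leq C\, \wt{\Es}(l)$. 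Lemma~\ref{2escapes} then gives $\wt{\Es}(l) \asymp \Es(4l) = \Es(m)$, which combines with the previous inequality to yield $\Es(n) \leq C\,\Es(m)\,\Es(m,n)$.

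The main technical point is the uniformity of the Harnack constant in the second step. By stopping $S$ at $\sigma_l$ rather than at $\sigma_m$, the function $h$ gets evaluated only at points well inside $B_m$, and one avoids the potentially singular point $\eta^2_0 \in \partial B_m$ where $h$ would fail to be harmonic. This is precisely the role of the choice $l = m/4$: keeping the inner and outer radii a bounded multiplicative distance apart forces the Harnack constant to depend only on this fixed ratio and not on $m$, $n$, or the random curve $\eta^2$. For small $m$, one simply uses $\Es(m) \geq c > 0$ to get the bound trivially.
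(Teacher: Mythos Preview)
Your proof is correct and follows essentially the same approach as the paper's: set $l=\lfloor m/4\rfloor$, bound the non-intersection event by the product event $E_1\cap E_2$, apply the strong Markov property at $\sigma_l$ together with the discrete Harnack principle (using that $\eta^2\subset\Lambda\setminus B_m$ and $\partial B_l$ sits at a fixed fraction inside $B_m$), then decouple $\eta^1$ and $\eta^2$ via Proposition~\ref{indep} and finish with Corollary~\ref{infdist} and Lemma~\ref{2escapes}. The only cosmetic difference is that the paper compares $h(z)\asymp h(z')$ for $z,z'\in\partial B_l$ and then identifies the common value with $\Pro{0}{S[1,\sigma_n]\cap\eta^2=\emptyset}$, whereas you compare directly to $h(0)$; both are immediate from Harnack.
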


\begin{proof}
Let $l = \lfloor m/4 \rfloor$ and fix $\eta^1= \eta^1_l$ and $\eta^2 = \eta^2_{m,n}$.

For any path $\eta$ in $\Omega_n$,
\begin{eqnarray*}
& & \Pro{}{S[1,\sigma_n] \cap \eta = \emptyset} \\
&\leq& \Pro{}{S[1,\sigma_l] \cap \eta^1(\eta) = \emptyset; S[1,\sigma_n] \cap \eta^2(\eta) = \emptyset} \\
&=& \sum_{z \in \partial B_l} \Pro{}{ S[1,\sigma_l] \cap \eta^1(\eta) = \emptyset; S(\sigma_l) = z}\Pro{z}{S[1,\sigma_n] \cap \eta^2(\eta) = \emptyset}.
\end{eqnarray*}
However, $\eta^2(\eta) \subset \Lambda \setminus B_{m}$, and thus by the discrete Harnack principle, for any $z,z' \in \partial B_l$,
$$ \Pro{z}{S[1,\sigma_n] \cap \eta^2(\eta) = \emptyset} \asymp \Pro{z'}{S[1,\sigma_n] \cap \eta^2(\eta) = \emptyset}.$$
Therefore,
$$ \Pro{}{S[1,\sigma_n] \cap \eta = \emptyset} \leq C \Pro{}{S[1,\sigma_{l}] \cap \eta^1(\eta) = \emptyset} \Pro{}{S[1,\sigma_n] \cap \eta^2(\eta) = \emptyset}.$$

We now let $\eta = \wh{S}^n[0,\wh{\sigma}_n]$. By Proposition \ref{indep}, for any $\omega \in \Omega_l$, $\lambda \in \widetilde{\Omega}_{m,n}$,
\begin{eqnarray*}
& & \Pro{}{\eta^1(\widehat{S}^n[0, \widehat{\sigma}_n]) = \omega; \eta^2(\widehat{S}^n[0, \widehat{\sigma}_n]) = \lambda} \\
&\asymp& \Pro{}{\eta^1(\widehat{S}^n[0, \widehat{\sigma}_n]) = \omega} \Pro{}{\eta^2(\widehat{S}^n[0, \widehat{\sigma}_n]) = \lambda}.
\end{eqnarray*}
Therefore,
\begin{eqnarray*}
\Es(n) 
&=& \whExp{}{\Pro{}{S[1,\sigma_n] \cap \wh{S}^n[0,\wh{\sigma}_n] = \emptyset}} \\
&\leq& C \whExp{}{\Pro{}{S[1,\sigma_{l}] \cap \eta^1(\wh{S}^n[0,\wh{\sigma}_n]) = \emptyset} \Pro{}{S[1,\sigma_n] \cap \eta^2(\wh{S}^n[0,\wh{\sigma}_n]) = \emptyset}}\\
&\leq& C \whExp{}{ \Pro{}{S[1,\sigma_l] \cap \wh{S}^n[0,\wh{\sigma}_l] = \emptyset}} \whExp{}{\Pro{}{S[1,\sigma_n] \cap \eta^2(\wh{S}^n[0,\wh{\sigma}_n]) = \emptyset}}\\
&=& C \whExp{}{ \Pro{}{S[1,\sigma_l] \cap \wh{S}^n[0,\wh{\sigma}_l] = \emptyset}} \Es(m,n).
\end{eqnarray*}
By corollary \ref{infdist}, since $4l \leq n$, 
$$ \whExp{}{ \Pro{}{S[1,\sigma_l] \cap \wh{S}^n[0,\wh{\sigma}_l] = \emptyset}} \asymp \whExp{}{ \Pro{}{S[1,\sigma_l] \cap \wh{S}[0,\wh{\sigma}_l] = \emptyset}} = \wt{\Es}(l).$$
Finally, by Lemma \ref{2escapes}, $\wt{\Es}(l) \asymp \Es(m)$, which finishes the proof of the proposition.

\end{proof}

\begin{prop} \label{e}
There exists $c > 0$ such that for all $m$ and $n$ with $m \leq n/2$,
$$ \Es(n) \geq c \Es(m) \Es(m, n).$$
\end{prop}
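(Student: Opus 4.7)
Set $l = \lfloor m/4 \rfloor$ and decompose $\wh{S}^n = \eta^1 \oplus \eta^* \oplus \eta^2$ using the notation of Section~\ref{sets}. The plan is to construct a sub-event of $\{S[1,\sigma_n]\cap\wh{S}^n=\emptyset\}$ whose joint probability under $(S,\wh{S}^n)$ is at least $c\,\Es(m)\,\Es(m,n)$, by forcing $S$ and the middle LERW piece $\eta^*$ into disjoint half-wedges so that the non-intersection decouples into a part depending on $\eta^1$ (contributing $\Es(m)$) and a part depending on $\eta^2$ (contributing $\Es(m,n)$).

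\emph{Step 1 (separation at $\partial B_l$).} Let $F_1=\{S[1,\sigma_l]\cap\eta^1=\emptyset,\ D_l\ge c_1\}$, where $D_l$ and $c_1$ are the scaled endpoint-to-path distance and constant of Theorem~\ref{sep}. Since $4l\le n$, Corollary~\ref{infdist} shows that the law of $\eta^1=\wh{S}^n[0,\wh{\sigma}_l]$ is comparable (up to a uniform multiplicative constant) to that of the infinite LERW restricted to $B_l$; combining with Theorem~\ref{sep} and Lemma~\ref{2escapes} we obtain $\Pro{}{F_1}\ge c\,\wt{\Es}(l)\ge c'\,\Es(m)$. On $F_1$, the points $w_1:=S(\sigma_l)$ and $z_1:=\eta^1_{k_1}$ on $\partial B_l$ have angular separation bounded below.

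\emph{Step 2 (disjoint wedges).} Fix $\theta>0$ small (depending only on $c_1$) and introduce two half-wedges $W_L,W_S$ of opening angle $\theta$ with apexes near $z_1,w_1$ respectively, both of radial range from $\partial B_l$ to $\partial B_{4m}$, chosen so that $W_L\cap W_S=\emptyset$ and so that neither meets the other's trajectory inside $B_l$. Let $F_2=\{\eta^*\subset W_L\}$ and $F_3=\{S[\sigma_l,\sigma_{4m}]\subset W_S\}$. For $F_3$: after $\sigma_l$ the walk is independent of $\wh{S}^n$, so Corollary~\ref{dfr} gives $\Pro{}{F_3\mid F_1,F_2}\ge c$. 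For $F_2$: conditioning on $\eta^1$, the domain Markov property (Lemma~\ref{condit}) expresses the continuation of $\wh{S}^n$ as the loop-erasure of a walk $X$ from $z_1$ conditioned to exit $B_n$ while avoiding $\eta^1$. A sufficient event on $X$ that forces $\eta^*\subset W_L$ is that $X$ stay in $W_L$ until reaching $\partial B_{4m}$ and then not return to $B_{3m}$; this has probability $\ge c$ by Corollary~\ref{wedgecond} (with $\eta^1$ in the role of $K_1$) followed by Corollary~\ref{dfr}.

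\emph{Step 3 (outer non-intersection and decoupling).} On $F_1\cap F_2\cap F_3$ the walk has reached some $w_2\in\partial B_{4m}\cap W_S$ having avoided $\eta^1\cup\eta^*$. Set $F_4=\{S[\sigma_{4m},\sigma_n]\cap\eta^2=\emptyset\}$. Because $\eta^2\subset\Lambda\setminus B_m$, the positive discrete harmonic function $h(z)=\Pro{z}{S[0,\sigma_n]\cap\eta^2=\emptyset}$ is well defined and harmonic on a connected domain containing $0$ and $w_2$; the discrete Harnack principle applied along a chain through $W_S$ (which is bounded away from $W_L$ and can be arranged away from $\eta^2$ up to uniform constants) gives $\Pro{}{F_4\mid\mathcal{F}_{\sigma_{4m}}}\ge c\,\Pro{0}{S[1,\sigma_n]\cap\eta^2=\emptyset}$. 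Taking the LERW expectation and invoking Proposition~\ref{indep} to treat $\eta^1$ and $\eta^2$ as independent up to constants produces the product $\Es(m)\,\Es(m,n)$, and combining Steps~1--3 yields $\Es(n)\ge c\,\Es(m)\,\Es(m,n)$.

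\emph{Main obstacle.} The technical heart is Step~2: loop-erasure does not preserve set inclusion, so controlling $\eta^*$ by a wedge requires identifying an event on the underlying conditioned walk $X$ that forces $\eta^*\subset W_L$ and then bounding its probability with the wedge estimates of Corollaries~\ref{wedgecond} and~\ref{dfr}. Harmonizing the angle $\theta$ and the radii $l<3m<4m$ so that both the wedge-disjointness and the uniform Harnack comparison in Step~3 hold for every admissible $\eta^2$ is the principal bookkeeping burden.
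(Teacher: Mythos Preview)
Your overall strategy matches the paper's: decompose $\wh{S}^n=\eta^1\oplus\eta^*\oplus\eta^2$ with $l=\lfloor m/4\rfloor$, use the separation lemma on the inner piece, confine $\eta^*$ to a wedge, and then decouple via Proposition~\ref{indep}. However, two steps in your outline do not go through as written, and the paper's proof differs from yours precisely at those points.

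\textbf{First gap: the Harnack step at radius $4m$.} You place $w_2$ on $\partial B_{4m}$ and claim the Harnack principle compares $h(w_2)$ to $h(0)$ for $h(z)=\Pro{z}{S[0,\sigma_n]\cap\eta^2=\emptyset}$. But $h$ is harmonic only on $B_n\setminus\eta^2$, and $\eta^2\subset B_n\setminus B_m$ can pass arbitrarily close to $w_2\in\partial B_{4m}$; your parenthetical that the chain ``can be arranged away from $\eta^2$ up to uniform constants'' is not justified. The paper avoids this by stopping the walk at $\partial B_{2l}$ (which lies strictly inside $B_m$, where $h$ is genuinely harmonic) and then asking $S^2=S[\sigma_{2l}+1,\sigma_n]$ to avoid $\eta^2\cup A$ with $A=B_l\cup W$; the extra set $A$ absorbs $\eta^1$ and $\eta^*$ all at once. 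Removing the additional constraint that $S^2$ avoid $A$ (and $W^*$) is nontrivial and is exactly where the paper invokes Lemma~\ref{rwdecomp}, Green's function comparisons, and the \emph{reverse} separation lemma (Theorem~\ref{sep2}), none of which appear in your sketch.

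\textbf{Second gap: the use of Proposition~\ref{indep}.} Your event $F_2$ is an event on the continuation walk $X$ given $\eta^1$; it constrains not only $\eta^*$ but also $\eta^2$ (its starting location and the region it lives in). After conditioning on $F_2$, the law of $\eta^2$ is no longer the marginal you need, so invoking Proposition~\ref{indep} at the end to factor out $\Es(m,n)$ is not legitimate. The paper orders things differently: it first restricts to $\{\eta^1\in K_1\}\cap\{\eta^2\in K_2\}$ (angular constraints on both endpoints), and then shows that $\condwhPro{}{\eta^*\subset W}{\eta^1,\eta^2}\ge c$ uniformly via Corollary~\ref{wedgecond}, which is formulated precisely for a walk conditioned to avoid sets on \emph{both} sides ($K_1\subset B_n$ and $K_2\subset\Lambda\setminus B_{4n}$). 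Only then does the expectation factor into $X(\eta^1)$ and $Y(\eta^2)$, to which Proposition~\ref{indep} applies cleanly.

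In short, the two missing ingredients are: working at an inner radius where Harnack is valid (and paying the price of an extra avoidance constraint, handled by Theorem~\ref{sep2}), and conditioning on $\eta^*\subset W$ \emph{given both} $\eta^1$ and $\eta^2$ so that Proposition~\ref{indep} can be applied to genuine functions of $\eta^1$ and $\eta^2$ separately.
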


\begin{proof}
We will use the following abbreviations. Let $l = \lfloor m/4 \rfloor$ and let
\begin{eqnarray*}
\eta^1 &=& \eta^1_l(\wh{S}^n[0, \wh{\sigma}_n]);\\
\eta^2 &=& \eta^2_{m,n}(\wh{S}^n[0, \wh{\sigma}_n]);\\
\eta^* &=& \eta^*_{l,m,n}(\wh{S}^n[0, \wh{\sigma}_n]).
\end{eqnarray*}
Then $\wh{S}^n[0, \wh{\sigma}_n] = \eta^1 \oplus \eta^* \oplus \eta^2$. We also decompose $S[1, \sigma_n]$ into $S^1 = S[1,\sigma_{2l}]$ and $S^2 = S[\sigma_{2l}+1, \sigma_n]$.

Let $c_1$ be as in the statement of the separation lemma (Theorem \ref{sep}). Let $W$ and $W^*$ be the half-wedges
\begin{eqnarray*}
W = \{ z : (1 - \frac{c_1}{4}) l \leq \abs{z} \leq (1 + \frac{c_1}{4})m, \abs{\arg(z)} \leq \frac{c_1}{4} \}; \\
W^* = \{ z : (1 - \frac{c_1}{4}) l  \leq \abs{z} \leq (1 + \frac{c_1}{4})m, \abs{\arg(z)} \leq \frac{c_1}{2} \}.
\end{eqnarray*}
and let $A = B_l \cup W$.

Let $K_1$ be the set of $\eta^1$ such that
$$ \eta^1 \cap \partial B_l \subset \{ z : \arg(z) \in (-\frac{c_1}{8}, \frac{c_1}{8}) \},$$ and $K_2$ be the set of $\eta^2$ such that
$$ \eta^2 \cap \partial B_{m} \subset \{ z : \arg(z) \in (-\frac{c_1}{8}, \frac{c_1}{8}) \}.$$
Then,
\begin{eqnarray*}
& & 
\Es(n) \\
&=& \whExp{}{\Pro{}{S[1,\sigma_n] \cap \widehat{S}^n[0,\wh{\sigma}_n] = \emptyset}}\\
&=& \whExp{}{ \Pro{}{S^1 \cap \eta^1 \oplus \eta^* = \emptyset;  S^2 \cap \eta^1 \oplus \eta^* \oplus \eta^2 = \emptyset} } \\
&\geq& \whExp{}{ \ind_{\{\eta^1 \in K_1\}} \ind_{\{\eta^2 \in K_2\}} \ind_{\{\eta^* \subset W \}} \Pro{}{S^1 \cap (\eta^1 \cup W^*) = \emptyset;  S^2 \cap (\eta^2 \cup A) = \emptyset }}\\
&=& \whExp{} {\ind_{\{\eta^1 \in K_1\}}  \Pro{}{S^1 \cap (\eta^1 \cup W^*) = \emptyset} \right. \\ 
 &\times&  \left. \ind_{\{\eta^2 \in K_2\}} \condPro{}{S^2 \cap (\eta^2 \cup A) = \emptyset }{ S^1 \cap (\eta^1 \cup W^*) = \emptyset} \ind_{\{\eta^* \subset W\}} } 
\end{eqnarray*}
Therefore,
$$ \Es(n) \geq \whExp{}{ X(\eta^1) Y(\eta^2) \ind_{\{\eta^* \subset W\}} },$$
where
$$ X(\eta^1) = \ind_{\{\eta^1 \in K_1\}}  \Pro{}{S^1 \cap (\eta^1 \cup W^*) = \emptyset},$$
and
$$ Y(\eta^2) = \ind_{\{\eta^2 \in K_2\}} \inf_{z \in \partial B_{2l} \setminus W^*} \Pro{z}{S[1,\sigma_n] \cap (\eta^2 \cup A) = \emptyset}.$$

By Lemma \ref{condit} and Corollary \ref{wedgecond}, for any $\omega_1 \in K_1$ and $\omega_2 \in K_2$,
$$ \condwhPro{} { \eta^* \subset W }{ \eta^1 = \omega_1, \eta^2 = \omega_2 } \geq c,$$
and therefore
\begin{eqnarray*}
\Es(n) \geq  c \whExp{} { X(\eta^1) Y(\eta^2) }.
\end{eqnarray*}
However, by Proposition \ref{indep}, $\eta^1$ and $\eta^2$ are independent up to constants, and therefore,
$$ \Es(n) \geq c \whExp{}{ X(\eta^1)} \whExp {}{Y(\eta^2) }.$$
To prove the Proposition, it therefore suffices to show that
\begin{eqnarray}
\whExp{} {X(\eta^1)} \geq c \Es(m), \label{dsp}
\end{eqnarray}
and
\begin{eqnarray}
\whExp{}{ Y(\eta^2) } \geq c \Es(m,n). \label{opg}
\end{eqnarray}

To prove (\ref{dsp}), note that
\begin{eqnarray*}
\whExp{} {X(\eta^1)} &=& \whExp{} {\ind_{\{\eta^1 \in K_1\}}  \Pro{} {S^1 \cap (\eta^1 \cup W^*) = \emptyset }} \\
&\geq& c \whExp{} { \Pro{}{S^1 \cap (\eta^1 \cup W^*) = \emptyset}} \\
&\geq& c \whExp{} { \Pro{}{S[1,\sigma_l] \cap \eta^1 = \emptyset; \dist(S(\sigma_l), \eta^1) \geq c l; S[1,\sigma_l] \cap W^* = \emptyset}} \\
&\geq& c \whExp{} {\Pro{}{S[1,\sigma_l] \cap \eta^1 = \emptyset}},
\end{eqnarray*}
where the last inequality is justified by the separation lemma (Theorem \ref{sep}).
However, by Corollary \ref{infdist} and Lemma \ref{2escapes},
$$ \whExp{} {\Pro{}{S[1,\sigma_l] \cap \eta^1 = \emptyset}} \asymp \whExp{} {\Pro{}{S[1,\sigma_l] \cap \wh{S}[0,\wh{\sigma}_l] = \emptyset}} = \wt{\Es}(l) \asymp \Es(m).$$

We now prove (\ref{opg}). Since $\eta^2 \subset \Lambda \setminus B_{m}$, by the discrete Harnack inequality, for any $z_1, z_2 \in \partial B_{2l} \setminus W^*$,
$$ \Pro{z_1}{S[1,\sigma_n] \cap (\eta^2 \cup A) = \emptyset} \asymp \Pro{z_2}{S[1, \sigma_n] \cap (\eta^2 \cup A) = \emptyset}.$$
Therefore, fixing a $z \in \partial B_{2l} \setminus W^*$,
$$ Y(\eta^2) \geq c \ind_{\{\eta^2 \in K_2\}} \Pro{z}{S[1,\sigma_n] \cap (\eta^2 \cup A) = \emptyset}.$$

By Lemma \ref{rwdecomp},
\begin{eqnarray*}
& & 
\Pro{z}{S[1,\sigma_n] \cap (\eta^2 \cup A) = \emptyset} \\
&=& \frac{G(z; B_n \setminus (\eta^2 \cup A))}{G(z; B_n)} \sum_{y \in \partial B_n} \condPro{y}{\xi_{z} < \xi_A \wedge \xi_{\eta^2} }{ \xi_{z} < \sigma_n} \Pro{z}{S(\sigma_n) = y}.
\end{eqnarray*}
For any $y \in \partial B_n$,
\begin{eqnarray*}
& & \condPro{y}{\xi_{z} < \xi_A \wedge \xi_{\eta^2} }{ \xi_{z} < \sigma_n} \\
&\geq&  \sum_{w \in \partial B_{m} \setminus W^*} \condPro{w}{\xi_{ z } < \xi_A \wedge \xi_{\eta^2} }{ \xi_{z} < \sigma_n} \condPro{y}{\xi_{w} < \xi_{W^*} \wedge \xi_{\eta^2} }{ \xi_{z}< \sigma_n}
\end{eqnarray*}
For any $w \in \partial B_{m} \setminus W^*$,
$$ \condPro{w}{\xi_{B_{cl/4}(z)} < \xi_A \wedge \xi_{\eta^2} }{ \xi_{z} < \sigma_n} \geq c.$$
Furthermore, by Lemma \ref{rebv}, for any $u \in \partial B_{cl/4}(z)$,
$$ \frac{\condPro{u}{\xi_{z} < \xi_A \wedge \xi_{\eta^2} }{ \xi_{z} < \sigma_n}}{\condPro{u}{\xi_{z} < \xi_{\eta^2} }{ \xi_{z} < \sigma_n}} \geq \condPro{u}{\xi_z < \xi_{B_{cl/2}(z)} }{ \xi_{z} < \xi_{\eta^2} \wedge \sigma_n}  \geq c.$$
It also follows from (\ref{adhx}) in Lemma \ref{rebv} that for any two paths $\eta^2, \wt{\eta}^2 \in \wt{\Omega}_{m,n}$  and any $u \in \partial B_{cl/4}(z)$, 
$$ \condPro{u}{\xi_{z} < \xi_{\eta^2} }{ \xi_{z} < \sigma_n} \asymp \condPro{u}{\xi_{z} < \xi_{\wt{\eta}^2} }{ \xi_{z} < \sigma_n},$$
and therefore there exists $f(n,m)$ such that for all $\eta^2 \in \wt{\Omega}_{m,n}$ and $u \in \partial B_{cl/4}(z)$,
$$ \Pro{u}{\xi_{z} < \xi_{\eta^2} | \xi_{z} < \sigma_n} \asymp f(n,m).$$

Thus,
\begin{eqnarray*}
& & \Pro{z}{S[1,\sigma_n] \cap (\eta^2 \cup A) = \emptyset} \\ 
&\geq& c f(n,m) \frac{G(z; B_n \setminus (\eta^2 \cup A))}{G(z; B_n)}  \\
&\times& \sum_{y \in \partial B_n} \Pro{y}{\xi_m < \xi_{\eta^2} \wedge \xi_{W^*} | \xi_{z} < \sigma_n} \Pro{z}{S(\sigma_n) = y}.
\end{eqnarray*}

Let $r_1 = \dist(z, \eta^2 \cup \partial B_n)$ and $r_2 = \dist(z, \eta^2 \cup A \cup \partial B_n)$. Then $r_2 > c_1 l > c_1 r_1$. Therefore,
$$G(z; B_n \setminus (\eta^2 \cup A)) \geq G(z; B_{r_2}(z)) \geq c G(z; B_l(z))$$
and by Lemma \ref{green} applied to the ball $B(z, r_1)$,
$$ G(z; B_l(z)) \geq c G(z; B_{r_1}(z)) \geq c G(z; B_n \setminus \eta^2).$$

Finally, by the reverse separation lemma (Theorem \ref{sep2}),
$$ \whExp{}{\Pro{y}{\xi_m < \xi_{\eta^2} \wedge \xi_{W^*} | \xi_{z} < \sigma_n}} \geq c \whExp{}{\Pro{y}{\xi_m < \xi_{\eta^2} | \xi_{z} < \sigma_n}},$$
and thus
\begin{eqnarray*}
& & \whExp{}{Y(\eta^2)} \\
&\geq& c \whExp{}{ \ind_{\{\eta^2 \in K_2\}} \Pro{z}{S[1,\sigma_n] \cap (\eta^2 \cup A) = \emptyset}} \\
&\geq& c \whExp{}{\Pro{z}{S[1,\sigma_n] \cap (\eta^2 \cup A) = \emptyset}} \\
&\geq& c \frac{G(z; B_l)}{G(z; B_n)} f(n,m) \whExp{}{ \sum_{y \in \partial B_n} \Pro{y}{\xi_{m} < \xi_{\eta^2} \wedge \xi_{W^*} | \xi_{z} < \sigma_n} \Pro{z}{S(\sigma_n) = y}} \\
&\geq& c \whExp{}{ \frac{G(z; B_n \setminus \eta^2)}{G(z; B_n)} f(n,m)  \sum_{y \in \partial B_n} \Pro{y}{\xi_{m} < \xi_{\eta^2} | \xi_{z} < \sigma_n} \Pro{z}{S(\sigma_n) = y} } \\
&\geq& c \whExp{}{\frac{G(z; B_n \setminus \eta^2)}{G(z; B_n)} \sum_{y \in \partial B_n} \Pro{y}{\xi_z < \xi_{\eta^2} | \xi_{z} < \sigma_n} \Pro{z}{S(\sigma_n) = y}}.
\end{eqnarray*}
However, by applying Lemma \ref{rwdecomp} again,
\begin{eqnarray*}
& & \frac{G(z; B_n \setminus \eta^2)}{G(z; B_n)} \sum_{y \in \partial B_n} \Pro{y}{\xi_z < \xi_{\eta^2} | \xi_{z} < \sigma_n} \Pro{z}{S(\sigma_n) = y} \\
&=& \Pro{z}{S[1, \sigma_n] \cap \eta^2 = \emptyset}.
\end{eqnarray*}
and thus
$$ \whExp{}{Y(\eta^2)} \geq c \Es(m,n).$$
\end{proof}

\subsection{Intersection exponents for $\SLE_2$ and LERW} \label{escapesec}
In this section, we use the convergence of LERW to $\SLE_2$ to show that for $0 < r < 1$, $\Es(rn,n) \asymp r^{3/4}$. We combine this result with the decomposition 
$$\Es(n) \asymp \Es(rn)\Es(rn,n)$$ from the previous section to obtain that $\Es(n) \approx n^{-3/4}$.

We recall the notation introduced in Section \ref{sSLE}. Let $\Gamma$ denote the set of continuous curves $\alpha:[0,t_\alpha] \to \overline{\Disc}$ (we allow $t_\alpha$ to be $\infty$) such that $\alpha(0) \in \partial \Disc$, $\alpha(0,t_\alpha] \subset \Disc$ and $\alpha(t_\alpha) = 0$. We can make $\Gamma$ into a metric space as follows. If $\alpha, \beta \in \Gamma$, we let
$$ d(\alpha, \beta) =  \inf \sup_{0 \leq t \leq t_{\alpha}}  \abs{\alpha(t) - \beta(\theta(t))},$$
where the infimum is taken over all continuous, increasing bijections $\theta:[0,t_{\alpha}] \to [0,t_{\beta}]$. Note that $d$ is a pseudo-metric on $\Gamma$, and is a metric if we consider two curves to be equivalent if they are the same up to reparametrization.

Recall (Theorem \ref{LERWtoSLE}) that LERW converges weakly to $\SLE_2$ on the space $(\Gamma, d)$. We want to apply this result to the functions $f_r$ defined as follows. Given $0 < r < 1$ and $\alpha \in \Gamma$, we let
$$ f_r(\alpha) = \Pro{0}{W[0,\tau_\Disc] \cap \alpha[0,\rho_r] = \emptyset},$$
where
$$ \rho_r = \inf \{ t : \abs{\alpha(t)} = r \}.$$
We also define $f_r$ to be identically $1$ for $r \geq 1$ (think of $\rho_r = 0$ in that case, so that the above probability is $1$). Recall that Theorem \ref{harmeasure} states that if $\gamma$ is $\SLE_2$ then
$$ \Exp{}{f_r(\gamma)} \asymp r^{3/4}.$$

Unfortunately, the $f_r$ are not continuous on the space $(\Gamma, d)$. However, the following lemma shows that they can be approximated by continuous functions.

\begin{lemma} \label{contin}
For all $0 < r < 1$, there exists a function $\wt{f}_r$ that is uniformly continuous on the space $(\Gamma, d)$ such that for all $\alpha \in \Gamma$ $$ f_{r/2}(\alpha) \leq \wt{f}_r(\alpha) \leq f_{2r}(\alpha).$$
\end{lemma}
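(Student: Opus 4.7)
The plan is to take
$$\wt{f}_r(\alpha) = \frac{2}{3r}\int_{r/2}^{2r} f_s(\alpha)\, ds.$$
First I would establish the sandwich inequality by noting that $s \mapsto \rho_s(\alpha) = \inf\{t : |\alpha(t)| = s\}$ is non-increasing in $s$ (the curve must travel longer to reach a smaller circle), so $\alpha[0, \rho_s]$ is nested-increasing as $s$ decreases and $s \mapsto f_s(\alpha)$ is non-decreasing. Hence $f_{r/2}(\alpha) \leq f_s(\alpha) \leq f_{2r}(\alpha)$ for every $s \in [r/2, 2r]$, and averaging yields $f_{r/2}(\alpha) \leq \wt{f}_r(\alpha) \leq f_{2r}(\alpha)$.

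Next I would recast $\wt{f}_r$ in a form amenable to the continuity argument. Let $T(W, \alpha) = \inf\{t : \alpha(t) \in W[0, \tau_\Disc]\}$ with the convention $T = +\infty$ if no such $t$ exists, and let $R(W, \alpha) = \inf_{t \leq T(W, \alpha)} |\alpha(t)|$, with $R = 0$ when $T = \infty$ (using $\alpha(t_\alpha) = 0$). A direct check shows $f_s(\alpha) = \Pro{}{R(W, \alpha) \leq s}$ for a.e. $s$, so Fubini gives
$$\wt{f}_r(\alpha) = \Exp{}{G_r\bigl(R(W, \alpha)\bigr)},$$
where $G_r : [0, \infty) \to [0, 1]$ is the piecewise-linear map equal to $1$ on $[0, r/2]$, affine from $1$ to $0$ on $[r/2, 2r]$, and $0$ on $[2r, \infty)$. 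In particular $G_r$ is bounded by $1$ and Lipschitz with constant $2/(3r)$.

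The hard part will be showing uniform continuity of $\alpha \mapsto \Exp{}{G_r(R(W, \alpha))}$ on $(\Gamma, d)$. Given $d(\alpha, \beta) < \delta$ via a reparametrization $\theta$, on the event that $W$ exhibits the same qualitative intersection pattern with both curves (each first-intersection with one lies within $\delta$ of one with the other) we have $|R(W, \alpha) - R(W, \beta)| \leq \delta$. The complementary event requires $W$ to pass within distance $\delta$ of one of the curves without hitting it; the Beurling estimate (Theorem \ref{beurling}), applied to a sub-arc of the near-missed curve running from the near-miss point either out to $\partial \Disc$ or in toward $0$ (both arcs exist because every curve in $\Gamma$ connects these), bounds the probability of such a near-miss-without-hit by $O(\delta^{1/2})$ with a universal constant. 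Integrating gives $\Exp{}{|R(W, \alpha) - R(W, \beta)|} = O(\delta^{1/2})$ uniformly in $\alpha, \beta$, and the Lipschitz bound on $G_r$ then yields $|\wt{f}_r(\alpha) - \wt{f}_r(\beta)| = O(\delta^{1/2}/r)$, which is uniform continuity on $\Gamma$ for each fixed $r$.
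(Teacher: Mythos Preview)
Your definition of $\wt f_r$ and the sandwich inequality match the paper's proof exactly, and your reformulation $\wt f_r(\alpha)=\Exp{}{G_r(R(W,\alpha))}$ is correct and pleasant. The gap is in the continuity step.

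The ``near-miss-without-hit'' event you describe is not bounded by the Beurling estimate alone, uniformly over $\Gamma$. Concretely, suppose $d(\alpha,\beta)<\delta$ via $\theta$, and suppose $\theta(T_\alpha)<T_\beta$ with $R(W,\alpha)>R(W,\beta)+\delta$. Then the point $z=\alpha(T_\alpha)\in W[0,\tau_\Disc]$ lies within $\delta$ of $\beta(s_0)$ with $s_0=\theta(T_\alpha)<T_\beta$, and $W$ avoids only the sub-arc $\beta[0,s_0]$ (toward $\partial\Disc$), not $\beta[s_0,t_\beta]$ (toward $0$): indeed $W$ certainly hits the latter at $0=\beta(t_\beta)$, and in general at $\beta(T_\beta)$. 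So your ``either toward $\partial\Disc$ or toward $0$'' dichotomy is not available---you only control the outward sub-arc. When $|z|$ is close to $1$ that outward sub-arc $\beta[0,s_0]$ can be arbitrarily short (it only has to reach $\partial\Disc$, which is nearby), and Beurling applied in a ball $B(z,\rho)$ gives a bound $C(\delta/\rho)^{1/2}$ that is useless once $\rho$ is forced to be of order $1-|z|$. Nothing in your setup forces $|z|$ away from $1$.

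This near-boundary difficulty is exactly what the paper isolates and treats separately. The paper does not try to control $|R(W,\alpha)-R(W,\beta)|$ directly; instead it proves the one-sided shift estimate $f_r(\alpha)\le f_{r+\delta}(\beta)+\epsilon$ and then telescopes in the integral. In that proof the near-boundary contribution is handled by Lemma~\ref{ckappa}, which bounds
\[
\Pro{0}{W[0,\tau_\Disc]\cap\alpha=\emptyset;\ W(\tau_\Disc)\in\wt C_\nu(\alpha)}
\]
uniformly in $\alpha$; this in turn rests on the topological Lemma~\ref{jordan}. Your argument would go through if you invoked Lemma~\ref{ckappa} (or an equivalent) to dispose of the case where the near-miss point lies in the annulus $\{1-\nu<|z|<1\}$, and used Beurling only when the near-miss point is at distance $\ge\nu$ from $\partial\Disc$. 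As written, though, the uniform $O(\delta^{1/2})$ bound is unjustified.
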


\begin{proof}
We define 
$$ \wt{f}_r(\alpha) = \frac{2}{3r} \int_{r/2}^{2r} f_s(\alpha) \,ds.$$
Note that for a fixed $\alpha$, $f_s(\alpha)$ is increasing, and therefore $f_s(\alpha)$ is integrable. In addition, the second assertion in the statement of the lemma follows immediately. It remains to show that $\wt{f}_r$ is uniformly continuous.

First of all, we claim that for all $\epsilon > 0$, there exists $\delta > 0$ such that for all $0 < r < 1$ and all $\alpha$, $\beta$ with $d(\alpha, \beta) < \delta$,
\begin{eqnarray}
 f_r(\alpha) \leq f_{r+\delta}(\beta) + \epsilon. \label{contina}
\end{eqnarray}

To prove this note that
$$ f_r(\alpha) - f_{r + \delta}(\beta) \leq \Pro{0}{W[0,\tau_\Disc] \cap \beta[0,\rho_{r+\delta}] \neq \emptyset; W[0,\tau_\Disc] \cap \alpha[0,\rho_{r}] = \emptyset}.$$  
By Lemma \ref{ckappa}, there exists $\nu > 0$ depending only on $\epsilon$ such that 
$$ \Pro{0}{ W[0,\tau_\Disc] \cap \alpha[0,\rho_{r}] = \emptyset; W(\tau_\Disc) \in \wt{C}_\nu(\alpha)} < \epsilon.$$
Furthermore, if $d(\alpha, \beta) < \delta$, then for every $z \in \beta[0,\rho_{r+\delta}]$, there exists $y \in \alpha[0,\rho_r]$ such that $\abs{z - y} < \delta$. Therefore, by the Beurling estimates (Theorem \ref{beurling}), one can make $\delta$ small enough so that 
$$ \Pro{0}{\abs{W(\tau^*_\alpha) - W(\tau^*_\beta)} > \nu} < \epsilon $$
where $\tau^*_\alpha = \tau_{\alpha[0,\rho_r]} \wedge \tau_\Disc$ and $\tau^*_\beta = \tau_{\beta[0,\rho_{r+\delta}]} \wedge \tau_\Disc$.

Therefore, for such a $\delta$,

\begin{eqnarray*}
& & \Pro{0}{W[0,\tau_\Disc] \cap \beta[0,\rho_{r+\delta}] \neq \emptyset; W[0,\tau_\Disc] \cap \alpha[0,\rho_{r}] = \emptyset} \\
&\leq& \Prob{}{E_1} + \Prob{}{E_2} + \Prob{}{E_3},
\end{eqnarray*} 
where
\begin{eqnarray*}
E_1 &=& \{ W(\tau^*_\alpha) \in C_\nu(\alpha) \} \\
E_2 &=& \{ W[0,\tau_\Disc] \cap \alpha[0,\rho_{r}] = \emptyset; W(\tau^*_\beta) \in D_{1-\nu} \} \\
E_3 &=& \{ W(\tau^*_\alpha) \in \partial \Disc \setminus C_\nu(\alpha); W(\tau^*_\beta) \in A_\nu \}
\end{eqnarray*} 
(recall that $A_\nu$ denotes the annulus $\Disc \setminus D_{1-\nu}$).

By our choice of $\nu$, $\Prob{}{E_1} < \epsilon$. Provided we take $\delta < \nu/2$, the events $E_2$ and $E_3$ are subsets of the event  
$$\left\{ \abs{W(\tau^*_\alpha) - W(\tau^*_\beta)} > \frac{\nu}{2} \right\},$$
 and therefore $\Prob{}{E_2}$ and $\Prob{}{E_3}$ can be made less than $\epsilon$. This proves the claim (\ref{contina}). 

Fix $0 < r < 1$. Given $\epsilon > 0$, let $\delta > 0$ be such that (\ref{contina}) holds (recall that $\delta$ depends only on $\epsilon$ and not on $r$) and suppose that $d(\alpha, \beta) < \delta$. Then since $f_s(\beta) \leq 1$ for all $s$ and $\beta$,
\begin{eqnarray*}
\wt{f}_r(\alpha) - \wt{f}_r(\beta) &=& \frac{2}{3r} \int_{r/2}^{2r} f_s(\alpha) \,ds - \frac{2}{3r} \int_{r/2}^{2r} f_s(\beta) \,ds \\
&\leq& \frac{2}{3r} \int_{r/2}^{2r} f_{s + \delta}(\beta) \,ds + \epsilon - \frac{2}{3r} \int_{r/2}^{2r} f_s(\beta) \,ds \\
&\leq& \frac{2}{3r}(\delta + \delta) + \epsilon.
\end{eqnarray*}
The latter can be made arbitrarily small by choosing $\delta$ small enough. By reversing the roles of $\alpha$ and $\beta$, one gets a similar lower bound, proving that $\wt{f}_r$ is uniformly continuous.
\end{proof}

\begin{lemma} \label{contconv}
There exists $C < \infty$ such that the following holds. Given a random walk $S$ and an independent LERW $\wh{S}^n$, we extend them to continuous curves $S_t$ and $\wh{S}^n_t$ by linear interpolation. Then for all $0 < r < 1$, there exists $N = N(r)$ such that for $n \geq N$,
$$ \frac{1}{C} r^{3/4} \leq \whExp{}{\Pro{}{S[0,\sigma_n] \cap \eta^2_{rn,n}(\wh{S}^n) = \emptyset}} \leq C r^{3/4}.$$
\end{lemma}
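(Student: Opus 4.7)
The plan is to scale distances by $n$ so that LERW converges to $\SLE_2$ and the random walk non-intersection probability is well approximated by its Brownian analog, at which point Theorem \ref{harmeasure} supplies the exponent $3/4$. The principal technical annoyance is that the functional
$$ f_r(\alpha) = \Pro{0}{W[0,\tau_\Disc] \cap \alpha[0,\rho_r] = \emptyset} $$
is not continuous on $(\Gamma,d)$, so weak convergence cannot be applied to it directly; this is bypassed using the uniformly continuous approximants $\wt{f}_r$ of Lemma \ref{contin}.

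First, let $\wt{\gamma}_n$ denote the time reversal of the linearly interpolated $n^{-1}\wh{S}^n$, viewed as a random element of $(\Gamma,d)$, and observe that, up to an $O(1/n)$ endpoint discrepancy, $\wt{\gamma}_n[0,\rho_r]$ coincides with the rescaled, time-reversed terminal segment $n^{-1}\eta^2_{rn,n}(\wh{S}^n)$. Since this segment joins a point near $\partial D_r$ to one near $\partial\Disc$, its diameter is at least $1-r$, uniformly in the LERW realization. Applying Proposition \ref{a} with diameter bound $(1-r)/2$ gives: for every $\epsilon>0$ there is $N_1=N_1(r,\epsilon)$ such that for all $n\ge N_1$, uniformly in $\wh{S}^n$,
$$ \Bigl| \Pro{}{S[0,\sigma_n]\cap \eta^2_{rn,n}(\wh{S}^n)=\emptyset} - f_r(\wt{\gamma}_n) \Bigr| < \epsilon, $$
and hence $\bigl|\Es(rn,n) - \whExp{}{f_r(\wt{\gamma}_n)}\bigr| < \epsilon$ after averaging over $\wh{S}^n$.

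Second, Lemma \ref{contin}, applied with its parameter equal to $r/2$ and then to $2r$, yields the sandwich
$$ \wt{f}_{r/2}(\alpha) \leq f_r(\alpha) \leq \wt{f}_{2r}(\alpha) \qquad \forall \alpha\in\Gamma. $$
Since $\wt{f}_{r/2}$ and $\wt{f}_{2r}$ are uniformly continuous on $(\Gamma,d)$, Theorem \ref{LERWtoSLE} gives
$$ \lim_{n\to\infty}\whExp{}{\wt{f}_s(\wt{\gamma}_n)} = \Exp{}{\wt{f}_s(\gamma)} \quad\text{for } s\in\{r/2,\,2r\}, $$
while the same sandwich used in the continuous setting, together with Theorem \ref{harmeasure}, shows that each of $\Exp{}{\wt{f}_{r/2}(\gamma)}$ and $\Exp{}{\wt{f}_{2r}(\gamma)}$ lies between universal constant multiples of $r^{3/4}$.

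Finally, fixing $r$, I would choose $\epsilon$ small compared to $r^{3/4}$ and then take $n$ large enough that the Proposition \ref{a} error is at most $\epsilon$ and $\bigl|\whExp{}{\wt{f}_s(\wt{\gamma}_n)} - \Exp{}{\wt{f}_s(\gamma)}\bigr| \leq \epsilon$ for $s\in\{r/2,\,2r\}$. Then
$$ \whExp{}{\wt{f}_{r/2}(\wt{\gamma}_n)} \leq \whExp{}{f_r(\wt{\gamma}_n)} \leq \whExp{}{\wt{f}_{2r}(\wt{\gamma}_n)}, $$
both sides are comparable to $r^{3/4}$, and combining with the Proposition \ref{a} step yields the desired two-sided bound on $\Es(rn,n)$. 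The main obstacle I anticipate is purely the discontinuity of $f_r$, already handled by Lemma \ref{contin}; beyond that the argument is essentially bookkeeping in the order in which the three asymptotic inputs (Proposition \ref{a}, Theorem \ref{LERWtoSLE}, Theorem \ref{harmeasure}) are chained together.
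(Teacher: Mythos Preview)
Your proposal is correct and follows essentially the same approach as the paper: replace the random walk by Brownian motion via Proposition \ref{a}, pass from $f_r$ to the uniformly continuous $\wt{f}_s$ via Lemma \ref{contin}, apply the weak convergence of Theorem \ref{LERWtoSLE}, and read off the $r^{3/4}$ from Theorem \ref{harmeasure}. The only cosmetic difference is that the paper treats the upper and lower bounds separately (with $\epsilon$ taken equal to $r^{3/4}$) rather than via your sandwich $\wt{f}_{r/2}\le f_r\le \wt{f}_{2r}$, and you should be careful not to write $\Es(rn,n)$ for the continuous-curve quantity, since in the paper that symbol is reserved for the discrete intersection handled later in Theorem \ref{conv}.
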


\begin{proof}
We'll prove the upper bound. The lower bound is proved in exactly the same fashion.

Fix $0 < r <1$. Recall that $S^{(n)}_t = n^{-1}S_{n^2t}$. By Proposition \ref{a}, there exists $N_1$ such that for $n \geq N_1$, and any realization of $\wh{S}_t^n$,
\begin{eqnarray*}
\Pro{}{S[0,\sigma_n] \cap \eta^2_{rn, n}(\wh{S}^n) = \emptyset} &=& \Pro{}{S^{(n)}[0,\sigma_\Disc] \cap \left( n^{-1} \eta^2_{rn, n}(\wh{S}^n) \right) = \emptyset} \\
&\leq& \Pro{}{W[0,\tau_\Disc] \cap \left( n^{-1} \eta^2_{rn, n}(\wh{S}^n) \right) = \emptyset} + r^{3/4} \\
&=& f_r(n^{-1} \wh{S}^n) + r^{3/4}.
\end{eqnarray*}

By Lemma \ref{contin}, 
$$ f_r(n^{-1} \wh{S}^n) \leq \wt{f}_{2r}(n^{-1} \wh{S}^n),$$
and $\wt{f}_{2r}$ is continuous in the metric $(\Gamma, d)$. Therefore, by the weak convergence of LERW to $\SLE_2$ described at the beginning of this section, there exists $N_2$ such that for $n \geq N_2$,
$$ \whExp{}{\wt{f}_{2r}(n^{-1} \wh{S}^n)} \leq \Exp{}{\wt{f}_{2r}(\gamma)} + r^{3/4}$$
where $\gamma$ denotes $\SLE_2$. Furthermore, applying first Lemma \ref{contin}, and then Theorem \ref{harmeasure},
$$  \Exp{}{\wt{f}_{2r}(\gamma)} \leq  \Exp{}{f_{4r}(\gamma)} \asymp r^{3/4}.$$
Therefore, the upper bound holds for $N = \max \{N_1, N_2 \}$. The lower bound is proved in the same fashion.

\end{proof}

We now prove the analogue of the previous lemma for the case where $S$ and $\wh{S}^n$ are discrete processes. The reason why the discrete case does not follow immediately from the continuous case is that we allow random walks that ``jump'', and therefore it's possible for two realizations of $S$ and $\wh{S}^n$ to avoid each other on the lattice $\Lambda$ but to intersect after they are made continuous curves by linear interpolation.

\begin{thm} \label{conv}
There exists a constant $C$ such that the following holds. For all $0 < r < 1$, there exists $N = N(r)$ such that for $n \geq N$,
$$ \frac{1}{C} r^{3/4} \leq  \Es(r n, n) \leq C r^{3/4}.$$
\end{thm}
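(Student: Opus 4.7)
The plan is to upgrade Lemma~\ref{contconv}---which gives the sandwich $\tfrac{1}{C}r^{3/4}\leq\whExp{}{\Pro{}{\tilde{E}}}\leq Cr^{3/4}$ for the \emph{continuous} non-intersection event $\tilde{E}$ where the linearly interpolated curves of $S[0,\sigma_n]$ and $\eta^{2}_{rn,n}(\wh{S}^n)$ are disjoint---to the \emph{discrete} vertex-set non-intersection event $E=\{S[1,\sigma_n]\cap\eta^{2}_{rn,n}(\wh{S}^n)=\emptyset\}$ that defines $\Es(rn,n)$. The starting point for both directions is that the vertex set of a lattice path sits on its piecewise-linear extension, so $\tilde{E}\subset E$.

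The lower bound is then immediate: $\Es(rn,n)=\whExp{}{\Pro{}{E}}\geq\whExp{}{\Pro{}{\tilde{E}}}\geq\tfrac{1}{C}r^{3/4}$ for $n\geq N_1(r)$.

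For the upper bound, decompose $\Pro{}{E}=\Pro{}{\tilde{E}}+\Pro{}{E\setminus\tilde{E}}$. On $E\setminus\tilde{E}$ a \emph{jump} must occur: some segment $[S_k,S_{k+1}]$ of the interpolated walk crosses an edge $[\eta^{2}_j,\eta^{2}_{j+1}]$ while no $S_i$ equals any vertex of $\eta^2$. Since $S$ and $\eta^2$ have bounded step range $R$, at such a jump both $S_k$ and $S_{k+1}$ must lie within distance $2R$ of some $v\in\eta^2$. The plan to bound $\Pro{}{E\setminus\tilde{E}}$ is to sum over the first jump time $T$ and apply the strong Markov property at $T+1$: conditioned on $S_{T+1}=z$ with $|z-v|\leq 2R$, irreducibility and positivity of the step distribution $p$ yield a uniformly positive probability of hitting $v$ in a bounded number of steps, so every close approach to $\eta^2$ without a vertex hit pays a constant factor. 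Pairing this with the separation lemma (Theorem~\ref{sep})---which excludes atypically close non-intersecting trajectories---shows $\Pro{}{E\setminus\tilde{E}}=o(1)$ as $n\to\infty$ with $r$ fixed. Since $N$ is permitted to depend on $r$, taking $N_2(r)$ large enough that this correction is at most $r^{3/4}$ and then applying Lemma~\ref{contconv} gives $\Pro{}{E}\leq\Pro{}{\tilde{E}}+r^{3/4}\leq C'r^{3/4}$.

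The main obstacle is a jump occurring close to $\partial B_n$: there the walk may exit $B_n$ before the irreducibility argument has time to force a hit of $v$, so the conditional avoidance probability past the jump need not be small. The fix will be to split the jump locus into a bulk part $\{v\in B_{(1-\delta)n}\}$, where the irreducibility argument applies cleanly, and a thin boundary shell $B_n\setminus B_{(1-\delta)n}$, whose contribution is absorbed using the Beurling estimate (Theorem~\ref{beurling}) combined with the smallness of the harmonic measure of the shell, sending $\delta\to 0$ at the end.
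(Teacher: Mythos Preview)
Your lower bound and the overall decomposition $\Pro{}{E}=\Pro{}{\tilde E}+\Pro{}{E\setminus\tilde E}$ with a bulk/boundary split are correct and match the paper's strategy. But the two halves of your upper bound are each gapped, and in fact you have the tools swapped relative to what actually works.

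\textbf{Bulk.} Your irreducibility argument only says that, after the first jump at time $T$, the walk hits $\eta^2$ in the next $O(1)$ steps with some fixed probability $c>0$. That yields $\Pro{}{E\setminus\tilde E}\leq (1-c)\cdot\Pro{}{\text{a jump occurs and }S[1,T]\cap\eta^2=\emptyset}$, which is merely a constant multiple of something no smaller than $\Pro{}{E\setminus\tilde E}$ itself; it does not make the quantity $o(1)$. Invoking Theorem~\ref{sep} does not rescue this: the separation lemma concerns the distance between $S(\sigma_k)$ and $\wh S[0,\wh\sigma_k]$ at the exit radius for an \emph{infinite} LERW started at the origin, not interior close approaches to the terminal piece $\eta^2$, and in any case it gives a lower bound on separation conditional on avoidance, not an upper bound on the probability of interior near-misses. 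The correct tool here is Beurling (Theorem~\ref{beurling}): if the continuous crossing occurs inside $B_{(1-\epsilon)n}$, then $S$ comes within $R$ of $\eta^2$ at a point at distance $\geq \epsilon n/2$ from $\partial B_n$, and since $\eta^2$ continues to $\partial B_n$, the probability that $S$ then exits $B_n$ without a discrete hit is $\leq C(R/\epsilon n)^{1/2}\to 0$. This is exactly the paper's argument.

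\textbf{Boundary.} Near $\partial B_n$ Beurling fails for the reason you identify, but ``smallness of the harmonic measure of the shell'' is not the fix: $S$ crosses the shell with probability one. What is small is the probability that $S$ hits the \emph{portion of $\eta^2$} lying in the shell. The paper shows that, with probability $>1-r^{3/4}$, the last piece $\wh S^n\cap A_{\epsilon n}$ of the LERW is contained in a ball $B_{\delta n}(z)$ for some $z\in\partial B_n$ (because the LERW is a subset of a random walk path and the random walk does not wander far after first entering $A_{\epsilon n}$), and then $\Pro{0}{S[0,\sigma_n]\cap B_{\delta n}(z)\neq\emptyset}<r^{3/4}$ uniformly in $z$ for $\delta$ small. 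This localization, not a generic harmonic-measure bound, is what makes the boundary term $\leq 2r^{3/4}$.
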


\begin{proof}
Fix $0 < r < 1$. The lower bound follows immediately from Lemma \ref{contconv} and the fact that if the discrete processes intersect each other so too will the continuous curves. 

To prove the upper bound we introduce some notation that will be used only in this proof. Let
$$ S[0, \ldots, \sigma_n]$$
denote the discrete set of points in $\Lambda$ visited by $S$ between $S_0$ and $S(\sigma_n)$. We will write
$$ S[0,\sigma_n]$$
to denote the continuous set of points in $\Complex$ visited by the continuous curve $S_t$ from $S_0$ to $S(\sigma_n)$. We use similar notation for $\wh{S}^n$. In addition, we let 
$$ \eta^2 = \eta^2_{rn,n}\left(\wh{S}^n[0, \ldots, \wh{\sigma}_n] \right)$$
be the terminal part of the discrete LERW curve and
$$ \wt{\eta}^2 =  \eta^2_{rn,n}\left(\wh{S}^n[0, \wh{\sigma}_n] \right)$$
be the terminal part of the continuous LERW curve.

As in the proof of Lemma \ref{ckappa}, one can choose $\delta > 0$ small enough so that for all $n$ sufficiently large, and for all $z \in \partial B_n$, 
$$ \Pro{0}{S[0, \sigma_n] \cap B_{\delta n}(z) \neq \emptyset} < r^{3/4}.$$
Furthermore, given such a $\delta$, we can choose $\epsilon > 0$ and $N$ such that for all $n \geq N$, and all $z \in \partial B_n$, the following holds. Let  $y \in \Lambda$ be the closest point to $(1-\epsilon)z$. Then,
$$ \Pro{y}{S[0,\sigma_n] \subset B_{\delta n}(z)} > 1 - r^{3/4}.$$
Since the LERW path $\wh{S}^n$ is a subset of a random walk path, one can combine the previous two observations to show that there exists $\epsilon > 0$ and $N$ such that for all $n \geq N$, 
$$ \whExp{}{\Pro{}{S[0, \sigma_n] \cap \left(\wh{S}^n[0,\wh{\sigma}_n] \cap A_{\epsilon n}\right) \neq \emptyset}} < 2 r^{3/4},$$   
where $A_{\epsilon n}$ denotes the annulus $B_n \setminus B_{(1-\epsilon)n}$.

By the Beurling estimates (Theorem \ref{beurling}), if $R$ is the range of $S$, then for any realization of $\wh{S}^n$,
\begin{eqnarray*}
\Pro{0}{S[0,\sigma_n] \cap \left( \wt{\eta}^2 \cap B_{(1 - \epsilon)n} \right) \neq \emptyset; S[0, \ldots, \sigma_n] \cap \eta^2 = \emptyset} &\leq& C \left(\frac{R}{\epsilon n} \right)^{1/2}.
\end{eqnarray*}
Therefore, we can select $N$ large enough so that for all $n \geq N$,
\begin{eqnarray*}
\Es(rn,n) &=& \whExp{}{\Pro{}{S[0, \ldots, \sigma_n] \cap \eta^2 = \emptyset}} \\
&=& \whExp{}{\Pro{}{S[0, \sigma_n] \cap \wt{\eta}^2 = \emptyset}} + \whExp{}{\Pro{}{S[0, \sigma_n] \cap \left( \wt{\eta}^2 \cap A_{\epsilon n} \right) \neq \emptyset}} \\
&+& \whExp{}{\Pro{}{S[0,\sigma_n] \cap \left( \wt{\eta}^2 \cap B_{(1 - \epsilon)n} \right) \neq \emptyset; S[0, \ldots, \sigma_n] \cap \eta^2 = \emptyset}} \\
&\leq& Cr^{3/4}.
\end{eqnarray*}
\end{proof}

\begin{thm} \label{escape}
$$ \Es(n) \approx n^{-3/4}.$$
\end{thm}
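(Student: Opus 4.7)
The plan is to combine the decomposition $\Es(n) \asymp \Es(m)\Es(m,n)$ for $m \leq n/2$ established in Propositions \ref{d} and \ref{e} with the asymptotic $\Es(rn,n) \asymp r^{3/4}$ (for $n \geq N(r)$) coming from Theorem \ref{conv}. Taking $m = rn$ with a fixed $r \in (0, 1/2]$, these immediately give an absolute constant $C$ (independent of $r$, even though the threshold $N(r)$ is not) such that, for all $n \geq N(r)$,
$$C^{-1} r^{3/4}\Es(rn) \leq \Es(n) \leq C r^{3/4}\Es(rn).$$

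Writing $F(n) = -\log \Es(n) \geq 0$, the display above becomes $|F(n) - F(rn) + \tfrac{3}{4}\log r| \leq \log C$ with $\log C$ absolute. I would iterate this relation $k$ times, where $k = k(n,r)$ is chosen to be the largest integer with $r^{k-1}n \geq N(r)$. This keeps every intermediate step legitimate and places $r^k n$ in the bounded interval $[rN(r), N(r))$. Telescoping yields
$$\bigl|F(n) - F(r^k n) + \tfrac{3k}{4}\log r\bigr| \leq k \log C.$$
Since $\Es$ is strictly positive on every positive integer, $F(r^k n)$ is bounded by a constant $M(r)$ depending only on $r$, and by construction $k = \log n / \log(1/r) + O_r(1)$.

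Dividing through by $\log n$ and letting $n \to \infty$, the term $F(r^k n)/\log n$ vanishes, the main term $-\tfrac{3k}{4}\log r / \log n$ tends to $3/4$, and the error is bounded by $\log C / \log(1/r)$. One therefore obtains
$$\biggl|\limsup_{n \to \infty} \frac{F(n)}{\log n} - \frac{3}{4}\biggr| \leq \frac{\log C}{\log(1/r)}, \qquad \biggl|\liminf_{n \to \infty} \frac{F(n)}{\log n} - \frac{3}{4}\biggr| \leq \frac{\log C}{\log(1/r)}.$$
Since the left-hand sides are independent of $r$ while $r \in (0, 1/2]$ is arbitrary, letting $r \to 0$ forces both limits to equal $3/4$, i.e.~$\Es(n) \approx n^{-3/4}$.

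The main obstacle is exactly the $r$-dependence of $N(r)$ in Theorem \ref{conv}: one cannot send $r \to 0$ directly inside $\Es(rn,n) \asymp r^{3/4}$ to read off the exponent, since that would require taking $n$ larger than all $N(r)$ simultaneously. The role of the iteration is precisely to convert this non-uniformity into a quantitative error of size $O(1/\log(1/r))$, which the outer limit $r \to 0$ then annihilates.
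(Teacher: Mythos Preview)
Your proof is correct and follows essentially the same approach as the paper: fix $r$, iterate the decomposition $\Es(n)\asymp\Es(rn)\Es(rn,n)$ together with Theorem \ref{conv} across scales until reaching the threshold $N(r)$, obtain an error of size $\log C/\log(1/r)$ in $\limsup$/$\liminf$ of $\log\Es(n)/\log n$, and then send $r\to 0$. The paper organizes the same argument by first choosing $r$ so that $\ln C/\ln(1/r)<\delta$ and then iterating, but the logic is identical; your symmetric formulation via $F(n)=-\log\Es(n)$ is a clean way to handle both bounds at once.
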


\begin{proof}
We prove the upper bound using Proposition \ref{d}. One gets the lower bound in exactly the same way using Proposition \ref{e}. Let $\delta > 0$ be given. Let $C$ denote the larger of the constants in Theorem \ref{conv} and Proposition \ref{d}. Let $0 < r < 1/4$ be small enough so that
$$ \frac{\ln C}{\ln (1/ r)} < \delta.$$
By Theorem \ref{conv} and our choice of $r$, there exists $N$ such that for $n \geq N$,
$$ \abs{\frac{\ln {\Es(rn,n)}}{\ln{1/r}} + \frac{3}{4}} < \delta.$$

Any $n \geq N$ can be written uniquely as $n = r^{-j} s$ for some $j=0,1,2,\ldots$ and $N \leq s < r^{-1}N$. Therefore, by Proposition \ref{d},
\begin{eqnarray*}
\ln{\Es(n)} &=& \ln{\Es(r^{-j}s)} \\
&\leq& \ln{\left[C^j \Es(s) \prod_{k=1}^{j} \Es(r^{k  -j}s, r^{k - 1 -j}s) \right]} \\
&\leq& j\ln{C} + \ln{\Es(s)} + j(-\frac{3}{4} + \delta)\ln (1/r).
\end{eqnarray*}
Thus,
\begin{eqnarray*}
\frac{\ln{\Es(n)}}{\ln{n}} &\leq& \frac{j\ln{C} + \ln{\Es(s)} + j(-\frac{3}{4} + \delta)\ln{1/r}}{j \ln{1/r} + \ln{s}} \\
&\leq& \frac{j\ln{C} + \ln{\Es(N)} + j(-\frac{3}{4} + \delta)\ln{1/r}}{j \ln{1/r} + \ln{N}}.
\end{eqnarray*}
Therefore,
\begin{eqnarray*}
\limsup_{n \to \infty} \frac{\ln{\Es(n)}}{\ln{n}} &\leq& \limsup_{j \to \infty} \frac{j\ln{C} + \ln{\Es(N)} + j(-\frac{3}{4} + \delta)\ln{1/r}}{j \ln{1/r} + \ln{N}} \\
&=& \frac{\ln{C}}{\ln{1/r}} + -\frac{3}{4} + \delta \\
&\leq& -\frac{3}{4} + 2 \delta.
\end{eqnarray*}
This proves the upper bound, since $\delta$ was arbitrary. As mentioned before, an identical proof will work for the lower bound.
\end{proof}

\subsection{Deriving the growth exponent from the intersection exponent} \label{growthsec}
In this section, we show that $\Gr(n) \approx n^{5/4}$. We first prove a lemma which relates the probability that a point $z$ is on the LERW path to an intersection probability for a LERW.

\begin{lemma} \label{lerwformula} Fix $z \in B_n$. Let $S$ be a random walk and  let $X$ be an independent random walk started at $z$ conditioned to hit $0$ before leaving $B_n$. Then
$$ \Pro{}{ z \in \wh{S}^n[0, \wh{\sigma}_n] } = G_n(0,z) \Pro{z} { \operatorname{L}(X[0, \xi^X_0]) \cap S[1, \sigma_n] = \emptyset }.$$
\end{lemma}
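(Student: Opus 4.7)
The plan is to reduce both sides to sums over self-avoiding paths from $0$ to $z$ in $B_n$ and identify the summands term by term. First, by (\ref{lerwf}) applied to $S$ stopped at $\sigma_n$, and by summing over the (unique, since $\wh{S}^n$ is self-avoiding) position at which $z$ appears,
$$\Pro{}{z \in \wh{S}^n[0, \wh{\sigma}_n]} \;=\; \sum_{\omega} p(\omega)\, G_{B_n}(\omega)\, \Pro{z}{\sigma_n < \xi_\omega},$$
where the sum runs over SAWs $\omega = [0, \omega_1, \ldots, \omega_{k-1}, z]$ in $B_n$. Expanding the right-hand side of the lemma by conditioning on $\operatorname{L}(X[0, \xi_0^X]) = \eta$ (using independence of $X$ and $S$) and setting $\omega = \eta^R$, so that $\Pro{z}{S[1, \sigma_n] \cap \eta = \emptyset} = \Pro{z}{\sigma_n < \xi_\omega}$, the lemma reduces to the pointwise identity
$$p(\omega)\, G_{B_n}(\omega) \;=\; G_n(0, z)\, \Pro{}{\operatorname{L}(X[0, \xi_0^X]) = \omega^R} \qquad (\ast)$$
for every such SAW $\omega$.

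To establish $(\ast)$ I will appeal to reversibility of loop-erasure (Lemma~7.2.1 of \cite{Law91}, cited in Section~\ref{sSLE}) applied to the conditioned chain $X$: writing $\tilde{X}$ for the time-reversal $(X[0, \xi_0^X])^R$, one has $\Pro{}{\operatorname{L}(\tilde{X}) = \omega} = \Pro{}{\operatorname{L}(X[0, \xi_0^X]) = \omega^R}$. A short telescoping calculation with $p^X(x,y) = (h(y)/h(x))\, p(x,y)$, $h(y) = \Pro{y}{\xi_0 < \sigma_n}$, and $p(x,y) = p(y,x)$, gives $\Pro{}{\tilde{X} = \tilde{x}} = p(\tilde{x})/h(z)$ for every walk $\tilde{x} = [0, \tilde{x}_1, \ldots, \tilde{x}_m = z]$ in $B_n$ with $\tilde{x}_j \neq 0$ for $j \geq 1$. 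Then I will compute $\Pro{}{\operatorname{L}(\tilde{X}) = \omega}$ by the standard forward loop-erasure decomposition: walks $\tilde{x}$ with $\operatorname{L}(\tilde{x}) = \omega$ split into loops at each $\omega_j$ staying in $B_n \setminus \{\omega_0, \ldots, \omega_{j-1}\}$. Crucially, the loop at $\omega_0 = 0$ must be trivial since $\tilde{X}$ never revisits $0$, so the factor $G(0; B_n) = G_n(0,0)$ that would appear in (\ref{GK}) is replaced by $1$. Using $\sum_{\text{loop at } \omega_j} p(\text{loop}) = G(\omega_j; B_n \setminus \{\omega_0, \ldots, \omega_{j-1}\})$ for $j \geq 1$ and summing,
$$\Pro{}{\operatorname{L}(\tilde{X}) = \omega} \;=\; \frac{p(\omega)}{h(z)} \prod_{j=1}^{k} G\bigl(\omega_j;\, B_n \setminus \{\omega_0, \ldots, \omega_{j-1}\}\bigr) \;=\; \frac{p(\omega)\, G_{B_n}(\omega)}{h(z)\, G_n(0, 0)}.$$
Identity $(\ast)$ now follows from $h(z)\, G_n(0, 0) = G_n(0, z)$, itself immediate from strong Markov at $\xi_0$ together with the symmetry $G_n(0, z) = G_n(z, 0)$.

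The main obstacle is this off-by-one in the forward loop-erasure decomposition for $\tilde{X}$: the constraint that $\tilde{X}$ does not revisit $0$ removes the would-be $G_n(0, 0)$ factor that (\ref{GK}) would supply, and it is exactly this missing factor that, via $G_n(0, z) = h(z) G_n(0, 0)$, produces the $G_n(0, z)$ prefactor appearing in the lemma. The secondary technical point is checking that loop-erasure reversibility applies to the conditioned chain $X$; this follows from the same path-bijection argument as Lawler's original lemma for symmetric random walks, since the underlying transition $p$ remains symmetric and the conditioning factors $h(y)/h(x)$ telescope under reversal.
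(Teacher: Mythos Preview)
Your argument is correct and reaches the same identity, but by a different route than the paper. The paper proceeds by a last-exit decomposition of the underlying random walk: writing $\tilde\sigma_z$ for the last visit of $S$ to $z$ before $\sigma_n$ and $\tilde\sigma_0^z$ for the last visit to $0$ before $\tilde\sigma_z$, one has $z\in\wh S^n$ if and only if $\xi_z<\sigma_n$ and $\operatorname L\bigl(S[\tilde\sigma_0^z,\tilde\sigma_z]\bigr)\cap S[\tilde\sigma_z+1,\sigma_n]=\emptyset$. Conditionally on $\{\xi_z<\sigma_n\}$ these two pieces are independent; the second is a walk from $z$ conditioned on $\sigma_n<\xi_z$, the first has the law of the time-reversal of $X[0,\xi_0^X]$, and the prefactor $G_n(0,z)$ drops out of the identity $\Pro{}{\xi_z<\sigma_n}\,\Pro{z}{\sigma_n<\xi_z}^{-1}=G_n(0,z)$.

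Your proof instead expands both sides as sums over self-avoiding walks via (\ref{lerwf}) and matches summands through the pointwise identity $(\ast)$. This is more computational but has the virtue of making explicit how the $G_n(0,z)$ factor arises: it is exactly the ``missing'' $G(0;B_n)$ term in the loop decomposition of $\tilde X$, combined with $h(z)\,G_n(0,0)=G_n(0,z)$. The paper's approach hides this inside a ratio of hitting probabilities. Both proofs ultimately invoke loop-erasure reversibility for the conditioned chain $X$; the paper does so in the line identifying $\operatorname L(S[\tilde\sigma_0^z,\tilde\sigma_z])$ with $(\operatorname L(X))^R$ in distribution. Your justification of this step is correct but brief; within the paper's toolkit the cleanest way to see it is that, by Lemma~\ref{greencondit}, $G_K^X(\eta)=G_K(\eta)$, and by Lemma~\ref{aa} this depends only on the \emph{set} $\{\eta_0,\ldots,\eta_k\}$, so the product of Green's functions you obtain from the forward decomposition of $\tilde X$ coincides with the one appearing in (\ref{lerwf}) for $L(X)=\omega^R$.
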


\begin{proof}
Let
$$ \wt{\sigma}_z = \left\{ \begin{array}{ll}
         \max \{k \leq \sigma_n : S_k = z \} & \mbox{if $\xi_z < \sigma_n$};\\
         \sigma_n & \mbox{if $\sigma_n < \xi_z$},\end{array} \right.$$
and let $\wt{\sigma}_0^z = \max \{k \leq \wt{\sigma}_z : S_k = 0 \}$.

By the definition of the loop-erasing procedure, 
$$ \Pro{}{z \in \wh{S}^n[0, \wh{\sigma}_n]} = \Pro{} { \xi_z < \sigma_n; \operatorname{L}(S[\wt{\sigma}_0^z, \wt{\sigma}_z]) \cap S[\wt{\sigma}_z + 1, \sigma_n] = \emptyset }.$$

Conditioned on the event $\{\xi_z < \sigma_n\}$, $S[\wt{\sigma}_0^z,\wt{\sigma}_z]$ and $S[\wt{\sigma}_z, \sigma_n]$ are independent. $S[\wt{\sigma}_z, \sigma_n]$  has the same distribution as a random walk started at $z$, conditioned to leave $B_n$ before returning to $z$. $S[\wt{\sigma}_0^z,\wt{\sigma}_z]$ has the same distribution as the time reversal of $X[0,\xi^X_0]$ and therefore $\operatorname{L}(S[\wt{\sigma}_0^z, \wt{\sigma}_z])$ has the same distribution as the time reversal of $\operatorname{L}(X[0, \xi^X_0])$  Thus,
\begin{eqnarray*} 
& & \Pro{} {\xi_z < \sigma_n; \operatorname{L}(S[\wt{\sigma}_0^z, \wt{\sigma}_z]) \cap S[\wt{\sigma}_z + 1, \sigma_n] = \emptyset } \\
&=& \Pro{}{\xi_z < \sigma_n} \condPro{z}{\operatorname{L}(X[0, \xi^X_0]) \cap S[1, \sigma_n] = \emptyset}{\sigma_n < \xi_z} \\
&=& \Pro{}{\xi_z < \sigma_n}\Pro{z}{\sigma_n < \xi_z}^{-1} \Pro{z}{\operatorname{L}(X[0, \xi^X_0]) \cap S[1, \sigma_n] = \emptyset}.
\end{eqnarray*} 

The lemma now follows from the fact that
$$ \Pro{}{\xi_z < \sigma_n} \Pro{z}{\sigma_n < \xi_z}^{-1} = \Pro{}{\xi_z < \sigma_n} G_n(z) = G_n(0,z).$$
\end{proof}

We finally have all the tools needed to prove our main theorem.

\begin{proof}[Proof of Theorem \ref{growth}]
We first prove the upper bound. For $z \in B_n$, let
$$r = r(z) = \frac{1}{4} \min \{ \abs{z}, n - \abs{z} \}.$$
Then by Lemma \ref{lerwformula},
\begin{eqnarray*}
\Pro{}{ z \in \wh{S}^n[0, \wh{\sigma}_n] } &=& G_n(0,z) \Pro{z} { \operatorname{L}(X[0, \xi^X_0]) \cap S[1, \sigma_n] = \emptyset } \\
&\leq& G_n(0,z) \Pro{z} { \wh{X}[0, \wh{\sigma}^X_{B_r(z)}] \cap S[1, \sigma_{B_r(z)}] = \emptyset }.
\end{eqnarray*}
By Propositions \ref{aw} and \ref{ax}, $\wh{X}[0, \wh{\sigma}^X_{B_r(z)}]$ has the same distribution up to a constant as $\wh{S}[0, \wh{\sigma}_{B_r(z)}]$. Therefore, by Lemma \ref{2escapes},
\begin{eqnarray*}
\Pro{}{ z \in \wh{S}^n[0, \wh{\sigma}_n] } &\leq& C G_n(0,z) \Pro{z} { \wh{S}[0, \wh{\sigma}_{B_r(z)}] \cap S[1, \sigma_{B_r(z)}] = \emptyset } \\
&=& C G_n(0,z) \wt{\Es}(r) \\
&\leq& C G_n(0,z) \Es(4r).
\end{eqnarray*}
By Theorem \ref{escape}, for all $\epsilon > 0$, there exists $M$ such that for all $k > M/4$,
$$ \Es(k) \leq k^{-3/4 + \epsilon}.$$
Also, by \cite[Proposition 6.3.5]{LL08}, for all $z \in B_n \setminus \{0\}$, 
$$G_n(0,z) \asymp \ln \frac{n}{\abs{z}}.$$    
Thus, for $n > 3M$,
\begin{eqnarray*}
\Gr(n) &=& \sum_{z \in B_n} \Pro{} { z \in \wh{S}^n[0, \wh{\sigma}_n] } \\
&\leq& C \left[ M^2 + \sum_{k = M}^{n/2} k \ln \frac{n}{k} \Es(4k) + \sum_{k=n/2}^{n-M} k \ln \frac{n}{k} \Es(4(n - k)) + M n \right] \\
&\leq& C \left[ M^2 + \sum_{k = M}^{n/2} k \ln \frac{n}{k} k^{-3/4 + \epsilon} + \sum_{k=n/2}^{n-M} k \ln \frac{n}{k} (n - k)^{-3/4 + \epsilon} + M n \right] \\
&\leq& C \left[ M^2 + n^{5/4 + \epsilon} + M n \right].
\end{eqnarray*}
Since $M$ does not depend on $n$, for all $n$ sufficiently large one gets that
$$ \Gr(n) \leq C n^{5/4 + \epsilon}$$
which gives the upper bound since $\epsilon > 0$ was arbitrary.

We now prove the lower bound. As before, let
$$r = r(z) = \frac{1}{4} \min \{ \abs{z}, n - \abs{z} \},$$
and suppose that $r > n/16$ so that $n/4 < \abs{z} < 3n/4$.

Let $c_1$ be as in the statement of the separation lemma (Theorem \ref{sep}) and let
$$ D_r := r^{-1} \min \{\operatorname{dist}(S(\sigma_{B_r(z)}), \wh{X}[0, \wh{\sigma}^X_{B_r(z)}]), \operatorname{dist}(\wh{X}(\wh{\sigma}^X_{B_r(z)}), S[0, \sigma_{B_r(z)}]) \}.$$
Then using a similar argument to the one in the proof of \ref{2escapes},
$$ \condPro{z}{S[1, \sigma_n] \cap (\wh{X}[0,\wh{\xi}^X_{n/16}] \cup B_{n/8}) = \emptyset }{ S[1, \sigma_{B_r(z)}] \cap \wh{X}[0,\wh{\sigma}_{B_r(z)}] = \emptyset; D_r > c_1 } $$
can be bounded below by a constant $c > 0$.
Furthermore, by \cite[Proposition 1.6.7]{Law91}, for any $w \in \partial B_{n/16}$,
$$\condPro{w}{\xi_{0} < \sigma_{n/8} }{ \xi_{0} < \sigma_n} = \frac{\Pro{w}{\xi_{0} < \sigma_{n/8}}}{\Pro{w}{\xi_{0} < \sigma_{n}}} \geq c.$$
Hence,
$$ \condPro{z}{S[1, \sigma_n] \cap \wh{X}[0,\wh{\xi}^X_{0}] = \emptyset }{ S[1, \sigma_{B_r(z)}] \cap \wh{X}[0,\wh{\sigma}_{B_r(z)}] = \emptyset; D_r > c_1 } \geq c.$$
Therefore, by the separation lemma (Theorem \ref{sep}),
\begin{eqnarray*}
& & \Pro{z}{S[1, \sigma_n] \cap \wh{X}[0,\wh{\xi}^X_{0}] = \emptyset} \\
&\geq& \Pro{z}{S[1, \sigma_n] \cap \wh{X}[0,\wh{\xi}^X_{0}] = \emptyset ; S[1, \sigma_{B_r(z)}] \cap \wh{X}[0,\wh{\sigma}_{B_r(z)}] = \emptyset ; D_r \geq c_1} \\
&\geq& c \Pro{z}{S[1, \sigma_{B_r(z)}] \cap \wh{X}[0,\wh{\sigma}_{B_r(z)}] = \emptyset; D_r > c_1 } \\
&\geq& c \Pro{z}{S[1, \sigma_{B_r(z)}] \cap \wh{X}[0,\wh{\sigma}_{B_r(z)}] = \emptyset}.
\end{eqnarray*}
As before, the last quantity is comparable to $\Es(r)$. Therefore, for all $z$ such that $n/4 \leq \abs{z} \leq 3n/4$,
$$ \Pro{}{ z \in \wh{S}^n[0, \wh{\sigma}_n] } \geq c G_n(0,z) \Es(r).$$

Now let $\epsilon > 0$. By Theorem \ref{escape}, there exists $M$ such that for all $k > M$,
$$ \Es(k) \geq k^{-3/4 - \epsilon}.$$
Therefore, for $n > 16M$,
\begin{eqnarray*}
\Gr(n) &=& \sum_{z \in B_n} \Pro{0} { z \in \wh{S}^n[0, \wh{\sigma}_n] } \\
&\geq& \sum_{n/4 \leq \abs{z} \leq 3n/4} \Pro{0} { z \in \wh{S}^n[0, \wh{\sigma}_n] } \\
&\geq& c \left[ \sum_{k = n/4}^{n/2} k \ln \frac{n}{k} \Es(k) + \sum_{k=n/2}^{3n/4} k \ln \frac{n}{k} \Es(n - k) \right] \\
&\geq& c \left[ \sum_{k = n/4}^{n/2} k \ln \frac{n}{k} k^{-3/4 - \epsilon} + \sum_{k=n/2}^{3n/4} k \ln \frac{n}{k} (n - k)^{-3/4 - \epsilon} \right] \\
&\geq& c n^{5/4 - \epsilon}.
\end{eqnarray*}
This proves the lower bound since $\epsilon$ was arbitrary.
\end{proof}

\end{document}